\DeclareSymbolFont{rsfscript}{OMS}{rsfs}{m}{b}
\DeclareSymbolFontAlphabet{\mathrsfs}{rsfscript}
\renewcommand{\mathcal}{\mathrsfs}
\newcommand{\nc}{\newcommand}
\nc{\on}{\operatorname}
\nc{\DMO}{\DeclareMathOperator}
\numberwithin{equation}{subsection}
\theoremstyle{plain}
\newtheorem*{thm*}{Theorem}
\newtheorem*{lem*}{Lemma}
\newtheorem*{prop*}{Proposition}
\newtheorem*{cor*}{Corollary}
\theoremstyle{definition}
\newtheorem*{defn*}{Definition} 
\newtheorem*{conj*}{Conjecture}
\newtheorem*{quest*}{Question}
\newtheorem*{exmp*}{Example}
\newtheorem*{ass*}{Assumption}
\theoremstyle{remark}
\newtheorem*{rem*}{Remarks}
\newtheorem*{note*}{Note}
\newtheorem*{case*}{Case}
\renewcommand{\p@enumi}{\thesubsection}
\renewcommand{\p@enumii}{\thesubsection\theenumi}
\newenvironment{num}{\renewcommand{\theenumi}{(\alph{enumi})}
                      
                                          \begin{enumerate} }
                    {\end{enumerate} }
 \newenvironment{conds}{\renewcommand{\theenumi}{(\roman{enumi})}
                       
                        \begin{enumerate} }
                     {\end{enumerate} }
 \newenvironment{subconds}{
                       
                        \begin{enumerate} }
                     {\end{enumerate} }
\nc{\be}{\begin{equation}}
\nc{\ee}{\end{equation}}
\nc{\bee}{\begin{equation*}}
\nc{\eee}{\end{equation*}}
\nc{\bs}{\begin{split}}
\nc{\es}{\end{split}}
\nc{\bc}{\begin{cases}}
\nc{\ec}{\end{cases}}
\nc{\bml}{\begin{multline}}
\nc{\eml}{\end{multline}}
\nc{\bmll}{\begin{multline*}}
\nc{\emll}{\end{multline*}}
\nc{\lb}[1]{\label{#1}\mar{#1}}
\nc{\belb}[1]{\mar{#1}\begin{equation}\label{#1}}
\newcommand{\mpair}[1]{\pair{\,#1\,}}
\newcommand{\mset}[1]{\set{\,#1\,}}
\newcommand{\pair}[1]{\langle #1\rangle}
\newcommand{\set}[1]{\{#1\}}
\nc{\mc}{{\mathcal}}
\nc{\mf}{{\mathfrak}}
\nc{\CA}{{\mathcal A}}
\nc{\CB}{{\mathcal B}}
\nc{\CC}{{\mathcal C}}
\nc{\CD}{{\mathcal D}}
\nc{\CE}{{\mathcal E}}
\nc{\CF}{{\mathcal F}}
\nc{\CG}{{\mathcal G}}
\nc{\CH}{{\mathcal H}}
\nc{\CI}{{\mathcal I }}
\nc{\CJ}{{\mathcal J }}
\nc{\CK}{{\mathcal K }}
\nc{\CL}{{\mathcal L}}
\nc{\CM}{{\mathcal M}}
\nc{\CN}{{\mathcal N}}
\nc{\CO}{{\mathcal O}}
\nc{\CP}{{\mathcal P}}
\nc{\CQ}{{\mathcal Q}}
\nc{\CR}{{\mathcal R}}
\nc{\CS}{{\mathcal S}}
\nc{\CT}{{\mathcal T}}
\nc{\CU}{{\mathcal U}}
\nc{\CV}{{\mathcal V}}
\nc{\CW}{{\mathcal W}}
\nc{\CX}{{\mathcal X}}
\nc{\CY}{{\mathcal Y}}
\nc{\CZ}{{\mathcal Z}}
\nc{\bbZ}{{\mathbb Z}}
\nc{\bbC}{{\mathbb C}}
\nc{\bbR}{{\mathbb R}}
\nc{\bbP}{{\mathbb P}}
\nc{\bbF}{{\mathbb F}}
\nc{\bbN}{{\mathbb N}}
\nc{\bbD}{{\mathbb D}}
\nc{\bbO}{{\mathbb O}}
\nc{\bbQ}{{\mathbb Q}}
\def\a{\alpha}
\def\b{\beta}
\def\g{\gamma}
\def\G{\Gamma}
\def\d{\delta}
\def\D{\Delta}
\def\e{\epsilon}
\def\l{\lambda}
\def\L{\Lambda}
\def\o{\omega}
\def\s{\sigma}
\def\th{\theta}
\def\Th{\Theta}
\def\to{\rightarrow}
\def\lsupp#1#2{\kern\scriptspace\vphantom{#2}^{#1}\kern-\scriptspace#2}
\def\lsubb#1#2{\kern\scriptspace\vphantom{#2}_{#1}\kern-\scriptspace#2}
\def\lsub#1#2{\tensor*[_{#1}]{#2}{}}
\def\lrsub#1#2#3{\tensor*[_{#1}]{#2}{_{#3}}}
\newcommand{\seq}{{\,\subseteq\,}}
\newcommand{\sneq}{{\,\subsetneq\,}}
\newcommand{\sm}{{\,\setminus\,}}
\newcommand{\eset}{{\emptyset}}
\nc{\wt}{\widetilde}
\nc{\wh}{\widehat}
\nc{\ol}{\overline}
\def\dotcup{\hskip1mm\dot{\cup}\hskip1mm}
\newcommand{\op}{^{{\rm op}}}
\newcommand{\mar}{\marginpar}
\newcommand{\join}{\bigvee}
\newcommand{\meet}{\bigwedge}
\DMO{\Supp}{{\mathrm{Supp}}}
\DeclareMathOperator{\Aut}{{\mathrm{Aut}}}
\DMO{\Rad}{{\mathrm{Rad}}}
\DMO{\Ann}{{\mathrm{Ann}}}
\nc{\Id}{\mathrm{Id}}
\DMO {\Add}{{\mathrm Add}}
\DMO {\add}{{\mathrm add}}
\DMO {\Img}{{\mathrm Im}}
\DMO {\coim}{{\mathrm Coim}}
\DMO {\coker}{{\mathrm Coker}}
\DMO {\colim}{\varinjlim}
\DMO {\plim}{\varprojlim}
\DMO {\mEnd}{{\mathrm End}}
\DMO {\mend}{{\mathrm end}}
\DMO {\Proj}{{\mathrm Proj}}
\DMO {\Ext}{{\mathrm Ext}}
\DMO {\ext}{{\mathrm ext}}
\DMO {\tor}{{\mathrm tor}}
\DMO {\Tor}{{\mathrm Tor}}
\DMO {\Hom}{{\mathrm Hom}}
\DMO {\HOM}{{\mathrm HOM}}
\DMO {\Modfg}{{\mathrm -Modfg}}
\DMO {\modulfg}{{\mathrm -modfg}}
\DMO {\modul}{{\mathrm -mod}}
\DMO {\Mod}{{\mathrm -Mod}}
\DMO {\pdim}{{\mathrm proj.dim.}}
\DMO {\PD}{{{\mathrm Proj.Dim.}}}
\DMO {\gldim}{{\mathrm gr.gl.dim.\ }}
\DMO {\grad}{{\mathrm rad }}
\DMO {\Sheaves}{{\mathrm Sh}}
\DMO {\Flab}{{\mathrm Fl}}
\DMO {\Poinc}{{\mathrm Poinc}}
\DMO {\Groth}{{{{\mathrm K}}_0}}
\DMO {\Mat}{{\mathrm Mat}}
\DMO \Sl{{\mathrm sl}}
\DMO \SL{{\mathrm SL}}
\DMO \Gl{{\mathrm gl}}
\DMO \GL{{\mathrm GL}}
\DMO \lcm{\mathrm{lcm}}
\DMO \rank{{\mathrm rank}}
\DMO \diag{{\mathrm diag}}
\nc{\bib}{\bibitem}
\nc{\pa}{\partial}
\DMO {\Addp}{{\mathrm Add}'}
\DMO {\addp}{{\mathrm add}'}
\DMO {\hatGr}{{\widehat{{\mathrm K}}_0}}
\DMO {\NExt}{{\mathrm NExt}}
\DMO {\nExt}{{\mathrm next}}
\DMO {\DExt}{{\mathrm DExt}}
\DMO {\dext}{{\mathrm dext}}
\newcommand{\Nat}{{\mathbb N}}
\newcommand{\real}{{\mathbb R}}
\DMO{\ob}{ob}
\DMO{\mor}{mor}
\DMO{\tr}{tr}
\DMO{\spec}{spec}
\newcommand\olp[1]{{\overline{#1}}^{\,\prime}}
\newcommand{\ab}{\mathfrak{A}}
\newcommand{\nts}{\negthinspace}
\newcommand{\snts}{\nts}
\DMO{\cov}{cov}
\DMO{\Sym}{Sym}
\DMO{\Dih}{Dih}
\DMO{\Spec}{Spec}
\DMO{\domn}{dom}
\DMO{\cod}{cod}
\DMO{\ord}{ord}
\newcommand{\nat}{\natural}
\newcommand{\cp}{\complement}
\newcommand{\gpdpreord}{\mathbf{Gpd\text{\bf -}PreOrd}}
\newcommand{\rlec}{\mathbf{RdE}}
\newcommand{\setc}{\mathbf{Set}}
\newcommand{\datc}{\mathbf{Dtm}}
\newcommand{\grpdc}{\mathbf{Gpd}}
\newcommand{\cgrpdc}{\mathbf{CGpd}}
\newcommand{\bringc}{\mathbf{BRng}}
\newcommand{\balgc}{\mathbf{BAlg}}
\newcommand{\prootc}{\mathbf{Prd}}
\newcommand{\rootc}{\mathbf{rd}}
\newcommand{\rtd}{\mathbf{Rd}}
\newcommand{\setproot}{\setc\text{\bf -}\prootc}
\newcommand{\grpdset}{\grpdc\text{\bf -}\setc_{\pm}}
\begin{document}

\title{Groupoids,  root systems   and weak order II}

\author{Matthew Dyer}
\address{Department of Mathematics 
\\ 255 Hurley Building\\ University of Notre Dame \\
Notre Dame, Indiana 46556, U.S.A.}
\email{dyer.1@nd.edu}

\begin{abstract}   
   This  is the second introductory paper    concerning  structures called rootoids and protorootoids, the definition of which is abstracted from 
   formal properties of Coxeter groups with their root systems and weak 
   orders.  The ubiquity of protorootoids is  shown by  attaching them   to  
   structures such as  groupoids with generators,  to simple graphs, to subsets of Boolean rings, to possibly infinite oriented matroids, and to 
   groupoids with a specified preorder on each set of morphisms with 
   fixed codomain; in each case, the condition that the structure give rise to a rootoid defines an interesting subclass of these structures. The 
   paper also gives non-trivial  examples of morphisms of rootoids  and 
   describes (without proof, and partly informally)  some  main ideas, results and questions  from subsequent papers of the series, including 
   the basic    facts about principal rootoids and    functor rootoids which together  provide the raison d'\^etre for these papers. 
   \end{abstract}

\maketitle
%
%
%
  \setcounter{section}{7}
\section*{Introduction}\label{s0}
  This  is the second introductory paper  in a series of papers concerning  structures called rootoids and protorootoids, the definition of which is abstracted from formal properties of Coxeter groups with their root systems and weak orders.
  The basic definitions appear in \cite{DyGrp1}. This paper  provides  further basic facts about  and examples of  rootoids and protorootoids,  and formulates (often incompletely, in more limited generality or informally)   many of  the main results of subsequent papers.  Assuming familiarity with (at least the introduction to) \cite{DyGrp1},    its concerns may be  detailed  as follows.

 Section \ref{sa8} discusses    a construction of  a  protorootoid from  a 
  groupoid $G$ with a specified   generating 
   set $S$ (assumed  to be closed under inversion and to not 
   contain any identity morphism). This generalizes a 
   standard construction  of the  abstract  root system of a 
   Coxeter group  which arises naturally in the theory of 
   buildings.  The pair  $(G,S)$ is called a $C_{2}$-system if  the 
   associated 
   protorootoid is a rootoid. Some basic characterizations,   examples 
   and properties 
   of   $C_{2}$-systems are mentioned  in Section \ref{sa8} without proof. In 
   particular,  their braid presentations 
 and the fact that  even  $C_{2}$-systems (defined as those with a sign character) correspond 
      to abridged principal rootoids, up to isomorphism of 
   each, are discussed.  The braid relations may be specified by  Coxeter matrices (the entries of  which  equal  lengths of joins of pairs of simple generators, and  determine lengths of the corresponding braid   relation) together with    local  data which determine the successive simple generators on each side of the relation.
    For example, a  cyclic group $G=\mpair{x}$ considered with generating set 
   $S=\set{x,x^{-1}}$    gives a $C_{2}$-system $(G,S)$, which is 
   even if and only if  $G$ is of even order or of infinite order.  
   
    Section \ref{s9} describes several  other  constructions of 
  protorootoids from structures of various types.
 It attaches  protorootoids to 
  simple graphs,  to certain edge-labelled simple graphs which are called 
  rainbow graphs, to subsets of Boolean rings, and to
   possibly infinite oriented matroids.  In each case, as also in Section \ref{sa8}, the 
  condition that the resulting protorootoid is a rootoid (of some specified 
  type) distinguishes an interesting class of these structures. 
Notions of covering rootoids and covering quotient rootoids mentioned
 in  \cite[2.13 and 4.9]{DyGrp1}   enable (connected) rootoids in various  natural subclasses 
  to be described in subsequent papers  as covering   quotients of (connected, simply connected) 
  rootoids arising from the above 
  structures,  associated to a  suitable  group of automorphisms of the 
  structure.  For convenience of reference, a  table showing standard notations introduced in this paper and \cite{DyGrp1} for protorootoids, set protorootoids and signed groupoid sets attached to various structures  is provided   in 9.10.

Section \ref{sa10} describes  results and questions related to 
our original motivations for this work.
 It first    describes  a protorootoid  $\CR$ attached to the set    of initial  
  sections of reflection orders of  a Coxeter system.
 Conjectures from \cite{DyWeak} imply that $\CR$ is a  complete, saturated, regular  rootoid and we 
 conjecture further that it is pseudoprincipal (see \cite[Section 3]{DyGrp1} for the definitions).   Section \ref{sa10} also introduces notions of 
 completions of rootoids suggested by this conjecture and asks to  what 
 extent  they exist in general. The only non-trivial result of the section is  
 the fact (which would be  a  weak consequence of existence of a  
 completion)  that the weak orders of a rootoid are isomorphic to  order 
 ideals of complete ortholattices.   
 
  Section \ref{sa11} constructs a left adjoint $\mathfrak{Q}$, mentioned 
  already in  \cite[5.8]{DyGrp1},  to the forgetful functor $\mathfrak{P}$ 
  from the category of protorootoids to the category of groupoid-preorders. It also shows that the full subcategory of protorootoidal 
  groupoid-preorders (those underlying a protorootoid) is equivalent to a 
  full reflective subcategory  of the category  of protorootoids, a result which enables the development of an analogue of the theory of protorootoids and rootoids in terms of groupoid-preorders (though this is not done in this paper).

  The  notion of a square of a 
 protorootoid $\CR$ plays a crucial role in our main results, and   is 
 introduced in Section \ref{s12}.  
   A commutative square of a protorootoid $\CR=(G,\L,N)$ 
 is defined  to be  a commutative diagram in $G$ involving morphisms 
  $x,y,z,w$ with $xy=zw$ such that  $x(N(y))=N(z)$ and  
  $N(x) N(z)=0$. Equivalently, the requirement is  that the 
  morphisms at each corner, suitably oriented, are orthogonal 
  i.e the pairs $(x,z)$, $(y,x^{-1})$, $(w^{-1},y^{-1})$ and 
  $(z^{-1},w)$ are each orthogonal. In terms of signed groupoid-sets, 
  the condition is that $x(\Phi_{y})=\Phi_{z}$.  A 
   trivial  point which facilitates  
  certain local to  global  comparisons  is the following rigidity property of   squares: in a faithful protorootoid, a 
  square is determined by any two of its 
  adjacent sides.

Section \ref{s13}  gives examples of morphism of rootoids, which, 
  when  specialized to 
Coxeter groups, appear implicitly   in  \cite{Muhl} or  \cite{BrHowNorm}. 
   The general proofs are deferred, but are given here  in a particularly simple and instructive case which provides a prototype 
   for   proofs in subsequent papers. Namely, it is a 
well-known theorem of Tits that  if $G$ is a group of diagram 
automorphisms of a Coxeter system $(W,S)$, and   $W^{G}$ is the 
group   of $G$-invariants  of $W$, then  $W^{G}$ is a Coxeter group. 
 In  \ref{ss12.2}, it is  proved  that the  inclusion $\th\colon W^{G}\to W$
satisfies the AOP and shown how  AOP   implies
 both  that the simple 
 generators $R$ of $W^{G}$ are the  elements of 
 the form $\th^{\perp}(s)$ with $s\in S\cap \domn(\th^{\perp})$, and that $W^{G}$ is preprincipal. 
 Tits' theorem  is easily recovered from this.  The proof of AOP for
  $\th$ requires an 
 expression of $\th^{\perp}(w)$ (which, a priori, is defined as a 
 meet) as a join; in the special case of $\th^{\perp}(s)$, for $s$ as above, this expression  amounts to  the explicit description by Tits of the simple generators $R$.

 The results in \cite{Muhl} generalize the above one  of Tits to Coxeter 
 subgroups of $W$ generated by longest elements of finite parabolic 
 subgroups of $W$ and satisfying a length compatibility condition
 which, in more general contexts, is  the main requirement in the definition of preprincipal rootoid.
   In \ref{ss13.3},  a generalization of these results   to preprincipal rootoids  is stated   in
    terms of   order-theoretic hypotheses; it amounts roughly to a  description in semilocal terms  of the possible  images of atomic generators of preprincipal rootoids under rootoid local embeddings into a fixed preprincipal rootoid. The result is  illustrated by examples with Coxeter groups  in \ref{ss13.4}.

 Subsections  \ref{ss13.5}--\ref{ss13.7} discuss what are here called normalizer groupoids, which are obtained by a  generalization of a construction in \cite{BrHowNorm}.   To illustrate the definition,  let $(W,S)$  be a Coxeter system with root system $\Phi$ and simple roots $\Pi$. The  normalizer groupoid $\CN$     is a   groupoid,  the   objects of which  identify with the subsets of  $ \mset{\Phi_{w}\mid w\in W}$, two such objects being in the same component if and only if they are in the same $W$-orbit for the natural action of $W$ on sets of subsets of the root system,   and for which the vertex group at such a subset identifies with its setwise stabilizer (normalizer) in $W$.    The  groupoid $\CN$  has a subgroupoid (a union of components of $\CN$)  
with objects the subsets of    $\mset{\Phi_{w}\mid w\in S}\cong \Pi$, the components of which  were studied in \cite{BrHowNorm}.
   The  normalizer groupoid construction   underlies a construction with  protorootoids which preserves, amongst others,  the classes of  rootoids, interval finite rootoids, complete rootoids and preprincipal rootoids. Restricted to preprincipal rootoids, it provides natural examples of rootoid  local embeddings as discussed in the previous paragraph.

  The results about normalizer rootoids will be deduced in later papers from  more general facts about functor rootoids  
    which are stated  without proof in Section  \ref{s14}. 
The notion of  functor rootoids was discussed in the introduction to \cite{DyGrp1} in relation to signed groupoid-sets.  Here we   provide further details, in terms of rootoids.   For   a 
   rootoid $\CR=(G,\L,N)$  and a  
    groupoid $H$,  with $G$ and  $H$ assumed non-empty and connected
     for simplicity, one has  the category $G^{H}$ of functors 
    from $H$ to $G$, with natural transformations as morphisms. 
     This category is a groupoid, and it has a subgroupoid 
     $G^{H}_{\square}$ with 
     all objects but only morphisms for which each 
     commutative square in $G$ (from the definition of  natural 
     transformation) is a commutative square of $\CR$ as  
     defined above.
     
    Fix a groupoid homomorphism $F\colon H\to G$ and let
     $G^{H}_{\square}[F]$ be the component of  $G^{H}_{\square}[F]$
    containing $F$.
     For any $b\in \ob(G)$, one has  trivially
     an ``evaluation at $b$'' groupoid morphism
     $\e_{b}\colon G^{H}_{\square}[F]\to G$, a pullback  protorootoid 
     $\e_{b}^{\nat}(\CR)$ with underlying groupoid $G^{H}_
     {\square}[F]$ and a natural protorootoid morphism  $\e_{b}^{\flat }
     \colon\e_{b}^{\nat}(\CR)\to \CR$.
      The   main result concerning functor rootoids  is  that
     $\e_{b}^{\flat}$ is a morphism  in the category of rootoid local embeddings
     (in particular,  the  protorootoid
     $\e_{b}^{\nat}(\CR)$ is a rootoid, called a functor rootoid). 
          It follows  that     the abridged functor rootoid 
$(\e_{b}^{\nat}(\CR))^{a}$ is  
principal  or complete if $\CR$ is so; it turns out to be independent of choice of $b$ up
 to canonical isomorphism.    The morphism $\e_{b}$ above is  a  called 
      a stable local embedding (of groupoids).

     Consider the  category of based, connected  groupoids  with 
     objects the pairs $(G,a)$ of a connected groupoid $G$ and 
     an object $a$ of $G$, with morphisms basepoint preserving 
     groupoid homomorphisms. If we set $F(b)=a$, then the  
     above constructs from    a morphism
      $F\colon (H,b)\to (G,a)$ a   morphism
       $\e_{b}\colon (G^{H}_{\square}[F],F)\to (G,a)$ which we call the 
       dual of $F$.
    It will be shown that  $F$ factors  canonically  through its double dual,  and 
    that the construction restricts  to a 
    duality on a suitable category of based stable local 
    embeddings over $(G,a)$.
     The proofs of  these   results  involve local descriptions of the construction in terms of  
     certain
     symmetric Galois connections associated to the objects 
     of $G$.

The fact that   $(\e_{b}^{\nat}(\CR))^{a}$ is principal
if $\CR$ is principal   is  the most substantial result of these papers. 
 It is comparatively straightforward   to show that
if  $\CR$ is a rootoid, then $\e_{b}^{\nat}(\CR) $ is a rootoid, and then trivial  that if   $\CR$ is also  complete or interval finite, then so is  $\e_{b}^{\nat}(\CR)$. The crucial step is then to show that if $\CR$ is preprincipal, then so is 
$\e_{b}^{\nat}(\CR)$.  This follows, by  arguments similar to those appearing in regard to Tits' theorem (see \ref{ss12.2}),  from the fact that
(whether or not $\CR$ is preprincipal) $\e_{b}^{\flat}$ is a rootoid local embedding. In turn, the key step in the proof that   $\e_{b}^{\flat}$ is a rootoid local embedding  is the proof that it satisfies the AOP. This also follows the general lines  of the argument for the corresponding fact in \ref{ss12.2}; the main point  is  expressing certain meets explicitly as joins.
That is accomplished   by a (finite for preprincipal rootoids, transfinite in general)   repetition of certain ``zig-zag'' and ``loop'' constructions (defined in later papers in terms of basic   combinatorics of squares) in the setting of the previously mentioned Galois connections.  Similar but simpler arguments are involved in the proofs of other results showing that principal rootoids  are preserved by natural constructions. 

 To  oversimplify, the category of  rootoids is  defined so as to be able  to express in natural terms the proof that   $(\e_{b}^{\nat}(\CR))^{a}$ is principal
if $\CR$ is principal.   It  then turns out    that the category of rootoids has all small     limits, as does its full subcategory of complete rootoids, while the  full subcategories  of preprincipal and interval finite  rootoids     admit all limits from  small
     categories with finitely many objects. These results may be viewed as  general analogues of the above-mentioned theorem of Tits.

As an application of functor rootoids, Section \ref{s14} informally 
describes (the  part  involving underlying groupoids) of  a 
  construction which attaches  to any rootoid $\CR$ a
 structure $\CR^{\o}$, which we call
     a  symmetric cubical  $\o$-rootoid.
 It  has an underlying 
     cubical $\o$-groupoid in the sense of higher category 
     theory,  each of the $n$ groupoid $n$-compositions of 
     which  underlies a rootoid (reducing to $\CR$ for $n=1$). 
     The supremum of the set of $n$ for which $\CR^{\o}$ has non-trivial $n$-morphisms provides  a measure of the degree of non-triviality of the theory of functor rootoids for $\CR$.  For the rootoid $\CC_{(W,S)}$
     attached to a Coxeter system, it is the supremum of the ranks of finite standard parabolic subgroups.  
     
     It may be expected that study of functor rootoids and related structures will be of particular interest in  examples such as Coxeter 
     groups (and more particularly,  finite and affine Weyl groups and the 
     symmetric groups) and that significantly stronger properties than are given by the existing general theory may hold in such cases.  For 
     instance,   underlying groupoids of      functor rootoids from finite rank Coxeter systems $(W,S)$ inherit from $(W,S)$  not only root systems realizable in real vector spaces but also analogues of the 
     ``canonical automaton'' (\cite{BjBr}) from $(W,S)$ (it is   a natural question if the appropriate analogue of the finiteness of  the set of 
     elementary roots (\cite{BjBr}) of $(W,S)$ holds in them).
            \section{$C_{2}$-systems} 
 \label{sa8}
\subsection{Protorootoid of a groupoid with generating set}
\label{ssa8.1}
 Considerable use is made in this section of the results of  \cite[\S 5]{DyGrp1} and especially of  the functors    $\mathfrak{I}$, $\mathfrak{J}$,   $\mathfrak{K}$, and $\mathfrak{L}$ defined there (see \cite[5.8]{DyGrp1}). 

\begin{defn*}\begin{num}
\item
A $C_{0}$-system is  a pair $(G,S)$ where $G$ is a 
groupoid and    $S$ is  a set of generators of   $G$ such 
that $S=S^{*}$ and $S$ contains no identity 
morphism of $G$. 
\item   A $C_{0}$-system  $(G,S)$ is \emph{even} if  the pair $(G,S)$ admits a sign character in the sense of   \cite[3.8]{DyGrp1}. \end{num}
\end{defn*}

 \subsection{}\label{ssa8.2} Fix  a $C_{0}$-system $(G,S)$ and  let $l=l_{S}$ be the corresponding length function on $G$. 
Let $X$ be the left regular $G$-set; this is the $G$-set (functor $G\to \setc$) with 
$\lsub{a}{X}:=\lsub{a}{G}$ and action map 
$\lrsub{a}G{b}\times \lsub{b}X\to \lsub{a}{X}$ identified with the 
multiplication map 
$\lrsub{a}G{b}\times \lsub{b}G\to \lsub{a}{G}$ of $G$, for all $a,b\in \ob(G)$.  Let $Y\colon G\to \setc$ be the composite of  $\wp_{G}(X)\colon G\to \bringc$ with the forgetful functor $\bringc\to \setc$, so $\lsub{a}{Y}=\wp(\lsub{a}{X})$  is the {set} of subsets of $\lsub{a}{X}$.
 For any $a\in \ob(G)$ and  $s\in \lsub{a}{S}:=S\cap \lsub{a}{G}$, define 
 \begin{equation}\label{eqa8.2.1} G_{s}^{>}:=\mset{g\in \lsub{a}{G}\mid l_{S}(s^{*}g)>l_{S}(g)}\in\, \lsub{a}{Y}.\end{equation}
 Observe that \begin{equation}\label{eqa8.2.2} \lrsub{}{1}{a}\in G_{s}^{>}, \qquad s\not\in G_{s}^{>}\end{equation} since by assumption,
 $S$ contains no identity morphism.  Hence 
 \begin{equation}\label{eqa8.2.3} \eset \sneq G_{s}^{>}\sneq \lsub{a}{X}.\end{equation}
 
 Define  $\Psi_{G}=\Psi\colon G\to \setc$ to be the $G$-subrepresentation of $Y$
  generated by all the elements $G_{s}^{>}\in \lsub{a}{Y} $ for 
  $s\in \lsub{a}{S}$ and $a\in \ob(G)$. Thus,  for $a\in \ob(G)$,
 \begin{equation} \label{eqa8.2.4}\lsub{a}{\Psi}:=\mset{g( G_{s}^{>})\mid g\in \lrsub{a}{G}{b}, s\in \lsub{b}{S}, b\in \ob(G)},\quad g(G_{s}^{>}):=\mset{gx\mid x\in G_{s}^{>}}\end{equation} 
 and  for $g\in \lrsub{a}{G}{b}$ and $Z\in \lsub{b}{Y}$,  one has  $\Psi(g)(Z)=gZ:=\mset{gz\mid z\in Z}\in \lsub{a}{Y}$.
  For $a\in \ob(G)$, define   \begin{equation}\label{eqa8.2.5} \lsub{a}{\Psi}'_{G}=
   \lsub{a}{\Psi}':=\mset{A\in \lsub{a}{\Psi}\mid \lrsub{}{1}{a}\in A}\seq\lsub{a}{\Psi} .\end{equation}
 Viewing $\Psi$ as a functor $\Psi\colon G\to \setc$ with $\Psi(a)=\lsub{a}{\Psi}$, there is a  $G$-cocycle (in fact, a 
 coboundary) $N$ for $\wp _{G}(\Psi)$ defined  by $\lsub{a}{N}
 ({g}):=\lsub{a}{\Psi}'+ g(\lsub{b}{\Psi}'  )\in \wp(\lsub{a}{\Psi})$ for $a,b\in \ob
 (G)$ and $g\in \lrsub{a}{G}{b}$. 
  Note that  for $s\in \lsub{a}{S}$,
  \begin{equation}\label{eqa8.2.6} \set{G_{s}^{>},s(G_{s^{*}}^{>})}\seq \lsub{a}N(s), \qquad  G_{s}^{>}\in \lsub{a}{\Psi}', \qquad s(G_{s^{*}}^{>})\not\in \lsub{a}{\Psi}'\end{equation} by \eqref{eqa8.2.2}. Hence 
 \begin{equation}\label{eqa8.2.7}\vert {N}(s)\vert \geq 2, \quad s\in S.\end{equation}
 Simple direct calculation shows that for $a\in \ob(G)$,  $s\in \lsub{a}{S}$ and $g\in \lsub{a}{G}$,
 \begin{equation}\label{eqa8.2.8}
 G_{s}^{>}\in N(g)\iff l_{S}(s^{*}g)\leq l_{S}(g),\quad
 s(G_{s^{*}}^{>})\in N(g)\iff l_{S}(s^{*}g)< l_{S}(g). \end{equation}
 Hence 
 \begin{equation}\label{eqa8.2.9}
  s(G_{s^{*}}^{>})\in N(g)\implies G_{s}^{>}\in N(g). \end{equation} 
 \begin{defn*} Let $(G,S)$ be a $C_{0}$-system and notation be as above.
 \begin{num}\item  The   set protorootoid and protorootoid  corresponding to $(G,S)$ are  $\CJ'_
 {(G,S)}:=(G,\Psi,N)$ and  $\CJ_
 {(G,S)}:=\mathfrak{I}(\CJ'_
 {(G,S)})=(G,\wp_{G}(\Psi),N)$ respectively.  \item The signed groupoid-set of
 $(G,S)$ is $J_{(G,S)}:=\mathfrak{K}(\CJ'_{(G,S)})$.
\item  The $C_{0}$-system $(G,S)$ is called a
  \emph{$C_{1}$-system} or is said  to satisfy the \emph{weak exchange condition (WEC)} if    $\CJ_{(G,S)}$ is cocycle finite and  for all  $a\in \ob(G)$ and  $g\in \lsub{a}{G}$, one has
  $\rank(N(g))=2l_{S}(g)$ in $\wp(\lsub{a}{\Psi})$ (i.e. $\vert N({g})\vert =2l_{S}(g)$). 
  \item The $C_{0}$-system $(G,S)$ is called a
  \emph{$C_{2}$-system} if  it is a $C_{1}$-system and  $\CJ_{(G,S)}$ is a rootoid.
 \end{num}
 \end{defn*}
\subsection{}\label{ssa8.3}  Let $(G,S)$ be a $C_{0}$-system.
Write  $R:=J_{(G,S)}=(G,\L)$, $\CR':=\CJ'_{(G,S)}=(G,\Psi,N)$,
$\CR:=\CJ_{(G,S)}=(G,\wp_{G}(\Psi),N)$ as above.
 From the proof of   \cite[Proposition 5.5]{DyGrp1},   
$\lsub{a}\L:=\lsub{a}\Psi\times\set{\pm }$  with
  $\lrsub{a}\L{+}=\lsub{a}\Psi\times\set{+}$. 
  For $g\in \lrsub{a}{G}{b}$, $\L_{g}:=\lsub{a}{\L}_{+} \cap g(\lsub{a}{\L}_{-})$. From  \cite[5.7]{DyGrp1}, $\vert \L(g)\vert =\vert N(g)\vert$ for any $g\in \mor(G)$. 
  \begin{lem*}\begin{num}\item If $g\in\lrsub{b} {G}{a}$ and $g(G_{s}^{>})\in \lsub{b}{\Psi}'  $ for all $s\in \lsub{a}{S}$, then $g=1_{a}$.
  \item The protorootoid  $\CR=\CJ_{(G,S)}$ is  faithful. 
    \item If $(G,S)$ is a $C_{1}$-system, then for $a\in \ob(G)$ and $x,y\in \lsub{a}{G}$, one has $x \lsub{a}{\leq  }y$ if and only if  
    $l_{S}(y)=l_{S}(x)+l_{S}(x^{*}y)$. \end{num}
       \end{lem*}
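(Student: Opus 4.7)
The plan is as follows. For (a), I would first unwind $g(G_s^{>})\in \lsub{b}{\Psi}'$ into the statement $1_b\in g(G_s^{>})$, equivalently $g^{-1}\in G_s^{>}$, which by \eqref{eqa8.2.1} is the condition $l_S(s^{*}g^{-1})>l_S(g^{-1})$ for every $s\in\lsub{a}{S}$. Set $w:=g^{-1}\in\lsub{a}{G}$ and suppose for contradiction that $l_S(w)\geq 1$; fix a reduced expression $w=t_1 t_2\cdots t_k$ with $t_i\in S$. Since $\cod(w)=a$, the first letter satisfies $\cod(t_1)=a$, so $t_1\in\lsub{a}{S}$. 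Taking $s:=t_1$ yields $s^{*}w=t_2\cdots t_k$, so $l_S(s^{*}w)\leq k-1<l_S(w)$, contradicting the hypothesis. Therefore $l_S(w)=0$, which forces $b=a$ and $g=1_a$.

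For (b), faithfulness of $\CR$ in the sense of \cite{DyGrp1} amounts to the statement that any $g\in\lrsub{a}{G}{a}$ acting as the identity on $\wp(\lsub{a}{\Psi})$ must equal $1_a$. Such a $g$ in particular fixes each $G_s^{>}\in\lsub{a}{\Psi}'$ (see \eqref{eqa8.2.6}) for every $s\in\lsub{a}{S}$, so the hypothesis of (a) is satisfied and (a) yields $g=1_a$. (If one instead takes the variant definition that $\lsub{a}{N}(g)=\emptyset$ forces $g$ to be an identity, unwind $\lsub{a}{\Psi}'+g(\lsub{b}{\Psi}')=\emptyset$ to $g(\lsub{b}{\Psi}')=\lsub{a}{\Psi}'$, note that $G_t^{>}\in\lsub{b}{\Psi}'$ for $t\in\lsub{b}{S}$, and apply (a) with the roles of $a,b$ reversed.)

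For (c), I would combine the cocycle identity $N(y)=N(x)+xN(x^{*}y)$ in $\wp(\lsub{a}{\Psi})$ (where $+$ denotes symmetric difference) with the $C_1$-system condition $|N(g)|=2l_S(g)$ and the elementary formula $|A+B|=|A|+|B|-2|A\cap B|$. These give
\begin{equation*}
2l_S(y)=2l_S(x)+2l_S(x^{*}y)-2|N(x)\cap xN(x^{*}y)|,
\end{equation*}
so $l_S(y)=l_S(x)+l_S(x^{*}y)$ is equivalent to $N(x)\cap xN(x^{*}y)=\emptyset$. Separately, an element $\alpha\in N(x)$ belongs to $N(y)=N(x)+xN(x^{*}y)$ iff $\alpha\notin xN(x^{*}y)$, so $N(x)\seq N(y)$ is also equivalent to $N(x)\cap xN(x^{*}y)=\emptyset$. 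Under the characterization (from \cite{DyGrp1}) of $\lsub{a}{\leq}$ as containment of $N$-sets, (c) follows.

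The main obstacle is part (a): it is the only step that genuinely uses the length function and the structure of reduced expressions in $(G,S)$, and hinges on the simple observation that a reduced expression for a morphism with codomain $a$ must begin with a letter of $\lsub{a}{S}$. Given (a), parts (b) and (c) are short formal consequences of (a) together with, respectively, the way faithfulness is set up in \cite{DyGrp1} and the cocycle-plus-length identities of a $C_1$-system.
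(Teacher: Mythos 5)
Your proposal is correct and follows essentially the same route as the paper: for (a) you unwind $g(G_{s}^{>})\in\lsub{b}{\Psi}'$ to $l_{S}(s^{*}g^{*})>l_{S}(g^{*})$ for all $s\in\lsub{a}{S}$ and rule out $l_{S}(g^{*})\geq 1$ via the first letter of a reduced expression (the paper compresses this last step to ``since $S=S^{*}$ generates $G$''), and for (b) the operative definition of faithfulness is exactly your parenthetical variant ($N(g)=\eset$ forces $g$ to be an identity), which you handle by reducing to (a) just as the paper does. The only divergence is in (c), where the paper simply invokes \cite[Corollary 3.14(a)]{DyGrp1} together with the definition of $C_{1}$-system, while you rederive that corollary directly from the cocycle identity $N(y)=N(x)+xN(x^{*}y)$, the count $\vert N(g)\vert=2l_{S}(g)$ and the symmetric-difference cardinality formula; this is a correct, self-contained substitute for the citation.
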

      
       \begin{proof} Suppose that $g\in \lrsub{b}{G}{a}$ is as in (a). 
 Let $s\in \lsub{a}{S}$.      Then $g(G_{s}^{>})\in\lsub{b}{\Psi}'  $ i.e. 
 $1_{b}\in g(G_{s}^{>})$ or equivalently $g^{*}\in G_{s}^{>}$. This 
 means that $l_{S}(s^{*}g^{*})>l_{S}(g^{*})$ for all 
 $s\in \lsub{a}{S}$, which implies that $g=1_{a}$ since $S=S^{*}$ 
 generates $G$. This proves (a).

        To prove (b), suppose that $g\in \lrsub{b}{G}{a}$ with $N(g)=\eset$ i.e. $\lsub{b}{\Psi}'  = g(\lsub{a}{\Psi}'  )$. 
   By \eqref{eqa8.2.6}, $g(G_{s}^{>})\in \lsub{b}{\Psi}'  $  for all $s\in \lsub{a}{S}$ and by (a), $g=1_{a}$.

  Part (c) is immediate  from the definition of $C_{1}$-system and  \cite[Corollary 3.14(a)]{DyGrp1}. 
        \end{proof}

\subsection{} \label{ssa8.4}  This subsection provides, in the special case of even $C_{0}$-systems, simpler variants of the above conditions and constructions.  Retain the assumptions and notation of \ref{ssa8.3}, but assume in addition that $(G,S)$ is even. 

 Note  that for any $s\in \lrsub{a}{S}{b}$, $g\in \lsub{a}{G}$ one has
 $l_{S}(s^{*}g)=l_{S}(g)\pm 1\neq l_{S}(g)$ and therefore 
\begin{equation}\label{eqa8.4.1}
H_{s}^{<}:=\mset{g\in \lsub{a}{G}\mid l_{S}(s^{*}g)<l_{S}(g)}=\lsub{a}{G}\sm H_{s}^{>}. \end{equation}  
Also, 
\begin{equation}\label{eqa8.4.2}
H_{s}^{<}=
s(\mset{g\in \lsub{b}{G}\mid l_{S}(sg)>l_{S}(g)})=
s(H_{s^{*}}^{>})\in \lsub{a}{\Psi}. \end{equation}  
Now for any $h\in \lrsub{a}{G}{c}$ and $r\in \lsub{c}{G}$,
\begin{equation}\label{eqa8.4.3}
\lsub{a}{G}\sm h(H_{r}^{>})=h(H_{r}^{<})\in \lsub{a}{\Psi}.
\end{equation}

The above implies that for  each $a\in \ob(G)$, there is  a definitely signed  
 set $\lsub{a}{\wh\Psi}$ with underlying set $\lsub{a}{\Psi}$,
 such that   $-\a:=\lsub{a}{G}\sm \a$ for all $\a\in \lsub{a}{\Psi}$ and  the subset of positive elements of $\lsub{a}{\wh\Psi}$ is
 $\lrsub{a}{\wh\Psi}{+}:=\lsub{a}{\Psi}'  $. Hence there is a unique  functor   $\wh\Psi\colon  G\to \setc_{\pm}$
with the following properties:
 \begin{conds}
 \item The composite functor  $  G\xrightarrow{\wh \Psi} \setc_{\pm}\to \setc$ of $\wh \Psi$ with the forgetful functor 
 $ \setc_{\pm}\to \setc$ is equal to $\Psi$.
 \item For $a\in \ob(G)$,  $\wh \Psi(a)=\lsub{a}{\wh \Psi}$, 
 as defined above, in $\setc_{\pm}$.
 \end{conds}
 This defines a signed groupoid-set $\ol{R}=
 (G,\wh \Psi)$. Let
 $\olp{\CR}=(G,\ol{\Psi},\ol{N}):=
 \mathfrak{L}(\ol{R})$ be the associated set  protorootoid. Let   $\ol{\CR}:=\mathfrak{I}(\olp{\CR})=(G,\wp_{G}(\ol{\Psi}),\ol{N})$ be the protorootoid corresponding to the set protorootoid 
 $\olp{\CR}$. To indicate dependence on $(G,S)$, write  $\ol{R}=\ol{J}_{(G,S)}$,
 $\olp{\CR}=\olp{\CJ}_{\snts(G,S)}$ and 
 $\ol{\CR}=\ol{\CJ}_{\snts(G,S)}$.  Recall the definitions of  $R=(G,\L)$,
 $\CR'=(G,\Psi,N)$ and $\CR=(G,\wp_{G}(\Psi),N)$ from \ref{ssa8.3}.
 
 For $a\in \ob(G)$, one has by definition that
  $\lsub{a}{\ol{\Psi}}=\lsub{a}{\wh\Psi}/\set{\pm }$, the set of $\set{\pm}$-orbits on   $\lsub{a}{\wh{\Psi}}$. 
  There is a natural transformation $\pi\colon \Psi\to \ol{\Psi}$ of functors $G\to \setc$ such that its component  $\pi_{a}$ at $a\in \ob(G)$ is the orbit map  $\a\mapsto \set{\pm \a} \colon \lsub{a}{\Psi}\to \lsub{a}{\ol{\Psi}}$. 
 \begin{prop*} Let $(G,S)$ be an even $C_{0}$-system and let notation be as above. Then
 \begin{num}\item  
 $\mathfrak{K}(\,\olp{\CR})\cong \ol{R}$  in $\grpdset$.
\item There is a morphism $f:=(\Id_{G},\pi)\colon \olp{\CR}\to {\CR}'$ in $\setproot$.
\item Write  $\mathfrak{I}(f)=(\Id_{G},\wp_{G}(\pi))\colon \ol{\CR}\to\CR$ for  the morphism of protorootoids corresponding to $f$. Then  the abridgement 
$\ab(\mathfrak{I}(f))\colon\ab(\, \ol{\CR}\,)\to \ab(\CR)$ is an isomorphism in $\prootc^{a}$.
\item $(G,S)$ satisfies the WEC if and only if $\ol{\CR}$ is a principal protorootoid. In that case, $S$ is the set of  simple generators of $\ol{\CR}$.
  \item The underlying set representation of  $\L$ is 
  equivalent in $\setc^{G}$ to   the coproduct (disjoint union)  $\Psi\coprod\Psi$ of two copies of $ \Psi$. 
   \end{num}\end{prop*}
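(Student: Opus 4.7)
\emph{Proof plan.}  My plan is to treat the five parts in roughly increasing order of difficulty, handling (a) and (e) as essentially formal consequences of the setup in \cite[Section 5]{DyGrp1} and concentrating on (b), (c), (d).

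For (e), the identification $\lsub{a}{\L}=\lsub{a}{\Psi}\times\set{\pm}$ from the proof of \cite[Proposition 5.5]{DyGrp1} directly exhibits $\L$ as the coproduct $\Psi\coprod\Psi$ of $G$-representations, the $G$-action being carried entirely by the first factor. For (a), I would invoke that the round-trip $\mathfrak{K}\circ \mathfrak{L}$ is naturally isomorphic to the identity on $\grpdset$ (a general feature of the functors set up in \cite[5.8]{DyGrp1}); applied to $\ol R$, this yields $\mathfrak{K}(\olp{\CR})\cong \ol R$.

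The core computation is for (b). For $g\in \lrsub{a}{G}{b}$ and $\alpha\in \lsub{a}{\Psi}$, I first verify from the definitions that $\alpha\in g(\lsub{b}{\Psi}')\iff g\in \alpha$, whence
\[\alpha\in N(g)=\lsub{a}{\Psi}'+g(\lsub{b}{\Psi}')\iff (1_a\in \alpha)\oplus (g\in \alpha),\]
a condition manifestly invariant under  $\alpha\mapsto -\alpha=\lsub{a}{G}\sm\alpha$. Hence $N(g)$ is a disjoint union of  $\set{\pm}$-orbits of size two, each containing a unique positive representative; setting $\Phi_g:=N(g)\cap \lsub{a}{\Psi}'$, one has $N(g)=\Phi_g\sqcup (-\Phi_g)$ and, by construction of $\mathfrak{L}$, $\ol N(g)=\pi_a(\Phi_g)$. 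Thus $\pi(N(g))=\ol N(g)$, which is the cocycle compatibility required of $f=(\Id_G,\pi)$ in $\setproot$.

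Claim (c) then follows quickly: $\wp_G(\pi)$ identifies each pair $\set{\alpha,-\alpha}$, inducing a bijection between the cocycle values $N(g)$ and $\ol N(g)$ up to precisely this two-to-one redundancy, which is what the abridgement functor collapses, making $\ab(\mathfrak{I}(f))$ an isomorphism in $\prootc^a$. For (d), parts (b)--(c) together with Lemma \ref{ssa8.3} will translate WEC into the single condition $\vert \ol N(g)\vert =l_S(g)$ for all $g\in \mor(G)$. The ``only if'' direction is then immediate, since the length function of a principal protorootoid with simple generators $S$ must coincide with $l_S$ and equals $\vert\ol N(g)\vert$. The main obstacle is the converse: I plan to verify the principal protorootoid axioms for $\ol{\CR}$ by showing first that $\vert \ol N(s)\vert=1$ for $s\in S$ (so $S$ consists of atomic generators), then decomposing $\ol N(g)$ along a reduced $S$-expression of $g$ by induction on $l_S(g)$ using the cocycle identity $\ol N(xy)=\ol N(x)+x(\ol N(y))$, and finally invoking faithfulness of $\ol{\CR}$ (inherited from Lemma \ref{ssa8.3}(b) via (c)) to conclude.
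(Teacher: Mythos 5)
The genuine gap is in your treatment of (e). The identification $\lsub{a}{\L}=\lsub{a}{\Psi}\times\set{\pm}$ does \emph{not} carry the $G$-action on the first factor alone: by \cite[(5.5.1)]{DyGrp1}, as recalled in the paper's proof, $\L(g)(\a,\eta)=\bigl(\Psi(g)(\a),\eta\,\e(\a)\e(\Psi(g)(\a))\bigr)$, where $\e(\a)=+$ exactly when $\a\in\lsub{a}{\Psi}'$; the sign coordinate is twisted whenever $g$ moves $\a$ across the positive/negative divide. Indeed, if the action were trivial on the second factor, no element of the positive part $\lsub{a}{\Psi}\times\set{+}$ would ever be sent to a negative element, so $\L_{g}=\eset$ for every $g$, contradicting $\vert\L(s)\vert=\vert N(s)\vert\geq 2$ for $s\in S$ (see \eqref{eqa8.2.7} and \ref{ssa8.3}). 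Consequently $\Psi\times\set{+}$ and $\Psi\times\set{-}$ are not subrepresentations and your one-line argument fails. The statement of (e) is still true, but the splitting must be taken along the level sets of $\eta\e(\a)$: the subsets $\mset{(\a,\eta)\mid \eta\e(\a)=+}$ and $\mset{(\a,\eta)\mid \eta\e(\a)=-}$ are $G$-stable and each projects isomorphically onto $\Psi$; this sign correction is exactly the content of the paper's proof of (e), and of the remark following it that $\L$ is not the coproduct of two copies of $\wh\Psi$ in $(\setc_{\pm})^{G}$.

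Two smaller points. In (d), your ``immediate'' direction presupposes that the simple generators of $\ol{\CR}$ are $S$; but that identification is part of the conclusion (``in that case \dots''), so it cannot be invoked when deducing WEC from principality---you must argue from the definition of principal protorootoid, using that $\vert N(g)\vert=2\vert\ol{N}(g)\vert$ turns WEC into the statement that $g\mapsto\vert\ol{N}(g)\vert$ equals the length function $l_{S}$; that single observation is the paper's entire proof of (d), and in particular the induction along reduced $S$-expressions you plan for the other direction is unnecessary once this is in place. For (a)--(c) you are essentially on the paper's route: your XOR computation shows $N(g)$ is $\set{\pm}$-stable with $N(g)\cap\lsub{a}{\Psi}'$ as positive representatives, which yields the compatibility in the sharper form $N(g)=\pi_{a}^{-1}(\ol{N}(g))$ actually required; and for (c), note that abridgement does not ``collapse'' anything---the correct mechanism is that $\wp(\pi_{a})$ is an injective Boolean ring homomorphism (since $\pi_{a}$ is surjective) under which $\ol{N}_{g}$ is identified with $N_{g}$, so it restricts to an isomorphism between the subrings generated by the translated cocycle values, which is precisely what the abridgements retain.
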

  \begin{proof} Part  (a) follows immediately  from the definitions and \cite[Proposition 5.5]{DyGrp1}. 
By the definitions, for $g\in \lsub{a}{G}$,  one has
$\wh{N}_{g}=N_{g}\cap \lsub{a}{\Psi}_{+}$,  $N_{g}=\mset{ \a,-\a\mid \a\in \wh\Psi_{g}}$ and $ \ol{N}_{g}=\mset{\set{\pm \a}\mid \a\in \wh\Psi_{g}}$. Hence  $N_{g}=\pi_{a}^{-1}(\ol{N}_{g})$, which proves (b).
Also,  $\set{\pm }$ acts freely on $N(g)$ with $\wh{N}_{g}$ as set of orbit representatives and $\ol{N}(g)$ as its set of orbits. Hence $\vert N(g)\vert =2\vert \ol{N}(g)\vert$, which implies (d) by the definitions.
 Note next that $\pi_{a}$ is surjective, so $(\wp_{G}(\pi))_{a}=\wp(\pi_{a})$ is an injective  homomorphism of Boolean rings $\wp(\lsub{a}{\ol{\Psi}})\to \wp(\lsub{a}{\Psi})$, which we regard for the proof of (c) as an inclusion. With this identification,  $\ol{N}_{g}=N_{g}$ and (c) follows immediately by definition of abridgement. 
  
Now for (e).  For any $a\in \ob(G)$ and  $\a\in \lsub{a}{\Psi}$, define  the \emph{sign} of $\a$  to be \begin{equation*}
\e(\a):=\begin{cases} +,&\text{if $\a\in \lsub{a}{\Psi}'$}\\
-,&\text{otherwise}.
\end{cases}
\end{equation*}
By definition,
$\lsub{a}{\L}=\lsub{a}{\Psi}\times \set{\pm}$. Using \cite[(5.5.1)]{DyGrp1},  the  $G$-action is  given by 
\begin{equation*} \L(g)(\a,\eta)=\bigl(\Psi(g)(\a),\eta \e(\a)\e(\Psi(g)(\a))\bigr), \qquad \eta\in \set{\pm},g\in G_{a}, \a\in \lsub{a}{\Psi}, a\in \ob(G).
\end{equation*}
The underlying set representation of $\L$ is the   disjoint union of  two subrepresentations  specified (imprecisely) by
\begin{equation*} \mset{(\a,\eta)\mid \eta\e(\a)=+},\qquad \mset{(\a,\eta)\mid \eta\e(\a)=-},\end{equation*} each of which is  isomorphic by projection $(\a,\eta)\mapsto \a$ to  $\Psi$ (and which are interchanged by the action of $-\in \set{\pm}$). From this, it is straightforward to   prove (e). 
 \end{proof} 
  \begin{rem*} Although $\L$ is not the coproduct in $(\setc_{\pm})^{G}$
   of two copies of $\wh \Psi$,  the proof of   (e) can easily be elaborated  to an explicit description of   $(G,\L)$ in terms of   $(G,\wh \Psi)$. \end{rem*}
 
\subsection{} \label{ssa8.5} The constructions discussed in \ref{ssa8.1}--\ref{ssa8.4} are modelled on an interpretation, in the theory of buildings,  of the abstract root system of a Coxeter system $(W,S)$ in terms of a $W$-action on a set of  subsets (called half-spaces) of $W$.   Though the following is a corollary  of both Theorems \ref{ssa8.8} and \ref{ssa8.14}, which are proved in subsequent papers,  it is  instructive to include here a direct proof   which makes the connection between the constructions of this section and  half-spaces  explicit.  
\begin{prop*} A  Coxeter system is an even $C_{2}$-system.  
\end{prop*}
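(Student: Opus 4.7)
The plan is to identify $\CJ_{(W,S)}$ explicitly with the half-space model of the classical root system of $(W,S)$, and then deduce evenness, the WEC, and the rootoid axioms from standard Coxeter theory.

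First I would verify the $C_{0}$-system and evenness conditions. Regarding $W$ as a one-object groupoid, each $s\in S$ is an involution (so $S=S^{*}$) and $1\notin S$, so $(W,S)$ is a $C_{0}$-system.  The length parity $\e\colon W\to \set{\pm }$ defined by $\e(w)=(-1)^{\ell(w)}$ is well-defined because each braid relation $(st)^{m_{st}}=1$ expands to a word of even length, and it sends each $s\in S$ to $-$; this furnishes a sign character of $(W,S)$ in the sense of \cite[3.8]{DyGrp1}, so $(W,S)$ is even.

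Second, I would realize $\Psi$ as the set of half-spaces attached to roots.  Let $\Phi=\Phi^{+}\dotcup(-\Phi^{+})$ be the abstract root system with simple roots $\set{\a_{s}\mid s\in S}$, and for each $\b\in \Phi$ set $H_{\b}:=\set{w\in W\mid w^{-1}\b\in \Phi^{+}}$.  The exchange property $\ell(sw)>\ell(w) \Leftrightarrow w^{-1}\a_{s}\in \Phi^{+}$ gives $G_{s}^{>}=H_{\a_{s}}$, while $wH_{\b}=H_{w\b}$ and $W\sm H_{\b}=H_{-\b}$ imply that the $W$-subrepresentation $\Psi$ of \ref{ssa8.2} is exactly $\set{H_{\b}\mid \b\in \Phi}$, with $\Psi'=\set{H_{\b}\mid \b\in \Phi^{+}}$.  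Under this bijection, $H_{\b}$ lies in $N(w)=\Psi'+w\Psi'$ iff $\b$ and $w^{-1}\b$ have opposite sign; the involution $\b\leftrightarrow -\b$ pairs these up, whence
\[
\card{N(w)}=2\,\card{\set{\b\in \Phi^{+}\mid w^{-1}\b\in -\Phi^{+}}}=2\ell(w)
\]
by the classical inversion count.  In particular $N$ is finite-valued, the WEC holds, and $(W,S)$ is a $C_{1}$-system.

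Finally, to conclude that $\CJ_{(W,S)}$ is a rootoid, I would invoke Lemma \ref{ssa8.3}(c): the induced weak order on $W$ coincides with the usual right weak order $x\le y \Leftrightarrow \ell(y)=\ell(x)+\ell(x^{-1}y)$.  The rootoid axioms from \cite{DyGrp1} then reduce to the classical theorems that this right weak order is a complete meet-semilattice in which every bounded subset has a join, together with the compatibility $N(x\vee y)=N(x)\cup w(N(w^{-1}y))$-type relations expressible via inversion sets of products along join chains; these are consequences of Tits' solution to the word problem and Bj\"orner's theorem on weak order.  The main obstacle is the bookkeeping needed to match each clause of the rootoid definition of \cite{DyGrp1} to the corresponding Coxeter-theoretic statement, in particular the precise compatibility of the cocycle $N$ with joins and with the atomic generators $S$; this is the reason a general direct proof is delicate and why the authors defer the general statements to Theorems \ref{ssa8.8} and \ref{ssa8.14}.
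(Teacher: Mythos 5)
Your first two steps follow essentially the same route as the paper: the whole point of its direct proof is the identification of the half-space model with the abstract root system, i.e.\ that $(t,\e)\mapsto W_{(t,\e)}$ is a $W\times\set{\pm}$-equivariant bijection onto $\wh\Psi$ preserving positive parts. One caution there: your count $\vert N(w)\vert =2l(w)$ silently uses injectivity of $\b\mapsto H_{\b}$ (distinct roots give distinct half-spaces); without it the symmetric difference $\Psi'+w(\Psi')$ could be smaller than twice the inversion count. That injectivity is exactly the paper's claim (c), the only nontrivial point of the identification (proved there via a palindromic reduced expression and the strong exchange condition, or by citing Bourbaki), so it needs to be proved or cited rather than folded into the word ``bijection''.

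The genuine gap is your final step. Being a $C_{2}$-system requires $\CJ_{(W,S)}$ to be a rootoid, and the rootoid axioms are not exhausted by ``the weak order is a complete meet semilattice'': they include the JOP, which is not a consequence of Tits' solution of the word problem together with Bj\"orner's meet-semilattice theorem, and your appeal to unspecified ``compatibility relations'' and ``bookkeeping'' leaves precisely that axiom unverified. The paper neither proves it from scratch nor defers it (your closing remark misreads the structure; the Proposition gets a complete proof): having identified $\ol{J}_{(W,S)}=(W,\wh\Psi)$ with the standard signed groupoid-set $C_{(W,S)}=(W,\Phi)$, it imports from \cite[Theorem 6.7(b)]{DyGrp1} that the latter is principal and rootoidal, obtains WEC from Proposition \ref{ssa8.4}(d), and then transfers the rootoid property from $\ol{\CJ}_{(W,S)}$ to $\CJ_{(W,S)}$ via the abridgement isomorphism of Proposition \ref{ssa8.4}(c) and \cite[Remark 4.3(2)]{DyGrp1}. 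To repair your argument you must either invoke the rootoidal/principal property of $C_{(W,S)}$ (equivalently, the join--orthogonality facts about inversion sets underlying it) explicitly, or run the paper's detour through $\ol{\CJ}_{(W,S)}$ and the abridgement; as written, the JOP for $\CJ_{(W,S)}$ is asserted, not proved.
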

\begin{proof}  Suppose that  
 $(G,S)=(W,S)$ is  a Coxeter system.  
 Then $(W,S)$ is certainly an even $C_{0}$-system, so 
 $\ol{J}_{(W,S)}=(W,\wh \Psi)$ is defined.  Write the standard signed groupoid-set of $(W,S)$ (see  \cite[6.5, 6.7]{DyGrp1}) as
 $C_{(W,S)} =(W,\Phi)$.
 Regard $\wh\Psi=\mset{w(H_{s}^{>})\mid s\in S,w\in W}$ and 
 $\Phi=T\times\set{\pm }$  as  $W\times\set{\pm}$-sets with distinguished positive subsets of $\set{\pm}$-orbit representatives
 $\mset{A\in \wh \Psi\mid 1_{W}\in A}$ and $T\times\set{+}$ respectively
 (see   \cite[6.5]{DyGrp1}).  We claim that    there is  a unique  morphism of signed $W\times\set{\pm}$-sets $\Phi\to \wh\Psi$ 
  such that
$(s,1)\mapsto G_{s}^{>}$ for all $s\in S$, and that it induces an isomorphism 
 \begin{equation} \label{eqa8.5.1}\Phi\xrightarrow{\cong} \wh \Psi\end{equation} between these $W\times \set{\pm }$-sets preserving their positive subsets. Though this is   well known (see \cite[Ch IV, \S1, Exercise 16(i)]{Bour}),
 an alternative direct proof is sketched below.
For $t\in T$,  define \emph{half-spaces}
\begin{equation*}\begin{cases} W_{(t,+)}:=\mset{w\in W\mid l(tw)>l(w)}\\W_{(t,-)}:=
\mset{w\in W\mid l(tw)<l(w)}=W\sm W_{(t,+)}\end{cases}\end{equation*}
Since $T\times \set{\pm}$ (resp., $\wh \Psi$) is the union of $W$-orbits  of the elements $(s,+)$ (resp., $H_{s}^{>}$) for $s\in S$,  the claim   follows from the claims  (a)--(c) below:
\begin{num}\item  for $w\in W$,
$ w(G_{s}^{>})=\mset{wx\mid x\in W, l(sx)>l(x)}=W_{w(s,+)}$.
\item for $(t,\e)$  in $T\times \set{\pm }$, $1\in W_{(t,\e)}$ if and only if $\e=+$.
\item for $(t,\e)$ and $(t',\e')$ in $T\times \set{\pm }$, 
$W_{(t,\e)}=W_{(t',\e')}$ implies $(t,\e)=(t',\e')$.
\end{num}

The claim (a)  can be proved by a simple  calculation involving the fact
 that for $w,x\in W$ and $s\in S$, $l(wsx)> l(wx)$ if and only if 
  $l(ws)-l(w)=l(sx)-l(x)$; the fact itself follows follows from the cocycle property of the reflection cocycle of $(W,S)$ and \cite[6.7.1]{DyGrp1}.
  The claim (b) follows immediately from the definitions.  To prove (c),
  suppose that  $W_{(t,\e)}=W_{(t',\e')}$. By (b), $\e=\e'$. Taking complements in $W$ shows that  $W_{(t,-\e)}=W_{(t',-\e')}$.
  Hence for any $w\in W$, $l(tw)>l(w)$ if and only if  $l(t'w)>l(w)$.
  Choose (see \cite{DyRef}) a palindromic reduced expression $t=s_{1}s_{2}\cdots s_{n+1}\cdots s_{2}s_{1}$ for $t$,  with $s_{i}\in S$ and $2n+1=l(w)$.
Let $w_{i} =s_{1}\cdots s_{i}$ for $i=0,1,\ldots, n+1$, so $l(w_{i})=i$
and $tw_{n}=w_{n+1}$. Note that $l(tw_{n})>l(w_{n})$, $l(w_{n}s_{n+1})>l(w_{n})$ and  $l(tw_{n}s_{n+1})<l(w_{n}s_{n+1})$.  The same is therefore true  with $t$ replaced by $t'$. The strong exchange condition 
(see \cite[6.3]{DyGrp1}) implies that $t'w_{n}=w_{n}s_{n+1}=w_{n+1}$  and so $t'=w_{n+1}w_{n}^{-1}=t$.

From the claim, it follows that $\ol{R}=(W,\wh \Psi)\cong (W,\Phi)$ as 
signed groupoid-set. By \cite[Theorem 6.7(b)]{DyGrp1},
$\ol{R}$ is  therefore principal and  rootoidal. So is $\ol{\CR}$ by the  terminological conventions in \cite[5.6]{DyGrp1}.   
 Proposition \ref{ssa8.4}(d)  implies that $(W,S)$ satisfies WEC; that is, it is a $C_{1}$-system. 
Also,  by 
 Proposition \ref{ssa8.4}(c)
   and \cite[Remark 4.3(2)]{DyGrp1},
$\CR$ is a rootoid. Hence $(W,S)$  is a $C_{2}$-system as required.   
\end{proof}

\subsection{}\label{ssa8.6} The following subsections \ref{ssa8.7}--\ref{ssa8.16} survey, 
 without proof and partly informally,  basic properties 
and examples of $C_{2}$-systems, especially even ones.  In particular, 
 Section \ref{ssa8.16} gives  some  simple  examples.  Full statements and  proofs will be given in subsequent papers.

 \subsection{WEC}\label{ssa8.7}   The following proposition lists some of many equivalent formulations of the WEC.  Compare
(d)--(e)   with EC in  \cite[Proposition 6.3]{DyGrp1}.
 
 \begin{prop*} The following five conditions on a $C_{0}$-system $(G,S)$ are equivalent:
\begin{conds}
\item  $(G,S)$ satisfies the WEC i.e. for all $g\in \mor(G)$,
$\vert N(g)\vert=2l_{S}(g)$.
\item For all $s\in S$,  $\vert N(s)\vert =2$.
\item For all $a\in \ob(G)$ and $g\in \lsub{a}{G}$, $\vert N(g)\cap \lsub{a}{\Psi}'\vert=l_{S}(g)$.
\item If $g\in \mor(G)$ and $r,s\in S$ with $l(gr)>l(g)$ and $l(sgr)\leq l(sg)$, then
$g(H_{r}^{>})=H_{s^*}^{>}$.
\item If  $g\in \mor(G)$ and $r,s\in S$ are such that $\exists sgr$, then $ g(H_{r}^{>})=H_{s^*}^{>}$ if and only if $(l(gr)> l(g)$  and   $l(sgr)\leq l(sg))$.
\end{conds}
 \end{prop*}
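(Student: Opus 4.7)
The plan is to establish the equivalences as two cycles: (i) $\Rightarrow$ (ii) $\Rightarrow$ (iii) $\Rightarrow$ (i), and (ii) $\Rightarrow$ (iv) $\Rightarrow$ (v) $\Rightarrow$ (ii). The primary tool throughout will be the cocycle identity $N(gh) = N(g) + g(N(h))$ (with symmetric difference in $\wp(\Psi)$) together with the membership equivalences \eqref{eqa8.2.8}.

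For the first cycle, (i) $\Rightarrow$ (ii) is immediate on specializing to $g = s$. For (ii) $\Rightarrow$ (iii), I would induct on $l_S(g)$: for a reduced decomposition $g = hs$ with $l_S(g) = l_S(h) + 1$, the cocycle gives $N(g) = N(h) + h(N(s))$, and (ii) forces $N(s) = \{G_s^>, s(G_{s^*}^>)\}$. A direct application of \eqref{eqa8.2.8} shows that $h(G_s^>)$ lies in both $\lsub{a}\Psi'$ and $h(\lsub{b}\Psi')$ (so not in $N(h)$), whereas $hs(G_{s^*}^>) \notin \lsub{a}\Psi'$; this yields the inductive identity $|N(g) \cap \lsub{a}\Psi'| = |N(h) \cap \lsub{a}\Psi'| + 1$. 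For (iii) $\Rightarrow$ (i), left multiplication by $g^{-1}$ gives a bijection $\lsub{a}\Psi \to \lsub{b}\Psi$ which restricts to $N(g) \setminus \lsub{a}\Psi' = g(\lsub{b}\Psi') \setminus \lsub{a}\Psi' \xrightarrow{\sim} \lsub{b}\Psi' \setminus g^{-1}(\lsub{a}\Psi') = N(g^{-1}) \cap \lsub{b}\Psi'$; applying (iii) to both $g$ and $g^{-1}$ then gives $|N(g)| = l_S(g) + l_S(g^{-1}) = 2l_S(g)$.

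For the second cycle, the heart is (ii) $\Rightarrow$ (iv). By repeated use of \eqref{eqa8.2.8}, the length hypotheses $l(gr) > l(g)$ and $l(sgr) \leq l(sg)$ translate exactly to the statement that $sg(G_r^>)$ lies in $s(\lsub{a}\Psi') \setminus \lsub{b}\Psi' \subseteq N(s)$; since by (ii), $N(s) = \{G_s^>, s(G_{s^*}^>)\}$ and only $s(G_{s^*}^>)$ fails to lie in $\lsub{b}\Psi'$, this forces $sg(G_r^>) = s(G_{s^*}^>)$, equivalently $g(G_r^>) = G_{s^*}^>$. The implication (v) $\Rightarrow$ (iv) is immediate, and for (iv) $\Rightarrow$ (v) the ``only if'' direction of the iff in (v) is automatic from the set equality $g(H_r^>) = H_{s^*}^>$: evaluation at $1_c \in H_r^>$ gives $g \in H_{s^*}^>$, whence $l(sg) > l(g)$; since $1_a \in H_{s^*}^>$, solving $gx = 1_a$ for $x \in H_r^>$ forces $g^{-1} \in H_r^>$, whence $l(gr) > l(g)$; and $r \notin H_r^>$ yields $gr \notin H_{s^*}^>$, whence $l(sgr) \leq l(gr) \leq l(sg)$. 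To close the cycle with (iv) $\Rightarrow$ (ii), given $\alpha \in N(s)$ I would write $\alpha = g(G_r^>)$ with $g$ ending at $b$, and split into two cases according to which of $\lsub{b}\Psi'$ or $s(\lsub{a}\Psi')$ contains $\alpha$; the first case applied to (iv) with the triple $(g,r,s^{-1})$ forces $\alpha = G_s^>$, while the second case applied to (iv) with $(s^{-1}g, r, s)$ forces $\alpha = s(G_{s^*}^>)$.

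The main obstacle will be the groupoid bookkeeping: source and target objects must be tracked carefully in every use of the cocycle and the membership equivalences, and one must verify that all relevant compositions are defined. Conceptually no step is deep once \eqref{eqa8.2.8} is in hand; the most intricate step is (iv) $\Rightarrow$ (ii), which requires the auxiliary substitution $g \mapsto s^{-1}g$ to bring the second case into the form directly covered by (iv).
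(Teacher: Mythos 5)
Your outline is correct, but there is nothing in the paper to measure it against: Proposition \ref{ssa8.7} is part of the material that subsections \ref{ssa8.7}--\ref{ssa8.16} explicitly state without proof (the proofs being deferred to subsequent papers), so you should regard your argument as supplying a proof rather than reproducing one. Checking it in detail, every step works. The engine is the membership translation $A\in\lsub{a}{\Psi}'$ iff $1_{a}\in A$ and $A\in g(\lsub{b}{\Psi}')$ iff $g\in A$, which converts each of your set-theoretic claims into exactly the stated length inequalities; the identity $\vert N(g)\vert=\vert N(g)\cap\lsub{a}{\Psi}'\vert+\vert N(g^{*})\cap\lsub{b}{\Psi}'\vert$ coming from the bijection $A\mapsto g^{*}A$ gives (iii)$\Rightarrow$(i) since $l_{S}(g)=l_{S}(g^{*})$; the induction for (ii)$\Rightarrow$(iii) via $N(hs)=N(h)+h(N(s))$ is sound because $h(G_{s}^{>})$ lies in $\lsub{a}{\Psi}'\cap h(\lsub{c}{\Psi}')$ (hence outside $N(h)$) while $hs(G_{s^{*}}^{>})\notin\lsub{a}{\Psi}'$; and the substitutions $(g,r,s^{*})$ and $(s^{*}g,r,s)$ in (iv)$\Rightarrow$(ii) do exactly what is needed (here $S=S^{*}$ guarantees $s^{*}\in S$, and every element of $\lsub{}{N}(s)$ is of the form $g(G_{r}^{>})$ by \eqref{eqa8.2.4}). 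Two small points should be made explicit in a write-up: in (ii)$\Rightarrow$(iii) and (ii)$\Rightarrow$(iv) you use that $G_{s}^{>}$ and $s(G_{s^{*}}^{>})$ are distinct elements of $N(s)$ (one lies in $\lsub{}{\Psi}'$, the other does not, by \eqref{eqa8.2.6}), so that $\vert N(s)\vert=2$ really forces $N(s)=\set{G_{s}^{>},s(G_{s^{*}}^{>})}$; and in your unconditional ``only if'' step for (v), the inequality $l(gr)\leq l(sg)$ needs the one-line remark that both sides equal $l(g)+1$, since multiplication by a generator changes $l_{S}$ by at most one. With those remarks added, and the object bookkeeping you already flag carried through, the argument is complete.
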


  \subsection{Principal rootoids and even  $C_{2}$-systems} \label{ssa8.8}    Abridged principal rootoids correspond bijectively  to even $C_{2}$-systems up to the natural notion of isomorphism of each.
  In particular, one has the following. 
\begin{thm*}\begin{num}\item 
    If $(G,S)$ is an even $C_{2}$-system, then  $\ab(\CJ_{(G,S)})\cong \ab(\ol{\CJ}_{(G,S)})$ is a principal rootoid with simple generators $S$.
    \item  If $\CR=(G,\L,N)$ is a  principal rootoid with simple generators $S$, then $(G,S)$ is an even $C_{2}$-system, and 
 $\CR$ and $\ol{\CJ}_{(G,S)}$   have isomorphic  abridgements.\end{num} 
 \end{thm*}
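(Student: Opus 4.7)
The two parts split naturally, with the bulk of the work in (b). For (a), the $C_{2}$ hypothesis on $(G,S)$ implies in particular that it is a $C_{1}$-system, so Proposition \ref{ssa8.4}(d) gives that $\ol{\CJ}_{(G,S)}$ is a principal protorootoid with simple generators $S$, and Proposition \ref{ssa8.4}(c) supplies the isomorphism $\ab(\,\ol{\CJ}_{(G,S)})\cong\ab(\CJ_{(G,S)})$ in $\prootc^{a}$. The $C_{2}$ hypothesis also makes $\CJ_{(G,S)}$ a rootoid, hence so is its abridgement, and principality transports across the isomorphism; this produces the asserted principal rootoid with simple generators $S$.

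For (b), let $\CR=(G,\L,N)$ be a principal rootoid with simple generators $S$. First, $(G,S)$ is an even $C_{0}$-system: $S$ generates $G$, $S=S^{*}$, no $s\in S$ is an identity (since $\vert N(s)\vert=2$), and the sign character of the principal rootoid $\CR$ restricts to one on $(G,S)$. The heart of (b) is to build an isomorphism $\ab(\CR)\cong\ab(\ol{\CJ}_{(G,S)})$. Once this is in hand, the WEC (equivalently, the $C_{1}$ property) follows from the length formula $\vert N(g)\vert=2l_{S}(g)$ valid in any principal rootoid via Proposition \ref{ssa8.7}, while the rootoid condition on $\CJ_{(G,S)}$ is transported across the chain $\ab(\CJ_{(G,S)})\cong\ab(\ol{\CJ}_{(G,S)})\cong\ab(\CR)$ using Proposition \ref{ssa8.4}(c).

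I would construct the isomorphism at the level of signed groupoid-sets. Writing $\Phi_{\CR}=(G,\wh\L)$ for the signed groupoid-set attached to $\CR$, for each $a\in\ob(G)$ and each positive $\a\in\lrsub{a}{\wh\L}{+}$ define the half-space
\[
H_{\a}:=\mset{g\in\lsub{a}{G}\mid g^{-1}(\a)\text{ is positive}}.
\]
The assignment $\a\mapsto H_{\a}$ is $G$-equivariant, and for the positive simple root $\a_{s}$ associated to $s\in\lsub{a}{S}$, the characterization \eqref{eqa8.2.8} identifies $H_{\a_{s}}$ with $G_{s}^{>}$. Extending antisymmetrically via $-\a\mapsto\lsub{a}{G}\sm H_{\a}$ yields a surjective sign- and $G$-equivariant map $\Phi_{\CR}\to(G,\wh\Psi)=\ol{J}_{(G,S)}$; an isomorphism here passes through $\mathfrak{L}$ and $\mathfrak{I}$ to the desired abridgement isomorphism.

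The main obstacle is injectivity: $H_{\a}=H_{\b}$ must force $\a=\b$. This is the rootoid analogue of the strong-exchange step in the proof of Proposition \ref{ssa8.5}. The sign at $1_{a}$ forces $\a,\b$ to share a sign, and after reducing to the positive case, I would choose a reduced expression for an element $g$ with $\a\in\wh{N}(g^{*})$, examine where $H_{\a}$ first flips sign along this path, and exploit the dihedral rank-two closure built into principal rootoids (the analogue of a palindromic reduced expression for a reflection in a Coxeter group) to identify $\a$ and $\b$. With injectivity in hand, all remaining claims follow as outlined above.
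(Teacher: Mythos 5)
Part (a) of your proposal is essentially what the paper indicates: it states that (a) ``already follows using Proposition \ref{ssa8.4} and \cite[Remark 4.3(2)]{DyGrp1}'', and your argument is exactly that. For part (b), however, the paper gives no proof at all in this article: it is explicitly deferred to a subsequent paper, where it is deduced from a characterization of arbitrary $C_{2}$-systems involving a condition related to \eqref{eqa8.2.9}. So your part (b) has to stand on its own, and as written it has a genuine gap.

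The gap is twofold. First, you start from ``the signed groupoid-set $\Phi_{\CR}=(G,\wh\L)$ attached to $\CR$'', with positive roots, simple roots $\a_{s}$, and an action $g^{-1}(\a)$ on roots. But an abstract principal rootoid is a triple $(G,\L,N)$ with $\L$ valued in Boolean rings; there is no given underlying set of roots, no positivity, and no a priori identity of the form $H_{\a_{s}}=G_{s}^{>}$ (the analogue of \eqref{eqa8.2.8} for $\CR$). Manufacturing such a root datum from $\CR$ --- e.g.\ via atoms of the abridged Boolean rings, together with a sign structure, the compatibility of lengths, and the fact that $\ab(\CR)$ is actually recovered from that root datum after applying $\mathfrak{L}$ and $\mathfrak{I}$ --- is itself the bulk of what (b) asserts, not a starting point one may assume. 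Second, the injectivity step, which you correctly identify as the crux, is only gestured at. The Coxeter argument you are imitating (claim (c) in the proof of Proposition \ref{ssa8.5}) rests on palindromic reduced expressions of reflections and the strong exchange condition, neither of which is available for a general principal rootoid: simple generators need not be involutions, there are no reflections, Example \ref{ssa8.16}(4) shows that WEC does not imply EC, and pairwise joins of simple generators need not even exist (the Coxeter matrices may have infinite entries), so the ``dihedral rank-two closure'' you invoke is not an established property. A smaller point: Proposition \ref{ssa8.7} concerns the cocycle of $\CJ_{(G,S)}$, not of $\CR$; in $\CR$ itself the relevant identity is $\rank N(g)=l_{S}(g)$, and the factor $2$ only appears after the comparison with $\CJ_{(G,S)}$ has been established, so citing \ref{ssa8.7} for ``$\vert N(g)\vert=2l_{S}(g)$ valid in any principal rootoid'' conflates the two cocycles. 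In short, the reduction of (b) to an isomorphism of abridgements is sound, but the construction of that isomorphism --- the heart of the theorem --- is missing.
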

The easier part  (a) already follows using Proposition \ref{ssa8.4} and  \cite[Remark 4.3(2)]{DyGrp1}. Part (b) is deduced in a subsequent paper  from a 
similar characterization of  arbitrary $C_{2}$-systems (which involves  a condition related to \eqref{eqa8.2.9}).
  \subsection{Braid presentation}\label{ssa8.9}  An even $C_{2}$-system $(G,S)$ has a canonical presentation by generators $S$ subject to \emph{trivial relations}  and  \emph{ braid relations}.
 The trivial relations are the relations $s^{-1}=s^{*}$ for $s\in S$ (recall that $S=S^{*}$). The braid relations  may be specified by (uniquely determined)
 data consisting of a  Coxeter matrix $M^{a}$ indexed by $\lsub{a}{S}$ attached to each  $a\in\ob(G)$,  and certain   maps $\set{\pi_{r}}_{r\in S}$ between subsets of the sets $\lsub{a}{S}$ for $a\in \ob(G)$, as described below.

 For $r,s\in \lsub{a}{S}$, the corresponding  entry of the Coxeter  ${M}^{a}$ is ${m}^{a}_{r,s}:=l_{S}(r\vee s)$ (if the join exists in $\lsub{a}{\leq}$; if it doesn't exist, one interprets the length as $\infty$). If $r\neq s$ and $n:=m^{a}_{r,s}\neq \infty$,
 there are exactly two    reduced expressions of $r\vee s$.  One of them is of the form $r_{1}\cdots r_{n}$ with $r_{1}=r$  and  the other is of the form
$s_{1}\cdots s_{n}$ with $s_{1}=s$. The corresponding braid relation is
\begin{equation}\label{eq8.9.1} \lrsub{a}{[r_{1},\ldots, r_{n}]}{b}=\lrsub{a}{[s_{1},\ldots, s_{n}]}{b}\end{equation}
(using the more precise notation of \cite[2.6]{DyGrp1})  where $r\vee s\in \lrsub{a}{G}{b}$.
A braid relation is defined as a relation arising in this way.
A basic  fact is that  inverses and cyclic shifts of braid relations 
are again  braid relations. For instance,  
\begin{equation}\label{eq8.9.2} \lrsub{b}{[r_{n}^{*},\ldots, r_{1}^{*}]}{a}=\lrsub{b}{[s_{n}^{*},\ldots, s_{1}^{*}]}{a}\end{equation} and 
\begin{equation}\label{eq8.9.3} \lrsub{a'}{[s_{1}^{*},r_{1},\ldots, r_{n-1}]}{b'}=\lrsub{a'}{[s_{2},\ldots, s_{n},r_{n}^{*}]}{b'}\end{equation}  are braid relations, where $a':=\cod(s_{2})$ and $b':=\domn(r_{n-1})$ (in fact, \eqref{eq8.9.2} follows by repeated application of \eqref{eq8.9.3}).
Note that it is obvious that \eqref{eq8.9.2}--\eqref{eq8.9.3} are relations, but not that they are braid relations.
The above also  implies that  for any braid relation 
\eqref{eq8.9.1}, one has 
\begin{equation}\label{eq8.9.4}
m^{a}_{r_{1},s_{1}}=m^{b}_{r_{n}^{*},s_{n}^{*}}=m^{a'}_{s_{1}^{*},s_{2}}.
\end{equation}

Next, for all $r\in S$, say  $r\in \lrsub{a}{S}{c}$, there is one map $\pi_{r}=\pi_{r}^{(G,S)}$, 
giving a bijection (with inverse $\pi_{r^{*}}$) 
\begin{equation}\label{eq8.9.5} \pi_{r}\colon \mset{t\in \lsub{c}{S}\mid m^{c}_{r^{*},t}\neq \infty}\xrightarrow{\cong} \mset{s\in \lsub{a}{S}\mid m^{c}_{r,s}\neq \infty} \end{equation} It is defined by setting $\pi_{r}(r_{2}):=s_{1}$
  for each braid relation \eqref{eq8.9.1} with $r_{1}=r$ and by setting  $\pi_{r}(r^{*}):=r$.   Using the above  fact about
  cyclic shifts of braid relations,  one easily sees that the braid relations are completely determined as stated by the data consisting of the   family of Coxeter matrices $\set{M^{a}}_{a\in \ob(G)}$ and partially defined maps $\set{\pi_{r}}_{r\in S}$: informally, the partially defined maps constitute the local data necessary to   determine in turn successive 
 simple generators appearing on the two sides of a   braid relation, left to right, 
 beginning  with  the leftmost simple generator,  until each side of  the relation reaches the 
 length specified by the corresponding entry of the Coxeter matrix.

\begin{rem*}      An even,  
  $C_{2}$-system $(G,S)$ is  completely determined (up to isomorphism) by $\ob(G)$, the set  $S$ with its decomposition $S=\dotcup_{a,b\in \ob(G)}\lrsub{a}{S}{b}$ and the map $s\mapsto s^{*}\colon S\to S$, the Coxeter matrices $M^{a}=(m^{a}_{r,s})_{r,s\in \lsub{a}{S}}$ for $a\in \ob(G)$ and the bijections $\pi_{r}$ for $r\in \lrsub{a}{S}{b}$.
  Simple necessary and sufficient conditions for  arbitrarily specified data  of this type to determine an even $C_{2}$-system are not known. \end{rem*}

  \subsection{Groupoid representation on its simple generators}\label{ssa8.10}
  An interval finite rootoid $\CR=(G,\L,N)$ with atomic generators $A:=A_{\CR}$ is said to be  
  \emph{$n$-complete}, where $n\in \Nat$, if for any $a\in \ob(G)$ and any elements $r_{1},\ldots ,r_{n} \in \lsub{a}{A}$, the join $\join_{i} r_{i}$ exists in $\lsub{a}{G}$ in its weak order.  Note that  $n$-completeness holds automatically  for $n=0,1$,  and that  if $\CR$ is complete, it  is $n$-complete for all $n\in \Nat$.     
   Let $(G,S)$ be an even $C_{2}$-system and $\CR:=\CJ_{(G,S)}$. According to \cite[Lemma 3.7]{DyGrp1} and Theorem \ref{ssa8.8}, one has $A_{\CR}=S$.
     Define $(G,S)$  to be \emph{$n$-complete} (resp., \emph{complete}) if $\CR$ is $n$-complete (resp., complete).   Obviously if $(G,S)$ is complete, it is $n$-complete for all $n$. 
  
 \begin{exmp*} A Coxeter system $(W,S)$ is $n$-complete, where $n\in \Nat$,  if and only if for all subsets $J$ of $S$ with $\vert J\vert \leq n$, the standard parabolic subgroup $W_{J}$ is finite.  In particular, if $W$ is of infinite rank $\vert S\vert$ but all its  standard parabolic subgroups $W_{J}$ of finite rank are finite, then $(W,S)$ is $n$-complete for all $n\in \Nat$ but not complete. \end{exmp*} 
   
 Fix an even $C_{2}$-system
    $(G,S)$.    Note that  $(G,S)$ is   $2$-complete if and only if there are no infinite entries in any of the Coxeter matrices $M^{a}$ for $a\in \ob(G)$. This holds if and only if 
       for all
   $a,b\in \ob(G)$ and $r\in \lrsub{a}{S}{b}$, the  map
   $\pi_{r}$ is a bijection $\pi_{r}\colon \lsub{b}{S}\to \lsub{a}{S}$.
   One defines $(G,S)$ to be \emph{$5/2 $-complete} if it is $2$-complete
    and the maps $\pi_{r}$  determine  a representation (i.e. functor) $\pi\colon G\to \setc$
    such that for all $a\in \ob(G)$, $\pi(a)=\lsub{a}{S}$ and for  all $r\in S$,    $\pi(r)=\pi_{r}$. If  $\pi$ exists,  it is uniquely determined, and will be denoted as  $\pi^{(G,S)}$.  Note that if $G$ is connected and simply connected, then $\pi$ is trivial (if it exists) by \cite[Lemma 1.14]{DyGrp1}.
         \begin{thm*} \begin{num} \item A $3$-complete, even $C_{2}$-system is $5/2$-complete. 
         \item A  $2$-complete, even $C_{2}$-system $(G,S)$ with $G$ finite is both complete and  $5/2$-complete. \end{num}
    \end{thm*}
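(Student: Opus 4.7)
For part (a), the plan is to verify directly that the partially defined maps $\pi_r$ assemble into a functor using the braid presentation of $G$ recorded in \ref{ssa8.9}. Since $3$-completeness entails $2$-completeness, each $\pi_r$ is a total bijection $\lsub{c}{S}\to\lsub{a}{S}$ for $r\in\lrsub{a}{S}{c}$; extending these tentatively by composition to all of $G$ reduces the functor property to checking the defining relations. The trivial relations $ss^{*}=1$ are automatic from $\pi_{s^*}=\pi_s^{-1}$. The substantive task is to verify, for each braid relation $\lrsub{a}{[r_1,\ldots,r_n]}{b}=\lrsub{a}{[s_1,\ldots,s_n]}{b}$ arising from $r\vee s$ (with $r_1=r$, $s_1=s$, $n=m^a_{r,s}$), the identity $\pi_{r_1}\cdots\pi_{r_n}(t)=\pi_{s_1}\cdots\pi_{s_n}(t)$ for all $t\in\lsub{b}{S}$.

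Fix such a braid relation and $t$, and set $t':=\pi_{r_1}\cdots\pi_{r_n}(t)$ and $t'':=\pi_{s_1}\cdots\pi_{s_n}(t)$, both in $\lsub{a}{S}$. By $3$-completeness, the rank-three joins $r\vee s\vee t'$ and $r\vee s\vee t''$ exist in $\lsub{a}{\leq}$. Unfolding the iterative recursion $u_{n+1}:=t$ and $u_i:=\pi_{r_i}(u_{i+1})$ (so $u_1=t'$), each step produces a rank-two braid relation $[r_i,u_{i+1},\ldots]=[u_i,\ldots]$ for $r_i\vee u_i$. Combining these via the WEC characterizations of \ref{ssa8.7} inside the finite interval $[1_a,r\vee s\vee t']$, I would characterize $t'$ intrinsically as the unique simple generator $u\in\lsub{a}{S}$ such that $r\vee s\vee u$ exists and the residual $(r\vee s)^{-1}(r\vee s\vee u)$ admits $t$ as a left descent. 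Because this characterization depends only on the element $r\vee s$ and on $t$, not on the chosen reduced expression, applying the analogous argument to $s_1,\ldots,s_n$ produces the same description of $t''$, forcing $t'=t''$.

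For part (b), the plan is first to upgrade $2$-completeness to completeness using the finiteness of $G$, and then to apply part (a). Since $G$ is finite, $\lsub{a}{\leq}$ is a finite poset for each $a$, and by the meet-semilattice property of rootoid weak orders it is in fact a finite meet-semilattice. Completeness will follow by induction on $n$: the cases $n\leq 2$ are trivial or given by hypothesis, and for $n\geq 3$, assuming $g:=r_1\vee\cdots\vee r_{n-1}$ exists by induction, the join $g\vee r_n$ is realized as the meet of the set of common upper bounds of $g$ and $r_n$ in $\lsub{a}{G}$. That this set is nonempty is the nontrivial point, which I would establish by extracting a maximum-length element of the finite set $\lsub{a}{G}$ and showing, via iterated application of $2$-completeness and the rootoid structure, that it dominates both $g$ and $r_n$. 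Completeness in particular entails $3$-completeness, whereupon part (a) delivers $5/2$-completeness.

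The main obstacle in (a) is the combinatorial bookkeeping needed to show that the intrinsic characterization of $t'$ is genuinely symmetric in the two reduced expressions of $r\vee s$; $3$-completeness is indispensable precisely because the relevant join $r\vee s\vee t'$ involves three atoms, and without it the local transfer of $t$ across a single braid move cannot be matched to a global combinatorial invariant. In (b), the principal subtlety is the nonemptiness of the set of common upper bounds in the inductive step; establishing a unique maximum-length element at each vertex from the $2$-complete, finite structure requires careful use of the meet-semilattice framework.
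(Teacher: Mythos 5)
The paper itself gives no proof of this theorem (per \ref{ssa8.6}, all results in \ref{ssa8.7}--\ref{ssa8.16} are stated with proofs deferred to subsequent papers), so your proposal can only be judged on its own terms, and it has genuine gaps. In (a), reducing the functor property to the braid relations via the presentation of \ref{ssa8.9} is reasonable, but the crux --- your intrinsic characterization of the transported generator --- fails as stated. Characterizing $t'$ as the unique $u\in \lsub{a}{S}$ with $r\vee s\vee u$ existing and $t$ a left descent of the residual $(r\vee s)^{*}\bigl((r\vee s)\vee u\bigr)$ is equivalent to requiring $l_{S}((r\vee s)t)=l_{S}(r\vee s)+1$ and $(r\vee s)t\leq (r\vee s)\vee u$; it therefore says nothing about those inputs $t$ with $l_{S}((r\vee s)t)<l_{S}(r\vee s)$, i.e. the $t$ whose inverses are final letters of reduced expressions of $r\vee s$, for which no $u$ whatsoever satisfies your condition, although $\pi_{r_{1}}\cdots\pi_{r_{n}}(t)$ is defined and must still be shown word-independent. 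This is not a fringe case: in Example \ref{ssa8.16}(4), for the relation $sr=rt$ one has $w:=r\vee s=sr$ and the residuals $w^{*}(w\vee u)$ are $1,1,s$ for $u=r,s,t$, so your criterion detects only the input $s$, while the inputs $r$ and $t$ (transported to $r$ and $s$ respectively) are invisible to it; the same failure occurs for the inputs $r,s$ in every Coxeter system, where the transport is the identity. The absorbed inputs would need a separate argument (e.g. via the cyclic-shift properties \eqref{eq8.9.2}--\eqref{eq8.9.3}), which you do not supply; and even for the remaining inputs, the existence and uniqueness of your characterizing $u$ --- in particular that the transported atom $t'$ really satisfies $(r\vee s)t\leq (r\vee s)\vee t'$ --- is asserted, not proved, and is exactly the sort of statement that needs the WEC/cocycle machinery rather than $3$-completeness alone.

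In (b) the logic is also gapped. Your induction on $n$ can only yield $n$-completeness for all $n$ (joins of finite sets of atoms), which the paper explicitly distinguishes from completeness (see the Coxeter example in \ref{ssa8.10}): completeness requires each weak order $\lsub{a}{G}$ to possess arbitrary joins, equivalently (for a finite complete meet semilattice) a maximum element, and an element need not be the join of the atoms below it (already $st$ in type $A_{2}$ is the join of no set of atoms). The substantive content is precisely the claim you leave unproved, that a maximal-length morphism dominates its whole star. Note the naive argument stumbles because for $w\in\lsub{a}{G}$ and $s\in\lsub{a}{S}$ the morphism $s^{*}w$ lies in a different star, so length must be maximized over the whole finite connected groupoid, and one still needs an exchange-condition-style induction (the ``longest element'' theory alluded to in \ref{ssa8.13}) to conclude $N(x)\seq N(w)$ for all $x\in\lsub{a}{G}$. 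Once that is established, completeness is immediate without any induction on $n$, and your deduction of $5/2$-completeness from completeness via (a) would be fine --- but both load-bearing steps, the word-independent description of the transport in (a) and the domination by a longest element in (b), are missing or incorrect as written.
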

  
\begin{rem*} 
Define a $5/2$-completion of an even $C_{2}$-system $(G,S)$ to be a $5/2$-complete  even  $C_{2}$-system $(H,R)$ with the following additional properties:
\begin{conds}\item  $G$ is a subgroupoid of $H$ and  $S\seq R$. 
\item For $a\in \ob(G)$,  $\lsub{a}{G}$ is an order ideal of $\lsub{a}{H}$
(where $\lsub{a}{H}$ has the weak order from the associated rootoid $\CJ_{(H,R)}$).
\item For $a,b\in \ob{G}$, $r\in \lrsub{a}{S}{b}$ and $s\in \lsub{b}{S}$ such that $\pi^{(G,S)}_{r}(s)\in \lsub{a}{S}$ is defined, one has $\pi^{(G,S)}_{r}(s)=\pi^{(H,R)}_{r}(s)$.
\end{conds}
Here, (iii) follows from (i)--(ii) but is stated explicitly for emphasis.  It is an open question whether every even $C_{2}$-system has a $5/2$-completion. A special case,  open for infinite $G$, is the question of whether  every $2$-complete, even $C_{2}$-system $(G,S)$ is $5/2$-complete. 
 These questions have an obvious similarity to open questions involving  notions of  completions of rootoids as considered in Section \ref{sa10} (which would not in general give $C_{2}$-systems from $C_{2}$-systems), but their relationship is unclear.
   \end{rem*}

      \subsection{Coxeter groupoids and $C_{2}$-systems}\label{ssa8.11}
Coxeter (and Weyl) groupoids   are   classes of groupoids $G$  with  distinguished 
generating sets $S$ (closed under inverses) defined in \cite{HY}  and \cite{CH} (note  that \cite{HY} uses a different but equivalent notion of groupoid to that   used in these papers). They will be studied in relation to rootoids in subsequent papers, but it is appropriate to include some imprecise general remarks here to indicate  in what respects the resulting pairs $(G,S)$ are  similar to, and  differ from, even $C_{2}$-systems. As a first comment in that regard,  note that a Coxeter groupoid with one object is a Coxeter group, but many other groups  arise as underlying groupoids of even $C_{2}$-systems (see \ref{ssa8.14}--\ref{ssa8.16}). Correspondingly, Coxeter groupoids share more properties in common with Coxeter groups than general even $C_{2}$-systems (but they also do not form a closed class with respect to basic constructions of these papers; see the comments on covering quotients below, and also \ref{ss13.7}).

 A Coxeter   groupoid $G$ may be defined by a  
presentation by generators $S$ and relations    specified   in terms of certain defining data  consisting of  families of $I$-indexed Coxeter matrices,  for a fixed index set $I$, related by additional combinatorial data. See \cite{CH} and \cite{HY} for  precise definitions, which  provide  a specified  indexing of the canonical generating set $S$ of $G$  by $I\times \ob(G)$,  inducing bijections  $\lsub{a}{S}\cong I$ for $a\in\ob(G)$. This  indexing  may  not  be completely  determined   by the pair $(G,S)$ alone, and provides additional structure, when the Coxeter matrices contain infinite entries.

The theory of Coxeter groupoids  in  \cite{CH} and \cite{HY} is developed assuming  existence of a  root system,  realized in a family of real vector spaces, satisfying certain  conditions  in terms of the  defining data. In particular, the root systems satisfy an integrality condition (roots are integral linear combinations of simple roots), though \cite[Remark 5]{HY} raises the possibility  of relaxing the condition to allow real linear  combinations.
It can be shown, however, that root systems  even in the relaxed sense do  not  exist for all possible sets of defining data (i.e. the open question at the end of  \cite[Remark 5]{HY} has a negative answer),  and necessary and sufficient conditions   for their existence  are not known.   
On the other hand,  Remarks (2)--(3) in \ref{ssa9.6}
suggest  that there may be  natural examples  of   Coxeter groupoids
sharing many of  the formal properties developed in \cite{CH} and \cite{HY},  without root systems in the sense there, but  with   root systems in a more abstract sense than (even the relaxed sense)   considered in   \cite{CH} and \cite{HY} above.

   Fix  a  Coxeter groupoid $G$  with canonical generators $S$  and, for simplicity in explanation here, with a root system
  $\Phi$ in the sense of \cite{CH} and \cite{HY}.
  View $\Phi$ as a representation of $G$ in the category $\setc_{\pm}$ in a similar way as for root systems of Coxeter groups. Then  the pair
    $(G,\Phi)$  turns out to be  a principal, rootoidal 
  signed groupoid-set  with simple generators $S$,
  and  the braid 
  presentation  of $(G,S)$  coincides with the defining presentation of 
  $(G,S)$ as Coxeter groupoid.
  This can be proved    by verifying WEC and SLC  using   
  properties of $G,\Phi, S$ established in  \cite{CH} and \cite{HY}, though in these papers we shall give the result assuming only  existence of a root system in a more general sense. 
  Further,  as  a  consequence of the  existence of  the canonical indexing $S\cong \ob(G)\times I$ as part of the definition of Coxeter groupoid, one has also the following fact:  \begin{conds}\item [($*$)] 
  There exists a representation $\pi'\colon G\to \setc$ such that: 
  \begin{subconds} \item   For
  all $a\in \ob(G)$, one has $\pi'(a)=\lsub{a}{S}$. \item  For  all $a,b\in\ob{G}$, $r\in \lrsub{a}{G}{b}$, $s\in \lsub{b}{G}$ with $\pi_{r}(s)$ defined, one has $\pi_{r}(s)=(\pi'(r))(s)\in \lsub{a}{S}$.
  \item  The restriction of $\pi'$ to each component of $G$ is  a trivial representation.   \end{subconds}
     \end{conds}
 
Here, $\pi_{r}=\pi_{r}^{(G,S)}$ as in \ref{ssa8.9}. Note that by \cite[Lemma 1.14]{DyGrp1}, the condition (iii) can be omitted if $G$ is simply connected. These last  comments are closely related to the following  facts, which are stated imprecisely  here: the class of even $C_{2}$-systems is closed under  taking both coverings  and covering quotients, while the class of  Coxeter groupoids is  closed under  coverings but not covering quotients (see Example \ref{ssa9.5}(5)). 

The condition ($*$) depends only on $(G,S)$. The class of even $C_{2}$-systems $(G,S)$ which satisfy  ($*$) may be regarded as a natural extension of the class of Coxeter groupoids with root system as in 
 \cite{CH} and \cite{HY}, to a class of Coxeter groupoids with abstract root systems. Note that explicitly specifying a representation $\pi'$ as in   ($*$)
 provides additional structure beyond that from the pair $(G,S)$ itself, in general.
  
%

 \subsection{Solvability of the word problem} \label{ssa8.12}  Tits solution of the word problem for Coxeter groups (\cite[Ch 4, \S1, Ex  13]{Bour}) applies mutatis mutandis to even $C_{2}$-systems. Slightly more precisely, in an even $C_{2}$-system $(G,S)$, any non-reduced expression is braid equivalent to an obviously non-reduced one (i.e.  one with a consecutive subexpression $ss^{*}$ for some $s\in S$), and any two  reduced expressions of the same element differ by  a sequence of braid operations.  Of course,  as for Coxeter systems $(W,S)$, for this to give rise to an actual algorithm,   finiteness of $S$ is necessary.

 \subsection{Standard parabolic subgroupoids}\label{ssa8.13} 
 For a rootoid $\CR=(G,\L,N)$, a \emph{standard parabolic subgroupoid } is defined as a subgroupoid  $H$ of $G$
 such that for each $a\in \ob(H)$, $\lsub{a}{H}$ is a join-closed 
 order ideal of $\lsub{a}{G}$ in weak right order i.e. an order ideal such if $h_{i}$ is a non-empty family in $\lsub{a}{H}$ such that $g:=\join_{i} h_{i}$ exists in $\lsub{a}{G}$, then $g\in \lsub{a}{H}$. A standard parabolic subgroupoid  $H$ is \emph{saturated} if $\ob(H)=\ob(G)$; any  standard parabolic subgroup can be enlarged  to  a saturated one  by possibly adjoining some  trivial components.
   
 \begin{exmp*} The saturated  standard parabolic subgroupoids of  $\CC:=\CC_{(W,S)}$, where $(W,S)$ is a Coxeter system, are  the standard parabolic  subgroups ${W_{J}}$ for $J\seq S$, where $W_{J}$ is the standard parabolic subgroup of $W$ generated by $J$ (both $W_{J}$ and $W$ being regarded as one-object groupoids). The only non-saturated standard parabolic subgroupoid is the empty subgroupoid. \end{exmp*}
Certain subtleties arise, for instance   with the  notions of   rank of standard parabolic subgroupoids (see the  examples of  \ref{ssa8.16} below). 
However, basic results about longest 
 elements, standard parabolic subgroups and  shortest coset  representatives of Coxeter groups extend mutatis mutandis to even  $C_{2}$-systems $(G,S)$.  (The results about cosets apply  only to saturated standard parabolic subgroups.)  Some closely related results in the special case of Weyl groupoids can be found in \cite{HV}, but the notion of standard parabolic  subgroupoid used there is not the same as  the restriction to Weyl groupoids of the notion here. 
 
\subsection{Reflection systems} \label{ssa8.14}  Reflection systems $(G,X)$ in the sense of \cite{DyRef}   are groups $G$ with 
  presentations by generators $X$ satisfying relations   obtained 
  by suitably mixing the  relations occurring in standard  presentations of  
  Coxeter groups, and  coproducts of cyclic groups in the 
  categories of groups and abelian groups.     The results  of this subsection and \ref{ssa8.15} imply that  an extensive class of (even) $C_{2}$-systems may  be  constructed from reflection systems; the proofs, 
  given in   subsequent papers, involve verifying the  WEC and SLC.

  To define reflection systems precisely,  denote an 
  alternating product $aba\cdots $ of length $m$ of elements  $a$ 
  and $b$  in a group as  $(aba\cdots)_{m}$. Let $X$ be a set and 
  suppose given $m_{x}\in \Nat_{\geq 2}\cup\set{\infty}$ for $x\in X
  $, $m_{x,y}=m_{y,x}\in \Nat_{\geq 2}\cup \set{\infty}$ for $x\neq y
  $ in $X$ such that  $m_{x,y}\not\in \set{2,\infty}$ implies $m_{x}
  =m_{y}=2$.
  A reflection system is a pair of a group $G$ and subset $X\seq G$ such that  for some $m_{x},m_{x,y}$ satisfying the above conditions,
\begin{multline}\label{eq6.14.1}
G\cong  \mpair{X\mid  x^{m_{x}}=1 \text{ \rm if $x\in X$, $m_{x}\neq \infty$},\\ (xyx\cdots )_{m_{x,y}}=(yxy\cdots )_{m_{x,y}}\text{ \rm if $x,y\in X$, $x\neq y$ and  $m_{x,y}\neq \infty$}}
\end{multline}
(more precisely, the relations on the right hold in $G$ and the canonical map from the group with presentation on the right to $G$ is an isomorphism). 

These conditions imply that the canonical map $X\to G$ is injective, that $m_{x}=\ord(x)$ for $x\in X\seq G$, and that
for distinct $x,y\in X$, $m_{x,y}$ is the minimum $n\in \Nat_{\geq 2}$ such that $(xyx\cdots)_{n}=(yxy\cdots)_{n}$ if such an $n$ exists and otherwise $m_{x,y}=\infty$. 
\begin{thm*} Let $(G,X)$ be a reflection system in the sense of \cite{DyRef} and set 
 $S:=X\cup X^{*}$. Then  $(G,S)$ is a $C_{2}$-system, which is even if and only if  there exists no $x\in X$ for which  $m_{x}$ is a  finite,  odd integer.\end{thm*}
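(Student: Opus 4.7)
The plan is to closely parallel the proof of Proposition \ref{ssa8.5} (the Coxeter case), replacing the abstract root system and strong exchange condition for Coxeter systems by their analogues for reflection systems established in \cite{DyRef}. First I would verify the easy fact that $(G,S)$ is a $C_0$-system: $S = X \cup X^{*}$ is closed under $*$ by construction, and no $x \in X$ is an identity since $m_x \geq 2$ forces $\ord(x) \geq 2$, whence neither $x$ nor $x^{-1}$ is trivial. So $\ol{J}_{(G,S)}$ and associated data are defined whenever the evenness hypothesis holds, and $\CJ_{(G,S)}$ is always defined.

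The core of the argument is to establish the analogue of the isomorphism \eqref{eqa8.5.1} between the root system of the reflection system and the $G$-set of half-spaces $\wh\Psi$ (with signs). From \cite{DyRef} one has a root system $\Phi$ for $(G,X)$, a set $T$ of ``reflections'' generalizing the reflections of a Coxeter group, and cocycle/length formulas giving a strong exchange condition and Matsumoto-type theorem for $(G,S)$. Using these, I would define half-spaces $G_{(t,\pm)} \subseteq G$ for $t \in T$ exactly as in the Coxeter case and verify the three claims (a)–(c) of the proof of Proposition \ref{ssa8.5}: (a) the computation $w(G_s^{>}) = G_{w(s,+)}$ from the cocycle identity for the reflection cocycle; (b) the elementary identity $1_G \in G_{(t,+)} \iff \epsilon = +$; (c) uniqueness of the half-space representation via a palindromic reduced expression argument plus the strong exchange condition from \cite{DyRef}. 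This identifies $\Phi$ (or its two-sided version, in the non-even case) with the $G$-set generated by the $G_s^{>}$.

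Once this identification is available, WEC follows because $|N(g)|$ equals the number of half-spaces separating $1_G$ from $g$, which by the cocycle formula equals $2 l_S(g)$; this gives that $(G,S)$ is a $C_1$-system. To upgrade to $C_2$, I would verify the rootoid axioms for $\CJ_{(G,S)}$ using the strong exchange/deletion properties of $(G,X)$, invoking the criterion applied in Proposition \ref{ssa8.5} (through the SLC of the corresponding signed groupoid-set and \cite[Remark 4.3(2)]{DyGrp1}); in the even case, one can also route through Proposition \ref{ssa8.4}(c) once $\ol{\CR}$ is shown to be principal rootoidal. I expect the main obstacle to be the verification that $\CJ_{(G,S)}$ is rootoidal in the non-even case, since there one loses the direct correspondence with a principal signed groupoid-set and must work with $\CJ_{(G,S)}$ directly; the argument will rest on the fact that the strong exchange condition from \cite{DyRef} still determines reduced expressions modulo braid operations.

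Finally, the evenness characterization is essentially a presentation calculation. A sign character is a homomorphism $\varepsilon\colon G \to \{\pm 1\}$ with $\varepsilon(s) = -1$ for every $s \in S$. The braid relations $(xyx\cdots)_{m_{x,y}} = (yxy\cdots)_{m_{x,y}}$ have the same length on both sides and so impose no constraint beyond $\varepsilon(x)\varepsilon(y)\in\{\pm 1\}$. The order relations $x^{m_x}=1$ require $(-1)^{m_x}=1$, so a sign character exists if and only if every finite $m_x$ is even; conversely, in that case one defines $\varepsilon$ by sending each generator to $-1$ and checks that all defining relations \eqref{eq6.14.1} are respected. This gives the stated characterization and completes the proof.
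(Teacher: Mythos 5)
The paper does not actually prove this theorem: it belongs to the survey portion \ref{ssa8.6}--\ref{ssa8.16}, and \ref{ssa8.14} only records that the proofs, deferred to subsequent papers, proceed by verifying the WEC and SLC. So there is no argument in the text to compare yours against, and your outline must stand on its own. The parts that do stand are the $C_{0}$ verification and the evenness criterion: since $m_{x}=\ord(x)$, a finite odd $m_{x}$ obstructs any character sending all of $S$ to $-1$, and conversely, when every finite $m_{x}$ is even, the assignment $x\mapsto -1$ respects every relation in \eqref{eq6.14.1} (both sides of a braid relation have the same length), so a sign character exists.

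The $C_{2}$ claim, however, is not established. First, the non-even case --- which is part of the theorem --- is precisely where your core mechanism breaks down, and you only flag it as ``the main obstacle'' without an argument: without a sign character \eqref{eqa8.4.1}--\eqref{eqa8.4.3} fail, $\wh\Psi$, $\ol{\CJ}_{(G,S)}$ and Proposition \ref{ssa8.4}(c)--(d) are unavailable, and the elements of $N(g)$ do not pair off into half-space/complement pairs. For instance, for the reflection system $G=\bbZ/3\bbZ=\mpair{x}$ with $X=\set{x}$, one has $G_{x}^{>}=G_{x^{2}}^{>}=\set{1}$, so the two simple generators give the same element of $\Psi$ and the complement of a half-space is not in $\Psi$; hence the count ``$\vert N(g)\vert =$ number of separating half-spaces $=2l_{S}(g)$'' cannot be obtained by the mechanism you describe, and no analogue of \eqref{eqa8.5.1} exists (the remark after Theorem \ref{ssa8.8}, that general $C_{2}$-systems involve a condition related to \eqref{eqa8.2.9}, already signals that a different argument is needed). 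Second, even in the even case your plan imports ``a root system, a set $T$ of reflections, strong exchange and a Matsumoto-type theorem'' for $(G,S)$ from \cite{DyRef}; that machinery is developed there for Coxeter systems, and the one non-formal step of Proposition \ref{ssa8.5} --- the injectivity claim (c), proved via a palindromic reduced expression of a reflection together with strong exchange --- has no analogue for generators of even order $\geq 4$ or infinite order, whose conjugates are not involutions. You offer no substitute at exactly that point, so WEC, and a fortiori the rootoid property, remain unproved in your outline.
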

 
\begin{rem*} In particular,  arbitrary coproducts $G$ in the category of groups (resp., in the category of abelian groups)  $G$ of 
 cyclic groups
 $\pair{x_{i}}$ of \emph{even or infinite order}, for $i\in I$ give 
 \emph{even}  $C_{2}$-systems $(G,S)$ with 
 $S=\mset{x_{i},x_{i}^{*}\mid i\in I}$.   
  On the other hand, if one allows \emph{odd order} cyclic groups 
  $\pair{x_{i}}$ as well above, the Theorem gives only  that $(G,S)$
is a  $C_{2}$-system.  The class of  general  $C_{2}$-systems seems to be a natural extension of the class  of the even ones, but its theory   is substantially more involved and we leave it  largely open. Note that while the condition that $(G,S)$  be a $C_{2}$-system entails no 
restriction on the groupoid $G$ (see Example \ref{ssa8.16}(a)), it is still a 
 stringent restriction on the pair $(G,S)$.
  \end{rem*}

  \subsection{Semidirect products}\label{ssa8.15} Let $G$ be a groupoid and $H$ be a group.
Assume that there is a given group homomorphism $\a\colon H\to \Aut_{\grpdc}(G)$ denoted as $h\mapsto \a_{h}$.  For $h\in H$ and any object or morphism $x$ of $G$, write $h(x):=\a_{h}(x)$.
Then the semidirect product groupoid $K:=G\rtimes_{\a} H$ is defined as follows. One has \begin{equation}
\ob(K)=\ob(G),\qquad \mor(K)=\mor(G)\times H,
\end{equation}
 \begin{equation}
 \cod(\d,h)=\cod(\d),\qquad  \domn(\d,h)=h^{*}(\domn(\d))
 \end{equation} and 
 \begin{equation}
 (\d,h)(\g,h')=(\d h(\g),hh')
 \end{equation} when defined, where $\d,\g\in \mor(G)$ and $h,h'\in H$.

 Now assume in addition that  $(G,S)$  and $(H,R)$ are $C_{0}$-systems (regarding $H$ as one-object groupoid)  and that the action by $H$ on $G$ preserves $S$. We say that $(H,R)$ acts on $(G,S)$.
 Let \begin{equation} T:=\mset{(\d,1_{H})\mid \d\in S}\dotcup\set{(1_{a},r)\mid a\in \ob(G),r\in R}\seq \mor(K).\end{equation}  Then $(K,T)$ is a $C_{0}$-system called the semidirect product of $(G,S)$ and $(H,R)$ and denoted as $(K,T)=(G,S)\rtimes (H,R)$.

 \begin{thm*}  Let $i\in \set{0,1,2}$. If $(G,S)$ and $(H,R)$ are $C_{i}$-systems such that $(H,R)$ acts on $(G,S)$, then  the semidirect product $(G,S)\rtimes (H,R)$  is a $C_{i}$-system, which is even if and only if 
  $(G,S)$ and $(H,R)$ are both even. 
  \end{thm*}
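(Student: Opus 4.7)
The plan is to exploit the commutation identity
\[
(1_a,h)(\delta,1_H) = (h(\delta),1_H)(1_{h(\domn(\delta))},h)
\]
(valid whenever composable in $K$), together with the canonical factorization $(\delta,h) = (\delta,1_H)(1_{\domn(\delta)},h)$. These let one rewrite any word in the generators $T$ into a product of $S$-generators followed by a product of $R$-generators, without changing either the value of the word or the total counts of $S$- and $R$-letters used. This reduces most length computations for $(K,T)$ to length computations in the two factors, and in particular gives the subadditivity $l_T((\delta,h)) \leq l_S(\delta) + l_R(h)$.

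For $i = 0$ the $C_0$-system axioms are routine: $T$ generates $K$ by the factorization above (since $S$ generates $G$ and $R$ generates $H$), $T = T^*$ using $(\delta,1_H)^{-1} = (\delta^{-1},1_H)$ and $(1_a,r)^{-1} = (1_{r^{-1}(a)},r^{-1})$, and no element of $T$ is an identity since neither $S$ nor $R$ contains one. For the evenness equivalence, given sign characters $\epsilon_G$ and $\epsilon_H$, define $\epsilon_K(\delta,h) := \epsilon_G(\delta)\epsilon_H(h)$; a short calculation shows this is a groupoid homomorphism to $\set{\pm 1}$, the essential point being that $\epsilon_G$ is $H$-invariant, which holds because $H$ preserves $S$ setwise and $\epsilon_G \equiv -1$ on $S$. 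Conversely, a sign character for $(K,T)$ restricts to sign characters of both factors along the obvious inclusions.

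For $i = 1$ (the WEC), the crucial step is to describe $N_K((\delta,h))$ explicitly. I expect $\Psi_K$ to admit a $K$-equivariant splitting into a piece generated by $K^{>}_{(s,1_H)}$ for $s \in S$, corresponding to $\Psi_G$, and a piece generated by $K^{>}_{(1_a,r)}$ for $r \in R$, corresponding to $\Psi_H$. Combined with the normal form above, this should yield a disjoint-union identification $N_K((\delta,h)) = N_G(\delta) \dotcup N_H(h)$ (suitably interpreted in $\lsub{a}{\Psi_K}$), giving
\[
|N_K((\delta,h))| = |N_G(\delta)| + |N_H(h)| = 2l_S(\delta) + 2l_R(h),
\]
which together with subadditivity of $l_T$ and the general bound $|N_K(g)| \leq 2l_T(g)$ forces both $l_T((\delta,h)) = l_S(\delta) + l_R(h)$ and the WEC. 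For $i = 2$, once the WEC holds, the rootoid property for $\CJ_{(K,T)}$ reduces, via Lemma \ref{ssa8.3}(c) and additivity of length, to the rootoid properties of the two factors, since the weak order on $\lsub{a}{K}$ is then essentially a twisted product of those on $\lsub{a}{G}$ and on $H$ in which joins exist precisely when the corresponding factor joins do.

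The main obstacle is verifying the disjoint decomposition of $N_K((\delta,h))$ used in the $i=1$ step: one must check that half-spaces pulled back from $G$ and from $H$ never accidentally coincide inside $\lsub{a}{\Psi_K}$, and that the $K$-action respects this splitting compatibly with the normal form. I expect this rigidity to be a careful but routine consequence of directly analysing the generating half-spaces $K^{>}_t$ for $t \in T$ and applying the faithfulness statement of Lemma \ref{ssa8.3}(b) within each factor.
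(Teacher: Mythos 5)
The paper itself contains no proof of this theorem: as announced in \ref{ssa8.6}, the statements in \ref{ssa8.7}--\ref{ssa8.16}, including this one, are surveyed without proof, the proofs being deferred to subsequent papers of the series. So there is no in-paper argument to compare yours with, and I can only assess your proposal on its own terms. Its skeleton is sound. The commutation identity and the factorization $(\delta,h)=(\delta,1_H)(1_{\domn(\delta)},h)$ are correct, and since a morphism of $K$ determines its $\mor(G)$- and $H$-components uniquely, pushing all $R$-type letters of a $T$-word to the right (which preserves the number of letters of each type, the $H$-action preserving $S$) gives the exact additivity $l_T((\delta,h))=l_S(\delta)+l_R(h)$ outright, not merely subadditivity. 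The half-space analysis also checks out: because $l_S$ is $H$-invariant, $K^{>}_{(s,1_H)}$ is the preimage of $G^{>}_{s}$ under the first component and $K^{>}_{(1_a,r)}$ is the preimage of the corresponding half-space of $(H,R)$ under the second; as every generating half-space is proper and non-empty, no set of the first type can equal one of the second, so $\lsub{a}{\Psi}$ for $(K,T)$ is the disjoint union of the pullbacks of $\lsub{a}{\Psi_{G}}$ and of $\Psi_{H}$, compatibly with the $K$-action and with the identity-containing subsets, whence $N_K((\delta,h))$ identifies with $N_G(\delta)\dotcup N_H(h)$. So the ``main obstacle'' you flag is indeed routine.

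Three points need repair or expansion. (1) The ``general bound'' $\vert N_K(g)\vert \le 2l_T(g)$ is not general: in a $C_{0}$-system a generator can already satisfy $\vert N(t)\vert>2$ (this is precisely how Example \ref{ssa9.5}(1) fails to be $C_{1}$), so you cannot invoke it before knowing $\vert N_K(t)\vert=2$ for $t\in T$; it is also unnecessary, since length additivity follows directly from the normal form and then WEC is immediate from the cocycle splitting. (2) In the evenness converse, $(H,R)$ does not embed in $(K,T)$ at a single object unless $H$ fixes an object of $G$, so ``restrict along the obvious inclusions'' does not literally apply to the $H$-factor; instead note that for any $R$-expression $h=r_1\cdots r_m$ and any object $b$ one has $(1_{h(b)},h)=(1_{h(b)},r_1)(1_{r_2\cdots r_m(b)},r_2)\cdots(1_{r_m(b)},r_m)$, a $T$-word of length $m$, so a sign character of $(K,T)$ forces all $R$-expressions of $h$ to have the same parity and hence yields a sign character of $(H,R)$. (3) For $i=2$, the order-theoretic claim should be made precise: by Lemma \ref{ssa8.3}(c) and additivity, the weak order at $a\in\ob(K)$ is exactly the product of the weak orders on $\lsub{a}{G}$ and on $H$ (the group law is twisted, the order is not), so meets and joins are computed componentwise; the complete meet semilattice property transfers, faithfulness is automatic from Lemma \ref{ssa8.3}(b), and the JOP transfers because, by the splitting of $N_K$, orthogonality in $K$ is componentwise orthogonality. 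With these adjustments your proposal is a correct proof strategy.
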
  
  \begin{rem*} Similarly, $C_{2}$-systems are closed under natural notions of infinite direct sums (special subgroupoids of infinite products) and  coproducts  (disjoint union). Those with underlying groupoid equal to  a group are also closed under  constructions giving   free products  (in the category of groups) and restricted wreath products of underlying groups.  Several of these constructions have versions for general rootoids, which are  compatible with those  for (even) $C_{2}$-systems; the compatibility amounts to a non-trivial closure property of rootoids from (even) $C_{2}$-systems under such general constructions. 
      The study of  $C_{2}$-systems in relation to amalgamated free products and HNN extensions, and, more generally,  arboreal group theory (fundamental groups and groupoids of graphs of groups, etc; see \cite{STrees}) may be of interest.  \end{rem*}
 \subsection{} \label{ssa8.16} To finish this section, here are    examples of $C_{0}$-systems with their underlying groupoid equal to a group. They  can 
be checked to be $C_{2}$-systems either directly or using the results quoted without proof above. The braid presentations, and in most cases the maps $\pi_{r}$,  of the even $C_{2}$-systems are  also given. 

 \begin{exmp*} 
 (1) Let $G$ be any groupoid and let $S$ be the set of all non-identity morphisms of $G$. Then $(G,S)$ is a  $C_{2}$-system.  For $a\in \ob(G)$, the weak order $\lsub{a}{\leq}$ on $\lsub{a}{G}$ has minimum element $1_{a}$, and the   elements of $\lsub{a}{G}\sm\set{1_{a}}$ are incomparable.

(2) Let $G$ be a non-trivial  cyclic group with generator $x$, and let 
$S:=\set{x,x^{*}}\seq G$. Then $(G,S)$ is a  $C_{2}$-system, 
which is even if and only if   the order $m$ of $x$ is  even  or infinite; the corresponding 
rootoid $\CJ_{(G,S)}$ is  complete if and only if  $x$ is of finite, even order.
Suppose from now that  $(G,S)$ is even.  If  $m=2$,    the   Coxeter matrix  at the (unique) object of $G$ is specified by the Coxeter graph
$\xymatrix{*+[o][F-]{}
}$ and the braid   presentation  $\mpair{x\mid x^{-1}=x}$ involves only a single  relation, which is trivial.
 Henceforward assume that  $m\geq 4$. Then the   Coxeter matrix of $(G,S)$  is specified by a Coxeter graph 
$
\xymatrix{*+[o][F-]{}\ar@{-}[r]^{m/2}&*+[o][F-]{}
}
$.
The braid presentation is 
  \begin{equation*}
  \mpair{x,x^{*}\mid x^{-1}=x^{*}, (xx\cdots )_{m/2}=(x^{*}x^{*}\cdots )_{m/2}}
  \end{equation*}
   where in the relation involving $m/2$,  there are $m/2$ factors in each product if $m$ is finite 
  and the   relation is omitted if $m=\infty$. One has $\pi_{x}(x)=x^{*}$ and $\pi_{x^{*}} (x^{*})=x$, while $\pi_{x}(x^{*})$ and $\pi_{x^{*}}(x)$ are defined if and only if $m$ is finite, in which case $\pi_{x}(x^{*})=x$ and $\pi_{x^{*}}(x)=x^{*}$. Note that
  although $(G,S)$ is an even $C_{2}$-system,  in this case $S$ is not a minimal generating set of $G$.

(3) Let $(W,S)$ be a dihedral Coxeter system i.e. one with $\vert S\vert =2$.
Write $S=\set{r,s}$.  Let $x=rs$, so $x^{*}=sr$. Let $R=\set{x,x^{*},r=r^
{*}}$, which is another set of generators of $W$ as group. Then $(W,R)$ is a $C_{2}$-system; it is even 
if and only if  the order $m$ of $rs$ in $W$ is 
even or infinite. The corresponding rootoid 
$\CJ_{(W,R)}$ is  complete if and only if  $m$ is finite and even. 
If $m=2$, then $(W,R)$ is a dihedral Coxeter system. Assume $m> 3$ is even. The Coxeter graph of the unique object of $(W,R)$ is then
\begin{equation*} 
\xymatrix{{x}\ar@{-}[r]^{m/2}&{x^{*}}&r 
}
\end{equation*} and the braid presentation is
 \begin{multline*}
  \mpair{x,x^{*},r\mid x^{-1}=x^{*}, \quad r^{-1}=r,\quad  rx=x^{*}r,\quad  rx^{*}=xr\\ (xx\cdots )_{m/2}=(x^{*}x^{*} \cdots )_{m/2} }.
  \end{multline*}
  The description of the maps $\pi_{r}$, for $r\in R$,  is left to the reader in this example.
  
  (4)  Suppose in (3) that $W$ is dihedral of order $8$, and let $t=rsr\in W$.
Set $V:=\set{r,s,t}$.  One can check that  $(W,V)$ is an even $C_{2}$-system. The  unique weak order of $\CJ_{(W,V)}$ is Boolean of rank $3$ i.e.  isomorphic to the lattice of subsets of $\set{r,s,t}$.  The Coxeter graph at  the unique object consists of three isolated vertices
\begin{equation*}\xymatrix{ {r}&{s}&{t}}\end{equation*} but it  still corresponds to a  non-trivial braid presentation
 \begin{multline*}
  \mpair{r,s,t\mid r^{-1}=r, \quad s^{-1}=s, \quad t^{-1}=t,\quad  ts=st,\quad  rt=sr, \quad rs=tr }
    \end{multline*} since the representation $\pi^{(W,V)}$ of $W$ on $V$  is non-trivial ($\pi_{s}=\pi_{t}=\Id_{V}$ and $\pi_{r}=(s,t)$ is the transposition on $V$ which interchanges $s$ and $t$).
  Since $(W,V)$ is not a Coxeter system,  this  shows that  WEC does not imply EC.
  Also, a $C_{2}$-system $(G,S)$ with $G$ a group and in which $S$ consists of involutions need not be a Coxeter system.
  
  (5) For a Coxeter system $(W,S)$, the Coxeter matrix of $(W,S)$ as $C_{2}$-system coincides with the Coxeter matrix of  $(W,S)$ as a Coxeter system. Denote the Coxeter matrix as $(m_{r,s})_{r,s\in S}$. 
  The braid presentations of $(W,S)$ as $C_{2}$-system and  as  Coxeter system also coincide.  One has, for $r,s\in S$, that $\pi_{r}(s)=s$ if $m_{r,s}\neq \infty$ and $\pi_{r}(s)$ is undefined if $m_{r,s}=\infty$. 
  
  (6) The example (4) (and  also (5)) is a special case of the following one. Let $(W,S)$ be a Coxeter system. Suppose that $S=I\dotcup J$ where no element of $I$ is conjugate to any element of $J$.  Let $\wt J:=\mset{wsw^{-1}\mid w\in W_{I},s\in J}$ and $\wt W$ be the subgroup of $W$ generated by $\wt J$.   It is shown in \cite{Gal} and in \cite{DySd} that $(\wt W,\wt J)$ is a Coxeter system  and $W$ is the semidirect product $W=W_{I}\ltimes \wt W$ . Set $V:=I\dotcup \wt J$. It follows from
Theorem \ref{ssa8.15} that $(W,V)$ is an even $C_{2}$-system.   The braid relations   are the braid relations of $(W_{I},I)$, those of $(\wt W,\wt J)$ and the relations $rs=\tilde s r$ for $r\in I$, $s,\tilde s\in \wt J$ with $\tilde s=rsr^{-1}$.  Let $(m_{r,s})_{r,s\in S}$ be the Coxeter matrix of $(W,S)$, let
$(\wt m_{r,s})_{r,s\in \wt J}$ be that of $(\wt W,\wt J)$ and let  $(m'_{r,s})_{r,s\in V}$ be the Coxeter matrix of $(W,V)$ as $C_{2}$-system. For any $r,s\in V$,  one has   $m'_{r,s}=2$ if one of $r$, $s$ is in $I$ and the other is in $\wt J$, while $m'_{r,s}=m_{r,s}$ if $r,s$ are both in $I$ and $m'_{r,s} =\wt m_{r,s}$ if both $r,s$ are in $\wt J$. 
Note that one could have $r\in I, s\in \wt J$ with $m'_{r,s}=2$
but $rs$ of infinite order as an element of the group $W$.
Recall that for $r,s\in V$,    $\pi_{r}(s)$ is defined if and only if $m'_{r,s}\neq \infty$.   The maps $\pi_{r}$ for $r\in V$ may be specified as restrictions of  maps $\pi'(r)$ where $\pi'$ is  the permutation  representation of the group $W$ on the set $V$ defined   as follows  First, the extended representation $\pi'$ is  trivial when restricted to $\wt W$, so it factors through a representation of $W_{I}\cong W/\wt W$.  On $V=I\dotcup \wt J$, $W_{I}$ acts trivially on $I$ and  by conjugation on $\wt J$. 
      \end{exmp*}

 \section{General constructions of protorootoids}
 \label{s9}
    \subsection{Protorootoid of a simple graph}\label{ssa9.1}
   We fix some notation and terminology concerning  simple (undirected) graphs. A \emph{simple graph} $\G$  is by definition a pair $(V,E)$ where $V$ is a set (the \emph{vertex set} of $\G$) and $E$ (the \emph{edge set} of $\G$) is a set of two-element subsets of $V$ (called edges of $\G$).
  A \emph{path} in
$\G$ is a sequence $(v_{n},\ldots, v_{0})$ in $V$, where $n\in \Nat$, such that $\set{v_{i},v_{i-1}}\in E$ for $i=1,\ldots, n$. This path is called a path of length $n$ from $v_{0}$ to $v_{n}$. The path is called a \emph{cycle} (of length $n$) if  $v_{0}=v_{n}$, and a \emph{simple cycle} (of length $n$) if  it is a cycle, $n\geq 3$ and $v_{i}\neq v_{j}$ for $0\leq i<j<n$.  
 The graph  $\G$ is said to be  \emph{connected} if it is non-empty and for any two vertices $x,y\in V$, there is a path from $x$ to $y$. An \emph{isomorphism} $(V,E)\to (V',E')$ of simple graphs is a bijection
 $\s\colon V\to V'$  such that for distinct  $x,y\in V$, one has $\set{x,y}\in E$ if and only if $\set{\s(x),\s(y)}\in E$. A \emph{subgraph} of $(V,E)$ is a graph
 $(V',E')$ with $V'\seq V$ and $E'\seq E$. The subgraph $(V',E')$ is \emph{full}
 if $E'=E\cap\mset{\set{x,y}\mid x,y\in V', x\neq y}$. A component of $(V,E)$ is a full, connected subgraph $(V',E')$ with $V'\neq \eset$ maximal under inclusion.
 
 A \emph{forest} is a simple graph with no simple cycles.  A \emph{tree} is a connected forest. Any simple graph $\G$ has, by Zorn's lemma,  a \emph{maximal subforest} $F$ i.e. a subgraph  $F$ of $\G$ which is a forest such that its sets of vertices and sets of edges are both maximal (under inclusion) amongst those of subgraphs of $\G$ which are forests.   It is easily seen (see e.g. \cite{STrees}) that the vertex set of a maximal subforest $F$ of $\G$ coincides with that of $\G$.

 Attach to the simple graph $\G=(V,E)$ the (unique up to isomorphism) simply 
 connected groupoid $G=G_\G$ with $\ob(G)=V$ in which there is a morphism 
 $b\to a$ in $G$ if and only if  there is a path from $b$ to $a$ in $\G$.  
 Concretely, we set 
 \begin{equation}\label{eqa9.1.1} \Hom_{G}(b,a)=\begin{cases}\set{(a,b)},&\text{if there is a path from $b$ to $a$ in $\G$}\\
 \eset,&\text{otherwise}\end{cases}\end{equation}
 with composition $(a,b)(b,c)=(a,c)$ when $(a,b)$ and $(b,c)$ are morphisms of $ G$. The groupoid $G$ has a generating set
 $S:=\mset{(a,b)\mid a,b\in V, \set{a,b}\in E}$. 
  Also, the groupoid $G$ is simply connected, and it is connected if  and only if $\G$ is connected.  More generally, the components of $G$ naturally correspond to those of $\G$.

   The definitions of Section \ref{sa8} for  the $C_{0}$-system $(G,S)$ can be transferred to the graph $\G$ as follows.
 
 \begin{defn*} \begin{num} \item  For $i\in\set{0,1,2}$, the graph  $\G$ is an (even) $C_{i}$-graph if $(G,S)$ is an 
 (even) $C_{i}$-system.

 \item The protorootoid (resp., set protorootoid, signed groupoid set) of $\G$ is   $\CJ_{\G}:=\CJ_{(G,S)}$ (resp., $\CJ'_{\G}:=\CJ'_{(G,S)}$, $J_{\G}:=J_{(G,S)}$). 
 \item For an even $C_{0}$-graph $\G$,  
 define  $\ol{\CJ}_{\G}:=\ol{\CJ}_{(G,S)}$,   $\ol{\CJ}_{\G}':=\ol{\CJ}_{(G,S)}'$ and   $\ol{J}_{\G}:=\ol{J}_{(G,S)}$.\end{num}
 \end{defn*}
 Note that $C_{0}$-graphs are just simple graphs, and saying that a $C_{i}$-graph is even  means it has no odd length cycles.  
 \begin{exmp*}   (1) Suppose that $n\in \Nat_{\geq 3}$ and  the 
 simple  graph $\G=(V,E)$ is an $n$-cycle. Write $V=\set{v_{1}, \ldots, v_{n}}$ and 
 $E=\mset{\set{v_{i},v_{i+1}}\mid i=1,\ldots n}$ where
  $v_{n+1}:=v_{1}$. Then (see Examples \ref{ssa9.4}(3)--(4))  $\G$ is a $C_{2}$-graph, which is 
  even if and only if  $n$ is even.  The corresponding rootoid $\CJ_{\G}$ is 
    complete if and only if  $n$ is even. The rootoids $\CJ_{\G}$ from the graphs here  are easily seen to be  the universal covering  rootoids of   the rootoids of   finite cyclic groups  of order $n\geq 3$ in Example 
    \ref{ssa8.16}(2).  (The universal covering rootoid of the rootoid from the
    infinite cyclic group is $\CJ_{\G}$ where $\G$ is a doubly infinite path).
 
 (2) If $\G$ is a  forest  then it is an even $C_{2}$-graph; further,  in that case,
 $\CJ_{\G}$ is complete if and only if   $\G$ contains no distinct edges with a  common vertex. 
 
 (3) More generally than (1), let  $(G,S)$ be a  $C_{0}$-system  such that $G$ is connected. 
Define the Cayley graph $(V,E)$ of  $(G,S)$ as follows.
Fix $a\in \ob(G)$. Set $V:=\lsub{a}{G}$ and
 \begin{equation}\label{eqa9.1.2} E:=\mset{\set{g,gs}\mid b\in \ob(G),g\in \lrsub{a}{G}{b}, s\in 
 \lsub{b}{S}}.\end{equation}  It is easy to see that, up to  isomorphism of simple graphs, this is independent of  the choice 
 of $a$.  Then  for $i\in \set{0,1,2}$, $(G,S)$ is an (even) $C_{i}$-system if and only if   its Cayley 
 graph $(V,E)$ is an (even) $C_{i}$-graph.   (The easy details of the argument will be given in subsequent papers).   This fact  implies  that the conditions for a general  $C_{0}$-system $(G,S)$ to be 
  a $C_{i}$-system  amount to  certain  metric conditions (with respect to the path length metric) on the  vertices of  the  Cayley graphs of the components of $G$. 
   \end{exmp*}

  \subsection{Protorootoid attached to a rainbow graph}
  \label{ssa9.2} A \emph{ rainbow 
 graph} $X$ is defined to be  a quadruple $X=(V,E,c,B)$ where 
 $\G:=(V,E)$ is a simple graph, $B$ is a Boolean ring  and 
 $c\colon E\to B$ is a function,
 subject to the requirement that for any cycle $(v_{n},\ldots, v_{0})$ in the graph,
 $c(e_{1})+\ldots +c(e_{n})=0_{B}$ where $e_{i}:=\set{v_{i-1},v_{i}}$. 
 
The rainbow graph $X$ is said to be defined over $B$. 
 A particular case which motivates the terminology, is 
 when $B=\wp(Y)$ for some set $Y$. The set $Y$ is  called the color set of $X$ and   elements of $Y$ are called colors.
  Then $c$   determines  a labelling of each edge $e\in E$
 by a set $c(e)$ of colors subject to the requirement
 that the symmetric difference of color labels of the edges  in any cycle is empty i.e. each color appears an even number of times in the color sets attached to the edges of the cycle.

 Let $G=G_{\G}$ be 
 the groupoid and $S$ its set of generators attached to the simple 
 graph $\G$ as in \ref{ssa9.2}.  Let $\L$ be the constant functor $G\to
  \bringc$ with constant value $B$.  Define a  cocycle $N$ of 
   $G$ for $\L$ as follows; for any $a,b\in G$ which are 
   joined 
   by a path in $\G$ with successive vertices $a=a_{n}, a_{n-1},
   \ldots, a_{0}=b$,  set  $\lsub{a}{N}((a,b)):=c(e_{1})+\ldots +c(e_{n})
   \in\lsub{a}{\L}$  where $e_{i}:=\set{a_{i-1},a_{i}}\in E $ (here 
   as in \ref{ssa9.1}, $(a,b)$ denotes  the 
   unique morphisms from $b$ to $a$ in $G$). This is well-defined 
   and 
   gives a cocycle, as required by  the definition of rainbow graph.   
   \begin{defn*} \begin{num} \item The  protorootoid of the rainbow graph $R$  is 
   defined as $\CJ_{R}:=(G,\L,N)$.
   \item The rainbow graph $R$ is rootoidal if $\CJ_{R}$ is a rootoid.\end{num}\end{defn*} 
   Note that if  $X$ is defined over a Boolean algebra $B=\wp(Y)$ of sets, then $\CJ_{X}$ is the associated protorootoid $\CJ_{X}=\mathfrak{I}(\CJ'_{X})$ of a set protorootoid $\CJ'_{X}:=(G,Y',N)$ where $Y'\colon G\to \setc$ is the constant functor with value $Y$. In that case, one also has an associated signed groupoid-set $J_{X}:=\mathfrak{K}(\CJ'_{X})$.

 \subsection{}  \label{ssa9.3} Though the following result  is   equivalent   to a  well known  description of the cycle space of a simple graph,  details of its  proof are given for completeness. It provides a convenient means of specifying examples of   rainbow graphs, but  is not  useful  in characterizing the more interesting  subclass of  rootoidal rainbow graphs.

  \begin{prop*}   Let $\G=(V,E)$ be a simple graph, $B$ be a Boolean ring and $T=(V,E')$ be a maximal subforest of $\G$, with edge set $E'\seq E$.
 Then  any function $c'\colon E'\to B$ extends uniquely to a function
 $c\colon E\to B$ such that $(V,E,c,B)$  is a rainbow graph.
\end{prop*}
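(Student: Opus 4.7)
The plan is to use the well-known characterization of the cycle space of a graph over $\bbF_{2}$, adapted to the Boolean ring $B$ (which has characteristic $2$ in the additive sense, i.e.\ $b+b=0$ for all $b\in B$). Setup first: because $T$ is a maximal subforest, every edge of $\G$ joins vertices in the same component of $T$ (otherwise one could adjoin the edge to $T$, contradicting maximality). In particular, for each $e=\set{u,v}\in E\sm E'$ there is a unique simple path $P_{e}$ in $T$ from $u$ to $v$, and $P_{e}$ together with $e$ forms a simple cycle $C_{e}$, the \emph{fundamental cycle} of $e$. The rainbow condition applied to $C_{e}$ forces
\[
c(e)=\sum_{f\in P_{e}}c'(f),
\]
which gives uniqueness of the extension $c$ on $E\sm E'$, and equals $c'$ on $E'$ by requirement.

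For existence, I would \emph{define} $c$ by the above formula. Given any cycle $(v_{n},\ldots ,v_{0})$ in $\G$ with $v_{0}=v_{n}$, set $\o(e):=\#\set{i\mid \set{v_{i-1},v_{i}}=e}\pmod 2\in \bbF_{2}$. Since $b+b=0$ in $B$, the sum appearing in the rainbow condition equals $\sum_{e\in E}\o(e)\gp c(e)$. Since the walk is closed, $\o$ lies in the kernel of the boundary map $\pa\colon \bbF_{2}^{E}\to \bbF_{2}^{V}$, i.e.\ in the cycle space $Z(\G)$ of $\G$. It therefore suffices to check that the $\bbF_{2}$-linear map $\s\colon \bbF_{2}^{E}\to B$, $\s(\o):=\sum_{e}\o(e)\gp c(e)$, vanishes on $Z(\G)$.

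The classical spanning-tree argument shows that $Z(\G)$ is spanned (in fact freely) over $\bbF_{2}$ by the indicator vectors $\chi_{C_{e}}$ of the fundamental cycles, for $e\in E\sm E'$: given any $\o\in Z(\G)$, the element $\o+\sum_{e\in E\sm E',\ \o(e)=1}\chi_{C_{e}}$ is supported on tree edges and lies in $Z(\G)$, hence is zero because a forest has trivial cycle space. Consequently it suffices to verify $\s(\chi_{C_{e}})=0$ for each $e\in E\sm E'$, and this is immediate from the definition: $\s(\chi_{C_{e}})=c(e)+\sum_{f\in P_{e}}c'(f)=2\gp c(e)=0_{B}$.

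The argument has no real obstacle; the only point deserving care is the translation between the paper's definition of cycles (closed walks, possibly non-simple and with repeated edges) and elements of the $\bbF_{2}$-cycle space, which relies on the characteristic-$2$ property of $B$ to discard edges traversed an even number of times. Once this translation is made, the spanning of $Z(\G)$ by fundamental cycles reduces the rainbow condition on arbitrary cycles to its defining instance on fundamental cycles, which holds by construction.
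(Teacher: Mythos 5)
Your proof is correct and takes essentially the same route as the paper: both reduce the rainbow condition to the fundamental cycles attached to the maximal subforest, which span the $\bbF_{2}$-cycle space, so that $c$ is forced on each non-tree edge $e$ to be the sum of $c'$ over the tree path $P_{e}$ and this choice works. The only differences are presentational — the paper phrases the extension as linear algebra on the basis $Z\cup E'$ of the edge space and cites the spanning property of the fundamental cycles as well known, whereas you define $c$ by the explicit formula and prove the spanning statement directly inside $\ker\pa$.
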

 \begin{proof}
    Let $\G$ be as above. Let $U$ be a  vector space
   with $E$ as $\bbF_{2}$-basis, and $U'$, $U''$ be the subspaces of $U$ spanned by $E'$, $E\sm E'$ respectively, so $U=U'\oplus U''$. The \emph{cycle space}  of $\G$ is the subspace $C$ of $U$ spanned by all its  elements $e_{1}+\ldots +e_{n}$  such that there is a cycle $(v_{n},\ldots, v_{0})$ in $\G$ satisfying
   $e_{i}=\set{v_{i-1},v_{i}}$.       For any   function $c\colon E\to B$, let  $\wh c\colon U\to B$ be its   unique  extension to a
     $\bbF_{2}$-linear map (regarding $B$ as 
     possibly non-unital $\bbF_{2}$-algebra in the natural way).
        From the definitions,  $(V,E,c,B)$ is a rainbow graph if and only if $\wh c$ vanishes on $C$, or equivalently, if and only if it vanishes on $Z$  where  $Z$ is  some chosen basis of $C$.

    Choose  the basis $Z$  of $C$ according to  the following  well-known procedure involving $T$.    For each edge $f\in E\sm E'$, there is a  simple cycle   of the graph    $(V,E'\cup\set{f})$, by maximality of $T$. The set $\set{e_{f,1},\ldots, e_{f,n_{f}}}$ of edges of this cycle contains $f$,   say $f=e_{f_{1}}\in E\sm E'$ and $e_{f_{i}}\in E'$ for $i>1$ without loss of generality.
      Define $z_{f}:=e_{f,{1}} +\ldots +e_{f,n_{f}}\in C$.  Since $z_{f}+U'=f+U'\in U/U'\cong U''$ and the elements $f\in E\sm E'$ are a basis of $U''$, it follows that  the  elements $z_{f}$ for $f\in E\sm E'$ are pairwise distinct 
 and  the set $Z:=\mset{z_{f}\mid f\in E\sm E'}$ is a basis of  $C$ over   $\bbF_{2}$.     
     It is also well known and   easily shown (see for instance \cite{Dies}) that $Z$ spans $C$, so it is a basis of $C$. Hence $Z\cup E'$ is a basis of $U$.
    
   It follows immediately  that
     if $(V,E,c,B)$ is a rainbow graph,  then $c$ is uniquely determined by its restriction to a    function   $c'\colon E'\to B$ (since $\wh c$ vanishes on $Z$). Conversely,  an arbitrary function 
     $c'\colon E'\to B$     extends  uniquely    to a linear map 
     $c''\colon U\to B$ which is zero on $Z$. Let $c$ denote the restriction of $c''$  to a      function  $c\colon E\to B$.  Then  $(V,E,c,B)$ is a rainbow graph such that $c$ extends $c'$.     
      \end{proof}

   \subsection{} \label{ssa9.4} Let $\G:=(V,E)$ be a   simple graph.
  The associated protorootoid $\CJ_{\G}$ (and, if $\G$ is even,  $\ol{\CJ}_{\G}$) have as components  the associated  protorootoids
  $\CJ_{\G_{i}}$ (and, if $\G$ is even,  $\ol{\CJ}_{\G_{i}}$) for the components $\G_{i}$ of $\G$.  Assume henceforward in this subsection that $\G$ is connected. Rainbow graphs  which afford the associated protorootoid(s) are described below.
  
    Let $(G,S)$ be the $C_{0}$-system associated to $\G$, as defined in \ref{ssa9.1}.
  One has $\ob(G)=V$, $\mor(G)=\mset{(a,b)\mid a,b\in \ob(G)}$ and 
  $S=\mset{(a,b)\mid \set{a,b}\in E}$.
   Hence for $(a,b)\in S$   $l_{S}(a,b):=l_{S} ((a,b))$ is the minimum length of a path in $\G$ from $a$ to $b$. 
 
 For all $s=(a,b)\in S$, define 
 \begin{equation}\label{eqa9.4.1} X_{s}:=\mset{ x\in V\mid l_{S}(b,x)>l_{S}(a,x)}\seq V\end{equation}
Note that $a\in X_{s}$ and $b\not\in X_{s}$. Let $X:=\mset{X_{r}\mid r\in S}$ and $B:=\CP(X)$. Define an edge labelling $c\colon E\to B$ as follows:
  for an edge  $\set{a,b}\in E$,   $c(\set{a,b})$  is the set of all elements of $X$ which  contain exactly  one of the vertices of the edge i.e.
 \begin{equation}\label{eqa9.4.2} c(\set{a,b}):=\mset{A\in X\mid  \vert \set{a,b}\cap A\vert =1}.\end{equation} 
 
 Also,  if $\G$ is even, note that for $s=(a,b)\in S$,
  $Y_{s}:=\set{X_{s},X_{s^{*}}}$ is a partition of $V$ into two disjoint  non-empty sets.  Set $Y:=\mset{Y_{s}\mid 
 s\in S}$ and $B':=\CP(Y)$. Define an edge labelling $c'\colon E\to B'$ as follows:
  for an edge  $\set{a,b}\in E$,  
 $ c'(\set{a,b})$ is the subset of all elements $A=\set{A_{1},A_{2}}$ of 
 $Y$ such that $\vert A_{i}\cap \set{a,b}\vert=1$ for $i=1,2$.
 
  \begin{prop*} \begin{num}\item The quadruple   $R=(V,E,c,B)$  is a rainbow graph 
 whose associated protorootoid $\CJ_{R}$ is isomorphic to the protorootoid
 $\CJ_{\G}$.  \item If $\G$ is even,   $R'=(V,E,c',B')$  is a rainbow graph 
 whose associated protorootoid $\CJ_{R'}$ is isomorphic to the protorootoid
 $\ol{\CJ}_{\G}$. \end{num}
 \end{prop*}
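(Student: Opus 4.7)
The plan has two shared ingredients: the direct verification that $R$ (respectively $R'$) is a rainbow graph, and the construction of an explicit natural isomorphism of protorootoids covering the identity on $G$. For the rainbow condition in (a), fix a cycle $(v_n,\ldots,v_0)$ and any $A \in X$: the coefficient of $A$ in $\sum_i c(\{v_{i-1},v_i\}) \in B = \wp(X)$ is, modulo $2$, the number of $i$ for which exactly one endpoint of the edge $\{v_{i-1},v_i\}$ lies in $A$, i.e.\ the number of times the cycle crosses $\partial A$. Since the cycle closes up at $v_0 = v_n$, this count is even, and the sum vanishes in $B$. The same argument, applied with $X$ replaced by $Y$, gives the rainbow condition in (b).

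For the isomorphism in (a), I exploit the fact that connectedness of $\G$ forces $G$ to be simply connected, so for each $a \in V$ there is a canonical bijection $\lsub{a}G \cong V$, $(a,x) \mapsto x$. A direct unwinding of the definitions shows that, for $s = (b,c) \in \lsub{b}S$, the set $G_s^> \seq \lsub{b}G$ corresponds under this bijection to $X_s \seq V$. Crucially, left multiplication by any $g = (a,b) \in \mor G$ transports $(b,x)$ to $(a,x)$ and therefore is the identity on $V$ under the identifications for $a$ and $b$. Hence $\Psi \colon G \to \setc$ is naturally isomorphic to the constant functor with value $X = \{X_s : s \in S\}$, and so $\wp_G(\Psi)$ identifies naturally with the constant functor $a \mapsto \wp(X) = B$. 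Under this identification $\lsub{a}\Psi'$ corresponds to $\{A \in X : a \in A\}$, and for $s = (a,b) \in S$,
\[
N(s) = \{A \in X : a \in A\} \,+\, \{A \in X : b \in A\} = \{A \in X : |A \cap \{a,b\}| = 1\} = c(\{a,b\}).
\]
Both cocycles are determined from their values on $S$ by cocyclicity, and the path-sum formula defining $N$ in $\CJ_R$ matches the cocycle expansion of $N$ in $\CJ_\G$ along any factorisation of $(a,b)$ into edges, completing (a).

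Part (b) proceeds along the same lines, with the additional observation that when $\G$ is even one has $X_{s^*} = V \sm X_s$, so the involution $A \mapsto V \sm A$ on $X$ yields a $\{\pm\}$-equivariant identification $\lsub{a}\wh\Psi \cong X$ whose $\{\pm\}$-quotient is $\lsub{a}\ol\Psi \cong Y$. The edge computation then gives $\ol N(s) = c'(\{a,b\})$, and the rest of the argument transfers verbatim. The main obstacle is the bookkeeping to confirm the $G$-equivariance of the identifications of $\Psi$ and $\wh\Psi$ with constant functors, but this reduces to the essentially trivial remark that, under the bijections $\lsub{a}G \cong V$, left multiplication by $G$ merely relabels the source object and so acts as the identity on $V$.
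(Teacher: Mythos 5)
Your proposal is correct and follows essentially the same route as the paper: identify $\Psi$ (resp.\ $\wh\Psi$) with a constant functor with value $X$ (resp.\ with $\set{\pm}$-quotient $Y$) via the canonical bijections $\lsub{a}{G}\cong V$, under which the $G$-action becomes trivial, check $N(s)=c(\set{a,b})$ (resp.\ $\ol{N}(s)=c'(\set{a,b})$) on generators, and conclude by cocyclicity. The only divergence is that you verify the rainbow condition by a direct parity count of boundary crossings along a cycle, whereas the paper deduces it from the fact that $N$ is already a cocycle of $\CJ_{\G}$; both are valid.
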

  \begin{proof} We prove (a). In addition to the above notation, use notation concerning  $\CJ_{\G}=\CJ_{(G,S)}=(G,\Psi,N)$ as in Section \ref{sa8}.
    Suppose that $s=(a,b)\in S$ and $g=(e,a)\in \mor(G)$. By definition, 
 \begin{equation}\label{eqa9.4.3} G_{s}^{>}=\mset{(a,x)\in \lsub{a}{G}\mid l_{S}(b,x)>l_{S}(a,x)}=
 \mset{(a,x)\mid x\in X_{s}}\end{equation} and 
 therefore 
 \begin{equation}\label{eqa9.4.4} g(G_{s}^{>})=X_{e,s}:=\mset{(e,x)\mid  x\in X_{s}}.\end{equation}
 Hence $\lsub{e}{ \Psi}=\mset{X_{e,s}\mid s\in S} $ and 
 $ \lsub{e}{ \Psi}'=\mset{X_{e,s}\mid s\in S, e\in X_{s}}$. 
 So for $h=(e,d)\in \mor(G)$, $N(h):=\lsub{e}{ \Psi}'+h(\lsub{d}{ \Psi}'  )$ is given by
\begin{equation}\label{eqa9.4.5} N(h)= \mset{X_{e,s}\mid s\in S,\quad  \vert(\set{e}+\set{d})\cap  X_{s}\vert =1}\end{equation} Since $N$ is a cocycle, for any path
$(e=e_{n}, \ldots ,e_{0}= d)$ from $d$ to $e$, $N(e,d)$ consists of the elements $X_{e,s}$ such that $X_{s}\in M((e,d)):= c(e_{n},e_{n-1})+\ldots +c(e_{1},e_{0})$.

Let $\Th$ be the constant functor $G\to \setc$ with constant value 
$X$. The above implies that  $M$ is a cocycle for $\CP_{G}(\Th)$,   since $N$ is a cocycle. Therefore  $R$ is a  rainbow graph. 
  The set protorootoids underlying  $\CJ_{\G}$ and   $\CJ_{R}$ are 
 $\CJ'_{\G}=(G,\Psi,N)$ and   $\CJ'_{R}=(G,\Th,M)$.  It follows  readily from the above that there is an isomorphism $(\Id_{G},\nu)\colon \CJ'_{\G}\to \CJ'_{R}$ of set protorootoids where $\nu_{a}\colon \lsub{a}{\Th}\to \lsub{a}{\Psi}$ is the isomorphism  $X_{s}\mapsto X_{a,s}\colon X\to \lsub{a}{\Psi}$. Then $\mathfrak{I}((\Id_{G},\nu))\colon \CJ_{\G}\to \CJ_{R}$ is an isomorphism as required to prove  (a). 
The proof of (b) is similar from the above and the definitions, and it is omitted.
  \end{proof}
     
  \subsection{}\label{ssa9.5} Here are    examples illustrating the above constructions 
\begin{exmp*} (1)  Let $\G=(V,E)$ be  the even simple graph with vertex set $V=\set{p,q,r,s,t}$ indicated schematically  on the left below. \begin{equation*} \xymatrix@!0{   &*+[o][F-]{p} \ar@{-}[dd]
 \ar@{-}[rr]\ar@{-}[dr]&   &*+[o][F-]{q}\ar@{-}[dd]&\\ 
   &   &*+[o][F-]{r}\ar@{-}[dr]&&\\
&*+[o][F-]{s}\ar@{-}[rr]&&
*+[o][F-]{t}&}\qquad \qquad
\xymatrix@!0{   &*+[o][F-]{p} \ar@{-}[dd]_{uwy}
 \ar@{-}[rr]^{vwx}\ar@{-}[dr]^>>>{\snts  \snts  \snts   uvz}&   
 &*+[o][F-]{q}\ar@{-}[dd]^{uyz}&\\ 
   &   
   &*+[o][F-]{r}\ar@{-}[dr]_<<<{wxy\snts  \snts  \snts  }&&\\
&*+[o][F-]{s}\ar@{-}[rr]_{vxz}&&
*+[o][F-]{t}&}\end{equation*} 
This gives rise to the rainbow graph $R'=(V,E,c',B')$ shown schematically on the right above where  $Y:=\set{u,v,w,x,y,z})$, $B':=\wp(Y)$ and 
the label ``$vwx$'' on the edge $\set{p,q}$ means that 
$c'(\set{p,q})=\set{v,w,x}$, etc.
In fact,   in the notation of \ref{ssa9.4},

\begin{align*} & X_{(p,q)} =\set{p,r,s}   & & X_{(q,p)} =\set{q,t}
 & & X_{(p,s)} =\set{p,r,q}   && X_{(s,p)} =\set{s,t}\\
& X_{(p,r)} =\set{p,s,q} &  & X_{(r,p)} =\set{r,t}
&& X_{(t,q)} =\set{t,r,s}   && X_{(q,t)} =\set{p,q}\\
& X_{(t,s)} =\set{t,r,q}   && X_{(s,t)} =\set{p,s}
&& X_{(t,r)} =\set{t,s,q}   &&H_{(r,t)} =\set{r,p}\\
\end{align*}

As color set, take  the  set $Y$ of partitions of $V$
where 
\begin{align*} &x:=\set{\set{p,r,s},\set{q,t}}
& &y:=\set{\set{p,r,q},\set{s,t}}
&&z:=\set{\set{p,s,q},\set{r,t}}\\
&u:=\set{\set{t,r,s},\set{p,q}}
&&v:=\set{\set{t,r,q},\set{p,s}}
&&w:=\set{\set{t,s,q},\set{r,p}}.
\end{align*}
This gives the rainbow graph $R'$ with edges  labelled by subsets of $Y$ as 
above  e.g. the partitions from $Y$  in which $p,q$ belong
 to complementary sets of the partition are precisely $v$, $w$ and $x$.
 Since   the color labels aren't all  singleton sets, $\G$ is  not  
 a $C_{1}$-graph.

(2)  Similarly,  the even graph $\G$ at the left below gives rise to the rainbow graph $R'$ at the right 
    \begin{equation*} \xymatrix@!0{   &*+[o][F-]{p}
\ar@{-}[r]&
*+[o][F-]{q}\ar@{-}[r]
   &*+[o][F-]{r}\ar@{-}[dd]&\\ 
   &*+[o][F-]{s}\ar@{-}[u]\ar@{-}[d]\ar@{-}[r]
   &*+[o][F-]{t}\ar@{-}[u]\ar@{-}[dr]&&\\
&*+[o][F-]{u}\ar@{-}[rr]&&
*+[o][F-]{v}&}
 \qquad \qquad 
    \xymatrix@!0{   &*+[o][F-]{p}
\ar@{-}[r]^{x}
&*+[o][F-]{q}\ar@{-}[r]^{y}
   &*+[o][F-]{r}\ar@{-}[dd]^{z}&\\ 
   &*+[o][F-]{s}\ar@{-}[u]^{z}\ar@{-}[d]_{y}\ar@{-}[r]^{x}
   &*+[o][F-]{t}\ar@{-}[u]_{z}\ar@{-}[dr]_<<<{y\snts  \snts  }&&\\
&*+[o][F-]{u}\ar@{-}[rr]_{x}&&
*+[o][F-]{v}&}\end{equation*} 
Here,  $Y=\set{x,y,z}$ where 
\begin{align*}&x=\set{\set{p,s,u},\set{q,r,t,v}}&&y=\set{\set{p,q,s,t},\set{r,v,u}}&&
z=\set{\set{p,q,r},\set{s,t,u,v}}\end{align*}
Regarding $\G$ as a subgraph of the cube graph obtained by 
deleting one vertex and the incident edges, edges receive the same label if and only if  
they correspond to parallel edges of the cube. Write $\CR=\ol{\CJ}_{\G}=(G,\L,N)
$. For a morphism $(a,b)$ of $G$, $l_{S}(a,b)$ is the minimum length of a 
path from $a$ to $b$, whereas $l_{N}(a,b)$ is the number of elements of $X$ 
which appear an odd number of times as edge-label in some (equivalently, every) path from 
$a$ to $b$. One easily checks $l_{N}(a,b)=l_{S}(a,b)$ for all vertices $a,b$, 
so $G$ is an (even)  $C_{1}$-graph. 
 
  The Hasse diagram of  weak order at a vertex $a\in V=\set{p,q,r,s,t,u,v}$ is just the graph $\G$ directed by
edge-distance ($v\mapsto l_{S}(a,v)$) from the vertex $a$. There are three isomorphism types of weak orders, corresponding to the three orbits $\set{t}$,$\set{p,r,u}$, $\set{q,s,v}$, 
of the automorphism group of $\G$ on $V$. The corresponding Hasse diagrams are  
\begin{equation*} 
\xymatrix@!0{&&&\\
*+[o][F-]{}&*+[o][F-]{}
&*+[o][F-]{}
\\
*+[o][F-]{} \ar@{-}[u]\ar@{-}[ur]&
*+[o][F-]{} \ar@{-}[ul]\ar@{-}[ur]&
*+[o][F-]{} \ar@{-}[ul]\ar@{-}[u]\\
&*+[o][F-]{}\ar@{-}[ul]
\ar@{-}[u]
\ar@{-}[ur]&&
}\qquad \xymatrix@!0{&*+[o][F-]{}&&\\
*+[o][F-]{}\ar@{-}[ur]&
*+[o][F-]{}\ar@{-}[u]&
*+[o][F-]{}\ar@{-}[ul]\\
*+[o][F-]{}\ar@{-}[u]\ar@{-}[ur]&&
*+[o][F-]{}\ar@{-}[u]\ar@{-}[ul]\\
&*+[o][F-]{}\ar@{-}[ul]
\ar@{-}[ur]&&
}\qquad
\xymatrix@!0{&*+[o][F-]{}&&\\
*+[o][F-]{}\ar@{-}[ur]&&*+[o][F-]{}\ar@{-}[ul]\\
*+[o][F-]{}\ar@{-}[u]&
*+[o][F-]{}\ar@{-}[ul]\ar@{-}[ur]&*+[o][F-]{}
\ar@{-}[u]\\
&*+[o][F-]{}\ar@{-}[ul]
\ar@{-}[u]
\ar@{-}[ur]&&
}
\end{equation*}
respectively, so the weak orders are (complete) meet semilattices. However, $\CR$ is not a rootoid (and equivalently,
$\G$ is not a $C_{2}$-graph) since JOP (more precisely, its consequence \cite[Proposition 4.5(b)]{DyGrp1}) fails; in the third diagram, the middle atom is below the join of the leftmost atom  and the  rightmost atom. On the other hand, the edge graph of a cube (or any hypercube) is an even $C_{2}$-graph.

(3) Consider the even graph $\G$ (a $6$-cycle) shown at the left in the diagram below.
\begin{equation*}\xymatrix@!0{ 
&*+[o][F-]{p}\ar@{-}[rr]\ar@{-}[dl]&
&*+[o][F-]{q}\ar@{-}[dr]&\\
*+[o][F-]{u}&&&
&*+[o][F-]{r}\\
&*+[o][F-]{t}\ar@{-}[rr]\ar@{-}[ul]&
&*+[o][F-]{s}\ar@{-}[ur]&
}\qquad
\xymatrix@!0{ 
&*+[o][F-]{p}\ar@{-}[rr]^{\,\,x'x''}\ar@{-}[dl]_>>>{ z'z''\snts  \snts  \snts  }&
&*+[o][F-]{q}\ar@{-}[dr]^>>>{\snts  \snts  \snts   y'y''}&\\
*+[o][F-]{u}&&&
&*+[o][F-]{r}\\
&*+[o][F-]{t}\ar@{-}[rr] _{x'x''}\ar@{-}[ul]^>>>{y'y''\snts  \snts  \snts  }&
&*+[o][F-]{s}\ar@{-}[ur]_>>>{\snts  \snts   \snts   z'z''}&
}\qquad 
\xymatrix@!0{ 
&*+[o][F-]{p}\ar@{-}[rr]^{x}\ar@{-}[dl]_>>>{z\snts  \snts  }&
&*+[o][F-]{q}\ar@{-}[dr]^>>>{ \snts  \snts  \snts   y}&\\
*+[o][F-]{u}&&&&
*+[o][F-]{r}\\
&*+[o][F-]{t}\ar@{-}[rr] _{x}\ar@{-}[ul]^>>>{y\snts  \snts  \snts  }&
&*+[o][F-]{s}\ar@{-}[ur]_>>>{\snts  \snts  \snts   z}&
}
\end{equation*}

The corresponding rainbow graphs $R$ and $R'$ are shown to the right, 
using  as  colors for $R$ the set
 $X=\set{x',y',z',x'',y'',z''}$  of subsets  of $V:=\set{p,q,r,s,t,u}$
where 
\begin{align*} x'&=\set{q,r,s} &y'&=\set{r,s,t}
 & z'&=\set{s,t,u}\\ x''&=\set{t,u,p} & y''&=\set{u,p,q}
&z''&=\set{p,q,r}
\end{align*}
and for $R'$ the color set $X'=\set{x,y,z}$ where
\begin{align*} x&=\set{x',x''} & y&=\set{y',y''} & z&=\set{z',z''}\end{align*}
The Hasse diagram of weak order at any object (vertex of $\G$) is isomorphic to $\G$ directed by distance from that vertex:
\begin{equation*}\xymatrix@!0{ &*+[o][F-]{}\ar@{-}[dr]\ar@{-}[dl]&\\
*+[o][F-]{}\ar@{-}[d]&
&*+[o][F-]{}\ar@{-}[d]\\
*+[o][F-]{}&&*+[o][F-]{}&\\
&*+[o][F-]{}\ar@{-}[ul]\ar@{-}[ur]&&
}\end{equation*}
It is easy to see that $\G$ is an even $C_{2}$-graph. The corresponding rootoids $\CJ_{\G}$, $\ol{\CJ}_{\G}$ are   complete,  connected and simply connected, and preprincipal,  and $\ol{\CJ}_{\G}$ is principal. 
The case of an arbitrary  cycle $\G$ of  even length $m\geq 4$  is analogous.

(4) Consider the graph $\G$ (a $5$-cycle) shown at left in the diagram below:
\begin{equation*}\xymatrix@!0{ 
&*+[o][F-]{p}\ar@{-}[rr]\ar@{-}[dl]&
&*+[o][F-]{q}\ar@{-}[dr]&\\
*+[o][F-]{t}&&&
&*+[o][F-]{r}\\
&&*+[o][F-]{s}\ar@{-}[ull]\ar@{-}[urr]&
}\qquad\qquad
\xymatrix@!0{ &*+[o][F-]{p}\ar@{-}[rr]^{ux}\ar@{-}[dl]_>>>{zv\snts  \snts  \snts  }&
&*+[o][F-]{q}\ar@{-}[dr]^>>>{\snts  \snts  \snts   vy}&\\
*+[o][F-]{t}&&&&*+[o][F-]{r}\\
&&*+[o][F-]{s}\ar@{-}[ull]^{yu}\ar@{-}[urr]_{xz}&
}
\end{equation*}
The corresponding rainbow graph $R$ is shown to the right
with color set $X=\set{x,y,z,v,w}$ where 
\begin{align*} x&=\set{q,r} &y&=\set{r,s}
 & z&=\set{s,t}& u&=\set{t,p} & v&=\set{p,q}
\end{align*}
Since $\G$ is not even, the  rainbow graph $R'$ is not defined.
The Hasse diagram of weak order at any object (vertex of $\G$) is 
\begin{equation*}\xymatrix@!0{ 
*+[o][F-]{}\ar@{-}[d]&&*+[o][F-]{}\ar@{-}[d]\\
*+[o][F-]{}&&*+[o][F-]{}&\\
&*+[o][F-]{}\ar@{-}[ul]\ar@{-}[ur]&&
}\end{equation*}
It is easy to see that $\G$ is a $C_{2}$-graph.
The corresponding rootoid $\CJ_{\G}$ is    connected and
 simply connected, but not preprincipal and not complete. The case of an arbitrary cycle of odd length $m\geq 3$ is analogous.

 (5) Let  $\CT$  be the universal covering of  the rootoid $\CC_{(W,S)}$ of a  dihedral Coxeter system of order $2m$ ($m\geq 2$). As is easily checked, $\CT\cong \ol{\CJ}_{\G}$ where the simple graph  $\G$ is a cycle of length $2m$ if $m$ is finite and $\G$ is a doubly infinite path if $m=\infty$. From the definitions in \cite{CH} and \cite{HY}, it is easily seen  that $\CT$ has a Coxeter groupoid as underlying groupoid (with canonical generating set as Coxeter groupoid given by the simple generators of $\CT$ as principal rootoid). In turn, $\CT$  has a covering quotient isomorphic to the principal protorootoid  $\ol{\CJ}_{(G,X)}$ where $G$ is a cyclic group $G=\mpair{x}$ of order $2m$ and $X=\set{x,x^{-1}}$. However $G$ is not a Coxeter groupoid. 
  \end{exmp*}   

 \subsection{Protorootoid of an oriented matroid}\label{ssa9.6}
A (possibly infinite) oriented matroid in the sense of \cite{BF}  is a triple
$\mpair{E,*,c}$ where $E$ is a set, $x\to x^{*}\colon E\to E$ is a function and $c\colon \wp(E)\to \wp(E)$ is a closure operator, satisfying certain axioms which are not repeated here (in particular, $c$ is a convex closure operator in a suitable sense for signed sets).  For a  motivating example, see Remark (1) below.

Let $M=\mpair{E,*,c}$ be an oriented matroid in the sense of \cite{BF}.
Assume for simplicity that  the following extra conditions hold: $E\neq \eset$, $c(\eset)=\eset$, and  $x^{*}\neq x$ and $c(x)=\set{x}$ for $x\in E$.
 The axioms and extra conditions  imply that setting $-x:=x^{*}$  defines a free action of the sign group on $E$, making $E$ an indefinitely signed set.
A subset $H$ of $E$ is called a \emph{hemispace} if $H=c(H)=E\sm (-H)$.
It is shown in op. cit. that hemispaces exist.

Define a signed groupoid-set $(G,\Phi)$ as follows.
 Let $G$ be the connected, simply connected groupoid  with the set of 
 hemispaces as its set of objects.  For $H$, $K$ in $\ob(G)$, let
  $\lrsub{H}{f}{K}\in \lrsub{H}{G}{K}$ be the unique morphism $K\to H$ in $G$. Set $\lsub{H}{\Phi}:=E$ regarded as definitely signed set with the above action of $\set{\pm}$ and with the hemispace  $\lrsub{H}{\Phi}{+} :=H$ as its subset of positive elements.
 Define $\Phi\colon G\to \setc_{\pm}$ to be  a functor with 
 $\Phi(H):=\lsub{H}{\Phi}$ for $H\in \ob(G)$,  and such that  for any $x\in \lsub{K}{\Phi}=E$,  one has $(\Phi(\lrsub{H}{f}{K}))(x)=x$. Thus, the composite $\Phi\colon G\to \setc_{\pm}\to \setc$ of $\Phi$ with the forgetful functor $\setc_{\pm}\to \setc$ is the constant functor with  value $E$.
  
Note that  $\Phi_{\lrsub{H}{f}{K}}=H\cap K\seq H=\lrsub{H}{\Phi}{+}$.
  It is easily seen that this defines a faithful, complemented signed groupoid-set
  $J_{M}:=(G,\Phi)$.    Since $J_{M}$ is  complemented, if it is rootoidal then it is complete. The associated set protorootoid and protorootoid
are denoted as $\CJ'_{M}:=\mathfrak{L}(J_{M})$ and
$\CJ_{M}:=\mathfrak{I}(\mathfrak{L}(J_{M}))$

\begin{rem*} (1) Given a subset $E$ of a real vector space closed under multiplication by $-1$, define $*$ by $x^{*}=-x$, for $x\in E$, and $c$ by
$c(X):=\real_{\geq 0}X\cap E$ where $\real_{\geq 0}X$ is the cone of non-negative linear combinations of elements of $X$; then $M=(E,*,c)$ is an oriented matroid. The additional conditions hold if $E\neq \eset$,  $0\not\in E$ and $x,\l x\in E$ with $\l\in \real_{>0}$ implies $\l=1$.
The above construction  therefore attaches a signed groupoid-set $J_{M} $ to $(V,E)$ when these conditions all hold.

 (2)    Oriented geometries (see \cite{BjEZ}) are special oriented matroids $M$ with finite underlying sets $E$.  Examples include the oriented matroid   $M=(E,*,c)$ naturally associated (as in (1)) to the (finite)  set  $E$ of unit normals 
to the hyperplanes of a real, central hyperplane arrangement $\CH$ as in 
\cite[6.11]{DyGrp1}; oriented geometries of this type   are called realizable.  Using  facts established in \cite{BjEZ}, it can be shown   by an argument similar  to that in   \cite[6.11]{DyGrp1} that the signed groupoid-set $J_{M}$ attached to a (non-empty)  oriented geometry $M$   is rootoidal if and only if  the oriented geometry is simplicial. 

(3) If the non-empty oriented geometry $M$ is simplicial, then $J_{M}$  is  a finite, connected, simply connected, complete,  abridged, principal, signed groupoid-set. Also, any signed groupoid-set $(G,\Phi)$ with all these properties can be shown to   give  a 
Coxeter groupoid $G$ with root system $\Phi$  of a more 
    general type than considered in \cite{HY} and \cite{CH} (see \ref{ssa8.11}); further,   
    for $a\in \ob(G)$, $\lrsub{a}{\Phi}{+}$  admits a natural convex closure operator  in the sense of \cite{EdJ}. Note these are  closure operators for \emph{unsigned sets} and are not known to all be induced by a single oriented matroid closure operator. It  is an open question whether every  finite, connected, simply connected, complete,  abridged, principal, signed groupoid-set  $(G,\Phi)$ is isomorphic to $J_{M}$ for  some oriented simplicial geometry $M$ (i.e. whether $\Phi$ is realizable in oriented matroids).

  For non-realizable
simplicial oriented geometries $M$, it is expected  that in general  the  root system  of $J_{M}$
      is not   realizable in  real vector spaces in any 
      natural   way (even  more generally than  as in  \cite[6.6 or 6.11]{DyGrp1} or in \cite{HY}, \cite{CH}).  Finding an example to show this remains a non-trivial open problem, however, because  the most
       natural notions of realizability for  root systems 
      do not exactly correspond to that of realizability of  oriented matroids.      
 Detailed discussion and proof of the  claims in Remarks (2)--(3) is deferred to  subsequent papers. 
   
    (4)  In the setting of  \cite[Example 6.12(2)]{DyGrp1}, there is an  infinite oriented matroid $M=(E,*,c)$ naturally associated as in (1) to the unit sphere  $E=\mathbb{S}\seq V$. 
    The    hemispaces of $M$ are precisely the subsets of $\mathbb{S}$ which are the  positive elements of $\mathbb{S}$ with respect to some  vector space total ordering of $V$. It follows readily 
    that the  protorootoid $\CJ_{M}$ is isomorphic to the universal cover of    $\CJ_{O(V)}$  and hence  is a  rootoid by \cite{Pilk} and \cite[Lemma 4.9]{DyGrp1}.  The  class of possibly infinite oriented matroids $M$ for which $\CJ_{M}$  is    a (necessarily complete) rootoid  may therefore  be regarded as a non-vacuous   extension of the class of simplicial oriented geometries. Examples    of such $M$ satisfying stringent additional conditions  (e.g. such that $\CJ_{M}$ is  pseudoprincipal, saturated and regular, see \cite[Definition 3.3]{DyGrp1})  are known to exist,  and  should be particularly interesting. 
         \end{rem*}

        \subsection{Protorootoid of a protomesh}
        \label{ssa9.7}  
   Let $P=(R,L)$ be a protomesh (see \cite[2.8]{DyGrp1} for the definition).  Assume $L\neq \eset$ to avoid trivialities.  
         There is  an associated rainbow graph 
   $T=T_{P}=(V,E,c,R)$ as follows. 
   The graph $\G=(V,E)$ is the complete graph on vertex set $L$ i.e. 
   $V:=L$ and $E:=\mset{\set{A,B}\mid A,B\in L, A\neq B}$.
   The  labelling $c\colon E\to R$ is given by $c(\set{A,B})=A+B$ where $+$ denotes addition in $R$. Define the protorootoid $\CJ_{T}=(G,\L,N)$ of  the rainbow graph $T$.  

    Concretely, $\CJ_{T}=(G,\L,N)$ where $G$ is a connected, 
 simply connected groupoid with $\ob(G)=L$, $\L$ is the constant 
 functor $G\to \bringc$ with constant value $R$, and $N$ is the $G$-cocycle (in fact, coboundary) for $\L$ such that for $f\in \lrsub{A}{G}{B}$,
 where $A,B\in L$,  one has  $N(f)=A+B$. It is obvious from this description that $\CJ_{T}$ is faithful.

\begin{defn*} \begin{num}\item  The protorootoid of the protomesh $P=(R,L)$  is defined to be  $\CJ_{P}:=\CJ_{T_{P}}=(G,\L,N)$ as above. 
   \item  The protomesh $P$ is called  a \emph{mesh} if $\CJ_{P}$ is a  rootoid.
 \end{num}\end{defn*}  
 
We say that the protomesh $P=(R,L)$ is defined over $R$.  If  $R$ is  the Boolean ring of subsets of a set, one also defines the set protorootoid  $\CJ'_{P}:=\CJ'_{T_{P}}$ and signed groupoid-set $J_{M}:=J_{T_{P}}$ of  $P$.
 Terminology for rootoids and protorootoids may be  extended to meshes and protomeshes  as in \cite[5.6]{DyGrp1}, by saying that a protomesh $P$ has some property of rootoids if $\CJ_{P}$ has that property.
 For example, a mesh $P$ is said to be \emph{complete} (resp., \emph{principal}) if  $\CJ_{P}$ is a complete (resp., principal) rootoid.

  \subsection{Characterization of meshes}
  \label{ssa9.8}    
    The following is obvious from the definitions.
    \begin{prop*} \begin{num}\item    A  protomesh $(R,L)$ is a mesh  
if and only if  for all  $\G\in L$, $\G+L$ is a complete meet semilattice (in the 
order induced by the natural order of $R$) and the protomesh
 $(R,\G+L)$ satisfies the $JOP$.
 \item   A mesh $(R,L)$ is complete if for each
  $\G\in L$, $\G+L$ has a maximum element i.e. is a complete 
  lattice.\end{num}
\end{prop*}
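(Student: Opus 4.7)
The plan is a direct unpacking of the construction $\CJ_P=(G,\L,N)$ so as to read off the rootoid axioms at each object as conditions on a suitable translate of the protomesh $(R,L)$.

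First, for each $\G\in L=\ob(G)$, I will identify the weak order on $\lsub{\G}{G}$ with the poset $\G+L\seq R$ equipped with the natural Boolean-ring order. Because $\L$ is constant with value $R$ and the $G$-action on $\L$ is trivial, one has $N((\G,B))=\G+B$ and the cocycle identity reduces to the tautology $\G+C=(\G+B)+(B+C)$. The map $(\G,B)\mapsto \G+B$ is then a bijection $\lsub{\G}{G}\to \G+L$ (faithfulness of $\CJ_P$ was noted in \ref{ssa9.7}), and the definition of the weak order makes it an order isomorphism between $\lsub{\G}{\leq}$ and $\G+L$ ordered by $R$. Moreover, the entire local structure of $\CJ_P$ at the basepoint $\G$ is manifestly isomorphic to that of $\CJ_{(R,\G+L)}$ at its basepoint $0=\G+\G$: translation by $\G$ intertwines objects, morphisms and cocycle values.

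Second, a protorootoid is a rootoid precisely when, at each object, the weak order is a complete meet semilattice and the JOP holds. Applied to $\CJ_P$ at each $\G\in L$, the first condition is, via the identification above, exactly the requirement that $\G+L$ be a complete meet semilattice in $R$; and the second condition, by the very definition of JOP for a protomesh together with the local-at-$\G$ isomorphism just noted, is exactly the assertion that $(R,\G+L)$ satisfies the JOP. This gives (a).

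For (b), $\CJ_P$ is complete precisely when each weak order is a complete lattice. Under the identification with $\G+L$, and given by (a) that $\G+L$ is already a complete meet semilattice, the presence of a maximum element in $\G+L$ promotes it to a complete lattice, yielding the claimed sufficient condition. The only substantive point of the argument is the dictionary in the first step; once it is in place, both (a) and (b) are immediate translations of axioms, consistent with the author's remark that the proposition is obvious from the definitions, so no real obstacle is anticipated.
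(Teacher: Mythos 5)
Your argument is correct and is exactly the unpacking the paper intends: the paper offers no written proof beyond declaring the proposition obvious from the definitions, and your dictionary $(\G,B)\mapsto \G+B$ identifying the weak order at $\G\in\ob(G)$ with $\G+L\seq R$ (using faithfulness from \ref{ssa9.7}) is precisely how that "obvious" translation of the rootoid and completeness axioms goes. Nothing further is needed.
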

    \begin{exmp*}  Let $R$ be a complete Boolean algebra.
    By \cite[Lemma 4.1]{DyGrp1}, the protomesh $(R,R)$ satisfies the JOP. Let $L$ be an ideal of $R$ as ring.  It is easy to see that the protomesh $(R,L)$ inherits the JOP from $(R,R)$.  Since $\G+L=L$ for all $\G\in L$, the proposition implies that   
         that $(R,L)$ is a mesh, and  so $\CT:=\CJ_{(R,L)}$ is a rootoid.
         
         It is trivial that $\CT$ is regular and  saturated, and that it is complete if $L=R$. It is also pseudoprincipal.  To prove this, it suffices to check that if $A,B\in L$ with $A\neq \eset$, there exists 
         $X\in L$ with $\eset\sneq X\seq A$ and either $X\seq B$ or $X\cap B=\eset$. Since $\eset\neq A=(A\cap B)\cup (A\cap B^{\cp})$, one may take $X$ to be whichever of $A\cap B$, $A\cap B^{\cp}$ is non-empty.
            \end{exmp*}

  \subsection{} \label{ssa9.9}
The definitions and \cite[Proposition 2.9]{DyGrp1}  immediately imply the following.
\begin{prop*} Let $\CR=(G,\L,N)$ be a faithful protorootoid with big weak order $\CL$. Then the following conditions are equivalent:
\begin{conds}\item $\CR$ is a (complete) rootoid.
\item For each $a\in \ob(G)$, $(\lsub{a}{\L},\lsub{a}{\CL})$ is a (complete) mesh.
\item For all $b\in \ob(G)$, there exists $a\in \ob(G[b])$  such that
$(\lsub{a}{\L},\lsub{a}{\CL})$ is a (complete) mesh.\end{conds}\end{prop*}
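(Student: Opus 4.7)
The plan is to unpack the definitions and apply Proposition \ref{ssa9.8} together with \cite[Proposition 2.9]{DyGrp1} to reduce everything to a local condition at each object.

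First I would set up the dictionary. By Proposition \ref{ssa9.8}, the protomesh $(\lsub{a}{\L},\lsub{a}{\CL})$ is a (complete) mesh exactly when, for every $\G\in \lsub{a}{\CL}$, the poset $\G+\lsub{a}{\CL}$ is a complete meet semilattice (resp., a complete lattice) in the order inherited from $\lsub{a}{\L}$, and the protomesh $(\lsub{a}{\L},\G+\lsub{a}{\CL})$ satisfies JOP. On the other hand, \cite[Proposition 2.9]{DyGrp1} characterizes when a faithful protorootoid is a (complete) rootoid via the big weak orders $\lsub{c}{\CL}$ at each object $c$ being complete meet semilattices (resp., complete lattices) together with an appropriate JOP.

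The crucial observation, which is the real content of the argument, is that the translates $\G+\lsub{a}{\CL}$ as $\G$ ranges over $\lsub{a}{\CL}$ are precisely the big weak orders $\lsub{b}{\CL}$ for $b\in \ob(G[a])$, transported into $\lsub{a}{\L}$. More explicitly, given $g\in \lrsub{a}{G}{b}$ with $N(g)=\G$, the cocycle identity $N(gh)=N(g)+g(N(h))$ together with faithfulness yields a canonical order isomorphism between $\G+\lsub{a}{\CL}$ (viewed inside $\lsub{a}{\L}$) and $\lsub{b}{\CL}$ (viewed inside $\lsub{b}{\L}$). Through this identification, complete meet semilattices correspond to complete meet semilattices, maxima to maxima, and JOP at one corresponds to JOP at the other. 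I would verify these matchings by straightforward bookkeeping with the cocycle formula; this is the main technical step, but it is routine.

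With this dictionary in hand, the implication (i)$\iff$(ii) is immediate: the requirement that every translate at every object be a complete meet semilattice with JOP is the same statement as the requirement that every big weak order at every object in every component be a complete meet semilattice with JOP, which by \cite[Proposition 2.9]{DyGrp1} is the (complete) rootoid condition. The implication (ii)$\Rightarrow$(iii) is trivial. For (iii)$\Rightarrow$(ii), one fixes $b\in \ob(G)$ and an object $a\in \ob(G[b])$ at which $(\lsub{a}{\L},\lsub{a}{\CL})$ is a (complete) mesh; by the translate correspondence, this property transfers to every $a'\in \ob(G[b])$, hence to $b$ itself. Since $b$ was arbitrary, condition (ii) follows.

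The main (indeed only) obstacle is making the order isomorphism $\G+\lsub{a}{\CL}\cong \lsub{b}{\CL}$ precise and checking it is compatible with joins, meets, maxima, and JOP; once that is in place, the rest is a direct translation of \cite[Proposition 2.9]{DyGrp1} through Proposition \ref{ssa9.8}.
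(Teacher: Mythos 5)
Your proposal is correct and follows essentially the route the paper intends: the paper treats this as immediate from the definitions together with \cite[Proposition 2.9]{DyGrp1}, and your use of Proposition \ref{ssa9.8} plus the cocycle identity $g(\lsub{b}{\CL})=N(g)+\lsub{a}{\CL}$ (transported by the Boolean ring isomorphism $\L(g)$) is exactly the dictionary that makes "immediate" precise. The transfer of the mesh property between objects of one component, giving (iii)$\Rightarrow$(ii), is the same observation, so there is no substantive divergence from the paper's argument.
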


\subsection{} \label{ssa9.10}
This section concludes  with the following Table 1 which summarizes previously introduced  notation for objects of the  categories $\prootc$, $\setproot$ and $\grpdset$  attached to various special structures.   \begin{table}[htbp]
 \caption{Notation for  protorootoids, set protorootoids and signed groupoid sets
 attached to special structures.}
   \begin{center}
\begin{tabular}{| r | c ||  c | c | c |}\hline
\multicolumn{2}{ | c ||}{\vphantom{$\ol{S}$}Structure}&$\prootc$ &$\setproot$& $\grpdset$ \\ \hline\hline
\vphantom{$\underline{{\ol{{\CC}'_{(W,S)}}} }$}Coxeter system& $(W,S)$&$\CC_{(W,S)}$&$\CC'_{(W,S)} $ & $C_{(W,S)}$\\ \hline
\vphantom{$\underline{{{\ol{\CC_\CH'}}} }$}Real central  simplicial arrangement &$\CH$&$\CC_{\CH}$&$\CC_{\CH}'$& $C_{\CH}$\\ \hline
\vphantom{$\underline{{\ol{{\CC}'_{(O,V)}}} }$}Compact real orthogonal group &$O(V)$&$\CC_{O(V)}$&$\CC_{O(V)}'$& $C_{O(V)}$\\  \hline 
\vphantom{$\underline{{\ol{{\CJ}'_{(G,S)}}} }$}$C_{0}$-system
 & $(G,S)$&$\CJ_{(G,S)}$&$\CJ_{(G,S)}'$& $J_{(G,S)}$\\ 
 \hline \vphantom{$\underline{{\wh{{\CJ}'_{(G,S)}}} }$}Even $C_{0}$-system  &$(G,S)$&$\ol{\CJ}_{(G,S)}$&$\ol{\CJ}\,'_{(G,S)}$& $\ol{J}_{(G,S)}$\\ \hline
\vphantom{$\underline{{\ol{{\CJ}_{(G,S)}}} }$}$C_{0}$-graph& $\G$&$\CJ_{\G}$&$\CJ_{\G}'$& $J_{\G}$\\ \hline
\vphantom{$\underline{{\ol{\ol{\CJ}_{(G,S)}}} }$}Even $C_{0}$-graph &$\G$&$\ol{\CJ}_{\G}$&$\ol{\CJ}\,'_{\G}$& $\ol{J}_{\G}$\\ \hline
\vphantom{$\underline{{\ol{{\CJ}'_{X}}} }$}Rainbow graph& $X$&$\CJ_{X}$&($\CJ_{X}'$)& ($J_{X}$)\\ \hline
\vphantom{$\underline{{\ol{{\CJ}'_{P}}} }$}Protomesh& $P$ &$\CJ_{P}$&($\CJ_{P}'$)& ($J_{P}$)\\ \hline
\vphantom{$\underline{{\ol{{\CJ}_{(M}}} }$}Possibly infinite  oriented matroid& $M$&${\CJ}_{M}$&${\CJ}'_{M}$& ${J}_{M}$\\ \hline
\end{tabular}\end{center}
\end{table}

 Those objects  denoted by notation of the form  $\CC_{U}$, $\CC'_{U}$, $C_{U}$ are rootoidal for any structure $U$ of the indicated type, while those denoted $\CJ_{U}$, $\CJ'_{U}$, $J_{U}$,
$\ol{\CJ}_{U}$, $\ol{\CJ}\,'_{U}$ or  $\ol{J}_{U}$
  may or may not be rootoidal depending on the specific structure $U$.
  The abridgement of a  protorootoid  denoted $\ol{\CJ}_{U}$ is isomorphic to the abridgement of  $\CJ_{U}$.
  Other regularities in the notation should be apparent from the table. The bracketed entries for a rainbow graph or  protomesh are  only defined  if   it  is defined over a Boolean ring $\CP(Y)$ of sets.

 %

\section{Completion}\label{sa10}
\subsection{Conjectural complete rootoid of a Coxeter system} 
\label{ssa10.1}
      Suppose $(W,S)$ is a Coxeter system and 
   $\CC=\CC_{(W,S)}=(W,\L,N)$ is the associated rootoid.
    The $W$-set corresponding to the functor $\L$ is $\wp(T)$, 
    with $W$-action induced by the   action of $W$ on $T$ by conjugation. The  (unique)  
    weak order is  $\CL=\mset{N(w)\mid w\in W}$. One therefore 
    has an associated mesh $P:=(\wp(T),\CL)$. In turn, this 
    mesh has an associated rootoid $\CR:=\CJ_P$, 
    which is complete if and only if  $W$ is finite.  The rootoid $\CR$ is easily seen to be the universal covering rootoid of $\CC$.    
 Let    $\wh \CL\seq \wp (T)$ be  the set of all initial sections of 
 reflection orders of the reflections $T$ of $(W,S)$,  
    as defined in \cite{DyHS1}. Then from \cite{DyHS1},
    \begin{equation}\label{eqa10.1.1} \CL=\mset{\G\in \wh{\CL}\mid \vert 
    \G\vert \text{ \rm is finite}}.\end{equation} One has  $\wh{\CL}=\CL$ if and only if  $W$ 
    is finite.  Define the  protomesh $\wh P:=(\wp(T),{\wh{\CL}}\ )$ and the 
    corresponding protorootoid  $\wh \CR:=\CJ_{\wh P}$. The identity 
    map $i:=\Id_{\wp(T)} $ is a morphism of protomeshes $i\colon P\to \wh P$.
    
\begin{conj*}  The protorootoid  $\wh \CR$ is a   complete, 
regular, saturated,   pseudoprincipal 
rootoid. Equivalently,  $\wh P$ is a complete, regular, saturated, pseudoprincipal mesh.
\end{conj*}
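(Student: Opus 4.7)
The plan is to split the conjecture into two halves: first, the completeness, regularity, and saturation assertions, which (as already noted in the excerpt) should follow from conjectures already formulated in \cite{DyWeak}; and second, pseudoprincipality, the new ingredient introduced here. Since the statement is a conjecture, the ``proof'' will really be a reduction strategy indicating what must be supplied.

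The first step is to apply Proposition \ref{ssa9.8}, which reduces the assertion that $\wh P$ is a complete mesh to showing, for every $\Gamma\in\wh\CL$, that the translate $\Gamma+\wh\CL\seq\wp(T)$ is a complete lattice under inclusion and that the protomesh $(\wp(T),\Gamma+\wh\CL)$ satisfies the JOP. The central conjecture of \cite{DyWeak} asserts that $\wh\CL$ is closed under arbitrary intersections and under the map $\Gamma\mapsto T\sm\Gamma$, making it a complete ortholattice. The symmetric-difference map $\Delta\mapsto\Gamma+\Delta$ is an involution on $\wp(T)$, and using the reformulations of $\wh\CL$ in terms of twisted weak orders and biclosed sets from \cite{DyHS1} one expects it to restrict to a bijection $\wh\CL\to\Gamma+\wh\CL$. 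Transporting the ortholattice structure through this bijection yields completeness of each $\Gamma+\wh\CL$, and the JOP would then be extracted just as in Example \ref{ssa9.8} using \cite[Lemma 4.1]{DyGrp1}. Saturation is immediate because each element of the big weak order is by construction an element of $\wh\CL$, and regularity should follow from the compatibility of the orthocomplement $\Gamma\mapsto T\sm\Gamma$ with the cocycle $N$ of $\wh\CR$.

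For pseudoprincipality, the plan is to mimic the argument in Example \ref{ssa9.8}: it will suffice to show that for any $A,B\in\wh\CL$ with $A\neq\eset$ there exists $X\in\wh\CL$ with $\eset\sneq X\seq A$ and either $X\seq B$ or $X\cap B=\eset$. Closure of $\wh\CL$ under intersections gives the sub-poset $\{Y\in\wh\CL\mid Y\seq A\}$ a complete ortholattice structure, so the goal becomes producing an atom $X$ of this sub-ortholattice sitting below $A\cap B$ or $A\sm B$. The natural candidates come from dihedral and finite-rank reflection subgroups: for any reflection $t\in A$, the initial sections of reflection orders of a suitably chosen finite-rank reflection subgroup containing $t$ furnish explicit small elements of $\wh\CL$ by finite Coxeter-group computations, and by refining inside these one expects to extract the required $X$.

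The decisive obstacle will be this last step. In infinite-rank Coxeter systems and in affine Weyl groups the elements of $\wh\CL\sm\CL$ can be quite intricate, and the dominance order on $T$ interacts nontrivially with the biclosed-set structure. Without a structural description of minimal elements of $\wh\CL$ going beyond the mere closure properties asserted in \cite{DyWeak}, pseudoprincipality does not appear to follow purely formally from an ortholattice reformulation; it would instead seem to require a finer decomposition of infinite initial sections into ``elementary'' pieces controlled by reflection subgroups, and this is the main technical hurdle I anticipate.
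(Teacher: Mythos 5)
There is nothing in the paper for your argument to be measured against: the statement is left as an open conjecture, and the paper's only commentary is exactly the reduction you open with --- every assertion except pseudoprincipality would follow from the unproven conjectures (2.2) and (2.5) of \cite{DyWeak}, the full conjecture holds trivially when $W$ is finite (since then $\wh\CR=\CR$), it can be checked for the infinite dihedral case, and it is not known for any other irreducible Coxeter system. Your proposal, as you yourself say, does not close the conjecture either; what it does correctly is isolate pseudoprincipality as the genuinely new content and identify the lack of a structural description of the elements of $\wh\CL\sm\CL$ as the decisive obstacle, which agrees with the paper's own assessment. (The ``equivalently'' clause relating $\wh\CR$ and $\wh P$ is purely definitional, via the conventions of \ref{ssa9.7}, and needs no argument.)

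One technical caution about the first half of your sketch. In applying Proposition \ref{ssa9.8} you propose to ``transport the ortholattice structure'' of $\wh\CL$ through the map $\Delta\mapsto\Gamma+\Delta$. That map is a bijection of sets onto $\Gamma+\wh\CL$, but it does not carry the inclusion order on $\wh\CL$ to the order that $\Gamma+\wh\CL$ inherits from $\wp(T)$: the pulled-back order is the $\Gamma$-twisted order, and the assertion that these twisted orders are complete (ortho)lattices satisfying the JOP is an additional conjecture --- this is precisely why \ref{ssa10.1} cites two conjectures of \cite{DyWeak} rather than one --- not a formal consequence of $\wh\CL$ itself being closed under intersections and under $\Gamma\mapsto T\sm\Gamma$. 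For the pseudoprincipality step, your criterion (for $A,B\in\wh\CL$ with $A\neq\eset$, produce $X\in\wh\CL$ with $\eset\sneq X\seq A$ and $X\seq B$ or $X\cap B=\eset$) is the right shape, modelled on Example \ref{ssa9.8}, but producing such an $X$ from initial sections attached to finite-rank reflection subgroups is exactly what is not known; so your text should be presented as a statement of strategy and of where the difficulty lies, not as a proof, and it should make explicit that even the ``first half'' remains conditional on the \cite{DyWeak} conjectures in their twisted form.
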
    
Except  for the statement that $\wh\CR$ is pseudoprincipal,  all parts of the conjecture    follow from the unproven conjectures   \cite[(2.2) and  (2.5)]{DyWeak}, which motivated many of the  concerns of  these papers.  The conjecture holds trivially for
 finite $(W,S)$ from the results in \cite{DyGrp1} since then $\wh \CR=\CR$, and it is not difficult to check   it holds   if $(W,S)$ is infinite dihedral. It is not known for any  other irreducible Coxeter systems. 
 Subsequent papers will show that  complete,  regular, saturated,   pseudoprincipal  rootoids have some  very favorable properties.
     \subsection{Completion of rootoids} \label{ssa10.2}
         Let $\CR=(G,\L,N)$ be a rootoid. 
A \emph{completion} of $\CR$ is a complete rootoid $\CT=(H,\G,M)$ together with a morphism $(\a,\nu)\colon \CR\to \CT$ of rootoids such that the following conditions (i)--(iii) hold:
\begin{conds} \item  $\a=\a'\a''$  where $\a'\colon G'\to H$ is the inclusion morphism  of a subgroupoid  $G'$ of $H$ and $\a''\colon G\to G'$ is an isomorphism. 
\item   For all $a\in \ob(G)$, $\nu_{a}\colon \lsub{a}{\L}\to \lsub{\a(a)}{\G}$ is injective.
\item  For each $a\in \ob(G)$ the  inclusion map $\lsub{a}{\a}\colon \lsub{a}{G}\to \lsub{\a(a)}{H}$ induced by $\a$ is an embedding of the weak right order of $G$ at $a$ as an order ideal of the  weak right order of $H$ at $\a(a)$. 
\end{conds}
\begin{rem*} A (different) definition of completion, more similar to standard notions of completions of semilattices, would be  obtained by replacing ``order ideal'' by ``join-closed meet subsemilattice'' in (iii).\end{rem*}

       \subsection{Completions of meshes}
     \label{ssa10.3}  
    A variant  for meshes of the notion of completion seems particularly simple. Define a completion of a mesh
     $P=(\L,\CL)$ to be a morphism $i\colon P\to \wh P$ of meshes  satisfying the conditions (i)--(iii) below:       \begin{conds}\item $\wh P=(\wh\L,\wh \CL\ )$ is a complete mesh
     \item $i$ is injective as a function $i\colon \L\to \wh \L$. Let $\CL':=\mset{i(\G)\mid \G\in \CL}\seq \wh \CL$.
     \item For each $\G\in \CL' $, the subset $\G+\CL'$ of  $\wh \L$ is an
     order ideal of $\G+\wh \CL$ (both in the order induced by the Boolean ring $\wh \L$). \end{conds}
         It is easily seen that  a completion $i$ as above induces a completion  $\CJ_{P}\to \CJ_{\wh P}$ of the associated rootoids.

 \begin{exmp*} (1) Let $P=(\L,\CL)$ where $\L=\wp(\set{x,y,z})$ and $\CL:=\set{\eset, \set{x},\set{y},\set{z}}$. It is easy to see that $P$ is a mesh 
 which has no completion $i\colon P\to \wh P$  such that 
 the map of Boolean rings underlying $i$ is an isomorphism.
 However, let $\wh \L:=\wp(\set{x,y,z,w})$, \begin{equation*} \wh \CL:=\CL\cup\mset{\set{x,y,z,w}\sm \G\mid \G\in \CL}\end{equation*} and $i\colon \L\to \wh\L$ be the natural inclusion homomorphism of Boolean rings (which doesn't preserve identity elements, but isn't required to).   It is easily checked that  $\wh P:=(\wh L,\wh \CL)$ is a mesh and  $i\colon P\to \wh P$ is a completion of $M$.    
 
  (2)    It is not  even conjectural  whether, for general Coxeter systems $(W,S)$,  $\CC_{(W,S)}$ has a completion.  However,   Conjecture \ref{ssa10.1} and \eqref{eqa10.1.1}   imply that the morphism $i\colon P\to \wh P$ defined in \ref{ssa10.1} is a completion of $P$ (with the additional stringent property that the map of Boolean rings underlying $i$ is an isomorphism). In particular, the conjecture implies that  the universal covering  rootoid $\CJ_{ P}$ of $\CC_{(W,S)}$ has a completion.  
        \end{exmp*}

  \begin{rem*} (1) Conjecture \ref{ssa10.1} would imply  that for any  non-empty tree $\G$, $\ol{\CJ}_{\G}$ has a completion, by embedding $\G$ as a subtree of the Cayley graph of  a universal Coxeter system $(W,S)$   (i.e. one with no braid relations) of sufficiently high  rank $\vert S\vert$ and taking a completion of the  universal covering  rootoid $\CJ_{ P}$ of $\CC_{(W,S)}$. 
  Note that the rootoids  $\ol{\CJ}_{\G}$ for non-empty trees $\G$ are the ``least complete'' of connected, simply connected, principal rootoids, in the sense that the only non-empty subsets of their weak orders which have upper bounds are finite chains (totally ordered subsets).          These  observations suggest the question of whether   an arbitrary   principal  mesh $P$ has a   completion $\wh P$.
  
     (2)  If rootoid completions as defined above do no exist in useful generality, it may be an indication that a weaker or different notion of completion is more appropriate or  even that  the axioms for rootoids should be  strengthened (in some  unknown way). 

  (3)  Subsequent papers will make use  of  a construction  which will be called the pseudocompletion of a rootoid, the definition of which  is suggested by the above notions of completions. In general, the pseudocompletion of a rootoid is not a rootoid, but only a protorootoid
  with self-dual weak orders; however, it enables one to use for general rootoids  a weak version of a useful partial duality in the theory of  rootoids which admit a completion.   
   \end{rem*}

\subsection{}  \label{ssa10.4} By definition,  the weak orders of a rootoid $\CR$  with a completion $\CT$ embed as  order ideals of weak orders of $\CT$, which by \cite[Proposition 7.1]{DyGrp1}, are complete ortholattices. 
 In \ref{ssa10.8}, it will be shown  that the weak orders of any rootoid $\CR$ embed as  order ideals of complete ortholattices. The proof uses  constructions which are described in \ref{ssa10.5}--\ref{ssa10.7}. 
The first of these  glues two complete lattices related by a Galois connection to give a complete lattice, which is a complete ortholattice under a stringent symmetry condition.
 
Recall that a Galois connection between two posets $X$ and $Y$ is a pair $(\a,\b)$ of order reversing maps
$\a\colon X\to Y$ and $\b\colon Y\to X$ such that for all $x\in X$, $y\in Y$ one has $y\leq \a(x)$ if and only if $x\leq \b(y)$. 
This is equivalent to saying that, regarding $X$, $Y$ as categories  and $\a,\b$ as (covariant) functors between $X$ and $Y\op$ in the natural way,  $\a$ is left adjoint to $\b$.
The subset $X':=\mset{x\in X\mid \b\a(x)=x}$ is called the set of stable elements of $X$; similarly, one defines the set $Y'$ of  stable  elements of $Y$. 

The following well-known facts concerning such a Galois connection are recorded here  for convenience of reference
(additional properties holding by virtue of the symmetry interchanging $X$ and $Y$, and $\a$ and $\b$, are not listed).
\begin{lem*} Let $x,x'\in X$ and $y\in Y$.
\begin{num}\item $x\leq x'$ implies $\a(x')\leq \a(x)$.
\item $y\leq\a(x)$ implies  $x\leq \b(y)$.
\item $x\leq \b\a(x)$ and $\a(x)=\a\b\a(x)$.
\item $X'=\mset{\b(u)\mid u\in Y}$.
\item If $X$, $Y$ are complete  lattices and $x_{i}\in X$ for $i\in I$, then
$\a(\join_{i}x_{i})=\meet_{i}\a(x_{i})$.
\item   If $X$, $Y$ are complete  lattices, then
 $X'$  and $Y'$ are complete lattices in the induced order (with meet given by meet in $X$, $Y$ respectively) and $\a$, $\b$ restrict to order reversing bijections between $X'$ and $Y'$.
\end{num}\end{lem*}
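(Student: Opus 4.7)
\medskip

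\noindent\textbf{Proof plan.} Everything is extracted from the defining biconditional $y\le \alpha(x)\iff x\le \beta(y)$ by bootstrapping. First I would prove (b), which is literally one implication of the definition, and then use it twice, applied to $y:=\alpha(x)$ (resp.\ $x:=\beta(y)$), to obtain the unit inequalities $x\le \beta\alpha(x)$ and $y\le \alpha\beta(y)$. Part (a) then follows from the second one: if $x\le x'$, then $x\le x'\le\beta\alpha(x')$, so the biconditional gives $\alpha(x')\le \alpha(x)$. The equality $\alpha\beta\alpha(x)=\alpha(x)$ in (c) now drops out by applying (a) to $x\le \beta\alpha(x)$ (yielding one inequality) and the unit inequality for $y=\alpha(x)$ (yielding the reverse), and these prove (c).

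For (d), one inclusion is immediate from $X'\ni x=\beta(\alpha(x))$; for the other, given any $u\in Y$, set $x:=\beta(u)$ and compute $\beta\alpha(x)=\beta\alpha\beta(u)=\beta(u)=x$ by the symmetric form of (c), so $x\in X'$. Part (e) is the standard ``left adjoints preserve joins'' argument: by (a) one has $\alpha(\bigvee_i x_i)\le \alpha(x_j)$ for each $j$, hence $\alpha(\bigvee_i x_i)\le \bigwedge_i\alpha(x_i)$; conversely, setting $y:=\bigwedge_i\alpha(x_i)$ one has $y\le \alpha(x_i)$ so $x_i\le \beta(y)$ for all $i$ by (b), hence $\bigvee_i x_i\le \beta(y)$ and the biconditional yields $y\le \alpha(\bigvee_i x_i)$.

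For (f), the essential point is closure of $X'$ under arbitrary meets formed in $X$. Given $\{x_i\}\subseteq X'$, I would set $m:=\bigwedge_i x_i$ (meet in $X$); then $m\le \beta\alpha(m)$ by the unit, while $m\le x_j$ gives $\alpha(x_j)\le \alpha(m)$ by (a) and hence $\beta\alpha(m)\le \beta\alpha(x_j)=x_j$ since $x_j\in X'$, so $\beta\alpha(m)\le m$ on taking the meet over $j$. Thus $m\in X'$ and meets in $X'$ agree with those in $X$. Joins in $X'$ then exist by the standard construction (meet of all upper bounds in $X'$), so $X'$ is a complete lattice, and symmetrically for $Y'$. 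Finally, (c) shows that $\alpha$ sends $X'$ into $Y'$ and $\beta$ sends $Y'$ into $X'$; the identities $\beta\alpha|_{X'}=\mathrm{id}_{X'}$ and $\alpha\beta|_{Y'}=\mathrm{id}_{Y'}$ are the definitions of $X'$, $Y'$, so $\alpha,\beta$ restrict to mutually inverse bijections between $X'$ and $Y'$, which are order-reversing by (a) and its symmetric version.

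The only mildly delicate step is verifying that the meet of elements of $X'$, taken in $X$, lies again in $X'$ (the rest is formal manipulation of the defining adjunction). Once this is in hand, the remaining assertions in (f) are immediate.
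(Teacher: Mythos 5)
Your proof is correct: every part follows, as you do it, by bootstrapping from the adjunction biconditional $y\le\a(x)\iff x\le\b(y)$, and the one delicate point you flag (closure of $X'$ under meets formed in $X$, including the empty meet giving the top element, so that joins in $X'$ can be built as meets of upper bounds) is handled properly. The paper records these statements as well-known facts without proof, so there is no argument to compare against; your derivation is the standard one (note only that the paper's definition already stipulates that $\a$ and $\b$ are order-reversing, so your re-derivation of (a) from the biconditional is, if anything, slightly more than is needed).
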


\subsection{} \label{ssa10.5}Suppose that $\a\colon X\to Y$ is an order reversing map between posets $X$ and $Y$.
Choose a set  
$V= V_{0}\dotcup  V_{1}$, where   $V_{0}$ (resp., $V_{1}$) is in bijection with $X$ (resp., $Y$) by a map $i_{0}\colon X \to V_{0}$ (resp., $i_{1}\colon Y\to V_{1}$). Define a relation
$\leq$ on $V$ by the following conditions:   for all $x,x'\in X$ and $y,y'\in Y$,
\begin{conds}\item   $i_{0}(x)\leq i_{0}(x')$ if and only if $x\leq x'$ in $X$.
\item $i_{1}(y)\leq i_{1}(y')$ if and only if $y'\leq y$ in $Y$.
\item $i_{1}(y)\not \leq i_{0}(x)$.
\item $i_{0}(x')\leq i_{1}(y)$ if and only if $y\leq \a(x')$ in  $Y$.
\end{conds} 
One readily checks  that $\leq$ is a partial order on
$V$. Obviously,  $V_{0}$ is an order ideal of $V$ isomorphic to $X$
  and   $V_{1}$ is an order coideal of $V$ order 
  isomorphic to $Y\op$. 
 \begin{lem*} Suppose above  that $X$, $Y$ are complete lattices and the map $\a$ is part of a Galois connection  $(\a,\b)$ between $X$ and $Y$. Then
\begin{num}\item The poset $V$ is a complete lattice.
\item  Suppose further that $X=Y$, $\a=\b$ and $x\wedge \a(x)=0_{X}$ for all $x\in X$. Then $V$ becomes a complete ortholattice with orthocomplement defined by $(i_{0}(x))^{\cp}:=i_{1}(x)$ and $(i_{1}(x))^{\cp}:=i_{0}(x)$ for all $x\in X$ 
\end{num}
\end{lem*}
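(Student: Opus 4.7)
The plan is to compute arbitrary meets in $V$ explicitly and then read off the ortholattice structure in (b) from those formulas.

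For (a), let $S\seq V$ and write $S=i_{0}(A)\cup i_{1}(B)$ with $A\seq X$, $B\seq Y$. Using (i)--(iv), I would characterise the lower bounds of $S$ by location: a lower bound $i_{1}(y_{0})\in V_{1}$ can exist only when $A=\eset$ (by (iii)), and is then characterised by $\bigvee B\le y_{0}$; a lower bound $i_{0}(x_{0})\in V_{0}$ is characterised by $x_{0}\le \bigwedge A$ together with $\bigvee B\le \alpha(x_{0})$, which by the Galois connection is equivalent to $x_{0}\le \beta(\bigvee B)$. Hence the greatest lower bound is
\[
\bigwedge S=\begin{cases} i_{0}\bigl((\bigwedge A)\wedge \beta(\bigvee B)\bigr) & \text{if } A\ne \eset,\\ i_{1}(\bigvee B) & \text{if } A=\eset.\end{cases}
\]
When $A=\eset$ and $B\ne\eset$ one must check that the $V_{0}$-candidate $i_{0}(\beta(\bigvee B))$ is dominated by $i_{1}(\bigvee B)$; this holds by (iv) together with the inequality $\bigvee B\le \alpha\beta(\bigvee B)$ from Lemma \ref{ssa10.4}(c) (applied symmetrically). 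The bottom of $V$ is $i_{0}(0_{X})$, and the top is $i_{1}(0_{Y})$, using that $\alpha$ is a left adjoint and so converts joins to meets, whence $\alpha(0_{X})=1_{Y}$ and $i_{0}(x)\le i_{1}(0_{Y})$ for all $x$ by (iv). Existence of all meets yields part (a).

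For (b), the map $v\mapsto v^{\cp}$ is plainly an involution swapping $V_{0}$ and $V_{1}$. Order-reversal is a direct case check via (i)--(iv); the cross case $i_{0}(x)\le i_{1}(y)\Leftrightarrow i_{0}(y)\le i_{1}(x)$ uses precisely the symmetry $\alpha=\beta$ of the Galois connection. To verify $v\wedge v^{\cp}=0_{V}$ (the bottom $i_{0}(0_{X})$), I would apply the meet formula from (a) with $A=B=\set{x}$ when $v=i_{0}(x)$: this gives $v\wedge v^{\cp}=i_{0}(x\wedge \beta(x))=i_{0}(x\wedge \alpha(x))=i_{0}(0_{X})$ by the hypothesis $x\wedge\alpha(x)=0_{X}$. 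The case $v=i_{1}(x)$ is identical, since the meet formula is in the $A\ne\eset$ regime either way. The dual identity $v\vee v^{\cp}=1_{V}$ then follows automatically from De Morgan via the order-reversing involution, completing the proof that $V$ is a complete ortholattice.

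There is no serious obstacle: the argument is essentially a clean case analysis. The only nonobvious step is identifying the meet of a family drawn entirely from $V_{1}$ with the $V_{1}$-element $i_{1}(\bigvee B)$ rather than with the apparently competing $V_{0}$-candidate $i_{0}(\beta(\bigvee B))$. This is the unique place where the Galois connection axiom $y\le\alpha\beta(y)$ (rather than just the combinatorial definition of $\le$ on $V$) plays a role, and it is also what makes the orthocomplementation in (b) come out cleanly from the hypothesis $x\wedge\alpha(x)=0_{X}$.
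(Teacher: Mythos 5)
Your proposal is correct, and it is essentially the order-dual of the paper's argument rather than a copy of it. The paper works with joins: it shows joins of nonempty subsets of $V_{0}$ and of $V_{1}$ exist in $V$ (the $V_{0}$ case being exactly the adjunction step $a\le\beta(y)$ for all $a$ iff $\bigvee A'\le\beta(y)$), then derives the mixed-join formula $i_{0}(x)\vee i_{1}(y)=i_{1}(\alpha(x)\wedge y)$ from $i_{1}(\alpha(x))=\min\{w\in V_{1}\mid i_{0}(x)\le w\}$; for (b) it checks $z\vee z^{\cp}=1_{V}$ directly via $i_{0}(x)\vee i_{1}(x)=i_{1}(x\wedge\alpha(x))=i_{1}(0_{X})$ and gets $z\wedge z^{\cp}=0_{V}$ by De Morgan. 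You instead compute arbitrary meets in closed form, $\bigwedge S=i_{0}\bigl((\bigwedge A)\wedge\beta(\bigvee B)\bigr)$ or $i_{1}(\bigvee B)$ according as $A\neq\eset$ or $A=\eset$, with the same Galois-connection inequality $y\le\alpha\beta(y)$ appearing at the single nonobvious comparison, and then in (b) you verify $z\wedge z^{\cp}=0_{V}$ directly from the hypothesis $x\wedge\alpha(x)=0_{X}$ and recover $z\vee z^{\cp}=1_{V}$ by De Morgan. The trade-off: your route yields a uniform formula for all meets (including the empty one, so the top comes for free), while the paper's join route produces the formula $i_{0}(x)\vee i_{1}(y)=i_{1}(\alpha(x)\wedge y)$, which is what makes its verification of (b) immediate. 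One cosmetic slip: the identity $\alpha(0_{X})=1_{Y}$ is what shows the bottom $i_{0}(0_{X})$ lies below all of $V_{1}$; the statement $i_{0}(x)\le i_{1}(0_{Y})$ needed for the top follows from (iv) alone, since $0_{Y}\le\alpha(x)$ is automatic. Both facts are true and trivial, so this does not affect the argument.
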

\begin{proof} For any non-empty subset of $V_{1}$, its join in $V_{1}$ is also its join in $V$, since $V_{1}$ is an order coideal of $V$. We claim that for any non-empty  subset $A$ of $V_{0}$, its join in $V_{0}$ is also its join in $V$. To see this, write $A=\mset{i_{0}(a)\mid a\in A'}$ where $A'\seq X$ has join $x:=\join A'$. It will suffice to show that for any $y$ in $Y$ with 
 $i_{0}(a)\leq  i_{1}(y)$ for all $a\in A'$, one has  
  $i_{0}(x)\leq  i_{1}(y)$. By definition,
  $i_{0}(a)\leq  i_{1}(y)$  if and only if $y\leq \a(a)$ in $Y$.
  This holds if and only if $a\leq \b(y)$. But $a\leq \b(y)$ for all $a\in A'$ if and only if $x=\join_{a\in A'}a\leq \b(y)$, which in turn holds if and only if $y\leq \a(x)$ i.e.  $i_{0}(x)\leq  i_{1}(y)$.
     Hence any non-empty subset 
  of $V_{0}$ (resp., $V_{1}$) has a join in $V$, lying in 
  $V_{0}$ (resp., $V_{1}$). 
  To show that $V$ is a complete lattice,
  it will therefore suffice to show that the join $i_{0}(x)\vee i_{1}(y)$ exists in $V$ for any $x\in X$ and $y\in Y$.  By  the definitions, for $z\in Y$, $i_{0}(x)\leq i_{1}(z)$ if and only if $z\leq \a(x)$ in $Y$. This   holds if and only if $i_{1}(\a(x))\leq i_{1}(z)$ in $V_{1}$. 
Writing $i_{1}(z)=w$,  this means that 
    $i_{1}(\a(x))=\min(\set{w\in V_{1}\mid i_{0}(x)\leq w})$.  Since any upper bound  of $\set{i_{0}(x), i_{1}(y)}$ in $V$ must be in $V_{1}$, it follows that in $V$, 
  \begin{equation*}i_{0}(x)\vee i_{1}(y) =i_{1}(\a(x))\vee i_{1}(y)=i_{1}(\a(x)\wedge y).\end{equation*} This proves (a).
  
  Next,  make the assumptions as in (b) and define $z^{\cp}$ for $z\in V$ as there. The definition of $V$ and \ref{ssa10.4}(a)--(b) readily imply that $z\mapsto z^{\cp} $ is order-reversing and 
  $(z^{\cp})^{\cp}=z$ for $z\in V$. 
  Further, $z\vee z^{\cp}=1_{V}$ since for all $x\in X$,
  \begin{equation*}
  i_{0}(x)\vee i_{1}(x)=i_{1}(x\wedge \a(x))=i_{1}(0_{X})=1_{V}
  \end{equation*}
   from above. This implies that $z\wedge z^{\cp}=(z^{\cp}\vee z)^{\cp}=0_{V}$. This completes the proof. \end{proof}
 \subsection{}\label{ssa10.6}  The above   gluing construction will be applied with $X=Y= L'$ where $ L'$  is the  following lattice completion of  a complete meet semilattice $L$.
   Let $L'$ be the set of all non-empty join-closed order ideals of $L$, and partially order $L'$ by inclusion. Let $i'\colon L\to L'$ map each element of $L$ to the principal order ideal of $L$ it generates i.e.  $i'(y):=\mset{x\in L\mid x\leq y}$ for all $y\in L$. 
\begin{lem*} Ordered by inclusion,  $L'$ is a complete lattice and $i'$ is an order isomorphism of $L$ with an order ideal of $L'$. \end{lem*}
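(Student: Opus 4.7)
The plan is to prove the two assertions of the lemma in turn. To see that $L'$ is a complete lattice (in inclusion order), I would first observe that $L\in L'$, so $L'$ has a top element. Next, I would show that $L'$ is closed under arbitrary non-empty intersections: if $\{I_\alpha\}$ is a non-empty family in $L'$, then $\bigcap_\alpha I_\alpha$ is clearly downward-closed and join-closed in $L$, and it is non-empty because $L$ has a minimum $\meet L$ (which exists by the complete-meet-semilattice hypothesis) and this minimum lies in every non-empty order ideal of $L$ by downward closure. Hence $L'$ has arbitrary meets together with a top element, so it is a complete lattice, joins being constructed in the usual way as meets of upper bounds.

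For the second assertion, I would first verify that $i'(y)$ lies in $L'$: it is non-empty (contains $y$), downward-closed by transitivity, and join-closed because if $\{x_\beta\}$ is a subset of $i'(y)$ whose join $z$ exists in $L$, then $y$ is an upper bound for the $x_\beta$, whence $z\leq y$ and $z\in i'(y)$. The map $i'$ is manifestly order-preserving; for injectivity and reflection of order, $i'(y)\seq i'(y')$ forces $y\in i'(y')$ (since $y\in i'(y)$), hence $y\leq y'$. Thus $i'$ restricts to an order isomorphism $L\to i'(L)$.

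The crucial remaining step is to show that $i'(L)$ is an order ideal of $L'$. Given $I\in L'$ with $I\seq i'(y)$ for some $y\in L$, the set $I$ is bounded above by $y$, so the join $z:=\join I$ exists in $L$ as the meet of its non-empty set of upper bounds. By join-closedness of $I$, one has $z\in I$; then $I\seq i'(z)$ (since $z$ is an upper bound for $I$) and $i'(z)\seq I$ (since $z\in I$ and $I$ is downward closed), so $I=i'(z)\in i'(L)$, as required.

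No individual step involves essential difficulty, but the main point worth flagging is the dual role of the complete-meet-semilattice hypothesis on $L$: it supplies the global minimum $\meet L$ needed to keep arbitrary intersections of members of $L'$ non-empty, and it simultaneously supplies the joins of bounded subsets that allow every bounded member of $L'$ to be recovered as a principal ideal in the final step.
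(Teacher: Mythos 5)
Your proof is correct and follows essentially the same route as the paper's: closure of $L'$ under intersections (with $\{0_L\}$ as bottom and $L$ as top) gives completeness, $i'$ is visibly an order isomorphism onto its image, and a member of $L'$ bounded above by a principal ideal is recovered as $i'(\join I)$ using join-closedness. The extra details you supply (that $i'(y)\in L'$ and that intersections are non-empty via $0_L$) are exactly the points the paper leaves implicit.
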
 
 \begin{proof} To avoid trivialities, assume $L\neq \eset$. By definition, $L'$ consists of the  order ideals $I\neq \eset$ of $L$ such that if a subset
$X$ of $I$ has a join $\join X$ in $L$,  then $\join X\in I$. The intersection of any family of elements  of $L'$ is an element of $L'$. It follows that $L'$, ordered by inclusion, is a complete lattice with minimum element $\set{0_{L}}$ and maximum element $L$. 
 This map $i'$ is clearly  an order-isomorphism onto its image $i'(L)$, regarded as subposet of $L'$. Moreover, $i'(L)$ is an order ideal of  $L'$. For suppose that $x\in L$ and $y\in L'$ with $y\seq i'(x)$. Then as a subset of $L$, $y$ is bounded above by $x$, so $u:=\join y$ exists in $L$, $u\in y$ since $y$ is join-closed, and necessarily, $y=\mset{z\in L\mid z\seq u}=i'(u)$ since $y$ is an order ideal of $L$.
\end{proof}

\subsection{}\label{ssa10.7} Let $L$ be a non-empty complete meet semilattice.
 For the purposes of this subsection,   a  relation $P$ on  $L$ (i.e.  a subset $P$ of $L\times L$) is called  an \emph{orthogonality relation} on $L$  if it satisfies the 
  conditions (i)--(iii) below: \begin{conds}
\item As a relation on $L$, $P$ is symmetric  i.e. $(x,y)\in P$ implies $(y,x)\in P$.
\item For any $x\in L$, $\mset{y\in L\mid (x,y)\in P}$ is a non-empty  join-closed order ideal of $L$.
\item If $a\in L$ with $(a,a)\in P$, then $a=0_{L}$.  \end{conds}

For example, if $L$ is a non-empty order ideal of a complete ortholattice with orthocomplementation $z\mapsto z^{\cp}$, then the relation $P$ on $L$ defined by $(x,y)\in P$ if $x,y\in L$ and   $x\leq y^{\cp}$ is an orthogonality relation.

\begin{thm*} Let $L$ be a non-empty complete meet semilattice with an orthogonality relation $P$ as above. Then there is a canonically associated  order isomorphism of $L$ with an order ideal of a  complete 
ortholattice $V$.\end{thm*}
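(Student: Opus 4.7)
The plan is to apply the two preceding constructions in sequence: first use Lemma \ref{ssa10.6} to embed $L$ as an order ideal of a complete lattice $L'$, then use the orthogonality relation $P$ to define a self-Galois connection on $L'$ and invoke Lemma \ref{ssa10.5}(b) to glue $L'$ to itself, obtaining the complete ortholattice $V$. The desired embedding of $L$ into $V$ is then the composite $L\hookrightarrow L'\cong V_{0}\hookrightarrow V$, each factor of which is already known to realize the smaller as an order ideal of the larger (the first by Lemma \ref{ssa10.6}; the second by the construction in \ref{ssa10.5}, since $V_{0}$ is visibly an order ideal of $V$).

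The heart of the argument is thus to construct a suitable $\alpha\colon L'\to L'$ from $P$. For $x\in L$, set $x^{\perp}:=\mset{y\in L\mid (x,y)\in P}$, which by axiom (ii) is an element of $L'$; for $I\in L'$, define
\[
\alpha(I):=\bigcap_{x\in I}x^{\perp}.
\]
This is non-empty (it contains $0_{L}$, since $x^{\perp}$ is a non-empty order ideal of $L$ and hence contains $0_{L}$) and is an intersection of join-closed order ideals, so it lies in $L'$. Obviously $I\seq I'$ implies $\alpha(I')\seq \alpha(I)$, so $\alpha$ is order reversing. I would then verify that $(\alpha,\alpha)$ is a Galois connection on $L'$: for $I,J\in L'$,
\[
J\seq \alpha(I)\iff \forall x\in I,\,\forall y\in J,\ (x,y)\in P,
\]
and by the symmetry axiom (i), this is equivalent to $I\seq \alpha(J)$.

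To apply Lemma \ref{ssa10.5}(b) I must also check the orthogonality condition $I\wedge \alpha(I)=0_{L'}$ in $L'$, i.e.\ $I\cap \alpha(I)=\set{0_{L}}$. If $y\in I\cap \alpha(I)$, then taking $x=y$ in the definition of $\alpha(I)$ gives $(y,y)\in P$, so $y=0_{L}$ by axiom (iii); conversely $0_{L}\in I\cap \alpha(I)$ always. Once this is in place, Lemma \ref{ssa10.5}(b) applied with $X=Y=L'$ and $\b=\a$ produces a complete ortholattice $V$ with $V_{0}$ order isomorphic to $L'$, and the chain of order-ideal inclusions described above completes the proof. Canonicity of $V$ follows from the canonicity of each step ($L'$ is determined by $L$ alone, and $\alpha$ is determined by $P$).

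The main obstacle is strictly the verification that the three axioms of an orthogonality relation translate, after passing from $L$ to $L'$, into exactly the hypotheses of Lemma \ref{ssa10.5}(b): symmetry of $P$ gives the self-adjointness of $\alpha$; the ``non-empty join-closed order ideal'' axiom ensures $\alpha$ takes values in $L'$ at all; and the ``$(a,a)\in P\Rightarrow a=0_{L}$'' axiom is exactly what is needed for the orthocomplementation axiom $I\wedge\alpha(I)=0_{L'}$. Everything else is a mechanical assembly of the two preceding lemmas.
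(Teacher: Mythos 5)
Your proposal is correct and follows essentially the same route as the paper: the paper likewise defines the map $z\mapsto z^{\dag}=\mset{b\in L\mid (a,b)\in P \text{ for all } a\in z}$, uses axiom (ii) and the fact that meets in $L'$ are intersections to see it restricts to a self-Galois connection $\th'$ on $L'$, checks $x\wedge\th'(x)=\set{0_{L}}$ via axiom (iii), and then applies Lemma \ref{ssa10.5}(b) together with the order-ideal embedding $L\hookrightarrow L'$ from Lemma \ref{ssa10.6}. The only cosmetic difference is that the paper first sets up the Galois connection on all of $\wp(L)$ and then restricts, whereas you define it directly on $L'$; the verifications are the same.
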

\begin{proof} Let $\th\colon \wp(L)\to \wp(L)$ be the function
defined by setting, for all $z\seq L$,  
\begin{equation*}
\th(z)=z^{\dag}:=\mset{b\in L\mid (a,b)\in P\text{ \rm for all $a\in z$}}.
\end{equation*}
 The definitions immediately imply that $(\th,\th)$ gives a Galois connection from $\wp(L)$ to itself. Note that  (ii) states that $\set{x}^{\dag}\in L'$ for all $x\in L$, where $L'$ is as in Lemma \ref{ssa10.6}. By Lemma \ref{ssa10.4},  for any $z\seq L$,
$z^{\dag}=(\cup_{x\in Z}\set{x})^{\dag}=\cap_{x\in x}\set{x}^{\dag}\in L'$, since meet in  $L'$ is given by intersection. This proves
that $\th$ restricts to a map $\th'\colon L'\to L'$.
The definitions readily imply that $(\th',\th')$ define a Galois connection from $L'$ to itself. We assert that for $x\in L'$,
$x\wedge \th'(x)=0_{L'}=\set{0_{L}}$.  Let $a\in x\wedge \th'(x)=x\cap \th'(x)$. Then $a\in x$ and $a\in \th'(x)$, so $(a,a)\in P$.
By (iii), $a=0_{L}$. Since clearly $0_{L}\in x\wedge \th'(x)$, the assertion is proved.

Now apply Lemma \ref{ssa10.5} with $X=Y= L'$ (which is a complete meet semilattice by Lemma \ref{ssa10.6}) and $\a=\b=\th'$.
It gives  a complete ortholattice $V$ with a canonical embedding of $L'$ as an order ideal of  $V$. Since  $L$ canonically embeds as an order ideal of $L'$,  the theorem follows.
\end{proof}
\subsection{} \label{ssa10.8} Finally, we may prove the following, in  
which  (c) is just one of  many possible illustrative applications of (b).
 \begin{cor*}\begin{num}\item  Let $(R,L)$ be a standard protomesh such that $L$ is a 
complete meet semilattice and $(R,L)$ satisfies the JOP. Then there
is a complete ortholattice $\wh L$ and an order-isomorphism
$i$ of $L$ with an order ideal of $\wh L$.
\item Any weak right order of a rootoid $\CR$ is order isomorphic to an order ideal of a complete ortholattice.
\item The weak right order of a Coxeter system $(W,S)$ is order isomorphic to an order ideal of a complete ortholattice.
\end{num}\end{cor*}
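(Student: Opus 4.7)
The plan is to derive (a) directly from Theorem \ref{ssa10.7} by constructing an appropriate orthogonality relation out of the Boolean ring structure of $R$, then get (b) from (a) via Propositions \ref{ssa9.9} and \ref{ssa9.8}, and finally (c) from (b) via Proposition \ref{ssa8.5}.

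For (a), I would define a relation $P$ on the complete meet semilattice $L$ by declaring $(x,y)\in P$ if and only if $xy = 0_{R}$ (product in the Boolean ring $R$). I then verify the three conditions in \ref{ssa10.7}. Symmetry is immediate from the commutativity of $R$. Self-orthogonality forces vanishing: if $(a,a)\in P$ then $a^{2}=0$, and since $R$ is a Boolean ring, $a^{2}=a$, so $a=0_{L}$. Non-emptiness: $0_{L}$ lies in $\mset{y\in L\mid (x,y)\in P}$ (here I use that the protomesh is standard, so $0_{R}\in L$). For the order-ideal property, if $z\le y$ in $L$ then $z = zy$ in $R$, so $xy=0$ gives $xz = xzy = (xy)z = 0$. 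The only nontrivial step is join-closedness: if $\set{y_{i}}_{i\in I}$ is a non-empty family in $\mset{y\mid xy=0}$ such that $u:=\join_{i}y_{i}$ exists in $L$, we must show $xu=0$. This is precisely where the JOP enters, since JOP relates the join of a family in $L$ to its behavior against arbitrary elements of $R$ (via the meets computed in the Boolean structure); granting JOP, the relations $xy_{i}=0$ for all $i$ force $xu=0$. With $P$ verified as an orthogonality relation, Theorem \ref{ssa10.7} produces the required complete ortholattice $\wh{L}$ and the order-embedding of $L$ as an order ideal.

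For (b), let $\CR=(G,\L,N)$ be a rootoid and $a\in \ob(G)$. By Proposition \ref{ssa9.9}, $(\lsub{a}{\L},\lsub{a}{\CL})$ is a mesh, and by Proposition \ref{ssa9.8} this means $\lsub{a}{\CL}$ is a complete meet semilattice and $(\lsub{a}{\L},\lsub{a}{\CL})$ satisfies the JOP. The weak right order of $\CR$ at $a$ is, via $g\mapsto N(g)$, order-isomorphic to $\lsub{a}{\CL}$ with the order induced from $\lsub{a}{\L}$, so part (a) applies and yields (b). For (c), Proposition \ref{ssa8.5} (together with the identification of weak orders on $W$ with weak orders in the rootoid $\CC_{(W,S)}$) allows (b) to be applied to $\CC_{(W,S)}$, giving (c).

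The main obstacle is the join-closedness clause in the verification of the orthogonality relation for (a); all other steps are formal consequences of Boolean ring algebra or of structural results already in place. Precisely how the JOP is deployed here is the single technical point that needs care, since it is the hypothesis that bridges lattice-theoretic joins in $L$ and multiplicative cancellation in $R$. Once that compatibility is secured, (a) reduces to a direct application of Theorem \ref{ssa10.7}, and the passage (a) $\Rightarrow$ (b) $\Rightarrow$ (c) is purely formal.
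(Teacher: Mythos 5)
Your proposal is correct and follows essentially the same route as the paper: the paper takes exactly the relation $P:=\mset{(x,y)\in L\times L\mid x\cap y=\eset}$ (your $xy=0_{R}$), checks conditions (i)--(iii) of \ref{ssa10.7} just as you do — in particular, join-closedness is literally the statement of the JOP, so the step you flag as delicate requires no further argument — and then obtains (b) by applying (a) to the protomeshes $(\lsub{a}{\L},\lsub{a}{\CL})$ of the rootoid and (c) by taking $\CR:=\CC_{(W,S)}$. Your detour through \ref{ssa9.9}/\ref{ssa9.8} for (b) and the citation of \ref{ssa8.5} for (c) are slightly more roundabout than needed but amount to the same thing.
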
 
\begin{rem*} The conclusion of (b) is much weaker than 
that which would follow from existence of a completion $\wh \CR$ of $\CR$, which would give \emph{compatible} embeddings of the weak orders  of $\CR$ in complete ortholattices, and  additional complete ortholattices  arising as weak orders of  objects of $\wh{\CR}$  which don't come from objects of $\CR$.  \end{rem*} 
 \begin{proof}  Take \begin{equation*}P:=\mset{(x,y)\in L\times L\mid x\cap y=\eset}.\end{equation*} in
Theorem \ref{ssa10.7}.  The condition \ref{ssa10.7}(i) on $P$ is trivial. 
In  \ref{ssa10.7}(ii), the indicated set is an order ideal because for any $x,y,z\in R$ with $z\seq y$,  $x\cap y=\eset$ implies  $x\cap z=\eset$, and it is join closed by the JOP. 
 Further,   if $(x,y)\in P$, then $x\wedge y\seq x\cap y =\eset=0_{L}$ in $L$. Taking $x=y$ shows that  \ref{ssa10.7}(iii) holds.
 This proves the hypotheses of Theorem \ref{ssa10.7}, and (a) follows.
 Part (b) follows by applying (a) to the protomeshes $(\lsub{a}{\L},\lsub{a}{\CL})$ of a rootoid  $\CR=(G,\L,N)$ with big weak order $\CL$, and (c) follows from (b) by taking $\CR:=\CC_{(W,S)} $.
 \end{proof}

 \section{Protorootoids and groupoid-preorders} \label{sa11}
    \subsection{Unital completion}\label{ssa11.1}
This subsection and the next recall some additional background about Boolean rings which will be used in several places subsequently in these papers. 

   The inclusion functor $I\colon \balgc\to \bringc $ has a left adjoint $U\colon \bringc \to \balgc$, which sends each  Boolean ring  to its \emph{unital completion} $U(B)$. Concretely, for a Boolean ring $B$,  $U( B):=B\oplus \bbF_{2}$ as abelian group with multiplication
\begin{equation*}
(b,\e)(b',\e')=(bb'+\e b'+\e' b,\e\e')
\end{equation*} (using that $B$ is naturally a possibly non-unital $\bbF_{2}$-algebra).  For any morphism $f\colon B\to C$ in $\bringc $, $f':=U(f)\colon U(B)\to U(C)$ is given by $f'(b,\e)=(f(b),\e)$. 
Identify $B$ as an ideal of $  U(B)$ via $b\mapsto (b,0)$ and let $1_{U(B)}:=(0_{B},1)$. 

Every element of $  U(B)$ is either in $B$, or of the form $b^{\cp}=1_{U(B)}+b$ for unique $b\in B$. 
This implies that $B$ is an order ideal of $  U(B)$ (in the natural order
of $U(B)$) and $B^{\cp}:=\mset{b^{\cp}\mid b\in B }$ is an order coideal of $  U(B)$.
\subsection{Free Boolean rings and algebras}\label{ssa11.2}
The forgetful functor $F\colon \bringc\to \setc$  has  a left adjoint
$G\colon \setc\to \bringc$ sending  each set $X$ to the free Boolean ring $G(X)$ on $X$.  This gives rise to a diagram  \begin{equation*}
       \xymatrix{
       {\balgc}\ar@<.6ex>[r]^{I}
      &{\bringc}\ar@<.6ex>[r]^-{F}
      \ar@<.6ex>[l]^{U}&{\setc}
      \ar@<.6ex>[l]^-{G}        	   }    
       \end{equation*} 
in which each functor to the left is left adjoint to the functor to the right which is above it in the diagram.

It is convenient to describe $G(X)$ as a subring of  its unital completion $U(G(X))$, which is  the free Boolean algebra $U(G(X))$ on $X$. Here, $U(G(X))$  naturally identifies with the quotient of the (commutative) polynomial ring $\bbF_{2}[X]$ on $X$ by the ideal $J$ generated by all elements $x^{2}+x$ for $x\in X$. The natural map $X\to U(G(X))$ given by $x\mapsto x+J$ is injective, and we use it to    identify $X$ with a subset of $U(G(X))$.  Then 
$U(G(X))$ has a $\bbF_{2}$-basis consisting of the monomials
$x_{1}\cdots x_{n}$ for $n\in \Nat$ and pairwise distinct $x_{i}\in X$. Two such  monomials are equal only  if they contain the same factors up to order, and the empty monomial is the identity element $1$ of $U(G(X))$. The ring $G(X)$ naturally identifies with the subring (not containing $1$) of $U(G(X))$  generated by $X$. Note that the monomials as above with $n>0$ form a $\bbF_{2}$-basis of $G(X)$.  An arbitrary function $X\to B$ where $B$ is a Boolean ring (resp., Boolean algebra) has a unique extension to a ring homomorphism (resp., unital ring homomorphism) $G(X)\to B$ (resp.,  $U(G(X))\to B$). It is easy to see that the map $G$ just defined on objects extends naturally to a functor $\setc \to \bringc$;
 the left adjointness of $G$ to $F$
(and of $UG$ to $FI$) follows using  these universal properties.

Note that if $X$ is finite, of cardinality $n$, then $G(X)$ (resp., $U(G(X))$)
is a finite Boolean ring of cardinality $2^{2^{n}}$ (resp., $2^{2^{n}-1}$)
with atoms equal to the products \begin{equation}
e_{Y}:= \Bigl(\prod_{y\in Y}y\Bigr) \Bigl(\prod_{y\in X\sm Y}(1+y)\Bigr)=\sum_{Z\colon Y\seq Z\seq X}\Bigl(\prod_{z\in Z}z\Bigr)
\end{equation}
 for $Y\seq X$ (resp., $\eset\sneq Y\seq X$). In particular, $G(X)$ has an identity element   \begin{equation} 1_{G(X)}=\sum_{Y\colon \eset\sneq Y\seq X} e_{Y}=1+e_{\eset}=\sum_{Y\colon\eset\sneq Y\seq X} \Bigl(\prod_{y\in Y}y\Bigr)\end{equation} if $X$ is finite, since $1=\sum_{Y\colon Y\seq X}e_{Y}$.
\begin{rem*} The Boolean ring  $\mpair{X\mid R}$  generated by generators $X$  subject to relations $R$ is by definition the quotient  $B/I$
of the free Boolean ring $B=G(X)$ on the set $X$, as constructed above, by the ideal $I$ of $B$ generated by the subset $R$ of $B$.
  \end{rem*}
 
          \subsection{} \label{ssa11.3} 
Recall from
 \cite[2.15 and 5.8]{DyGrp1}  
 the definition of the  functors  $\mathfrak{P}\colon \prootc\to \gpdpreord$  and $\mathfrak{P}'\colon \prootc^{a}\to \gpdpreord$. 
  \begin{thm*}  The  functors   $\mathfrak{P}$ and $\mathfrak{P}'$  
 have  left adjoints   $\mathfrak{Q}\colon  \gpdpreord \to \prootc$
and  $\mathfrak{Q}' \colon  \gpdpreord \to \prootc^{a}$ respectively.   \end{thm*}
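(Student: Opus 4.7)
The plan is to construct $\mathfrak{Q}(G,\preceq)$ explicitly, for $(G,\preceq)\in \gpdpreord$, as a universal protorootoid built in two stages from $(G,\preceq)$. First I build a ``free protorootoid on the bare groupoid $G$''. Define $B\colon G\to \bringc$ by letting $B(a)$ be the Boolean ring freely generated by symbols $[g]$ for $g\in \lsub{a}{G}$ subject to the single relation $[1_a]=0$, and for $f\in \lrsub{a}{G}{b}$ let $B(f)\colon B(b)\to B(a)$ be the unique Boolean ring homomorphism with $[h]\mapsto [fh]+[f]$. Direct verification using freeness gives $B(f)([1_b])=[f]+[f]=0$, $B(1_a)=\Id$, and $B(fg)([h])=[fgh]+[fg]=B(f)\bigl(B(g)([h])\bigr)$, so $B$ is functorial; setting $N_0(g):=[g]\in B(\cod(g))$ produces a cocycle since $N_0(f)+f\cdot N_0(h)=[f]+([fh]+[f])=[fh]=N_0(fh)$. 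The resulting protorootoid $\CR_0(G):=(G,B,N_0)$ is easily seen to afford a left adjoint to the forgetful functor $\prootc\to \grpdc$: any groupoid morphism $\a\colon G\to H$ into the underlying groupoid of a protorootoid $(H,M,N_H)$ lifts uniquely to a protorootoid morphism $(\a,\nu^0)$ via $\nu^0_a([g]):=N_H(\a(g))$, where freeness of $B(a)$ forces uniqueness and well-definedness, and naturality of $\nu^0$ follows from the cocycle identity in $(H,M,N_H)$.

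In the second stage, let $J$ be the smallest $G$-subfunctor of $B$ (a family of ideals $J(a)\seq B(a)$ stable under the morphisms $B(f)$) containing, for every $a\in \ob(G)$ and every $x\preceq y$ in $\lsub{a}{G}$, the element $[x][y]+[x]$; explicitly, $J(a)$ is the ideal of $B(a)$ generated by all $B(f)([x][y]+[x])$ for $f\in \lrsub{a}{G}{b}$ and $x\preceq y$ in $\lsub{b}{G}$. Since applying $B(g)$ to such a generator yields another generator (indexed by $gf$ in place of $f$), $J$ is automatically $G$-stable, so $\Lambda:=B/J$ is a well-defined functor $G\to\bringc$. Setting $N(g):=[g]+J(\cod(g))$ and $\mathfrak{Q}(G,\preceq):=(G,\Lambda,N)$, the cocycle property of $N$ descends from that of $N_0$, and the containment $[x][y]+[x]\in J(a)$ yields $N(x)\leq N(y)$ in $\Lambda(a)$ whenever $x\preceq y$.

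The universal property of $\mathfrak{Q}$ now follows routinely. Given a morphism $\a\colon (G,\preceq)\to \mathfrak{P}(H,M,N_H)$ in $\gpdpreord$, the lift $\nu^0\colon B\to M\circ \a$ from the first stage satisfies $\nu^0_a([x][y]+[x])=N_H(\a(x))N_H(\a(y))+N_H(\a(x))=0$, because $\a$ preserves the preorder and hence $N_H(\a(x))\leq N_H(\a(y))$ in the weak order of $(H,M,N_H)$; by naturality this vanishing extends to every $B(f)([x][y]+[x])$, so $\nu^0$ annihilates $J$ and factors uniquely through a natural transformation $\nu\colon \Lambda\to M\circ \a$ giving the required morphism of protorootoids. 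Extension of $\mathfrak{Q}$ to morphisms of $\gpdpreord$ and naturality of the adjunction bijection $\Hom_{\prootc}(\mathfrak{Q}(P),\CR)\cong \Hom_{\gpdpreord}(P,\mathfrak{P}(\CR))$ are then formal. Finally, since each $\Lambda(a)$ is generated as a Boolean ring by the values $N(g)$ with $g\in \lsub{a}{G}$, the protorootoid $\mathfrak{Q}(G,\preceq)$ is automatically abridged, and the same construction viewed in $\prootc^a$ produces the left adjoint $\mathfrak{Q}'$ to $\mathfrak{P}'$. The one real point requiring care is the $G$-equivariance of the relations, which is handled cleanly by defining $J$ as a $G$-saturation: this obviates any invariance hypothesis on $\preceq$ under the groupoid action and ensures that $\Lambda$ is a $\bringc$-valued functor on $G$.
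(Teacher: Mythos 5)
Your proposal is correct and follows essentially the paper's own route: the value rings of $\mathfrak{Q}(G,\leq)$ are presented by generators corresponding to cocycle values with the $G$-translated order relations imposed, the universal property is checked directly by lifting a preorder-preserving morphism $\a$ via $[g]\mapsto M_{\a(g)}$ and noting the relations are killed, and $\mathfrak{Q}'$ is obtained by observing the image is abridged. The only difference is cosmetic: the paper presents each ring in one step on generators $(a,x,y)$ (standing for $\L(x)(N(y))$) with both cocycle-type and order-type relations, while you first form a free protorootoid on the bare groupoid (cocycle built in) and then quotient by the $G$-saturated order relations; the two presentations yield isomorphic protorootoids, and your explicit attention to $G$-saturation of the relations corresponds exactly to the paper's inclusion of the $\L(x)$-translates in its relation scheme.
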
 
 \begin{proof}
 Let $(G,\leq)$ be an object of $\gpdpreord$.  Attach to $(G,\leq)$ 
 the following protorootoid $\CR:=(G,\L,N)$.
  The Boolean ring $\lsub{a}{\L}$ for $a\in \ob(G)$  is defined to be  the Boolean ring generated by a set $X_{a}$
 of indeterminates  subject to  relations $R_{a}$, both specified below. 
 One takes $X_{a}:=\mset{(a,x,y)\mid x\in \lrsub{a}{G}{b}, y\in \lsub{b}{G} }$  where one regards the indeterminate $(a,x,y)$ as corresponding to  $(\L(x))(N(y))$. The relations $R_{a}$ are of the following types: \begin{conds}\item for $x\in \lrsub{a}{G}{b}$, $y\in \lrsub{b}{G}{c}$, $z\in \lrsub{c}{G}{d}$,  
 $(a,x,yz)+(a,x,y)+(a,xy,z)=0$.
 \item for $x\in \lrsub{a}{G}{b}$, $y,z\in \lsub{b}{G}$ with $y\leq z$,
 $(a,x,y)(a,x,z)+(a,x,y)=0$. 
\end{conds}
In this, (i) (resp., (ii)) is chosen to correspond to the result of acting by $\L(x)$ on the  cocycle condition $N(yz)=N(y)+\L(y)N(z)$ (resp.,  the order relation $N(y)\seq N(z)$ i.e. 
$N(y)N(z)=N(y)$).
 Formally, the  Boolean ring $\lsub{a}{\L}$ defined above   is the evident quotient $\lsub{a}{\L}:=B(X_{a})/I_{a}$  of  the free Boolean ring $B(X_{a})$ generated by   $X_{a}$ by the ideal  $I_{a}$  of $B(X_{a})$ generated by the left hand sides of the relations of type (i)--(ii) above.   
 The image in the quotient $\lsub{a}{\L}$ of an element $(a,x,y)\in X_{a}\seq B(X_{a})$ will  still be  written  as $(a,x,y)$.

There is evidently a representation $\L$ of $G$ determined  by $\L(a):=\lsub{a}{\L}$
for $a\in \ob(G)$ and $\L(g)(a,x,y)=(b,gx,y)$ for $g\in \lrsub{b}{G}{a}$. 
There is a cocycle $N\in Z^{1}(G,\L)$ given by $N(g)=(a,1_{a},g)\in \L(a)$ for $g\in \lsub{a}{G}$. This defines the protorootoid $\CR=(G,\L,N)$.
Set $\mathfrak{Q}(G,\leq):=\CR$. It is straightforward to check that $\Id_{G}$ defines a morphism $I_{(G,\leq)}\colon (G,\leq)\to \mathfrak{P}\mathfrak{Q}(G,\leq)$ in $\gpdpreord$. 

By a standard criterion for existence of left adjoints, 
there is a unique extension of  the map $\mathfrak{Q}$ on objects  to a functor $\mathfrak{Q}$ left adjoint to $\mathfrak{P}$, with $I_{(G,\leq)}$ as component at $(G,\leq)$ of its unit,   provided that $I_{(G,\leq)}$
 is universal from $(G,\leq)$ to $\mathfrak{P}$ i.e. provided that
for any protorootoid $\CR=(H,\G,M)$ and for every morphism
$ \th\colon (G,\leq)\to \mathfrak{P}(\CR)$ in $\gpdpreord$  (specified by the groupoid homomorphism $\th\colon G\to H$), there is a unique morphism
$f=(\a,\nu)\colon (G,\L,N)\to (H,\G,M)$ in $\prootc$  such that $\th=\mathfrak{P}(f) I_{(G,\leq)}$.
It is easy to check that there is a unique such $f$ as required, given by
$\a=\th\colon G\to H$ and $\nu_{a}((a,x,y))=\bigl(\G(\th(x))\bigr)(M_{\th(y)})$.

The image of $\mathfrak{Q}$ is contained in $\prootc^{a}$, so  $\mathfrak{Q}$ factorizes uniquely as $\mathfrak{Q}=\mathfrak{B}\mathfrak{Q}'$ as in 
\cite[5.8]{DyGrp1}, with $\mathfrak{Q}'$ clearly left adjoint to $\mathfrak{P}'=\mathfrak{P}\mathfrak{B}$.
 \end{proof}

 \subsection{}   \label{ssa11.4}
      Let  $(G,\leq)$ be a groupoid-preorder
in which $G$ is (empty or) a connected, simply connected groupoid. Assume that the big weak right  preorder  is an anti-chain i.e. none of the elements of any of the weak right preorders of $G$ are   comparable.
The proposition below provides  a  concrete description,  which will be useful in a subsequent paper,  of a protorootoid $\CR$
  which is isomorphic to  $\mathfrak{Q}(G,\leq)$.

Define  $\CR:=(G,\L',N')$  as follows. Let $V:=\ob(G)$ and for $x,y\in V$, denote the unique morphism in $\lrsub{x}{G}{y}$ as $(x,y)$. 
 Let $\wh B$ be the   Boolean ring freely generated by $V$,
  and let $B$ denote the (possibly non-unital) subring of $\wh B$ generated by all the elements $x+y$ of $\wh B$ where $x,y\in V\seq \wh B$. Let $\L'\colon  G\to \bringc$ be the constant functor with value $B$, and let $N\in Z^{1}(G,\L')$ be the cocycle determined by $N'((x,y))=x+y\in \lsub{x}{\L}'=B$ for all $(x,y)\in \mor(G)$.
  
  \begin{prop*}  For $(G,\leq)$ and $\CR$ as  above, there is an isomorphism $\mathfrak{Q}(G,\leq)\xrightarrow{\cong}\CR$ in $\prootc$.\end{prop*}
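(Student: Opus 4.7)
The plan is to apply the universal property of $\mathfrak{Q}$ (Theorem \ref{ssa11.3}) to obtain a canonical morphism $f\colon \mathfrak{Q}(G,\leq)\to\CR$ and then verify it is an isomorphism by constructing an explicit two-sided inverse at the level of Boolean rings. The simple-connectedness of $G$ will reduce the indexing of the abstract generators of $\mathfrak{Q}(G,\leq)$ to pairs of objects of $G$, and the anti-chain hypothesis on the big weak right preorder will trivialize the order-type relations (ii) from the construction of $\mathfrak{Q}$.

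For the morphism itself, the anti-chain hypothesis ensures that $\Id_G$ is tautologically a morphism $(G,\leq)\to \mathfrak{P}(\CR)$ in $\gpdpreord$, since only the trivial relations $y\leq y$ need to be preserved. By the universal property established in the proof of Theorem \ref{ssa11.3}, this yields a unique morphism $f=(\Id_G,\nu)\colon\mathfrak{Q}(G,\leq)\to\CR$ in $\prootc$, with $\nu_a((a,x,y))=(\L'(x))(N'(y))=N'(y)$ since $\L'$ is constant; concretely, if $y\in\lrsub{b}{G}{c}$ then $\nu_a((a,x,y))=b+c\in B$.

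Next I would analyze $\lsub{a}{\L}$ directly. Because $G$ is simply connected, the generator $(a,x,y)\in X_a$ is uniquely determined by the triple $(a,b,c)$ with $b=\cod(y)=\domn(x)$ and $c=\domn(y)$; I write $[b,c]$ for this generator (with $a$ suppressed). The anti-chain hypothesis reduces each relation (ii) to Boolean idempotence, which holds automatically, while the relations (i) become $[b,d]+[b,c]+[c,d]=0$ for all $b,c,d\in V$. Specializing $c=d$ gives $[c,c]=0$; specializing $c=a$ gives the ``splitting through $a$'' identity $[b,d]=[b,a]+[a,d]$. Hence $\lsub{a}{\L}$ is generated as a Boolean ring by the elements $e_v:=[v,a]$ for $v\in V$, subject to the single relation $e_a=0$.

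Finally, to construct the inverse I would use the universal property of the free Boolean ring $\wh B=G(V)$ (see \ref{ssa11.2}) to define a ring homomorphism $\wh g\colon \wh B\to \lsub{a}{\L}$ by $v\mapsto e_v$, then set $g:=\wh g|_B\colon B\to \lsub{a}{\L}$. On the generators of $B$, the splitting identity gives $g(b+c)=e_b+e_c=[b,a]+[a,c]=[b,c]$, while $\nu_a$ sends $[b,c]\mapsto b+c$ and $e_b\mapsto b+a$ by direct computation. Both composites $\nu_a\circ g$ and $g\circ\nu_a$ are therefore the identity on generating sets and, being ring homomorphisms, on the whole rings. The main technical subtlety lies in coordinating the three different presentations involved --- the abstract $X_a$-generators, their reduction to the pairs $[b,c]$, and the comparison with the concrete subring $B\subseteq\wh B$ --- but using $\wh B$ as an intermediary makes each well-definedness check automatic, since any ring map $V\to \lsub{a}{\L}$ extends uniquely to $\wh B$ and then restricts to $B$.
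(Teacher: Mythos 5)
Your proposal is correct, and while it follows the same skeleton as the paper (fix $a$, use simple connectedness to index the generators of $\lsub{a}{\L}$ by pairs of objects, note that the anti-chain hypothesis kills the type (ii) relations, and exploit the ``splitting through $a$'' identity coming from the type (i) relations), the mechanism differs in two useful ways. The paper defines $\nu_a$ by hand on generators via $(a,(a,b),(b,c))\mapsto b+c$, proves bijectivity by identifying $\lsub{a}{\L}$ with a Boolean ring $B'$ presented by triangle relations on the complete graph on $V$ and then invoking a spanning-tree freeness argument (in the spirit of Proposition \ref{ssa9.3}) to see that $B'$ is free on the edges through $a$ while $B$ is free on $\mset{a+b\mid b\in V,\, b\neq a}$, and finally must still check that $(\Id_{G},\nu)$ is a morphism of protorootoids. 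You instead obtain the morphism from the adjunction of Theorem \ref{ssa11.3} --- the anti-chain hypothesis makes $\Id_{G}$ tautologically a morphism $(G,\leq)\to\mathfrak{P}(\CR)$ --- so both the morphism property and the formula $\nu_a\colon [b,c]\mapsto b+c$ come for free, and you establish bijectivity by an explicit two-sided inverse $g$ whose well-definedness rests only on the universal property of the free Boolean ring $\wh B$, with the composite checks reduced to the generating sets $\set{b+c}$ of $B$ and $\set{[b,c]}$ of $\lsub{a}{\L}$ via the identities $[c,c]=0$, $[b,c]=[c,b]$, $[b,d]=[b,a]+[a,d]$. In particular your parenthetical claim that $\lsub{a}{\L}$ is presented by the $e_v$ subject only to $e_a=0$ is never actually needed (it is true, but only a posteriori via the isomorphism), so nothing in your argument depends on a freeness or presentation identification. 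The trade-off: the paper's route gives explicit free generating sets for both rings, whereas yours is more economical and keeps every well-definedness check inside a universal property; both end with the same routine observation that a morphism in $\prootc$ with invertible groupoid part and invertible components is an isomorphism.
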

  \begin{proof}
   Use notation for   $\mathfrak{Q}(G,\leq)=(G,\L,N)$ as in the proof of Theorem \ref{ssa11.3}.
   It will be shown   that there is an isomorphism of protorootoids 
    $(\Id_{G},\nu)\colon \mathfrak{Q}(G,\leq)\xrightarrow{  \cong} \CR$ such that  for
    $a,b,c\in \ob(G)$, $g:=(a,b)\in \lrsub{a}{G}{b}$ and $h:=(b,c)\in \lrsub{b}{G}{c}$, one has  \begin{equation}\label{eqa11.4.1}\nu_{a}((a,g,h))=\nu_{a}(a,(a,b),(b,c)):=b+c\in \lsub{a}{\L}'=B.\end{equation} 
  
To prove this, note first that for fixed $a\in \ob(G)$, there is a bijective correspondence $(a,(a,b),(b,c))\mapsto (b,c)$, for $b,c\in \ob(G)$,  from the set of generators of $\L(a)$ to  $\mor(G)$. So $\L(a)$ is naturally isomorphic to the Boolean ring $B$ generated by $\mor(G)$ subject to relations $(b,d)+(b,c)+(c,d)$ for $b,c,d\in V$ (these correspond to the  relations of type \ref{ssa11.3}(i) for $\L(a)$, and by the assumption on $(G,\leq)$, there are no other relations). These relations immediately imply that
$(b,b)=0$ (take $c=d=b$) and then $(b,c)=(c,b)$ (take
$d=b$). Let $\G:=(V,E)$ be the complete graph on $V$, with vertex set
$ E:=\mset{\set{b,c}\mid b,c\in V, b\neq c}$. The above implies that  
$\L(a)$ identifies with the  Boolean ring $B'$  generated by $E$ subject to relations $e_{1}+e_{2}+e_{3}=0$ for the edges $e_{1},e_{2},e_{3}$ of any triangle in $\G$, by an isomorphism $\a\colon \L(a)\to B'$ with
$\a(a,(a,b),(b,c))= \set{b,c}\in E\seq B'$ if $b\neq c$ and $\a(a,(a,b),(b,c))= 0$ otherwise. 

One can readily see 
(compare Proposition 
      \ref{ssa9.3})  that for any fixed $a\in V$,  $B'$ is the Boolean ring  freely generated by     the set $\mset{e\in E \mid a\in e}$ of  edges of $\G$ which have  $a$ as an endpoint.  Similarly, for fixed $a$,  $B$ identifies naturally with  the  Boolean ring  freely generated by its subset  $\set{a+b\mid b\in V, b\neq a}$. Hence there is an isomorphism $\b\colon B'\to B$ such that
      $\set{a,b}\mapsto a+b$ for all $b\in V\sm\set{a}$. By the relations for $B'$, $\b$ satisfies $\b(\set{b,c})= b+c$ for all $\set{b,c}\in E$. Clearly $\nu_{a}:=\b\a$ is an isomorphism of Boolean rings $\L(a)\to \L'(a)=B$ satisfying \eqref{eqa11.4.1}.
      It is now trivial to check that $(\Id_{G},\nu_{a})$ is an isomorphism of protorootoids as required. 
               \end{proof}

 \subsection{} \label{ssa11.5}  Let $\eta\colon \Id\to \mathfrak{P}\mathfrak{Q}$ and $\e\colon \mathfrak{Q}\mathfrak{P}\to \Id$ denote the unit and counit respectively  of the  adjoint pair $(\mathfrak{Q},\mathfrak{P})$.
 Recall the definition of the  category   $\gpdpreord_{P}$ of protorootoidal groupoid-preorders  from \cite[2.15]{DyGrp1}. Also, denote   the full subcategory of $\prootc$  with objects $\CR$ such that $\e_{\CR}$ is an isomorphism as $\prootc_{F}$. Part (a) of the following corollary provides a  characterization of protorootoidal groupoid-preorders which will be made more explicit in a subsequent paper.
 \begin{cor*}
 \begin{num} \item An object $a$ of $\gpdpreord$ is in $\gpdpreord_{P}$ if and only $\eta_{a}$ is an isomorphism in $\gpdpreord$.
 \item $\mathfrak{Q}$ and $\mathfrak{P}$ restrict to an adjoint equivalence between  $\gpdpreord_{P}$ and $\prootc_{F}$.
   \item A protorootoid is in $\prootc_{F}$ if and only of it is isomorphic to one of the form  $\mathfrak{Q}(a)$ for some object $a$ of $\gpdpreord_{P}$. 
   \end{num}\end{cor*}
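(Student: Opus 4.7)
The plan is to derive all three parts as formal consequences of the adjunction $(\mathfrak{Q}, \mathfrak{P})$ from Theorem \ref{ssa11.3}, using the triangle identities together with the fact that a morphism in $\gpdpreord$ is a groupoid homomorphism which preserves the preorders.

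For (a), the ``if'' direction is immediate: if $\eta_{a}$ is an isomorphism, then $a \cong \mathfrak{P}\mathfrak{Q}(a)$, so $a$ underlies the protorootoid $\mathfrak{Q}(a)$ and hence lies in $\gpdpreord_{P}$. For the converse, let $a = (G,\leq)$ be protorootoidal; since $\gpdpreord_{P}$ is a full subcategory, one may assume $a = \mathfrak{P}(\CR)$ for some protorootoid $\CR$. The triangle identity gives $\mathfrak{P}(\e_{\CR})\circ \eta_{a} = \Id_{a}$, so $\eta_{a}$ is a split monomorphism with retraction $\mathfrak{P}(\e_{\CR})$. Both are morphisms in $\gpdpreord$, and inspection of the construction in the proof of Theorem \ref{ssa11.3} shows that $\eta_{a} = I_{(G,\leq)}$ is the identity on the underlying groupoid $G$; the composition being $\Id_{a}$ then forces $\mathfrak{P}(\e_{\CR})$ also to be the identity on $G$ as a groupoid homomorphism. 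Writing $\mathfrak{P}\mathfrak{Q}(a) = (G,\leq')$, the two morphisms respectively establish $\leq \,\subseteq\, \leq'$ and $\leq' \,\subseteq\, \leq$, whence $\leq \,=\, \leq'$ and $\eta_{a}$ is an isomorphism in $\gpdpreord$.

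For (b), I invoke the standard fact that any adjunction restricts to an equivalence between the full subcategories of objects on which the unit, respectively counit, is an isomorphism. Concretely, if $\eta_{a}$ is an isomorphism, the triangle identity $\e_{\mathfrak{Q}(a)}\circ \mathfrak{Q}(\eta_{a}) = \Id_{\mathfrak{Q}(a)}$ together with the invertibility of $\mathfrak{Q}(\eta_{a})$ forces $\e_{\mathfrak{Q}(a)}$ to be an isomorphism, so $\mathfrak{Q}$ sends $\gpdpreord_{P}$ to $\prootc_{F}$. Dually, the identity $\mathfrak{P}(\e_{\CR})\circ \eta_{\mathfrak{P}(\CR)} = \Id_{\mathfrak{P}(\CR)}$ shows that $\mathfrak{P}$ sends $\prootc_{F}$ into the subcategory on which $\eta$ is an isomorphism, which by (a) is precisely $\gpdpreord_{P}$. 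The restricted unit and counit are isomorphisms by construction, so the restriction is an adjoint equivalence.

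Part (c) is then a formality: if $\CR \in \prootc_{F}$, then $a:=\mathfrak{P}(\CR) \in \gpdpreord_{P}$ by (b), and the counit $\e_{\CR}\colon \mathfrak{Q}(a) \to \CR$ is an isomorphism by definition of $\prootc_{F}$; conversely, $\mathfrak{Q}(a)$ lies in $\prootc_{F}$ for any $a \in \gpdpreord_{P}$ by (b), and $\prootc_{F}$ is closed under isomorphism. The only delicate point in the whole argument is the only-if direction of (a): one must identify precisely what a retraction of $\eta_{a}$ in $\gpdpreord$ can look like. The key observation is that such a retraction is forced to act as the identity on the underlying groupoid, so that its preservation of preorders yields the reverse inclusion; the rest is formal.
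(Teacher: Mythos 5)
Your proposal is correct and follows essentially the same route as the paper: part (a) via the triangle identity $\mathfrak{P}(\e_{\CR})\circ\eta_{\mathfrak{P}(\CR)}=\Id$, the observation that $\eta_{\mathfrak{P}(\CR)}=I_{(G,\leq)}$ has $\Id_{G}$ as underlying groupoid morphism (forcing the retraction to be $\Id_{G}$ as well and the two preorders to coincide), and parts (b)--(c) by the standard fact that an adjunction restricts to an adjoint equivalence between the full subcategories where the unit, respectively counit, is invertible. The only cosmetic difference is that you spell out this standard fact via both triangle identities where the paper simply cites it, so no further comment is needed.
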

   \begin{proof}   To prove (a), let $a$ be an object of $\gpdpreord$. Suppose first   that $\eta_{a}\colon a\to \mathfrak{P}\mathfrak{Q}(a)$ is an isomorphism.
 Then $a\cong \mathfrak{P}(b)$ where $b:=\mathfrak{Q}(a)$, so $a$ is an object of $\gpdpreord_{P}$. 
 On the other hand, suppose that $ a= \mathfrak{P}(b)$  where $b$ is a protorootoid.
 By one of the  triangular identities for the adjunction, the composite
 \begin{equation}\label{eqa11.5.1}
 \xymatrix{
  {\mathfrak{P}(b)}\ar[r]^-{\eta_{\mathfrak{P}(b)}}&{\mathfrak{P}\mathfrak{Q}\mathfrak{P}(b)}\ar[r]^-{\mathfrak{P}(\e_{b})}& {\mathfrak{P}(b)} }
  \end{equation}  is the identity. Writing $\mathfrak{P}(b)=(G,\leq)$ and $\mathfrak{P}\mathfrak{Q}\mathfrak{P}(b)=(G,\leq')$, this  means that the composite $(G,\leq )\to (G,\leq ')\to (G,\leq)$ is $\Id_{(G,\leq)}$, which 
  as a morphism of groupoids is $\Id_{G}$. From the proof of Theorem \ref{ssa11.3}, $\eta_{\mathfrak{P}(b)}=I_{(G,\leq)}$  has $\Id_{G}$ as underlying groupoid morphism. Hence $\mathfrak{P}(\e_{b})$ also has 
  $\Id_{G}$ as underlying groupoid morphism. This shows that $\Id_{G}$ induces a preorder isomorphism between  $(G,\leq)$ and $(G,\leq')$, so
the preorders   $\leq$ and $\leq'$ on $G$ coincide. Hence in \eqref{eqa11.5.1}, the two maps are inverse isomorphisms, and 
$\eta_{a}\colon a\xrightarrow{\cong}
{\mathfrak{P}\mathfrak{Q}(a)}$ is an isomorphism.  This proves (a).

      If $F\colon A\to B$ and $G\colon B\to A$ are  functors with $F$ left adjoint to $G$, with unit $\eta$ and counit $\e$,  then, using the triangular identities for the adjunction,  $F$ and $G$ restrict to an adjoint equivalence between the full subcategory of $A$
 with objects $a$   such  that $\eta_{a}$ is an isomorphism,  and the full subcategory of $B$ with objects $b$ such that $\e_{b}$ is an isomorphism.   
 Part (b) follows from (a)  by applying this remark to the adjoint pair $(\mathfrak{Q},\mathfrak{P})$, and (c) follows from (b).  
   \end{proof}
\subsection{} \label{ssa11.6} Let $\CR=(G,\L,N)$ be a protorootoid. Denote the component   at $\CR$ of the counit of the adjunction in Theorem \ref{ssa11.3} as   $\e_{\CR}=(\a,\nu)\colon \CR'\to \CR$.
Write $\CR'=(G,\L',N')$ and let $\CL$, $\CL'$ denote the big weak orders of $\CR$ and $\CR'$ respectively.   
 \begin{cor*}\begin{num}
 \item  $\CR'$ is abridged.
 \item $\mathfrak{P}(\e_{\CR})\colon \mathfrak{P}(\CR')\to \mathfrak{P}(\CR)$ is an identity  map  in $\gpdpreord$, with underlying groupoid map  $\a=\Id_{G}$. 
 \item For any $a\in \ob(G)$, the weak right preorders of $\CR$ and $\CR'$ at $a$ are equal.
  \item For  $a\in \ob(G)$ $\nu_{a}\colon \lsub{a}{\L}'\to \lsub{a}{\L}$ restricts to an order isomorphism   $\lsub{a}{\CL}'\to \lsub{a}{\CL}$. 
  \item   If ${\CR}$ is abridged, then  for $a\in \ob(G)$, $\nu_{a}$ is a surjection of Boolean rings.  \end{num}
 \end{cor*}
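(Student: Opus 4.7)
The strategy is to lean on the explicit description of $\mathfrak{Q}$ given in the proof of Theorem~\ref{ssa11.3}, together with the standard triangular identities for the adjunction $(\mathfrak{Q},\mathfrak{P})$. Part (a) is essentially already recorded at the end of the proof of Theorem~\ref{ssa11.3}: by construction $\lsub{a}{\L}'$ is presented with generators $(a,x,y)$ and relations chosen so that $(a,x,y)=\L'(x)(N'(y))$. Hence the $\L'(x)(N'(y))$ generate $\lsub{a}{\L}'$ as a Boolean ring, which is precisely the defining property of abridgement; this is exactly why $\mathfrak{Q}$ was noted to factor through $\prootc^{a}$.

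For (b) I would repeat, in a slightly different context, the argument already used in the proof of Corollary~\ref{ssa11.5}(a). One of the triangular identities of the adjunction asserts that $\mathfrak{P}(\e_{\CR})\circ \eta_{\mathfrak{P}(\CR)}=\Id_{\mathfrak{P}(\CR)}$. By the explicit formula $\eta_{\mathfrak{P}(\CR)}=I_{\mathfrak{P}(\CR)}$ from the proof of Theorem~\ref{ssa11.3}, the underlying groupoid morphism of $\eta_{\mathfrak{P}(\CR)}$ is $\Id_{G}$, which forces the same for $\mathfrak{P}(\e_{\CR})$; thus $\a=\Id_{G}$. Since both $\Id_{G}\colon \mathfrak{P}(\CR)\to \mathfrak{P}(\CR')$ and $\Id_{G}\colon \mathfrak{P}(\CR')\to \mathfrak{P}(\CR)$ are morphisms in $\gpdpreord$, the two weak right preorders on $G$ must coincide, so $\mathfrak{P}(\e_{\CR})$ is the identity in $\gpdpreord$. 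This proves (b), and (c) is the fibrewise reformulation at each $a\in \ob(G)$.

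For (d), the defining compatibility of $\nu$ with the cocycles (together with $\a=\Id_{G}$) gives $\nu_{a}(N'(g))=N(g)$ for every $g\in \lsub{a}{G}$, so $\nu_{a}(\lsub{a}{\CL}')=\lsub{a}{\CL}$. By (c), for $x,y\in \lsub{a}{G}$ one has $N'(x)\le N'(y)$ in $\lsub{a}{\L}'$ iff $x\le y$ in the common weak right preorder iff $N(x)\le N(y)$ in $\lsub{a}{\L}$; combined with the surjection, this yields the desired order isomorphism. For (e), if $\CR$ is abridged then $\lsub{a}{\L}$ is generated as a Boolean ring by all $\L(x)(N(y))$, and each such element equals $\nu_{a}((a,x,y))$, so the generators lie in the image of $\nu_{a}$, making $\nu_{a}$ surjective.

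No step presents a real obstacle: each is either formal adjunction-theoretic bookkeeping or direct unpacking of the presentation of $\mathfrak{Q}\mathfrak{P}(\CR)$. The only point requiring any care is in (d), where one must use (c) in \emph{both} directions to upgrade the surjection $\nu_{a}\colon \lsub{a}{\CL}'\to \lsub{a}{\CL}$ from a merely order-preserving surjection to an honest order isomorphism, but this follows immediately because the common preorder is, on either side, the one induced by the natural order on the cocycle values.
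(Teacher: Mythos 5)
Your proposal is correct and follows essentially the same route as the paper: (a) from the presentation of $\lsub{a}{\L}'$ together with $(a,x,y)=\L'(x)(N'(y))=N'(xy)+N'(x)$, (b) by the triangular identity argument (which the paper simply cites from the proof of Corollary \ref{ssa11.5}(a) with $b=\mathfrak{P}(\CR)$), (c) as the fibrewise restatement, and (d) from the equality of the weak right preorders. The only cosmetic deviation is in (e), where you argue surjectivity directly from the counit formula $\nu_{a}((a,x,y))=\L(x)(N(y))$ instead of via the surjection $\lsub{a}{\CL}'\to\lsub{a}{\CL}$ of (d); the two are interchangeable by the cocycle identity.
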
  
 \begin{proof}  
By construction,  for each $a\in \ob(G)$, $\lsub{a}{\L}'$ is generated as ring  by  the elements
$(a,g,h)$ for $g\in \lrsub{a}{G}{b}$ and $h\in \lrsub{b}{G}{}$.
Since $(a,g,h)=(\L'(g))(N'(h))=N'(gh)+N'(g)$, (a) follows. Part (b)
 follows from  the proof of Corollary \ref{ssa11.5}(a) (taking $b=\mathfrak{P}(\CR)$) and 
   (c) follows from (b).  For (d), observe that the map
   $\lsub{a}{\CL}'\to \lsub{a}{\CL}$ given by restriction of $\nu_{a}$ identifies with the induced map (of  associated 
    partial orders) of the preorder isomorphism
   $\Id_{G}\colon \lsub{a}{G}\to \lsub{a}{G}$ from (c), where the left (resp., right) side has the weak right preorder from $\CR'$ (resp., $\CR$).

    Finally (e) follows from surjectivity of the map $\lsub{a}{\CL}'\to \lsub{a}{\CL}$ in (d),  because 
 $\lsub{a}{\CL}$  generates $\lsub{a}{\L}$ since $\CR$ is abridged  and  $\lsub{a}{\CL}'$ generates  $\lsub{a}{\L}'$  by (a).  \end{proof} 
   \subsection{} \label{ssa11.7} This subsection   shows  that the right hand part of the diagram in \cite[5.8]{DyGrp1} can be refined to the following diagram by inserting the category $\gpdpreord_{P}$:
        \begin{equation*}
       \xymatrix{
       {\prootc}\ar@<1ex>[r]^{\mathfrak{A}}
       \ar@/^3.0pc/[rr]_-{\mathfrak{P}_{P}}
       &{\prootc^{a}}\ar@<1ex>[r]^>>{\mathfrak{P}'_{P}}
       \ar@/^3.0pc/[rr]_-{\mathfrak{P}'} \ar@<1ex>[l]^{\mathfrak{B}}&
       {\gpdpreord_{P}}\ar@{^{(}->}[r]_{\iota}\ar@<1ex>[l]^<<{\mathfrak{Q}'_{P}} \ar@/^3.0pc/[ll]
	_-{\mathfrak{Q}_{P}}
       &
       {\gpdpreord.} \ar@/^3.0pc/[ll]
	_-{\mathfrak{Q}'}	   }    
       \end{equation*}
            In this, $\mathfrak{A}$, $\mathfrak{P}'$, $\mathfrak{Q}'$,
       $\mathfrak{B}$ have been previously defined 
       ( the functors $\mathfrak{P}=\mathfrak{P}'\mathfrak{A}$ and $\mathfrak{Q}=\mathfrak{B}\mathfrak{Q}'$ from \cite[5.8]{DyGrp1}  are  not shown here).
 Let $\iota\colon \gpdpreord_{P}\to \gpdpreord$ denote the inclusion functor. Note that $\mathfrak{P}'$ uniquely factorizes as 
 $\mathfrak{P}'=\iota \mathfrak{P}_{P}'$ for some functor 
 $\mathfrak{P}_{P}'$ as shown,  by definition of $\gpdpreord_{P}$.
 Set $\mathfrak{Q}'_{P}:=\mathfrak{Q}'\iota$. A straightforward argument using Theorem \ref{ssa11.3} and  the fact that $\iota$ is full and faithful shows that $\mathfrak{Q}'_{P}$ is left adjoint to $\mathfrak{P}_{P}'$. Since $\mathfrak{B}$ is left adjoint to $\mathfrak{A}$, it follows that $\mathfrak{Q}_{P}:=\mathfrak{B}\mathfrak{Q}_{P}'$ is left adjoint to $\mathfrak{P}_{P}:=\mathfrak{P}_{P}'\ab$.
 Therefore each explicitly named functor except $\iota$ in the diagram is part of an adjoint pair, with the lower functor left adjoint to the symmetrically corresponding upper functor.
 
   \begin{rem*}
      Note that if $\CR$ is  an object of $\prootc_{F}$,  then $\CR$ is  abridged by Corollary \ref{ssa11.6}(a). 
         By Corollary \ref{ssa11.5}, the functor $\mathfrak{Q}_{P}'$ (resp., $\mathfrak{Q}_{P}$)   defines an equivalence of $\gpdpreord_{P}$ with the full coreflective subcategory  $\prootc_{F}$ of $\prootc^{a}$ (resp., of $\prootc$).      It will be  shown in
  a  later paper that   abridged principal rootoids are objects of $\prootc_{F}$. It  is an open problem to  determine   conditions  for an  abridged rootoid or protorootoid to be in $\prootc_{F}$. \end{rem*}

     \section{Squares}\label{s12}
\subsection{}\label{ss12.1}
Let  $(W,S)$ be a   Coxeter system. Suppose that $x\in W$ and  $r,s\in S$ are  such that  $l(sx)> l(x)$ and $l(sxr)\leq l(xr)$.   Then EC  implies that $sxr=x$ so, setting $v:= rx^{*}s=x^{*}$, one has  $vsxr=1$, and  each of the expressions   $xr$, $sx$,  $vs$ and $rv$ is a compatible expression:
\begin{equation*}
\xymatrix{
{}\ar[r]^{x}&{}\ar[d]^{s}\\
{}\ar[u]^{r}&{}\ar[l]^{v}
}
\end{equation*}
\subsection{} \label{ss12.2} The following  simple  notion, of which the previous subsection provides natural examples, plays a fundamental   role in our main results.
\begin{defn*} Let $\CR=(G,L,N)$ be a  protorootoid.
\begin{num}\item A \emph{oriented square} of $\CR$ is a quadruple of morphisms
$(v,y,x,w)$ of $G$ as in the diagram 
\begin{equation*}\xymatrix{
{}\ar[r]^{x}&{}\ar[d]^{y}\\
{}\ar[u]^{w}&{}\ar[l]^{v}
}
\end{equation*}  such that the composite $vyxw$ is defined and equal to an identity morphism of $G$
and  each expression
$vy$, $yx$, $xw$ and $wv$ is  compatible  for $\CR$.

\item A commutative square diagram in $G$ 
\begin{equation*}\xymatrix{
{}\ar[r]^{x}&{}\\
{}\ar[u]^{w}\ar[r]_{u}&{}\ar[u]_{z}
}
\end{equation*} is said to be a \emph{commutative square of $\CR$}
if $(x,w,u^{*},z^{*})$ is an oriented square of $\CR$. 

\end{num}

\end{defn*}
If $(v,y,x,w)$ is an oriented square of $\CR$, then  $(y,x,w,v)$ and $(w^{*},x^{*},y^{*},v^{*})$
are also oriented squares of $\CR$, so there is a natural action of the dihedral 
group of order $8$ on the set of oriented squares of $\CR$. This implies that  
the notion of commutative square of $\CR$ is well-defined, since in (b), 
  $(x,w,u^{*},z^{*})$ is an oriented  square of $\CR$ if and only if  $(z,u,w^{*},x^{*})$ is 
  an oriented square of $\CR$. 
  
  Sometimes   the phrase ``square of $\CR$''  (or even just ``square'') will be used  refer to either an oriented square of $ \CR$   or a commutative square of $\CR$ when context makes it clear which is meant.  Note that the sets of oriented and commutative squares of $\CR$ depend only on the underlying groupoid-preorder of $\CR$.
\subsection{}\label{ss12.3} The remainder of this section gives basic properties of squares,  and  examples.  
\begin{lem*} Let $\CR=(G,\L,N)$ be a protorootoid. Fix a 
 quadruple $s=(x,w,v,y)$ of morphisms of $G$ with $xwvy$ defined and equal to an identity morphism of $G$. Then:
\begin{num}\item  $s$ is  an oriented  square of $\CR$  if and only if 
both $x(N_{w})=N_{y^{*}}$ and $N(w)\cap N(x^{*})=\eset$.
\item If $s$ is an oriented square of $\CR$, it is uniquely determined by $(x,w,v)$.
\item If   $\CR$ is faithful, and $s$ is an oriented square, then $s$ is uniquely determined by $(x,w)$ (this   property will be referred to as \emph{rigidity of squares}).
\item Suppose that $\CR$ is the protorootoid attached to a signed groupoid-set $(G,\Phi)$. Then $s$ is an oriented square of $\CR$  if and only if  
$x(\Phi_{w})=\Phi_{y^{*}}$. \end{num}\end{lem*}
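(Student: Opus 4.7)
The plan is to prove (a) first, using the cocycle identity arising from $xwvy = 1$ together with all four compatibility conditions, and then derive (b), (c), (d) as consequences. Write $C = \cod(x)$. From $xwvy = 1_C$ we get $xwv = y^*$ and $xw = y^*v^*$, so iterating the cocycle property yields
\begin{equation*}
0 = N(xwvy) = N(x) + xN(w) + xwN(v) + N(y^*)
\end{equation*}
in $\lsub{C}{\L}$. Set $a := N(x)$, $b := xN(w)$, $c := xwN(v)$, $d := N(y^*)$. Applying the Boolean-ring automorphisms induced by $x$, $xw$, and $y^*$ respectively, the four compatibility conditions for $xw$, $wv$, $vy$, $yx$ translate into exactly the four adjacent-product vanishings $ab = bc = cd = da = 0$. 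Multiplying $a+b+c+d=0$ by $a$ cancels $ab$ and $ad$, giving $a(1+c) = 0$, hence $a \le c$; by symmetry $c \le a$, so $a = c$, and similarly $b = d$. The equality $b = d$ is precisely $xN(w) = N(y^*)$, while the $xw$-compatibility $ab = 0$ is equivalent to $N(w) \cap N(x^*) = 0$ via $N(x^*) = x^*N(x)$. Conversely, given the two hypotheses, $xw$-compatibility holds directly, so $vy = (xw)^*$ is compatible as well (compatibility is stable under inversion). Since $wv = x^*y^*$, the remaining $wv$- and $yx$-compatibilities are equivalent to each other under inversion, and applying $y^*$ to $N(y)\cdot yN(x) = 0$ reduces it to $N(y^*)\cdot N(x) = 0$, which by the first hypothesis becomes once again $xw$-compatibility.

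Part (b) is immediate, since $xwvy = 1$ forces $y = v^*w^*x^*$. For (c), suppose $(x,w,v,y)$ and $(x,w,v',y')$ are both oriented squares. Then (a) gives $N(y^*) = xN(w) = N((y')^*)$, and the cocycle applied to $yy^* = 1$ gives $N(y) = y\cdot xN(w)$, and likewise $N(y') = y'\cdot xN(w)$. The morphism $g := y(y')^*$ is well-defined because $\domn(y) = \cod(x) = \domn(y')$, and a short cocycle expansion gives
\begin{equation*}
N(g) = N(y) + y(y')^*N(y') = y\cdot xN(w) + y\cdot xN(w) = 0.
\end{equation*}
Faithfulness then forces $g$ to be an identity morphism, yielding $\cod(y) = \cod(y')$ and $y = y'$; then $v = v'$ follows from $xwvy = xwv'y'$. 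Part (d) is a translation of (a) into signed groupoid-sets: using $N(g) = \Phi_g \sqcup -\Phi_g$ and sign-equivariance of the $G$-action, the equation $xN(w) = N(y^*)$ decomposes by sign into $x\Phi_w = \Phi_{y^*}$, provided $x\Phi_w$ is contained in the positive part of $\Phi(\cod(x))$. This positivity holds automatically (one direction from $\Phi_{y^*}$ being positive, the other from $xw$-compatibility), and once it holds it forces $\Phi_x \cap x\Phi_w = \eset$ by a direct sign-check using the definition of $\Phi_x$, equivalent to $N(w) \cap N(x^*) = 0$.

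The main obstacle is the Boolean-ring argument at the core of (a), namely the extraction of the strong conclusions $a = c$ and $b = d$ from $a+b+c+d = 0$ together with only the four adjacent-product vanishings. This is the substantive content of the lemma: it shows that the (seemingly weaker) four-fold oriented-square compatibility is actually equivalent to the two opposite-side conditions characterizing commutative squares in the introduction. Given (a), the remaining parts are routine: (b) is tautological, (c) invokes faithfulness exactly once on the self-composable morphism $y(y')^*$, and (d) is a sign-decomposition argument using the structure of the associated signed groupoid-set.
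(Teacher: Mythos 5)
Your parts (b), (c) and (d), and the ``only if'' half of (a), are correct and essentially follow the paper's own route: your identity $a+b+c+d=0$ with the four adjacent products zero is a tidy repackaging of the paper's comparison of the two disjoint decompositions of $N(xw)$, your $y(y')^{*}$ computation in (c) is exactly how the paper's one-line appeal to faithfulness is meant to be unwound, and your sign-decomposition in (d) is the paper's translation (the only caveat being that the paper's $\mathfrak{L}$ takes $N(g)$ to be the set of $\set{\pm}$-orbits met by $\Phi_{g}$ rather than $\Phi_{g}\cup-\Phi_{g}$; your argument works verbatim in either model). The genuine gap is in the converse half of (a). You obtain the $vy$- and $wv$-compatibilities by invoking ``compatibility is stable under inversion'', on the grounds that $vy=(xw)^{*}$ and $wv=x^{*}y^{*}$ as morphisms. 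But compatibility is a property of the factorization, not of the composite: inversion-stability says that compatibility of the expression $xw$ gives compatibility of the expression $w^{*}x^{*}$, and in general $(v,y)\neq(w^{*},x^{*})$, so nothing about the pair $(v,y)$ follows; likewise $yx$-compatibility tells you about the pair $(x^{*},y^{*})$, not about $(w,v)$. (Already in $\CC_{(W,S)}$ the element $(xw)^{*}$ has many incompatible factorizations even when $xw$ is compatible.) So as written only two of the four required compatibilities, namely those for $xw$ and $yx$, are actually established in the ``if'' direction.

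The repair is one line inside your own setup, and it is precisely the paper's ``cyclic shift'' step that you skipped: the hypothesis $x(N_{w})=N_{y^{*}}$ says $b=d$, so the cocycle identity $a+b+c+d=0$ forces $a=c$ (i.e. $N_{x}=xw(N_{v})$, equivalently $y(N_{x})=N_{v^{*}}$), and then $bc=ab=0$ and $cd=ad=ab=0$, which are exactly the missing $wv$- and $vy$-compatibilities. (The paper phrases this as: the pair of conditions propagates from $(x,w,v,y)$ to its cyclic shift $(y,x,w,v)$, and iterates.) A cosmetic point only: since $\L(C)$ is a possibly non-unital Boolean ring, write $a+ac=0$ rather than $a(1+c)=0$.
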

\begin{rem*} (1)
 Many results involving squares have an analogue 
involving a   dual  notion  of cosquares, in which, for instance,  the 
condition $x(\Phi_{w})=\Phi_{y}$ in (c) is replaced by
 $x(\Phi^{\cp}_{w})=\Phi^{\cp}_{y}$ where
 for $z\in \lsub{a}{G}$, $\Phi_{z}^{\cp}:=\lrsub{a}{\Phi}{+} \sm \Phi_{z}$ (see Remark \ref{ssa10.3}(3), and \cite{DyWeak} for some examples in Coxeter groups).   

 (2) Much of the combinatorics   involving  squares and cosquares in subsequent papers  applies in  the framework of pairs
    $(\CR,\CL)$ where $\CR=(G,\L,N)$ is a protorootoid and 
    $\CL=(\,\lsub{a}{\CL})_{a\in \ob(G)}$,  
     $\lsub{a}{\CL}\seq {\L}(a)$ is a  subset  containing $0_{\L(a)}$ of $\L(a)$,   endowed with the 
     induced partial order, and the family ${\CL}$ is  stable under the    dot $G$-action in the natural sense.  \end{rem*}
\begin{proof} For (a), first suppose that $s$ is a square of $\CR$.
From $N(y)\cap N(v^{*})=\eset$, it follows that 
\begin{equation*} N(xw)=N_{x}+x(N_{w})=N(y^{*}v^{*})=N_{y^{*}}\cup y^{*}(N_{v^{*}}).\end{equation*}
Using $N_{x}\cap N_{y^{*}}=\eset$, this implies that $N_{y^{*}}
\seq x(N_{w})$.  By symmetry, $N_{w}\seq x^{*}(N_{y^{*}})$.
One has $N_{w}\cap N_{x^{*}}=\eset$ by definition, and  the ``only if'' 
implication in (a) follows. To prove  the ``if'' in (a), it will suffice by symmetry 
to show that if $x(N_{w})=N_{y^{*}}$ and $N(w)\cap N(x^{*})=\eset$, then 
$y(N_{x})=N_{v^{*}}$ and $N_{x}\cap N_{y^{*}}=\eset$ (for then two more 
repetitions of the same argument shows that the defining conditions for $s$ to be 
an oriented square are satisfied). Now the assumptions imply that
\begin{equation*} N(xw)=N_{x}\dotcup x(N_{w})=N_{x}\dotcup N_{y^{*}}.\end{equation*}
But also, $N(xw)=N(y^{*}v^{*})=N_{y^{*}}+y^{*}(N_{v^{*}})$. 
Comparing the last two equations shows   that $N_{x}=y^{*}(N_{v^{*}})$ as 
required. This completes the proof of (a). Part  (b) holds 
since $xwvy=1_{a}$ where $a:=\cod(x)$.  If also  $\CR$ is faithful, then $y$ 
is uniquely determined by the condition  $N_{y^{*}}=x(N_{w})$ from (a).  
Then  $v$  is determined by  (b),  since $(y,x,w,v)$ is 
a square of $\CR$. This proves (c).

For (d), write $\mathfrak{L}(G,\Phi)=(G,\Xi,N)$ where $\Xi\colon G\to \setc$
with $\CP_{G}(\Xi)=\L$. From the constructions in the proof of \cite[Proposition  5.5]{DyGrp1}, it follows that
$x(N_{w})=N_{y^{*}}$ if and only if  $x(\Phi_{w})$ is a set of $\set{\pm}$-orbit 
representatives on $\Phi_{y^{*}}\cup -(\Phi_{y^{*}})$. Also, $N_{w}\cap N_{x^
{*}}=\eset $ if and only if  $\Phi_{w}\cap \Phi_{x^{*}}=\eset$ if and only if   $x(\Phi_{w})\seq \lrsub{a}{\Phi}{+}
$ where $a:=\cod(x)$.
Hence  $x(\Phi_
{w})=\Phi_{y^{*}}$ if and only if  ($x(N_{w})=N_{y^{*}}$ and $N_{w}\cap N_{x^{*}}=\eset$). This shows that  (d) follows from (a).
\end{proof}
 \subsection{} \label{ss12.4} Consider a commutative diagram in $G$ as follows:
\begin{equation}\label{eq12.4.1}\xymatrix{&\ar[l]_{a}&\ar[l]_{e}\\
\ar[u]^{d}&\ar[u]^{b}\ar[l]^{c}&\ar[l]^{g}\ar[u]_{f}
}\end{equation}
Define the quadruples   $s_{1}:=(a,b,c^{*},d^{*})$, $s_{2}:=(e,f,g^{*},b^{*})$
and $s_{3}:=(ae,f,g^{*}c^{*},d^{*})$ of morphisms of $G$.
\begin{lem*} If any two of  $s_{1},s_{2},s_{3}$ are oriented squares  of $\CR$, then they are all  
oriented squares  of $\CR$.\end{lem*}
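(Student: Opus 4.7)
The plan is to apply the characterization of oriented squares from Lemma \ref{ss12.3}(a), which reformulates each $s_i$ as the conjunction of an ``equality'' condition $x(N_w)=N_{y^*}$ and a ``disjointness'' condition $N_w\cap N_{x^*}=\eset$. Concretely, $s_1$ becomes (A1) $a(N_b)=N_d$ and (A2) $N_b\cap N_{a^*}=\eset$; $s_2$ becomes (B1) $e(N_f)=N_b$ and (B2) $N_f\cap N_{e^*}=\eset$; and $s_3$ becomes (C1) $(ae)(N_f)=N_d$ and (C2) $N_f\cap N_{(ae)^*}=\eset$. The equality conditions compose transitively by applying $a$ (resp.\ $a^*$, $e^*$), so any two of (A1), (B1), (C1) immediately imply the third.

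The main step is then to handle the disjointness conditions. The key identity is the cocycle formula $N_{(ae)^*}=N_{e^*}+e^*(N_{a^*})$ in the Boolean ring; intersecting with $N_f$ and using that intersection distributes over symmetric difference yields
\begin{equation*}
N_f\cap N_{(ae)^*}\;=\;\bigl(N_f\cap N_{e^*}\bigr)\,+\,\bigl(N_f\cap e^*(N_{a^*})\bigr).
\end{equation*}
The second summand simplifies further via $G$-equivariance: $N_f\cap e^*(N_{a^*})=e^*\bigl(e(N_f)\cap N_{a^*}\bigr)$. Combined with (B1) this equals $e^*(N_b\cap N_{a^*})$, so under (A2) it vanishes. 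Thus, assuming (A1), (B1) and (A2), the displayed identity collapses to $N_f\cap N_{(ae)^*}=N_f\cap N_{e^*}$, and (B2) $\iff$ (C2). This already gives $s_1\wedge s_2\Rightarrow s_3$, and the reverse direction $s_1\wedge s_3\Rightarrow s_2$ follows from the same identity.

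For the remaining implication $s_2\wedge s_3\Rightarrow s_1$, one  derives (A1) from (B1) and (C1) as above, and then observes
\begin{equation*}
N_b\cap N_{a^*}\;=\;e(N_f)\cap N_{a^*}\;=\;e\bigl(N_f\cap e^*(N_{a^*})\bigr),
\end{equation*}
so (A2) reduces to $N_f\cap e^*(N_{a^*})=\eset$. But the displayed identity together with (B2) gives $N_f\cap e^*(N_{a^*})=N_f\cap N_{(ae)^*}$, which is $\eset$ by (C2). Nothing in the argument is deep: the expected obstacle is purely bookkeeping, namely correctly expanding $N_{(ae)^*}$ via the cocycle identity and tracking the Boolean-ring arithmetic relating the three disjointness conditions; once that identity is in hand, the three pairwise implications follow uniformly.
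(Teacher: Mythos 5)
Your proof is correct and follows essentially the same route as the paper's: both reduce each $s_{i}$ to the two conditions of Lemma \ref{ss12.3}(a) and then combine the cocycle identity with distributivity of intersection over symmetric difference in the Boolean rings $\L(a)$. The only cosmetic difference is that the paper treats a single case explicitly ($s_{1}$ and $s_{3}$ squares imply $s_{2}$ is, checking disjointness in the equivalent corner form $N(e)\cap N(b)=\eset$ via $aN(e)=N(ae)+N(a)$) and dispatches the remaining cases by the dihedral symmetry of oriented squares, whereas you verify all three implications explicitly from the single expansion $N_{(ae)^{*}}=N_{e^{*}}+e^{*}(N_{a^{*}})$.
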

\begin{rem*} If $\CR$ is preorder-isomorphic  to the rootoid attached to a signed groupoid-set, this  is obvious from  Lemma \ref{ss12.3}(d).  Although the general case can be deduced from this special case,      a direct  proof is indicated  for completeness. \end{rem*}
\begin{proof} 
 Suppose for example that $s_{1}$ and $s_{3}$ are squares of $\CR$. Then using
Lemma \ref{ss12.3}(a) it follows  that 
\begin{equation*} a(e( N_{f}))=(ae)( N_{f})= N_{d}=a( N_{b})\end{equation*} and hence $e( N_{f})= N_{b}$.
Also,  $N(e)\cap N(b)=\eset$ since \begin{multline*}a(N(e)\cap N(b))=aN(e)\cap aN(b)=(N(ae)+N(a))\cap N(d)=\\ (N(ae)\cap N(d))+(N(a)\cap N(d))=\eset+\eset=\eset.\end{multline*}
This shows that $s_{2}$ is a square of $\CR$ by Lemma \ref{ss12.3}(a) again. The  other cases can be proved  similarly, or  deduced from the case treated using  the symmetry properties of oriented squares. 
\end{proof}

\subsection{Examples of squares}
\label{ss12.5} This subsection gives    examples  of squares   from Coxeter groups. \begin{exmp*} Let
 $(W,S)$ be a Coxeter system, $\CR:=\CC_{(W,S)}$ and $\Phi$ be the abstract root system of $\Phi$. 
 
  (1) Assume that 
     $(W,S)$ is  of type $A_{3}$, with
  $S=\set{r,s,t}$ and Coxeter graph $\xymatrix{r\ar@{-}[r]&s\ar@{-}[r]&t}$. 
  Concretely, $W=S_{4}$, the symmetric group on four letters, with
  $r=(1,2)$, $s=(2,3)$ and $t=(3,4)$ (adjacent transpositions).
 In the commutative  cubical diagrams below, each  face gives  a commutative square of  $\CR$. 
   \begin{equation*}\xymatrix{\ar[rrr]^{rst}&&&\\
   &\ar[r]^{rst}\ar[ul]_>>>>>>>>{r}&\ar[ur]^>>>>>>>>{s}&&\\ &\ar[u]^{sr}\ar[r]_{rst}\ar[dl]^>>>>>>>>{s}&\ar[u]_{ts}\ar[dr]_>>>>>>>>{t}&&\\
\ar[uuu]^{sr}\ar[rrr]_{rst}&&&\ar[uuu]_{ts}&}\qquad\qquad
\xymatrix{\ar[rrr]^{srts}&&&\\
   &\ar[r]^{srts}\ar[ul]_>>>>>>>>{t}&\ar[ur]^>>>>>>>>{r}&&\\ &\ar[u]^{r}\ar[r]_{srts}\ar[dl]^>>>>>>>>{t}&\ar[u]_{t}\ar[dr]_>>>>>>>>{r}&&\\
\ar[uuu]^{r}\ar[rrr]_{srts}&&&\ar[uuu]_{t}&} 
\end{equation*} 
Applying the previous Lemma to the above diagrams also gives some other squares of $\CR$, such as the oriented square $(sr,tsrt,st, srst)$.

 (2) Assume that $W_{J}$ is a finite standard parabolic subgroup of 
 $(W,S)$. Denote the  longest element of $(W_{J},J)$ as $w_{J}$.
 It is well known that $w_{J}^{2}=1$ and that for any $z\in W_{J}$, the expression $z z'$ is compatible where $z':=z^{*}w_{J}$. Hence if  $x\in W_{J}$, then  $(x, x^{*}w_{J},w_{J}xw_{J},w_{J}x^{*})$ is an oriented square of $\CR$. 

(3)  Suppose $J,K\seq S$ and $d\in W$  with $dJd^{-1}=K$ and 
$l(dr)=l(d)+1$ for all $r\in J$ (equivalently,  $l(sd)=l(d)+1$ for all $s\in K$).
Let $x\in W_{J}$ and $y=dxd^{*}\in W_{K}$. It is well known and easily seen (see \cite{Bour} or \cite{Hum}) that $d(\Phi_{x})=\Phi_{y}$. It follows that 
  $(d^{*},y^{*},d,x)$ is an oriented  square of  $\CR$.
  \end{exmp*}

 \section{Examples of morphisms of rootoids} 
 \label{s13} This section gives examples of morphisms of rootoids. Familiarity with the definitions of the categories $\rtd$ and $\rlec$ (and in particular, with JOP, AOP and   the notation   $\th^{\perp}$ for the partially defined adjoint of $\th$ as in 
 \cite[4.6--4.8]{DyGrp1}) is assumed.
\subsection{} \label{ss13.1} 

Let $\CR=(G,\L,N)$ be a protorootoid, and $i\colon H\to G$ be the inclusion morphism for  a subgroupoid $H$ of $G$. Recall the notion of the  restriction $\CR_{H}$ of $\CR$ to $H$ (see \cite[2.11--2.12]{DyGrp1}). Trivial 
examples show that $\CR_{H}$ need not be a rootoid if $\CR$ is a 
rootoid (and so, a fortiori,  pullbacks of rootoids by arbitrary groupoid homomorphisms need not be rootoids either).  This subsection  discusses  properties of the natural morphism  $i^{\flat }\colon \CR_{H}\to \CR$ in two  very   simple examples.

   \begin{exmp*} 
 (1)   Suppose $(W,S)$ is a dihedral Coxeter system of order $2n$  where $n\in 
   \Nat_{\geq 2}\cup\set{\infty}$.  Let $\CR:=\CC_{(W,S)}=(W,\L,N)$. 
  Write  $S=\set{r,s}$.    Let $x:=rs$, let  $G=\mpair {x}$  be  the 
  rotation subgroup of $W$ and let $i\colon G\to W$ be the inclusion. Note that $G$ is a cyclic group  of order $n$.   Set $X:=\set{x, x^{*}}\seq G$.
   Then $(G,X)$ is a $C_{2}$-system discussed in Example 
   \ref{ssa8.16}(2), and there is an 
   associated rootoid $\CT:=\CJ_{(G,X)}$ defined there. There is also the protorootoid
   $\CR_{G}:=i^{\nat}(\CR)$ obtained by restriction of $\CR$ to $G$.   One can  easily check that  
    $\CR_{G}\cong \CT$,  and that 
      $i^{\flat}$ is a morphism  in $\rtd$. It is a morphism in $\rlec$  if and only if 
    $n$ is even or infinite, since only then is $G$  join-closed in $W$. 
    If this holds, then $\CR_{G}$ is a  preprincipal rootoid (and otherwise, it is not).

    (2) Suppose $(W,S)$ is a Coxeter system and  $J\seq S$. Write 
    $\CR:=\CC_{(W,S)}=(W,\L,N)$ and let $i\colon W_{J}\to W$ be the 
    inclusion. Recall that $(W_{J},J)$ is a Coxeter system.
    Then $i^{\nat}(\CR)$ is a principal rootoid, the abridgement of which
    is easily seen to be  isomorphic to the abridgement of $\CC_{(W_{J},J)}$.  The unique morphism $\th$ of 
    weak  right orders induced by $i^{\flat}$ identifies with $i$ (as a map of sets)
    and induces an order  isomorphism between  $W_{J}$ (in weak  right order) and its image $W_{J}$ in $W$, which is a join closed order ideal
    of $W$ in its weak right order.
    The adjoint  $\th^{\perp}$  is defined only on $W_{J}$  and reduces to the identity map $W_{J}\to W_{J}$.     
    It follows  that  $i^{\flat}$ is a morphism in  $\rtd$ and in $\rlec$. 
        \end{exmp*} 
     \subsection{Fixed subgroups of automorphism groups of Coxeter groups}\label{ss13.2} Let $(W,S)$  be a Coxeter system and  $G$ be a group of 
      automorphisms of $(W,S)$. It is a well-known Theorem of Tits that
      the  fixed subgroup
       \begin{equation}\label{eq13.2.1} W':=W^{G}=\mset{w\in W\mid \s(w)=w \text{ \rm for 
      all $\s\in G$}}\end{equation}  of $G$ on $W$ is a Coxeter group. Its set $S'$ of standard  Coxeter 
      generators is the set of   longest elements $w_{J}$ of finite  standard
      parabolic subgroups $W_{J}$ of $W$ such that $J$ is a single $G$-orbit 
      on $S$.  It is known how to associate  a root system of $(W',S')$ to one of $(W,S)$, either as a linearly realized root system (see e.g. \cite{Hee}) or abstractly (\cite{DyEmb1}--\cite{DyEmb2}).
      
        \begin{prop*} Let  $\CR:=\CC_{(W,S)}$  denote the standard rootoid attached to $(W,S)$,  
      let    $i\colon W'\to W$ be the inclusion homomorphism
      and  set $\CT:=\CR_{W'}=i^{\nat} \CR$.
      Then  $\CT$ is a preprincipal  rootoid with simple generators $S'$,  and $ i^{\flat}\colon \CT\to \CR$ is a morphism in $\rlec$       
        \end{prop*}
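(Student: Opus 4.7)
The plan is to follow exactly the strategy set out in \ref{ss12.2}: verify that the inclusion $i\colon W'\to W$ satisfies the AOP, and then invoke the general fact (recalled there) that AOP yields at once that $i^\flat$ is a morphism in $\rlec$, that $\CT$ is preprincipal, and that the atomic generators of $\CT$ are the images under the partial adjoint $i^\perp$ of those elements of $S$ lying in its domain. Thus the real work is the verification of AOP; Tits' theorem is then essentially recovered as the identification of $i^\perp(S\cap\domn(i^\perp))$ with $S'$.

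The essential content is to express $i^\perp$, which is a priori a meet of elements of $W'$, explicitly as a join taken in the weak right order of $W$. Concretely, I propose
\[
i^\perp(w):=\bigvee_{\sigma\in G}\sigma(w),
\]
understood to be defined precisely when the right hand side exists in the weak right order of $W$. Since each $\sigma\in G$ fixes $S$, it preserves lengths and hence is an automorphism of weak right order; the join above, when it exists, is therefore $G$-invariant, lies in $W'$, and is an upper bound of $w$. Conversely, any $v\in W'$ with $w\leq v$ satisfies $\sigma(w)\leq\sigma(v)=v$ for all $\sigma\in G$, so $v$ dominates the join, confirming that $i^\perp(w)$ agrees with $\min\{v\in W'\mid w\leq v\}$ whenever the latter exists.

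Specialising to $w=s\in S$ recovers Tits' explicit description of $S'$: the orbit $J:=G\cdot s\subseteq S$ is $G$-stable, the join $\bigvee J$ exists in $W$ precisely when $W_J$ is finite, and in that case equals the longest element $w_J$. Hence $S\cap\domn(i^\perp)=\{s\in S\mid W_{G\cdot s}\text{ is finite}\}$, and $i^\perp$ carries this set bijectively onto $S'=\{w_J\mid J\in S/G,\ W_J\text{ finite}\}$. Once AOP is in place, the general implications stated in \ref{ss12.2} give immediately that the atomic generators of $\CT$ are precisely $S'$, that $\CT$ is preprincipal, and that $i^\flat\in\rlec(\CT,\CR)$.

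The main obstacle is the verification of AOP beyond the pointwise existence of $i^\perp$ sketched above: AOP packages compatibility of $i^\perp$ with the cocycle structure of $\CR$, or equivalently, with the combinatorics of commutative squares from Section \ref{s12}. I expect the argument to proceed by reading the join formula iteratively: each new $\sigma(w)$ brought into the partial supremum introduces a commutative square of $\CR$ relating the current partial join to its successor, and the $G$-symmetry of the set $\{\sigma(w)\mid \sigma\in G\}$ ensures that the resulting configuration glues together into the data of a rootoid local embedding. Making this square-by-square assembly precise and $G$-equivariant—so that not only $i^\perp(w)$ but also the associated cocycle values are correctly realised—is the delicate step; everything else is formal from the framework of \ref{ss12.2}.
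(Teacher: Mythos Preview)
You have the right join formula $i^{\perp}(w)=\bigvee_{\sigma\in G}\sigma(w)$ and the right justification for it, but you have misidentified where the work lies. The verification of AOP is not the ``delicate step'' you describe and requires no squares whatsoever: AOP asks that if $w'\in W'$, $w\in I$, and $N(w')\cap N(w)=\emptyset$, then $N(w')\cap N(i^{\perp}(w))=\emptyset$. Since $w'$ is $G$-fixed, applying any $\sigma\in G$ to $N(w')\cap N(w)=\emptyset$ gives $N(w')\cap N(\sigma(w))=\emptyset$, and then the JOP for $\CR$ together with your join formula gives $N(w')\cap N(\bigvee_{\sigma}\sigma(w))=\emptyset$ immediately. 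That is the entire argument; the square machinery of Section~\ref{s12} plays no role here (it is needed for the much harder functor-rootoid results of Section~\ref{s14}, for which this proposition is only a prototype).

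Conversely, the parts you treat as ``formal from the framework of \ref{ss12.2}'' are not pre-packaged anywhere: subsection~\ref{ss12.2} is merely the definition of commutative squares, and the implications ``AOP $\Rightarrow$ preprincipal'' and ``AOP $\Rightarrow$ atoms $=\{i^{\perp}(s):s\in S\cap I\}$'' are being proved \emph{here for the first time}, as the prototype the introduction promises. You need to supply those arguments. For preprincipality: given $w\in W'$ and an atom $r'=i^{\perp}(r)$ of $W'$, use that $\CR$ is preprincipal to get either $N(r)\subseteq N(w)$ (whence $N(r')\subseteq N(w)$ by definition of $i^{\perp}$) or $N(r)\cap N(w)=\emptyset$ (whence $N(r')\cap N(w)=\emptyset$ by AOP). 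For the atoms: one inclusion is easy, but to show every $i^{\perp}(r)$ with $r\in S\cap I$ is an atom of $W'$ you must take an atom $s'\le' i^{\perp}(r)$, write $s'=i^{\perp}(s)$, and use preprincipality of $\CR$ plus AOP to force $i^{\perp}(r)=s'$. None of this is hard, but none of it can be waved away.
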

              \begin{proof} Let $\leq$ denote the weak right order of $\CR$  and $\leq'$ denote that of  $W'$ from $\CT$.   Let $I$ denote the order ideal of $W$ generated by $W'$. Meets and joins in $(W,\leq)$ are denoted 
 by $\vee$ and $\wedge$ respectively.  
 
 By \cite[Lemma 2.10]{DyGrp1}, $ i\colon W'\to W$ is order preserving.  Moreover, since $i$ is injective, the definitions imply that  $W'$ has the induced order as a subposet of $W$. Note  that $G$ acts as a group of order automorphisms of $(W,\leq)$. Hence any join  or meet which exists in $W$ of a  subset of $W'$ is itself in $W'$ 
 and so $W'$ is a join-closed meet subsemilattice of $W$.
 This already implies that $\CT$ is a rootoid (the JOP for $\CT$ is inherited trivially from that for $\CR$), that $i^{\flat}$ is a morphism in $\rootc$, 
and   that     $i^{\flat}$ is  a morphism in $\rlec$ provided that it is one in $\rtd$ i.e.  if  $i^{\flat}$  satisfies the AOP.
 Recall that for $w\in I$,
 \begin{equation}\label{eq13.2.2} i^{\perp}(w):=\meet   \mset{u\in W'\mid w\leq u}\end{equation} is the minimum of the elements $u$  of $W'$ (in either $\leq '$ or $\leq$) such that $w\leq i(u)$.
There is  an   expression for  $i^{\perp}(w)$ as a join instead of a meet as follows:
  \begin{equation}\label{eq13.2.3}  i^{\perp}(w)=\join_{\s\in G}\s(w).\end{equation} To prove this, 
  note first  that since $w\leq i^{\perp}(w)$, it follows that
 $\s(w)\leq \s(i^{\perp}(w))=i^{\perp}(w)$ for all $\s\in G$. Therefore 
$u:=\join_{\s}\, \s(w)\leq i^{\perp}(w)$ as well. 
 Clearly, one has $w\leq u$. Since 
 $\mset{\s(w)\mid \s\in G}$ is $G$-stable, its join $u$ is in $W'$
 and so $i^{\perp}(w)\leq u$.   This proves \eqref{eq13.2.3}.

   Now   AOP requires  that for $w'\in W'$ and $w\in I$ such that  $N(i(w'))\cap N(w)=0$, 
 one  has $N(w')\cap N(i^{\perp}(w))=\eset$.
But for all $\s\in G$, $N(w')\cap N(w)=\eset$ implies that   
  $N(w')\cap N(\s(w))=\eset$  and hence 
  \begin{equation}\label{eq13.2.4} N(w')\cap N(i^{\perp}(w))=N(w')\cap N(\join_{\s\in G}\s(w))=\eset\end{equation}
  by the JOP for $\CR$. Hence $i^{\flat}$ is a morphism in $\rlec$.
  
  Next, note  that $W'$ is interval finite since intervals in $(W',\leq ')$ are subsets of intervals in $(W,\leq)$  which are finite.  By \cite[Lemma 3.4]{DyGrp1},  the set $A'$  of atoms of $W'$ in its weak order generates $W'$.  We claim that
  \begin{equation}\label{eq13.2.5} {A'}=\mset{i^{\perp} (s)\mid s\in I\cap S}.\end{equation}
  To see this, suppose first that $s'\in A'$. Then there is some $s\in  S$ with $s\leq s'$, since $W$ is interval finite with atoms $S$. Note that $s\in I$.
  Then $1_{W}<i^{\perp}(s)\leq s'$ and hence $1_{W'}<'i^{\perp}(s)\leq' s'$.
  This implies that $s'=i^{\perp}(s)$ since $s'$ is an atom of $W'$.
  
  Conversely, suppose that $r\in S\cap I$ and let $r':=i^{\perp}(r)$.
  There is an $s'\in{A'}$ with $s'\leq ' r'$, since $W'$ is interval 
  finite. From the argument above, $s'=i^{\perp}(s)$ for some $s\in I\cap S$.
 Since $W$ is preprincipal (in fact, principal), either  $N({r})\cap N({s'})=\eset$  or
 $N(r)\seq N(s')$.  In the first case, AOP  implies that $N(r') \cap N(s')=
 \eset$ contrary to  $N(s')\cap N(r')=N(s')$ (from $s'\leq 'r'$). In the second 
 case, 
 $r'=i^{\perp}(r)\leq s'$ which implies $r'\leq ' s'$. This gives  $r'=s'\in A'$.
 
 To prove that $\CT$ is preprincipal, consider $w\in W'$ and $r'\in A'$.
 It is required to show that either $N(r')\seq  N(w)$ or $N(r')\cap N(w)=\eset$.
Write $r'=i^{\perp}(r)$ where $r\in I\cap S'$. Since $\CR$ is preprincipal, 
either $N(r)\seq N(w)$ or $N(r)\cap N(w)=\eset$. In the first  case, 
$N(r')\seq N(w)$ by definition of $r'=i^{\perp}(r)$.  In the contrary case,
  $N(r')\cap N(w)=\eset$ by AOP. Hence $\CT$ is preprincipal as claimed. From \eqref{eq13.2.3} and \eqref{eq13.2.5}, it follows that
  $A'=S'$.
 \end{proof}
 
    \begin{rem*} (1)  In \cite{Muhl}, it is shown that $\CT$ preprincipal  implies (in the circumstances above) that $(W',S')$ is a Coxeter system. It   can also  be shown from \cite{DyEmb2} that $\CT^{a}\cong (\CC_{(W',S')})^{a}$.
       
        (2) The proposition and its proof extend mutatis mutandis  to the context of a group of automorphisms of  any preprincipal rootoid
        (but Theorem \ref{ss14.1} is more general). \end{rem*}
 
  \subsection{}\label{ss13.3}  We now state without proof a generalization of Proposition \ref{ss13.2} (quite different from \ref{ss14.1}).    Let $\CR$ be a preprincipal rootoid. Denote $\CR$ as $(W,\L,N)$
 and its set of  atomic morphisms as  $S:=A_{\CR}$ for notational consistency with the examples below. Let $W'$ be a groupoid and $\th\colon W'\to W$ be a groupoid homomorphism such that for each $a\in \ob(W')$, the induced map $\lsub{a}{W}'  \to \lsub{\th(a)}{W}$ is injective (in examples below, $W$ is a Coxeter  group and  $\th$ is  the inclusion of a subgroup).

   Suppose that $R=R^{*}$ is a set of non-identity morphisms generating  $W'$.  Write $\CT:=\th^{\nat}(\CR)=(W',\Psi,M)$.  The assumptions on $\th$ imply that $\CT$ inherits faithfulness and interval finiteness  from $\CR$.
   Consider the following conditions:  \begin{conds}\item   For each  $a\in \ob(W')$, $w\in \lsub{a}{W}'  $ and $r\in R_{a}$, either the expression
  $\th(r)\th(w)$ is compatible in $\CR$ or the expression $\th(r^{*})\th(w')$ is compatible  in $\CR$ where $w':=rw$. 
  \item For any $a\in \ob(W')$ and  distinct $r,s\in \lsub{a}{R}$ for which $\th(r)\vee \th(s)$ exists in weak right order of  $\lsub{a}{W}$, one has $\th(r)\vee \th(s)=\th(u)$ for some $u\in \lsub{a}{W}'  $.
  \item  Let $a\in \ob(W')$ and  $s\in \lsub{\th(a)}{S}$ be such that there is an element $w\in \lsub{a}{W}'  $ with $N(s)\seq N(\th(w))$. Then  there is an element $\hat s\in \lsub{a}{R}$ with the following properties (1)--(2):  (1) if  $w\in \lsub{a}{W}'  $ with  $N(s)\seq N(\th(w))$,
 one has $N(s)\seq N( \th(\hat s))\seq N(\th(w))$  (2) if $w\in \lsub{a}{W}'  $ with $N(s)\cap N(\th(w))=\eset$, then $N(\th(w))\cap N(\th(\hat s))=\eset$. 
  \end{conds}
  
 For $a$, $s$ as in (iii), the element $\hat s$ is uniquely determined. Let $R'$ be the subset of $R$  consisting of all elements $\hat s$ arising as in (iii) from $a$ and  $s$ satisfying the conditions there. 
 
   \begin{thm*} If the  conditions $\text{\rm (ii)--(iii)}$ hold, then $\CT$ is a preprincipal rootoid, $R'$ is the set of atomic generators of $\CT$, 
   $\text{\rm (i)--(iii)}$ hold with $R$ replaced by $R'$,
  and $\th^{\flat} $ is a morphism in $\rlec$.   Further, if $R$ satisfies 
 $\text{\rm (i)--(iii)}$  or if   no proper subset $R_{0}=R_{0}^{*}$ of $R$ generates $W'$ and satisfies  $\text{\rm (ii)--(iii)}$, then $R'=R$. 
      \end{thm*}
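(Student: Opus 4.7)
The plan is to adapt the proof of Proposition \ref{ss13.2}, with condition (iii) playing the role there played by the group-theoretic formula $i^{\perp}(w)=\join_{\s\in G}\s(w)$: both realise a nominally meet-defined object as an explicit join. Write $\CT=(W',\Psi,M)$ with $\Psi(a)=\L(\th(a))$ and $M(g)=N(\th(g))$, and let $\leq$, $\leq'$ denote the weak right orders of $\CR$ and $\CT$ respectively. The hypotheses on $\th$ already yield that $\CT$ is faithful and interval finite, and that $\th$ is an order embedding of each $(\lsub{a}{W'},\leq')$ into $(\lsub{\th(a)}{W},\leq)$; the whole theorem thus reduces to subposet-theoretic statements about the image $\th(\lsub{a}{W'})\seq\lsub{\th(a)}{W}$.

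The first step is to prove that $\th(\lsub{a}{W'})$ is join-closed in $\lsub{\th(a)}{W}$, from which JOP for $\CT$ (and hence that $\CT$ is a rootoid) will follow automatically. Given a family $\th(w_i)$ in $\th(\lsub{a}{W'})$ whose join $g$ exists in $\lsub{\th(a)}{W}$, interval finiteness and preprincipality of $\CR$ express $g$ as the join of the finitely many atoms $s\in\lsub{\th(a)}{S}$ with $s\leq g$; atomic primality in a preprincipal rootoid then gives $s\leq\th(w_i)$ for some $i$, so each such $s$ satisfies the hypothesis of (iii) and produces $\hat s\in\lsub{a}{R}$ with $\th(\hat s)\leq\th(w_i)\leq g$. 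Iterating the pairwise join-closedness of (ii), with the minimality clause (iii)(1) used at each stage to certify that partial joins are squeezed between atom-joins and $g$ and therefore remain of the form $\th(u)$ for $u\in\lsub{a}{W'}$, produces the required $u$ with $\th(u)=g$. JOP for $\CT$ then transfers directly from JOP for $\CR$.

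With $\CT$ known to be a rootoid, I would next identify $\th^{\perp}$ first on atoms and then on general elements. Condition (iii) reads precisely as $\th^{\perp}(s)=\hat s$ for every atom $s\in\lsub{\th(a)}{S}$ lying in the order ideal generated by $\th(\lsub{a}{W'})$. Decomposing a general $w$ in this order ideal as the join of the atoms $s\leq w$ (preprincipality of $\CR$), the join-closedness from the previous step and (iii)(1) combine to give the explicit formula
\[
\th^{\perp}(w)\;=\;\bigvee\bigl\{\,\hat s \;\bigm|\; s\in\lsub{\th(a)}{S},\;s\leq w\,\bigr\}
\]
in $(\lsub{a}{W'},\leq')$. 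This meet-as-join expression is the technical core of the argument and, as in Proposition \ref{ss13.2}, is the main obstacle: the earlier argument exploited a group action to render the formula almost automatic, whereas here the join must be assembled one atom at a time, relying essentially on atomic primality in $\CR$, on (ii), and on the join-closedness just proven. From the formula, AOP for $\th^{\flat}$ reduces via JOP for both $\CR$ and $\CT$ to the atomic case, which is exactly (iii)(2); and preprincipality of $\CT$ reduces similarly to the preprincipal dichotomy of $\CR$ applied to the atom $s$ below each $\hat s\in R'$ via (iii)(1) and (iii)(2).

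The remaining assertions follow comparatively routinely. Interval finiteness of $\CT$ and \cite[Lemma 3.4]{DyGrp1} say atoms generate each $\lsub{a}{W'}$; given an atom $r'$, fix any atom $s\leq\th(r')$ in $\CR$, whereupon (iii) produces $\hat s\in R$ with $1_a<'\hat s\leq' r'$, forcing $r'=\hat s\in R'$, and conversely each $\hat s\in R'$ is an atom because any $1_a<' r'<'\hat s$ would, by the previous argument, be some $\widehat{s'}\in R'$ satisfying $s\leq\th(r')$, violating the minimality clause (iii)(1). Hence $R'=A_{\CT}$. That $R'$ again satisfies (i)--(iii) is then immediate: (i) follows from the preprincipal dichotomy of $\CT$ applied to atoms of $\CT$, while (ii)--(iii) restrict from $R$ to $R'\seq R$ using that the $\hat s$ produced by (iii) is in $R'$ by definition. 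For the final clause, under (A) the preprincipal dichotomy for $\CT$ together with (i) for $R$ forces each $r\in R$ to be an atom (a non-atomic $r$ would produce a choice of $w$ violating (i) via a proper atomic factor of $r$), so $R=A_{\CT}=R'$; under (B), since $R'\seq R$ is self-inverse, generates $W'$, and satisfies (ii)--(iii) by what was just proved, minimality forces $R'=R$.
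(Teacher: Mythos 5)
The paper states this theorem without proof (13.3 opens with ``We now state without proof a generalization of Proposition \ref{ss13.2}'', the proof being deferred to subsequent papers), so there is no argument to compare against line by line; judged on its own terms, your proposal has a fatal gap at its core. Both your join-closedness step and your displayed formula rest on the premise that in a preprincipal rootoid every element is the join of the atoms below it in weak order (``interval finiteness and preprincipality of $\CR$ express $g$ as the join of the finitely many atoms $s\leq g$''). This is false already for the principal rootoid $\CC_{(W,S)}$ of a Coxeter system: for $w=st$ with $m_{s,t}\geq 3$, the only atom below $w$ in weak right order is $s$, so the join of the atoms below $w$ is $s\neq w$. Consequently the formula $\th^{\perp}(w)=\join\mset{\hat s\mid s\in \lsub{\th(a)}{S},\ s\leq w}$ is simply wrong. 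Concretely, in the setting of Proposition \ref{ss13.2} take $(W,S)$ of type $A_{3}$ with $S=\set{r,s,t}$ ($r,t$ the end nodes) and $G$ the diagram automorphism swapping $r$ and $t$, so $R=S'=\set{s,rt}$. For $w=rs$ one has $N(rs)=\set{r,rsr}$, so the only atom below $w$ is $r$ and your right-hand side is $\hat r=r\vee t=rt$; but $rsr\in N(rs)\sm N(rt)$, so $rs\not\leq rt$, while \eqref{eq13.2.3} gives $\th^{\perp}(rs)=rs\vee ts\neq rt$. So the meet cannot be exhibited as a join in one step from the atoms below $w$, and with it your derivations of join-closedness, of AOP, and of preprincipality of $\CT$ all collapse.

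There is a secondary problem of the same kind: condition (ii) only guarantees that the join of \emph{two elements of $R$} lies in $\th(\lsub{a}{W}')$, so your ``iterating the pairwise join-closedness of (ii)'' leaves the scope of (ii) after the first join (the partial join is some $\th(u)$ with $u\notin R$ in general), and the simultaneous appeal to ``the join-closedness just proven'' is circular. What is actually required --- and what the paper itself signals in the Remark of \ref{ss13.3}, in \ref{ss14.6} and in the introduction --- is an iterative procedure: starting from $w$ one must repeatedly enlarge the candidate join (the ``zig-zag'' and ``loop'' constructions with squares, finitely many repetitions in the preprincipal case, transfinitely in general) before the relevant meet can be written as a join and AOP verified; the $A_{3}$ example above already forces at least a second pass. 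Your concluding reductions (AOP and the preprincipal dichotomy for $\CT$ to (iii)(2) and to the dichotomy in $\CR$, the identification $R'=A_{\CT}$, and the final clause on $R'=R$) are plausible consequences of a correct meet-as-join statement, but as written the central step on which they all depend fails.
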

      \begin{rem*} Theorems  \ref{ss13.5}  and \ref{ss14.2} provide extensive classes of (special) morphisms in $\rlec$. The underlying morphism $\th\colon W'\to W$  of groupoid-preorders of any morphism in $\rlec$ between preprincipal rootoids arises as Theorem \ref{ss13.3}, with $R=R'$ equal to the atomic generators of $W'$.
         The  version of the theorem proved in subsequent papers  includes also another characterization of such morphisms  with weaker hypotheses  which are more amenable  to verification in interesting cases (e.g. for Coxeter groups, using (generalizations of) the canonical automaton \cite[Section 4]{BjBr}).   \end{rem*} 
  \subsection{}\label{ss13.4}  
   In this subsection,  the theorem is  used     to describe additional  examples of   morphisms in $\rlec$ with  the rootoid $\CR=\CC_{(W,S)}$ of a Coxeter system $(W,S)$ as codomain. Fix $R=R^{*}\seq W\sm\set{1}$, let $W'$ be the subgroup generated by  $R$
 and let  $\th$ denote the inclusion $\th\colon W'\to W$.  
  \begin{exmp*}  
    (1)   Suppose above that $R$ consists of longest elements of non-trivial
  finite  standard parabolic subgroups of $(W,S)$, so the elements of $R$ are   involutions.  Conditions of various degrees of generality (and ease of verification)  under which (i) is satisfied can be found in \cite{Muhl}, 
  \cite{DyEmb1}, \cite{DyEmb2} and \cite{DyAdm}.  Assume (i) holds.
  Then  the sets of atoms in $W$ of weak right intervals $[1,r]$ for $r\in R$ are pairwise disjoint subsets of $S$.  
    It is easy to see  using this that  (i) then implies (ii) and (iii).
  Assume that (i)--(iii) hold, so the Proposition applies with $R=R'$.  It is    a  result of
   \cite{Muhl}  that $(W',R)$  is  a Coxeter system, and   \cite{DyEmb2}  implies that  $\CT^{a}\cong (\CC_{(W',R)})^{a}
    $; these  facts may be deduced alternatively from the conclusions of   Theorem \ref{ss13.3}.    The results in \ref{ss13.2} are a special case. 
        
(2) Suppose that $(W,S)$ is irreducible and  $S$ is finite.
Fix a  standard Coxeter element $c$ of $(W,S)$ i.e. an element of the form
$c=s_{1}\cdots s_{n}$ where $s_{1},\ldots, s_{n}$ are the distinct elements of
 $S$ in some  order.   Consider the following condition
 \begin{enumerate}\item[($*$)]
  there exists $n'$ such that
$1\leq n'\leq n$ and  $s_{1},\ldots,s_{n'}$  commute pairwise and  $s_{n'+1},\ldots, s_{n}$  commute pairwise. \end{enumerate}

Assume first that $W$ is finite and   ($*$) holds. Let $R_{0}:=\set{s_{1}\cdots s_{n'}, s_{n'+1}\cdots s_{n}}$ and $R_{1}:=\set{c,c^{-1}}$.
Set $W_{i}:=\mpair{R_{i}}$ and let $\th_{i}\colon W_{i}\to W$ be the inclusion.
Using  \cite{Bour}, (i)--(iii) are easily shown   to hold taking $R=R_{0}$, $W'=W_{0}$, and  $W'$ is a finite dihedral group (this  is   generalized in \cite{DyEmb2}). 
Assume  further that 
 $n$ divides 
 the length $N$ of the longest element of $(W,S)$.   Then \cite{Bour}  implies that 
 $l_{S}(c^{m})=ml_{S}(c)$ for $-N/n\leq m\leq N/n$. It is easily checked then 
 that (i)--(iii) also hold with $R=R_{1}$; in this case, $W'=W_{1}$ is a finite cyclic 
 group. The   morphism $(\th_{1})^{\flat}$   is the 
 composite  in $\rlec$  of  a morphism $i^{\flat}$ as in Example  \ref{ss13.1}(1) with the morphism $(\th_{0})^{\flat} $.  

Now consider the case that $W$ is infinite. It is proved in \cite{Spey} that  $l(c^{m})
 =ml(c)$ for all $m\in \Nat$. Taking  $R=\set{c,c^{*}}$, this implies that (i) 
 holds and that $W'$ is an infinite cyclic group. It is  not known
    if (ii)--(iii) hold 
 in general. However, they follow easily from $l(c^{m})
 =ml(c)$  if ($*$) holds.
 In that case, taking $R=R_{0}$ and $W'=W_{0}$ as defined   in the  case of finite $W$, (i)--(iii) also hold, and  $W'$ is infinite dihedral. Note however  that  not all infinite irreducible Coxeter systems have  Coxeter elements $c$ satisfying ($*$). 

(3)  Assume 
that $(W,S)$ is a universal Coxeter system i.e. the product
of  any two distinct elements of $S$ has infinite order, so $W$ is 
isomorphic to the  free product of the cyclic subgroups of order two generated 
by the elements of $S$. Assume also that there is a given free action of the  cyclic  group $C_{2}$ of order two on $S$. Write the action on $S$ of the non-trivial element of $C_{2}$ as $s\mapsto s'$ and let $S'$ be a set of $C_{2}$-orbit representatives.
Let $R=\mset{ss'\mid s\in S}$. Then $W'$ is a free group
(freely generated by  $\mset{ss'\mid s\in S'}$). It is easy to verify that conditions (i)--(iii) hold.
 \end{exmp*}   
  
   \subsection{Normalizer rootoids}\label{ss13.5}
 The remainder of this section  describes another class  of morphisms in $\rlec$  and,  in the case of Coxeter groups,  indicates  its connection to results of \cite{BrHowNorm}.

    Fix a (faithful, for simplicity)  protorootoid $\CR=(G,\L,N)$.  
  Define  a groupoid $ L$ as follows. The objects of $ L$ are pairs $A=(a,X)$ where $a\in \ob(G)$ and $X\seq \lsub{a}{G}$.   Given another object $B=(b,Y)$ of $ L$, a morphism $A\to B$ in $G$
  is defined to be   a triple $(B,g,A)$ where $g\in \lrsub{b}{G}{a}$ has the following property:
  \begin{enumerate}\item[($*$)] there is a bijection  $\s\colon X\to Y$ such that for each $x\in X$, there is a commutative square of $\CR$ of the form 
  \begin{equation*}\xymatrix{
{}\ar[r]^{g}&{}\\
{}\ar[u]^{x}\ar[r]&{}\ar[u]_{\s(x))}
}
\end{equation*}
(note $\s$  is uniquely determined, by  rigidity of squares).
 \end{enumerate}  
 The composition in $ L$ is induced by that in $G$, according to the obvious  formula
 $(C,h,B)(B,g,A):=(C,hg,A)$. It follows easily from Lemma \ref{ss12.4} that $ L$ is a groupoid as claimed.  
  There is a natural functor  $\th\colon  L\to G$  defined as follows.
 On objects, $\th(a,X)=a$ and on morphisms $\th(B,g,A)=g$.
 Define the protorootoid $\CN:=\th^{\nat}(\CR)$ and the morphism $\th^{\flat}\colon  \CN\to \CR$ in $\prootc$.

 \begin{thm*} Assume above that $\CR$ is a rootoid. Then
 \begin{num}
 \item  $\CN$ is a rootoid.
 \item If $\CR$ is complete (resp.,  interval finite, cocycle finite or  preprincipal), then $\CN$ has that same  property.
 \item $\th^{\flat}$ is a morphism in $\rlec$.
 \end{num} In particular, if $\CR$ is a principal rootoid, then $\CN^{a}$ is also a principal rootoid. \end{thm*}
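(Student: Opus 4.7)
\noindent The plan is to verify in turn that $\CN$ is faithful, that it is a rootoid, and that $\th^{\flat}$ satisfies the AOP that (together with the embedding property) characterises morphisms in $\rlec$; the inheritance of completeness, interval and cocycle finiteness, and preprincipality will then follow by formal arguments, and the ``in particular'' clause as a corollary. The central technical fact that drives everything is that the objects $A = (a,X)$ of $L$ are designed precisely so that the morphisms of $L$ out of $A$ form a sub-poset of $\lsub{a}{G}$ (under the weak right order of $\CR$) closed under those meets and joins which arise in the rootoid axioms; this closure is controlled by the calculus of squares via Lemmas \ref{ss12.3} and \ref{ss12.4}.

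First I would dispatch faithfulness: by rigidity of squares (Lemma \ref{ss12.3}(c)), a morphism $(B,g,A)$ of $L$ is determined by $(A,g)$ alone, since the accompanying bijection $\s \colon X \to Y$ (and hence $Y$, and so $B$) is forced. Consequently $\th$ is injective on each $\lsub{A}{L}$, and since the cocycle of $\CN$ is the pullback $N \circ \th$, faithfulness of $\CR$ descends to $\CN$. The next step is the closure property: for $A = (a,X) \in \ob(L)$, if $g_1,g_2 \in \lsub{A}{L}$ then $g_1 \wedge g_2 \in \lsub{A}{L}$ (meet computed in $\lsub{a}{G}$), and if $\{g_i\}_{i \in I} \seq \lsub{A}{L}$ has a common upper bound $h \in \lsub{A}{L}$ then $\join_i g_i \in \lsub{A}{L}$. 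In each case one fixes $x \in X$, factors $g_1$ (respectively $h$) compatibly through the desired meet (respectively join), and applies Lemma \ref{ss12.4} to decompose the given square over the larger morphism into a square over the meet or join with $x$ on the left. Combined with faithfulness, this shows that $\lsub{A}{L}$, viewed inside $\lsub{a}{G}$, is a meet-subsemilattice closed under the joins it must admit, and that meets and bounded joins in $\lsub{A}{L}$ coincide with those in $\lsub{a}{G}$. The JOP of $\CN$ is then inherited from that of $\CR$, proving part (a). Completeness, interval finiteness and cocycle finiteness of $\CN$ follow at once, since the weak orders, intervals and cocycle values of $\CN$ are subobjects of the corresponding ones of $\CR$ via $\th^{\flat}$.

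For AOP, and thence for preprincipality in (b) and part (c), I would adapt the strategy of Proposition \ref{ss13.2}. The task is, given $A \in \ob(L)$ and $w \in \lsub{a}{G}$ with $\th^{\perp}(w)$ defined (i.e., admitting an upper bound in $\lsub{A}{L}$), to express $\th(\th^{\perp}(w))$ as a join in $\lsub{a}{G}$ of elements each of whose cocycle value is contained in $N(w)$; the JOP of $\CR$ then yields AOP exactly as in \eqref{eq13.2.3}--\eqref{eq13.2.4}. Such a join expression is built by a (possibly transfinite) iteration: starting from $w_0 = w$, one repeatedly adjoins the minimal enlargement needed to restore the square condition with some $x \in X$ that currently fails, this being a special case of the zig-zag construction alluded to in the introduction of Section \ref{s14}. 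For preprincipal $\CR$ the iteration stabilises in finitely many steps; in the general case a continuity argument using JOP handles the transfinite limit. With AOP in hand, the argument of \ref{ss13.2} adapts verbatim to identify the atomic generators of $\CN$ and to show $\CN$ preprincipal whenever $\CR$ is. This completes (b) and (c). The final clause is then formal: if $\CR$ is principal, $\CN$ is preprincipal and complete, so $\CN^a$ is abridged, preprincipal and complete, hence principal.

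The main obstacle is the zig-zag construction producing the join expression for $\th^{\perp}(w)$. In \ref{ss13.2} an ambient automorphism group furnished a canonical family $\{\s(w)\}_{\s \in G}$ over which to take the join; here no such group acts, and the enlargements are defined only locally by the square condition at each $x \in X$. The combinatorial analysis showing that these local enlargements assemble into the global join $\th^{\perp}(w)$ while interacting correctly with $N(w)$ through JOP is precisely what the authors defer to the more general framework of functor rootoids in later papers.
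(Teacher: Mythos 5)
There is a genuine gap, and you have in fact named it yourself: the AOP for $\th^{\flat}$ is never proved. Everything that makes this theorem substantial — part (c), the preservation of preprincipality in (b), and hence the ``in particular'' clause — hinges on expressing $\th(\th^{\perp}(w))$ as a join of elements whose cocycle values avoid the relevant $N$-sets, and your proposal only gestures at ``adjoining the minimal enlargement needed to restore the square condition'' without defining this enlargement, showing it exists, showing the iteration converges to $\th^{\perp}(w)$, or showing that each stage keeps its cocycle disjoint from $N(\th(v))$ so that JOP can be invoked as in \eqref{eq13.2.3}--\eqref{eq13.2.4}. In Proposition \ref{ss13.2} the whole argument rode on the canonical family $\set{\s(w)}_{\s\in G}$ supplied by an ambient automorphism group; here no such family exists, and constructing a substitute is exactly the ``zig-zag''/``loop'' machinery that the paper itself defers. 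Indeed the paper does not prove this theorem directly at all: its route is Lemma \ref{ss14.3}, which uses rigidity of squares to exhibit each component of $\CN$ as a covering quotient of a functor protorootoid, together with the covering generalities of \cite[Lemma 4.9]{DyGrp1}, reducing everything to Theorem \ref{ss14.2}, whose proof (via the Galois connections of \ref{ss14.6}) appears only in subsequent papers. So the step you flag as ``the main obstacle'' is precisely the mathematical content of the theorem, not a detail that can be waved through.

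There is also a problem earlier, in your argument for part (a). You claim that $\th(\lsub{A}{L})$ is closed under meets (and bounded joins) computed in $\lsub{a}{G}$ by ``factoring $g_{1}$ compatibly through the meet and applying Lemma \ref{ss12.4} to decompose the given square.'' Lemma \ref{ss12.4} is a two-out-of-three statement: from the commutativity of \eqref{eq12.4.1} and the knowledge that \emph{two} of $s_{1},s_{2},s_{3}$ are squares, it yields the third. To decompose a square over $g_{1}$ into a square over $g_{1}\wedge g_{2}$ composed with a square over the remaining factor you would already need to know that one of the two pieces is a square, which is exactly what you are trying to prove; so the lemma does not apply as you use it, and the asserted closure (and with it the transfer of the complete-meet-semilattice property and JOP to $\CN$) is not established by the sketch. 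A correct treatment of even part (a) has to argue directly with the condition $g(\Phi_{x})=\Phi_{\s(x)}$ (equivalently Lemma \ref{ss12.3}(a)) for all $x\in X$ simultaneously, or else follow the paper and pass through the functor-rootoid picture where this verification is carried out.
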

 
 \begin{rem*} (1) Suppose that $\CR$ is the protorootoid of a (strongly faithful) signed groupoid-set $(G,\Phi)$. Let $A=(a,X)$,
$B=(b,Y)$ be objects of 
 $ L$. Then \begin{equation}\label{eq13.5.1} \lrsub{B}{ L}{A}=\mset{(B,g,A)\mid g\in \lrsub{b}{G}{a},  \mset{g(\Phi_{x})\mid  x\in X} =\mset{\Phi_{y}\mid y\in Y}}.\end{equation} In particular, the
 vertex group $\lrsub{A}{ L}{A}$ is isomorphic to the normalizer (i.e. setwise stabilizer)  in  the vertex group $\lrsub{a}{G}{a}$ of $ \mset{\Phi_{h}\mid h\in X}$.    For this reason,   $L$ (resp., $\CN$) will be referred to as the \emph{normalizer groupoid} (resp., \emph{normalizer rootoid}) of 
$\CR$. An analogous result applies to similarly defined \emph{centralizer rootoids} which are coverings of  the normalizer  rootoids, and for which  the vertex groups are the centralizers (pointwise stabilizers) of  sets $\mset{\Phi_{h}\mid h\in X}$ for $X\seq \lsub{a}{G}$ and $a\in \ob(G)$. 

 (2) In addition to the above construction, there is  a dual version  involving  a rootoid $\CN'$ with underlying groupoid  
 groupoid $ L'$ where  $\ob( L')=\ob( L)$ and with  composition given by the same formula as for $L$, but in which the analogue  of \eqref{eq13.5.1} 
 is
      \begin{equation}\label{eq13.5.2} \lrsub{B}{ {L'}\nts\nts}  {A}=\mset{(B,g,A)\mid g\in \lrsub{b}{G}{a},  \mset{g
      (\Phi_{x}^{\cp})\mid  x\in X} =\mset{\Phi_{y}^{\cp}\mid y\in Y}}\end{equation} 
      where 
       $\Phi_{z}^{\cp}:=\lrsub{c}{\Phi}{+} \sm \Phi_{z}$ for $z\in \lsub{c}{G}
       $.   
       These matters for Coxeter groups  are closely related to our initial motivations 
       in \cite{DyWeak} for this work and probably  more generally to the  questions on completions raised  in \ref{ssa10.3}. Similar remarks to those here apply, for instance, to Theorem \ref{ss14.2}.
           \end{rem*}      
       
        \subsection{} \label{ss13.6} This subsection  discusses in more detail applications of the preceding theorem  in the case in which $\CR=\CC_{(W,S)}$ is the standard (principal) rootoid of a Coxeter system $(W,S)$. The groupoid $ L$ may be described as in Remark \ref{ss13.5}, with $(G,\Phi)=(W,\Phi)$.
 The full subgroupoid  $K$ of $ L$  containing   all objects $(a,J)$ of $L$ 
 with $J\seq S$ (rather than $J\seq W$) is clearly  a union of components of $ L$.
 Identify subsets of $S$ with subsets of  the simple roots
   $\Pi=\dotcup_{s\in S} \,\Phi_{s}$ in the obvious way, by
    $J\mapsto \Pi_{J}:=\cup_{s\in J} \Phi_{s}$. Then $K$ may be described 
    (up to isomorphism) as follows.   The objects of $K$ are subsets 
    $\Gamma$ of $\Pi$. For $\G,\D\seq \Pi$, one has
    $\lrsub{\G}{K}{\D}=\mset{(\G,w,\D)\mid  w(\D)=\G}$ and the composition is
    given by $(\G,w,\D)(\D,w',\L)=(\G,ww',\L)$. The vertex groups of $K$
    are isomorphic to  the setwise stabilizers in $W$ of subsets of simple roots.
    
  The groupoid $K$ (or   its components) has been studied implicitly in  \cite{How}, \cite{Deod} and explicitly in \cite{BrHowNorm}.   Natural groupoid generators were found for $K$ if   $W$  is finite in  \cite{How}, and  in general   in \cite{Deod}.  A natural  presentation    for   $K$ with these groupoid generators was described  in \cite{BrHowNorm}.
  The generators and relations are both defined in terms of finite  standard parabolic subgroups of $W$.

    Since $\CR$ is preprincipal, Theorem \ref{ss13.5} implies that $\CN$ is  a 
    preprincipal  rootoid and hence  the restriction $\CN_{K}$ is also 
    preprincipal. For each object $a$ of $L$, one has the  map
     $\lsub{a}{\th}\colon \lsub{a}{L}\to W$ and its partially defined adjoint 
     $\lsub{a}{\th}^{\perp}$.  The atomic generators of $L$ (resp., $K$)
     are those elements $\lsub{a}{\th}^{\perp}(s)$ for $s\in S\cap \domn(\lsub{a}{\th}^{\perp})$  and $a\in \ob(L)$ (resp., $a\in \ob(K)$). The construction of   these simple generators for $L$ in general involves   repeatedly taking joins of certain recursively defined families of elements.  However, in the special case of the simple generators of $K$, it reduces to constructions with joins of elements of $S$. These are the longest elements of finite parabolic subgroups, and the     construction is seen to reduce  to that in \cite{Deod}. Thus,       
    the generators for $K$ in \cite{Deod} turn out to be  the atomic generators of $\CN_{K}$ (or equivalently, the simple generators  $S$ of the abridgement). Similarly, the presentation obtained in \cite{BrHowNorm}  is the braid presentation discussed in \ref{ssa8.8}.
     
    The parts of the general theory  described in this paper now  imply  additional  facts about $K$.  For example, 
    $K$ has a natural abstract root system which may be described in several ways (e.g. as that  of  $\ol{J}_{(K,S)}$), the number of positive roots made negative by a groupoid element is its length, the weak orders of 
    $K$ are complete meet semilattices which embed naturally in the 
    weak orders of $W$ satisfying AOP, and have the simple generators as their atoms,  the solution by  Tits of the  word problem for Coxeter groups extends to $K$, etc.
    Additionally, all these general consequences  apply not only to $K$, but also to the (in general much  larger) groupoid   $ L$; similar conclusions hold for any preprincipal rootoid  in place of that from a Coxeter system.
    
 \subsection{} \label{ss13.7}   The rootoids $ L$, $K$ also  have  additional
special     properties which can be established with the aid of the general theory, but 
    can't be proved entirely within  it.  Consider first the case in which $W$ is finite.
   Then (components of) $K$ turn out to be  Coxeter groupoids with  root systems realizable  in  real     vector spaces as in \cite{HY} and  \cite{CH}  (in a way similar to that of  Coxeter groups).
 Once one knows this,
     the     presentations of components of $K$ from \cite{BrHowNorm} can be alternatively written 
      as  Coxeter groupoid presentations as in \cite{HY} and  \cite{CH}  (in terms of  a family 
      of   Coxeter matrices attached to the objects   and an object 
      change diagram).   Note that components of $K$ (or $L$) need not simply be   coverings   of    Coxeter groups.   For example, 
      taking $(W,S)$ of type $D_{4}$,  the component $K'$ of 
      $K$ with objects the pairs $(1_{W},\set{s})$ for $s\in S$ is a (connected)  Coxeter groupoid with four objects  such that each of its four stars has thirty elements,  contains three  atomic generators  and has a  longest element of length  seven.
      
            It is not known for finite $W$  whether  all components of $L$ are Coxeter groupoids  or whether their abstract root systems have  realizations similar to those for $K$ in real vector spaces; in fact, it is not even known whether the self-composable    atomic  generators of $K$ are  involutions.

         If  $W$ is infinite, then components of $K$ (and a fortiori, $L$) need not be Coxeter groupoids (since simple examples  show that  the cardinalities of the sets of  simple generators with codomains in a fixed component of $K$       need not be constant). 
    However,   the natural geometric realizations in real vector spaces  of   the abstract  root system of $W$ induces  a  realization $\Psi$ in a real vector space of the abstract root systems of  $L$ (and therefore $K$)  in a more general, and weaker, sense;
    for example, simple generators need not act as pseudoreflections, and a component   $\lsub{a}{\Pi}$ for $a\in \ob(L)$ of  the simple roots may span a proper subspace of the span of the corresponding component  $\lsub{a}{\Phi}_{+}$ of the  positive roots. 
    \begin{rem*} (1) Preservation of realizability in this weak sense  of   root systems of principal rootoids under several natural constructions  is a general phenomenon, as will be shown in a subsequent paper. Realizability of root systems in the stronger sense (similar to that in \cite{CH}, \cite{HY})  is  only preserved in far more restricted circumstances, but has stronger consequences.

(2)      A final point  worth emphasizing here is that Theorem \ref{ss13.5} not only applies to produce, say, a preprincipal   rootoid $\th^{\nat} (\CR)$ from a preprincipal rootoid $\CR$, but  the theorem may be applied  again to $\th^{\nat} (\CR)$ instead of $\CR$ to obtain, for example, the  analogue for     $\th^{\nat} (\CR)$  of many results of \cite{BrHowNorm}.  \end{rem*}
  \section{Functor rootoids}
  \label{s14}
  \subsection{Limits of rootoids} \label{ss14.1}
  After describing in previous sections some background and  examples of rootoids and morphisms between them, we finally discuss the main results of these papers, which involve  the categories $\rtd$ and $\rlec$.  An easily stated first result is the following.
   \begin{thm*}   
  \begin{num}
  \item  The category $\rtd$ has  all small limits, as does its full subcategory of complete rootoids.
  \item The full  subcategories of $\rtd$ with     preprincipal rootoids as objects has all limits of functors from small  categories with only finitely many objects. In particular, it has all small limits, and coequalizers of arbitrary families of parallel morphisms
   \end{num}
  \end{thm*}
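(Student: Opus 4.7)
The strategy is to reduce to constructing products and equalizers in each of the relevant full subcategories of $\rtd$, using the standard fact that a category has all small limits iff it has small products and equalizers of parallel pairs, and that it has all limits indexed by small categories with finitely many objects iff it has finite products and equalizers of arbitrary families of parallel morphisms. I would therefore build products first, then equalizers, and then assemble limits from these basic constructions.

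For products, given a small family $(\CR_{i})_{i\in I}$ of rootoids with $\CR_{i}=(G_{i},\L_{i},N_{i})$, I would set $\prod_{i}\CR_{i}:=(G,\L,N)$ coordinatewise: $G=\prod_{i}G_{i}$ is the product in $\grpdc$, $\L(a)=\prod_{i}\L_{i}(a_{i})$ is the product of Boolean rings, and $N$ is defined componentwise on cocycles. The weak right orders of $\prod_{i}\CR_{i}$ are then the products (in the order-theoretic sense) of the weak right orders of the factors, so meets and joins exist coordinatewise, and both the JOP and completeness (when assumed in each factor) transfer directly to the product. However, the interval $[1_{a},(g_{i})]$ in $\prod_{i}\CR_{i}$ is $\prod_{i}[1_{a_{i}},g_{i}]$, which is infinite as soon as infinitely many factors are non-trivial; thus interval finiteness, and with it preprincipality, is preserved only under \emph{finite} products, explaining the finite-objects restriction in (b). For a finite product the atomic generators of $\prod_{i}\CR_{i}$ are easily seen to be the disjoint union of the atomic generators of the factors, and the preprincipal dichotomy transfers coordinatewise.

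For equalizers, given a family $(f_{\alpha})_{\alpha\in A}$ of parallel morphisms $\CR\to \CS$ in $\rtd$, I would let $H\seq G$ be the subgroupoid consisting of those morphisms on which all $f_{\alpha}$ agree, both at the underlying groupoid-map level and at the level of the cocycle components, and take $\CT:=\CR_{H}$ in the sense of \cite[2.11--2.12]{DyGrp1}. Since each $f_{\alpha}$ is a morphism in $\rtd$ and therefore respects the relevant joins, each $\lsub{a}{H}$ is a join-closed order-ideal subposet of $\lsub{a}{G}$ in its weak right order; the JOP, completeness, interval finiteness and faithfulness of $\CT$ all then follow routinely from the corresponding properties of $\CR$.

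The substantive issue is verifying AOP for the inclusion $\iota\colon\CT\to\CR$, and, in the preprincipal case, transferring preprincipality to $\CT$. Both of these are directly modelled on the proof of Proposition \ref{ss13.2}: one must produce an explicit expression of the adjoint $\iota^{\perp}(w)$, defined a~priori as a meet in $H$, as a join computed in $\CR$, after which AOP follows from the JOP of $\CR$. I expect the correct formula to be $\iota^{\perp}(w)=\join_{\alpha\in A}f_{\alpha}(w)$, computed in $\CR$ via a suitable identification, in direct analogy with the formula $\iota^{\perp}(w)=\join_{\sigma\in G}\sigma(w)$ at the core of \ref{ss13.2}. Once AOP is in hand, the preprincipal analogue of that proposition gives preprincipality of $\CT$: its atomic generators are the elements $\iota^{\perp}(s)$ for $s$ an atomic generator of $\CR$ in $\domn(\iota^{\perp})$, and the preprincipal dichotomy for $\CR$ combined with AOP forces the same dichotomy for $\CT$. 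With products and equalizers constructed, assembling small products with equalizers yields (a), while assembling finite products with equalizers of arbitrary parallel families yields (b); the latter is allowed in (b) because the equalizer of any small family of parallel morphisms indexed by a two-object category has already been built, regardless of how many morphisms that category carries.
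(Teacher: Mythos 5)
The first thing to note is that the paper does not actually prove this statement: Theorem \ref{ss14.1} is stated without proof, with the proofs deferred to subsequent papers of the series (the introduction describes these results as ``general analogues'' of Tits' theorem whose proofs rest on the squares/Galois-connection machinery sketched in \ref{ss14.6}). So your proposal can only be judged on its own terms. Your reduction to products and equalizers is standard and fine as category theory, and the product half is plausible — including your correct observation that interval finiteness, hence preprincipality, survives only finite products, which matches the Remark after the theorem — although even there you owe a check that the projections and the tupling morphism satisfy AOP, i.e.\ that the coordinatewise construction is a product in $\rtd$ and not merely in $\prootc$.

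The genuine gap is in the equalizer step, which is exactly where all the content lies, and your key formula $\iota^{\perp}(w)=\join_{\alpha}f_{\alpha}(w)$ does not typecheck: the $f_{\alpha}$ are parallel morphisms $\CR\to\CS$ between \emph{different} rootoids, so $f_{\alpha}(w)$ lives in the underlying groupoid of $\CS$, whereas $\iota^{\perp}(w)$ must be a morphism of the proposed equalizer, a subgroupoid of $\CR$. The analogy with Proposition \ref{ss13.2} breaks down precisely here: there the parallel morphisms are automorphisms of $\CR$ itself, so $\join_{\sigma}\sigma(w)$ is meaningful, lies in $W$, and is $G$-fixed — that is what makes the meet-as-join expression, hence AOP and the identification of the atomic generators, go through. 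For a general parallel family no such one-line formula exists, and the paper indicates that expressing the relevant meets as joins in this generality requires the iterated ``zig-zag'' and ``loop'' constructions with squares, which your argument does not supply. There are secondary problems feeding into the same step: the agreement subgroupoid is not an order ideal even in the motivating example (for the diagram automorphism of type $A_{3}$ one has $r\le rt\in W^{G}$ in weak order but $r\notin W^{G}$); its join-closedness does not follow from the $f_{\alpha}$ being morphisms in $\rtd$, since AOP does not entail preservation of joins (in \ref{ss13.2} join-closedness came from the maps being order \emph{automorphisms}); and since morphisms of protorootoids carry Boolean-ring components, the equations $f_{\alpha}\iota^{\flat}=f_{\beta}\iota^{\flat}$ and the universal property involve those components as well, so the naive restriction $\CR_{H}$ has not been shown to be the equalizer even granting your order-theoretic claims.
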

  \begin{rem*}  The categories in (b) do not have infinite  products. This is similar to the fact that an infinite product of Coxeter groups is not necessarily a Coxeter group.
     \end{rem*}
    \subsection{Functor rootoids} \label{ss14.2}
    The construction of  \emph{functor rootoids} discussed below significantly extends the construction of normalizer rootoids   from \ref{ss13.5}, puts it in a more conceptual framework and suggests  the ubiquity of rootoid local embeddings.
    
 Let $\CR=(G,\L,N)$ be a protorootoid and $H$ be a groupoid.
 For simplicity, assume that $\CR$ is faithful and connected and  $H$ is connected.
 Let $G^{H}$ denote the  functor groupoid with   objects the functors
   $F\colon H\to G$ and in which  a morphism $\nu\colon F\to F'$ in $G^{H}$ is a 
   natural transformation.  Composition is the usual composition of 
   natural transformations: if $\mu\colon F'\to F''$ is another morphism, then
 $\mu\nu$ has components $(\mu\nu)_{a}=\mu_{a}\nu_{a}$ for all $a\in \ob(G)$.
 
 Let $G^{H}_{\square}$ denote the subgroupoid of $G^{H}$ on all objects, but containing  only morphisms $\nu\colon F\to F'$ in $G^{H}$ such that for each morphisms $g\in \lrsub{a}{G}{b}$ of $G$, the commutative  diagram
  \begin{equation}\label{eq14.2.1}
 \xymatrix{
 {F(b)}\ar[r]^{\nu_{b}}&{F'(b)}\\
 {F(a)}\ar[r]_{\nu_{a}}\ar[u]^{F(g)}&{F'(a)}\ar[u]_{F'(g)}
 }
 \end{equation} in $G$   from the defining property of the   natural  transformation $\nu$ is a commutative square of $\CR$.  Using Lemma  \ref{ss12.4}, one easily checks that this defines a groupoid $G^{H}_{\square}$ as claimed.
 
 For any   $b\in \ob(H)$, there is an evaluation functor 
 $\e_{b}\colon G^{H}_{\square }\to G$  sending each functor  
 $F'\colon H\to G$  to $F'(b)$ and each morphism  $\nu\colon F'\to F''$ 
 in $G^{H}_{\square }$ to its component  $\nu_{b}\colon F'(b)\to F''(b)$ 
 as natural transformation. 
 
 Recall that for any groupoid $K$ and object $a$ of $K$, $K[a]$ denotes the component of $K$ containing $a$. Choose   base points  $F\in \ob(G^{H}_{\square})$ (fixed throughout the 
 following discussion) and $b\in \ob(H)$ (which will be allowed to vary).
 Let $\rho:=\rho_{b}\colon  G^{H}_{\square}[F]\to G$ denote the 
 restriction  of the previously defined evaluation functor.
  Define the protorootoid $\CT= \CT_{b}=\rho_{b}^{\nat}(\CR)$ and the 
  morphism $(\rho_{b})^{\flat}\colon  \CT_{b}\to \CR$ in $\prootc$ (these 
  depend only on $b$ and   $G^{H}_{\square}[F]$, 
  not  on $F$ itself).

 \begin{thm*} Assume above that $\CR$ is a rootoid. Then
 \begin{num}  
 \item  $\CT_{b}$ is a rootoid.
 \item If $\CR$ is complete (resp., interval finite, cocycle finite or preprincipal), then $\CT_{b}$ is complete (resp., interval finite, cocycle finite, or  preprincipal).
 \item $(\rho_{b})^{\flat}$ is a morphism in $\rlec$.
 \item The abridgement $(\CT_{b})^{a}$ is  independent of the choice of $b$, up to canonical isomorphism.
 \end{num}\end{thm*}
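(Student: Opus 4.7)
The plan is to prove the four parts in the order (a), the easy cases of (b) (complete, interval-finite, cocycle-finite), (c), the preprincipal case of (b), and finally (d), following the roadmap set out in the introduction and using the Tits' theorem proof of \ref{ss12.2} as explicit prototype for (c). For (a), the cocycle of $\CT_{b}$ pulls back along $\rho_{b}$ to $N_{\CT_{b}}(\nu)=N(\nu_{b})$ for $\nu\colon F'\to F''$ in $G^{H}_{\square}[F]$, so the weak right preorder at $F'$ is controlled object-by-object by the weak orders of $\CR$. To verify JOP and the remaining rootoid axioms I would construct meets and joins in $\CT_{b}$ by first producing a candidate component at $b$ using the rootoid structure of $\CR$, and then using the commutative-square condition defining $G^{H}_{\square}$ together with rigidity of squares (Lemma \ref{ss12.3}(c)) to propagate this candidate uniquely to components at every other object of $H$.

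The complete, interval-finite, and cocycle-finite cases of (b) then follow essentially formally: the relation $l_{N_{\CT_{b}}}(\nu)=l_{N}(\nu_{b})$ embeds intervals of $\CT_{b}$ in intervals of $\CR$, cocycle values are pulled back verbatim, and the propagation mechanism from (a) yields a global supremum in $\CT_{b}$ whenever the required supremum already exists at $b$ in $\CR$.

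The heart of the argument, and by far the main obstacle, is (c): showing $\rho_{b}^{\flat}$ satisfies AOP. As in \ref{ss12.2}, the strategy is to express $\rho_{b}^{\perp}(g)$, a priori defined as a meet over all $\nu$ in $G^{H}_{\square}[F]$ with $g\leq\nu_{b}$, as an explicit join of such natural transformations. The plan is to build, for each object $a$ of $H$, a component at $a$ by iterated application of the square combinatorics (Lemma \ref{ss12.4}) in combination with the symmetric Galois connections attached to the objects of $G$ alluded to in the introduction; a ``zig-zag'' step propagates $g$ along a chosen path from $b$ to $a$ in $H$, and a ``loop'' step enforces consistency around cycles. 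Iterated to a fixed point, which requires finitely many steps when $\CR$ is preprincipal and transfinitely many in general, these produce a natural transformation whose components realize the desired join. Once that join formula is in hand, AOP follows from the JOP of $\CR$ applied to the join, exactly as in \eqref{eq13.2.4} for Tits' theorem. The bulk of the technical work is ensuring the zig-zag and loop constructions are well-defined and that the fixed-point iteration terminates correctly; this is explicitly flagged in the introduction as the central technical content of the series.

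With (c) established, the preprincipal case of (b) is routine in the spirit of \ref{ss13.2}: the atomic generators of $\CT_{b}$ are the $\rho_{b}^{\perp}(s)$ for $s\in A_{\CR}\cap\domn(\rho_{b}^{\perp})$, and the preprincipal dichotomy for $\CT_{b}$ reduces via AOP and the join formula to the preprincipal dichotomy for $\CR$. For (d), fix $b,b'\in\ob(H)$ and choose any $h\in\lrsub{b'}{H}{b}$; for each $\nu\colon F'\to F''$ in $G^{H}_{\square}[F]$, the naturality diagram of $\nu$ at $h$ is a commutative square of $\CR$ by the very definition of $G^{H}_{\square}$, so by Lemma \ref{ss12.3}(a) the morphisms $F'(h)$ and $F''(h)$ induce canonical bijections identifying the cocycle data at $b$ with that at $b'$. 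Passing to abridgements erases the dependence on the choice of $h$, because any two choices of $h$ differ by a loop in $H$ whose induced action on abridged weak orders is trivial by $G^{H}_{\square}$-invariance, giving the canonical isomorphism $(\CT_{b})^{a}\cong(\CT_{b'})^{a}$.
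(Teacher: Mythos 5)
There is no proof in this paper to compare against: Theorem \ref{ss14.2} is one of the results the paper explicitly states without proof, deferring the argument (the Galois connections of \ref{ss14.6}, the ``zig-zag'' and ``loop'' constructions, the factorization through Lemma \ref{ss14.3}) to subsequent papers of the series. Your proposal reproduces that announced strategy rather faithfully, but as it stands it is an outline, not a proof, and the gap sits exactly where the paper says the real work lies. In part (a) you propose to build meets (and, for completeness in (b), joins) by computing a candidate component at $b$ inside $\CR$ and then ``propagating'' it to the other objects of $H$ using the square condition and rigidity. Rigidity of squares (Lemma \ref{ss12.3}(c)) gives only \emph{uniqueness} of such a propagation, never \emph{existence}: there is no reason the meet in $\lsub{F'(b)}{G}$ of the components $\nu^{(i)}_{b}$ lifts to a natural transformation lying in $G^{H}_{\square}[F]$ at all, and the image of $\lsub{F'}{(G^{H}_{\square}[F])}$ in the weak order at $F'(b)$ is not obviously meet-closed. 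Deciding which elements of $\lsub{F'(b)}{G}$ do arise as $b$-components of morphisms through squares is precisely what the relation $R$ and its Galois connection in Theorem \ref{ss14.6} are for, and you never define $R$, the zig-zag or loop steps, the fixed-point iteration, or prove its termination and that its output is a natural transformation realizing the required meet as a join. The same missing construction is the entire content of (c): without the explicit join expression for $\rho_{b}^{\perp}(g)$ the AOP argument modelled on \eqref{eq13.2.4} cannot be run, and the preprincipal case of (b) collapses with it.

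Two smaller points. First, your reduction of the easy cases of (b) is fine only modulo (a): interval and cocycle finiteness do pull back verbatim along $\nu\mapsto\nu_{b}$, but completeness again needs the unproven existence of lifts of joins. Second, your argument for (d) is essentially the right idea but is stated too loosely: the family $\set{F'(h)}_{F'}$ is a natural isomorphism $\rho_{b}\Rightarrow\rho_{b'}$, and the square condition (Lemma \ref{ss12.3}(a)) gives $N(\nu_{b'})=F''(h)\bigl(N(\nu_{b})\bigr)$, which is what identifies the two pullback cocycles; independence of the choice of $h$ comes from the fact that for a loop $\gamma$ at $b$ the square condition forces $F''(\gamma)$ to fix every cocycle value $N(\nu_{b})$, so the induced automorphism is the identity on the abridgement. ``$G^{H}_{\square}$-invariance'' as you state it does not by itself deliver this; you need to exhibit and use that computation.
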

 If $\CR$ is a  principal rootoid, one  obtains a well-defined (up to isomorphism) principal rootoid $(\CT_{b})^{a}$ from any groupoid homomorphism from a connected groupoid to the underlying groupoid of  $\CR$.  The rootoids $\CT_{b}$ (resp., $(\CT_{b})^{a}$) are called \emph{functor rootoids} (resp., \emph{abridged functor rootoids}) of $\CR$.    
 \subsection{} \label{ss14.3} The proof of Theorem \ref{ss13.5} is  reduced to that of Theorem \ref{ss14.2}  by the  following   technical   lemma (the main point in  the proof of which is  rigidity of squares) and generalities (\cite[Lemma 4.9]{DyGrp1}) about covering protorootoids.  The statement uses the notation of \ref{ss13.5} and \ref{ss14.2}. 

\begin{lem*} Let $\CR$ be a faithful protorootoid and  $A=(a,X)$ be an object of the normalizer groupoid $L$ with $X\neq \eset$. Fix   a connected, simply connected groupoid
$H$ and  an   object $b$ of $H$ such that there is a bijection $\phi\colon \lsub{b}{H}\xrightarrow{\cong} X$. 
\begin{num}\item  There is a unique groupoid homomorphism $F\colon H\to G$  such that, if $H$ is non-empty, $F(b)=a$ and $F(h)=\phi(h)\in \lsub{a}{G}$ for all 
$h\in \lsub{b}{H}$.
\item  Let $\th'\colon L[A]\to G[a]$  and $\rho_{b}'\colon G^{H}_{\square}[F]\to G[a]$ denote the evident restrictions of $\th$ and $\rho_{b}$.
Then there exists a unique groupoid homomorphism $\tau\colon G^{H}_{\square}[F]\to L[A]$ such that $\tau(F)=A$ and  the diagram
\begin{equation*}
\xymatrix{
{L[A]}\ar[r]^{\th'}&{G[a]}\\
{G^{H}_{\square}[F]}\ar[u]^{\tau}\ar[ur]_{\rho_{b}'}&
}
\end{equation*} commutes. Moreover,
$\tau$ is a covering.
\item 
The protorootoid $\CT_{b}$  is a covering protorootoid
of  the restriction $\CN_{L[A]}$. Hence  every component of $\CN$ is a covering quotient  of a functor protorootoid of  $\CR$.
\end{num}
\end{lem*}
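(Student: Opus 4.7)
The plan is to handle the three parts in order. Part (a) extends $\phi$ to a functor by exploiting simple connectivity of $H$; part (b) transports natural transformations in $G^{H}_{\square}[F]$ to morphisms of $L$ via rigidity of squares; and part (c) follows formally from (b) combined with the general theory of coverings of protorootoids in \cite[Lemma 4.9]{DyGrp1}.

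For (a), since $H$ is connected and simply connected, each $c\in\ob(H)$ admits a unique morphism $h_{c}\in\lsub{b}{H}$ with $\domn(h_{c})=c$ (and $h_{b}=1_{b}$). Define $F(c):=\domn(\phi(h_{c}))$ on objects and, noting that a general morphism $g\colon c\to d$ of $H$ factors uniquely as $g=h_{d}^{-1}h_{c}$, set $F(g):=\phi(h_{d})^{-1}\phi(h_{c})$. A routine check verifies that $F$ is a functor with $F(b)=a$ and $F(h)=\phi(h)$ for all $h\in\lsub{b}{H}$; uniqueness is automatic because $H$ is generated by $\lsub{b}{H}$ together with its inverses.

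For (b), define $\tau$ on objects by $\tau(F'):=(F'(b),\{F'(h)\mid h\in\lsub{b}{H}\})$ (so $\tau(F)=A$, by (a) and surjectivity of $\phi$), and $\tau(\nu):=(\tau(F''),\nu_{b},\tau(F'))$ on a morphism $\nu\colon F'\to F''$. To see that $\tau(\nu)$ is a morphism of $L$, apply the defining commutative squares of $G^{H}_{\square}$ at each $h\in\lsub{b}{H}$: rigidity of squares (Lemma~\ref{ss12.3}(c)) ensures that $F'(h)\mapsto F''(h)$ is a well-defined bijection $\sigma$ witnessing condition $(*)$ in \ref{ss13.5}. Commutativity of the stated triangle and uniqueness of $\tau$ are immediate. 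The substantive assertion is that $\tau$ is a covering. Given $F'$ and a morphism $(B,g,\tau(F'))$ in $L[A]$, the bijection $\sigma$ from $(*)$ supplies for each $h\in\lsub{b}{H}$ a commutative square of $\CR$ with top $g$ and left $F'(h)$; rigidity determines the remaining vertex and arrow uniquely, which we designate $F''(\domn(h))$ and $\nu_{\domn(h)}$. The construction of (a), applied to these data, extends $F''$ uniquely to a functor $H\to G$, and the $\nu_{c}$'s assemble into a natural transformation $\nu\in G^{H}_{\square}[F]$ lifting $(B,g,\tau(F'))$.

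For (c), the commutativity in (b) gives $\rho_{b}'=\th'\tau$, hence
\begin{equation*}
\CT_{b}=(\rho_{b}')^{\nat}\CR=\tau^{\nat}(\th')^{\nat}\CR=\tau^{\nat}(\CN_{L[A]});
\end{equation*}
since $\tau$ is a covering of groupoids by (b), \cite[Lemma 4.9]{DyGrp1} identifies $\CT_{b}$ as a covering protorootoid of $\CN_{L[A]}$. The final sentence follows by letting $A$ range over representatives of components of $\CN$ containing some object with $X\neq\eset$, trivial components being handled directly. The main obstacle is showing $\tau$ is a covering in (b): the squarewise specifications of $F''(\domn(h))$ and $\nu_{\domn(h)}$ for $h\in\lsub{b}{H}$ must cohere into a bona fide functor and natural transformation whose naturality diagram at \emph{every} morphism of $H$ is a commutative square of $\CR$. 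Via the factorization $g=h_{d}^{-1}h_{c}$, this reduces to the 3-for-2 principle for squares (Lemma~\ref{ss12.4}): two adjacent commutative squares of $\CR$ sharing a side compose to a third, so the squares for $h_{c}$ and $h_{d}$ combine into the required square for $g$.
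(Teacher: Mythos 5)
The paper states this lemma without proof (indicating only that the main point is rigidity of squares, together with the covering generalities of \cite[Lemma 4.9]{DyGrp1}), and your argument fleshes out exactly that outline and is correct: rigidity gives well-definedness of $\sigma$, uniqueness of $\tau$, and uniqueness of lifts; Lemma \ref{ss12.4} applied to the factorization $g'=h_{d}^{-1}h_{c}$ upgrades the squares at the $h_{c}$ to naturality squares of $\CR$ at every morphism of $H$; and the identification $\CT_{b}=\tau^{\nat}(\CN_{L[A]})$ plus \cite[Lemma 4.9]{DyGrp1} yields (c). The one caveat, inherited from the statement itself rather than introduced by you, is that existence in (a) forces $\phi(1_{b})=1_{a}$ (hence $1_{a}\in X$), since any functor must send $1_{b}$ to $1_{F(b)}$; your construction and the ``routine check'' tacitly use this normalization.
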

\begin{rem*} The component $G^{H}_{\square}[F]$ may be alternatively regarded as a component of a centralizer rootoid as in Remark \ref{ss13.5}(1).
The covering $\tau$ in the above lemma  comes from the  group of automorphisms of $H$ which fix the object $b$  and permute $\lsub{b}{H}$ arbitrarily.  Similarly,  suitable automorphism groups  of  an arbitrary connected groupoid $H$  give rise to  covering quotients  of  associated functor rootoids.
\end{rem*}

 \subsection{Duality}\label{ss14.4} Define a  \emph{based  connected groupoid} to be a pair $( H  ,e)$ consisting of a
connected groupoid $ H  $ and  a specified object $e$ of $ H  $. A morphism of based connected groupoids is defined to be a base-point preserving groupoid morphism. With composition of underlying groupoid morphisms as composition, this defines a category $\cgrpdc _{\bullet}$ of based connected groupoids.

The above construction may be interpreted 
as follows.  Fix a connected rootoid $\CR$ and a base point $a$ of its 
underlying groupoid $G$.  Consider the slice category
 $C=\cgrpdc_{\bullet}/ (G,a)$.  The construction attaches to any object $F\colon (H,b)\to (G,a)$ of $C$ another object $e(F):=(G^{H}_{\square}[F],\rho_{b})$ of $C$.  
    An object of $C$ isomorphic to an object $e(F)$ will be called a \emph{stable object} of $C$.
     
 \begin{thm*}  \begin{num} \item  The map $e$ from $\ob(C)$ to $\ob(C)$ defined above naturally  extends to a contravariant functor  $e\colon C\to C$
 (induced by the contravariant functor $\grpdc\to \grpdc$ given on objects by
 $H\mapsto G^{H}$). 
  \item      There are natural isomorphisms
   \begin{equation*} \Hom_{C}(P_{1},e(P_{2}))\cong \Hom_{C}(P_{2},e(P_{1})) \end{equation*}
          for all $P_{1},P_{2}\in \ob(C)$, where each $\Hom$-set has at most one element. More precisely,
    letting $e'\colon C\to C^{\op}$ and $e''\colon  C^{\op}\to C$ be the two  covariant functors naturally identified with $e$, the functor $e'$ is left adjoint to $e''$.
    \item Let $D$ be the full subcategory of $C$ consisting of stable objects. 
    For all $P\in \ob(D)$, the component at $P$ of the unit (and counit) of the adjunction in $\text{\rm (b)}$ is an isomorphism. 
   
    \item The restriction of $e$ to a  contravariant functor $D\to D$ is a  contravariant equivalence i.e $e'$ restricts to a  category equivalence  $D\to D^{\op}$.
    \item Let $F$ be in $\ob(C)$ and  $F'':=e(e(F))\in \ob(C)$. Regard $F,F''$  as morphisms in $\cgrpdc_{\bullet}$.
    Then $F''$ is a monomorphism and $F=F''F'$ for a (unique) morphism $F'$  in $\cgrpdc_{\bullet}$
    \end{num}
\end{thm*}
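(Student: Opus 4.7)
For (a), the natural extension of $e$ to morphisms is by precomposition. Given a morphism $\alpha\colon (H_1,b_1)\to (H_2,b_2)$ in $\cgrpdc_{\bullet}$ with $F_2\alpha=F_1$, precomposition with $\alpha$ defines a groupoid morphism $G^{H_2}\to G^{H_1}$, $\phi\mapsto \phi\alpha$, which I would check restricts to a based groupoid morphism $G^{H_2}_{\square}[F_2]\to G^{H_1}_{\square}[F_1]$: for any morphism $\nu\colon \phi\to\phi'$ in $G^{H_2}_{\square}$ and any morphism $h\in \mor(H_1)$, the defining square of $\nu\alpha$ at $h$ is the defining square of $\nu$ at $\alpha(h)$, hence a commutative square of $\CR$. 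This restricted map sends $F_2$ to $F_1$ and satisfies $\rho_{b_1}\circ(\alpha^*)=\rho_{b_2}$ since $\alpha(b_1)=b_2$, giving a morphism $e(F_2)\to e(F_1)$ in $C$; functoriality is immediate.

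For (b), my strategy is to use the exponential adjunction in $\grpdc$. A morphism $\beta\colon P_1\to e(P_2)$ in $C$ corresponds, under $\grpdc(H_1,G^{H_2})\cong \grpdc(H_1\times H_2,G)$, to a functor $\tilde\beta\colon H_1\times H_2\to G$ satisfying: (i) $\tilde\beta(-,b_2)=F_1$; (ii) $\tilde\beta(b_1,-)=F_2$; and (iii) for every pair of morphisms $h_i\colon c_i\to c_i'$ in $H_i$ ($i=1,2$), the image under $\tilde\beta$ of the canonical commutative square of $H_1\times H_2$ with sides $(h_1,\id_{c_2})$, $(\id_{c_1'},h_2)$, $(h_1,\id_{c_2'})$, $(\id_{c_1},h_2)$ is a commutative square of $\CR$. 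I would verify that condition (iii) is precisely a repackaging of the $\square$-condition for $\beta(h_1)$ in $G^{H_2}_{\square}$. Since (i)--(iii) is manifestly symmetric under transposition $H_1\times H_2\leftrightarrow H_2\times H_1$, it gives a canonical bijection $\Hom_C(P_1,e(P_2))\leftrightarrow \Hom_C(P_2,e(P_1))$, and naturality in both variables is inherited from the exponential adjunction.

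Next, I would prove each such Hom-set has at most one element by using rigidity of squares. The values of $\tilde\beta$ on $(\{b_1\}\times H_2)\cup(H_1\times\{b_2\})$ and the corresponding morphisms are forced by (i)--(ii). For any other $(c_1,c_2)$, connectedness of the $H_i$ provides $h_i\colon b_i\to c_i$; condition (iii) applied to $(h_1,h_2)$ is a commutative square of $\CR$ with two adjacent sides $\tilde\beta(h_1,b_2)=F_1(h_1)$ and $\tilde\beta(b_1,h_2)=F_2(h_2)$ already determined, so by Lemma \ref{ss12.3}(c) its opposite corner $\tilde\beta(c_1,c_2)$ and remaining two sides are forced. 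Propagating along further morphisms in each factor then determines $\tilde\beta$ uniquely. Parts (c) and (d) then follow from standard adjoint functor machinery: on any object of the form $e(Q)$, the unit is automatically an isomorphism (it has an inverse produced via the counit by the triangular identities, both components lying in singleton Hom-sets), so stable objects are precisely those where the unit is an isomorphism, and the restriction of $e$ to them is a contravariant equivalence.

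For (e), I would construct $F'\colon (H,b)\to (G^{G^H_{\square}[F]}_{\square}[\rho_b],\rho_b)$ explicitly: each $c\in \ob(H)$ is sent to the evaluation functor $\rho_c\colon G^H_{\square}[F]\to G$, $\phi\mapsto \phi(c)$; each morphism $h\colon c\to c'$ in $H$ is sent to the natural transformation $\rho_c\to \rho_{c'}$ with component $\phi(h)$ at $\phi\in G^H_{\square}[F]$. The $\square$-condition required for $F'(h)$ to lie in $G^{G^H_{\square}[F]}_{\square}$ unfolds into exactly the $\square$-condition defining morphisms of $G^H_{\square}$ (applied to $h$), hence is automatic; one verifies $F'(b)=\rho_b$ and $F''\circ F'=F$ directly from the definitions. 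Finally, the monomorphism property of $F''$ reduces to the uniqueness established in the proof of (b): two morphisms $\gamma_1,\gamma_2$ in $C$ into $e(P)$ with the same composition $F''\gamma_i$ correspond to two elements of a common $\Hom_C(P',e(P))$ and must coincide.

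\textbf{Main obstacle.} The hard part will be the verification that the $\square$-condition on natural transformations in $G^{H_1}_{\square}$ translates into the symmetric condition (iii) on $\tilde\beta\colon H_1\times H_2\to G$. Rigidity of squares plays the essential dual role of forcing the uniqueness in (b) and ensuring that the enlarged square condition is in fact symmetric in the two factors; this is where most of the technical work will concentrate.
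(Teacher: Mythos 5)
Your proposal is correct in outline, but it follows a genuinely different route from the one the paper has in mind: the paper states Theorem \ref{ss14.4} without proof and indicates (via Proposition \ref{ss14.5} and Theorem \ref{ss14.6}) that the intended argument is \emph{local}, encoding each object $F$ of $C$ through the datum equivalence $\cgrpdc_{\bullet}\simeq\datc_{\bullet}$ by its subset $\chi(F)\seq \lrsub{a}{G}{a}\coprod\lsub{a}{G}$ and realizing $e$ as the dagger operation of an explicit symmetric relation, so that duality becomes the standard symmetry of a Galois connection. You instead argue globally and formally: the exponential law identifies $\Hom_{C}(P_{1},e(P_{2}))$ with functors $H_{1}\times H_{2}\to G$ satisfying your conditions (i)--(iii), whose symmetry under transposition is exactly the dihedral invariance of commutative squares noted in \ref{ss12.2} (so your ``main obstacle'' is in fact immediate from the paper, not where the work lies); the at-most-one-element claim follows from rigidity of squares (Lemma \ref{ss12.3}(c)) by your propagation from the base points, which is the genuinely load-bearing step and is carried out correctly; (c)--(d) then follow from the triangle identities combined with the singleton property of Hom-sets into objects of the form $e(Q)$ (note it is only these Hom-sets, not all Hom-sets of $C$, that are singletons -- your argument does only use these); and your explicit unit $c\mapsto\rho_{c}$, $h\mapsto(\phi\mapsto\phi(h))$ together with the mono argument via (b) handles (e). What each approach buys: yours is shorter, self-contained, and makes clear that the duality needs only faithfulness (rigidity) rather than the full rootoid hypotheses; the paper's Galois-connection formalism, while heavier, yields the concrete local description of stable objects ($\chi(e(F))=\chi(F)^{\dag}$) that is also the computational backbone for Theorem \ref{ss14.2} and for explicit work with functor rootoids, which your formal argument does not provide.
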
 

The stable morphism  $F''$ in (e) is called  the \emph{hull} of $F$. 
Part (e) shows that any morphism $F$  in $C$ factors uniquely 
through its hull. Moreover, $e(F)$ and $e(F'')$ are canonically 
isomorphic, and  $(F'')^{\flat}$ is a morphism in $\rlec$ with 
$F''$ as underlying groupoid morphism.  By (d), every hull has a dual.  Loosely speaking,
this means that in considering  components 
$\CT_{b}$ of  functor rootoids,  one may for most purposes  restrict 
$F$ to be a stable local embedding. There is a category of stable local embeddings of rootoids not discussed here.
 \subsection{}\label{ss14.5} It is a well known fact that a connected groupoid is  determined  
up to isomorphism by the isomorphism type of  its vertex groups  and the cardinality of the set of its objects.  The proof of Theorems \ref{ss14.2}  and \ref{ss14.4} make use of  a  more canonical version of this fact, described  in the proposition below.
 
 Define a   category $\datc_{\bullet}$ with objects the 
triples $(A,X,x)$ where $A$ is a group, $X$ is a free (left) 
$A$-set and $x\in X$. A morphism $(A,X,x)\to (B,Y,y)$ in 
$\datc_{\bullet}$ is a pair $(\rho,\th)$ where $\rho\colon A\to B$
 is a group homomorphism and $\th\colon X\to Y$ is a function
satisfying $\th(x)=y$ and $\th(ax')=(\rho(a))\th(x')$ for $a\in A$,
 $x'\in X$. Composition is componentwise: $(\rho,\th)(\rho',
 \th'):=(\rho\rho',\th\th')$.

\begin{prop*} The natural functor  $\mathfrak{F}\colon \cgrpdc_{\bullet}\to  \datc_{\bullet}$ defined  on objects  by mapping 
$(G,a)\mapsto (\lrsub{a}{G}{a},\lsub{a}{G},\lrsub{}{1}{a})$
is  a category equivalence.\end{prop*}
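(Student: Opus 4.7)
My plan is to verify that $\mathfrak{F}$ is essentially surjective and fully faithful, whence it is a category equivalence. Throughout, the key technical fact exploited is the freeness of the $A$-action on $X$: it ensures that any rescaling needed to move between representatives of an $A$-orbit is canonical, and dually that any two pairs in $X\times X$ lying in the same diagonal $A$-orbit are linked by a unique group element. The main subtlety is getting the conventions straight in the construction for essential surjectivity, so that the canonical identifications of $X$ with the star $\lsub{a}{G}$ and of $A$ with the vertex group $\lrsub{a}{G}{a}$ make the respective actions and group structures correspond, rather than yielding opposite actions or an anti-isomorphism of groups.

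For essential surjectivity, given $(A,X,x)\in \ob(\datc_{\bullet})$, I would construct a based connected groupoid $(G,a)$ with $\mathfrak{F}(G,a)\cong (A,X,x)$. Set $\ob(G):=A\backslash X$, $a:=[x]$, and for $b,c\in\ob(G)$ define $\Hom_G(c,b):=(c\times b)/A$ under the diagonal $A$-action. The identity morphism at $c$ is $[(y,y)]$ for any $y\in c$; the inverse of $[(y,z)]$ is $[(z,y)]$. Composition of $[(y_1,z_1)]\in\Hom_G(c_1,c_2)$ with $[(y_2,z_2)]\in\Hom_G(c_2,c_3)$ is defined using the unique $\alpha\in A$ with $\alpha z_1=y_2$ (which exists by freeness) via
\[
[(y_2,z_2)]\circ [(y_1,z_1)]:=[(y_1,\alpha^{-1}z_2)].
\]
The groupoid axioms reduce to direct computation. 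The canonical assignments $y\mapsto [(y,x)]\colon X\to\lsub{a}{G}$ and $A\to \lrsub{a}{G}{a}$ (with a convention on $[(x,\alpha x)]$ chosen to respect group multiplication) then assemble, together with $x\mapsto 1_a$, into the required isomorphism $(A,X,x)\xrightarrow{\cong}\mathfrak{F}(G,a)$ in $\datc_{\bullet}$.

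For full faithfulness, fix $(G,a),(H,b)\in\ob(\cgrpdc_{\bullet})$ with $\mathfrak{F}(G,a)=(A,X,x)$ and $\mathfrak{F}(H,b)=(B,Y,y)$, and a morphism $(\rho,\theta)\colon(A,X,x)\to(B,Y,y)$ in $\datc_{\bullet}$. Every morphism $f\colon c_1\to c_2$ of $G$ factors as $f=g_2^{-1}g_1$ with $g_1\in\lrsub{a}{G}{c_1}$ arbitrary and $g_2:=g_1 f^{-1}\in\lrsub{a}{G}{c_2}$; so if $F\colon(G,a)\to(H,b)$ is a basepoint-preserving functor with $\mathfrak{F}(F)=(\rho,\theta)$, functoriality forces $F(f)=\theta(g_2)^{-1}\theta(g_1)$, which proves uniqueness. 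For existence, define $F(c):=\domn(\theta(g))$ for any $g\in\lrsub{a}{G}{c}$ (well-defined since $\theta$ is $\rho$-equivariant and $\lrsub{a}{G}{c}$ is a single $A$-orbit) and $F(f)$ by the above formula. Independence of the choice of $g_1$ follows from the equivariance identity $\theta(\alpha g_2)^{-1}\theta(\alpha g_1)=\theta(g_2)^{-1}\rho(\alpha)^{-1}\rho(\alpha)\theta(g_1)=\theta(g_2)^{-1}\theta(g_1)$. Functoriality is a telescoping calculation using the identity $g_1(f_2f_1)^{-1}=(g_1f_1^{-1})f_2^{-1}$: taking $g_3:=g_2f_2^{-1}$, both $F(f_2f_1)$ and $F(f_2)F(f_1)$ equal $\theta(g_3)^{-1}\theta(g_1)$. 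Finally $F(1_a)=1_b$ and $F|_{\lsub{a}{G}}=\theta$ follow by inspection (taking $g_1=1_a$ or $g_2=1_a$), so $\mathfrak{F}(F)=(\rho,\theta)$, as required.
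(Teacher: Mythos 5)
Your proposal is essentially correct, but there is nothing in the paper to compare it against: like the other statements of Section 14, this proposition is given without proof (the arguments are deferred to subsequent papers), so you should be judged only on the argument itself, which is the standard one and is sound. Full faithfulness via the factorization $f=g_2^{-1}g_1$ through the star at the base point, and essential surjectivity via the groupoid with objects $A\backslash X$ and morphism sets $(c\times b)/A$, is exactly the natural route. Two points are worth making explicit. First, with your composition rule the assignment $\alpha\mapsto [(x,\alpha x)]$ is an \emph{anti}-homomorphism, since $[(x,\alpha x)]\circ[(x,\beta x)]=[(x,\beta\alpha x)]$; the convention that works is $\rho(\alpha):=[(\alpha x,x)]=[(x,\alpha^{-1}x)]$, for which one checks directly that $\rho(\alpha)\circ\rho(\beta)=\rho(\alpha\beta)$ and $\rho(\alpha)\circ[(y,x)]=[(\alpha y,x)]$, so that $(\rho,\theta)$ with $\theta(y):=[(y,x)]$ is indeed an isomorphism onto $\mathfrak{F}(G,a)$ (you flag this issue but leave the verification implicit). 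Second, in the fullness step you verify $F|_{\lsub{a}{G}}=\theta$ but not that $F$ restricted to the vertex group is $\rho$, which is the other component of $\mathfrak{F}(F)$; this is automatic, because for $\alpha\in\lrsub{a}{G}{a}\subseteq\lsub{a}{G}$ the morphism axioms of $\datc_{\bullet}$ give $\theta(\alpha)=\theta(\alpha\cdot 1_a)=\rho(\alpha)\theta(1_a)=\rho(\alpha)$, i.e.\ $\theta$ already restricts to $\rho$ on the vertex group, completing $\mathfrak{F}(F)=(\rho,\theta)$. Also, a wording quibble: the existence of the element $\alpha$ with $\alpha z_1=y_2$ comes from $z_1,y_2$ lying in the same orbit; freeness gives only its uniqueness. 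With these small repairs the argument is complete.
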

 The objects of $\datc_{\bullet}$ are accordingly  called \emph{based connected groupoid datums}.

\subsection{}\label{ss14.6} Let $\CR$ be a connected rootoid with underlying groupoid $G$ and fix $a\in \ob(G)$. Let $C$ be as in \ref{ss14.4}.
Define the coproduct (disjoint union) $\lsub{a}{\CX}:=
\lrsub{a}{G}{a}\coprod \lsub{a}{G}$ of sets. Let $F\colon (H,b)\to (G,a)$ be a morphism in $C$ i.e.  $F\colon H\to G$ is a groupoid homomorphism  with $F(b)=a$.
Write  $\mathfrak{F}(F)=(\rho,\th)$  where $\rho$ (resp., $\th$) is the group homomorphism (resp., function)  $\lrsub{b}{H}{b}\to \lrsub{a}{G}{a}$ (resp., $\lsub{b}{H}\to \lsub{a}{G}$) induced by $F$.
Define 
$\chi(F):=\Img(\rho)\coprod \Img (\th)\seq\, \lsub{a}{\nts\CX}$.

To any relation $R\seq X\times Y$ from a set $X$ to
 a set $Y$, there is an associated Galois connection  
 determined by  a pair of order-reversing   maps between
  $\wp(X)$ and $\wp(Y)$ as follows. The map $\wp(X)\to \wp(Y)$ 
  is given by \begin{equation}\label{eq14.6.1} A\mapsto A^{\dag}:=\mset{y\in Y\mid (x,y)\in 
  R\text{ for all $x\in A$} }.\end{equation} and the map 
  $\wp(Y)\to \wp(X)$ is defined by symmetry.  We shall be concerned below  only with 
  the case that  $R$ is symmetric (so $X=Y$), in  which case 
  these two order-reversing maps coincide and are  denoted $A\mapsto A^{\dag}$.  Recall from \ref{ssa10.4} that a subset $A$ of $X$ is  called stable
  if it is of the form $A=B^{\dag}$ for some $B\seq X$, or equivalently, if $A=A^{\dag\dag}$. The stable subsets form a complete lattice and the map $A\mapsto A^{\dag}$ is an order-reversing involution of this lattice.

\begin{thm*}  There is a (explicitly known)  symmetric  relation $R$ on
 $X=\lsub{a}{\nts\CX}$  such that the associated Galois connection 
 has the following properties: \begin{num}
\item The $R$-stable subsets of $X$  are precisely the  sets 
$\chi(F)$ for stable objects  $F\in\ob(C)$.
\item  For any $F$ in $\ob(C)$, $\chi(e(F))=\chi(F)^{\dag}$.
\end{num}
\end{thm*}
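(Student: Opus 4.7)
The plan is to define $R$ via commutative squares of $\CR$ with corner at $a$, establish (b) directly from the definition of $G^H_\square$, and then deduce (a) by combining (b) with the equivalence $\mathfrak{F}$ of Proposition \ref{ss14.5}. For the relation, I would declare $(u_1,u_2) \in R$ iff there is a commutative square of $\CR$ in which the underlying morphisms $\tilde u_1, \tilde u_2 \in \lsub{a}{G}$ (using the evident inclusion on the group-part) appear as the two adjacent sides meeting at the corner $a$, supplemented with a clause distinguishing the group from the set component of $X$ (roughly: $u_i \in \lrsub{a}{G}{a}$ when the opposite corner of the square also sits above $a$). Rigidity of squares (Lemma \ref{ss12.3}(c)) makes this well-defined since the square, if it exists, is uniquely determined by its two specified sides, and the dihedral symmetry of oriented squares gives the symmetry of $R$.

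For part (b), the inclusion $\chi(e(F)) \seq \chi(F)^\dag$ is a direct unwinding: any $\nu \in \mor(G^H_\square[F])$ has, by the very definition of $G^H_\square$, all of its naturality squares being commutative squares of $\CR$; specializing to morphisms $h$ incident to $b$ exhibits $\nu_b = \rho_b(\nu)$ as $R$-related to $F(h) \in \chi(F)$. For the reverse inclusion, given $g \in \chi(F)^\dag$, build $\nu\colon F' \to F$ component by component: for each $c \in \ob(H)$ pick $h_c \in \lrsub{b}{H}{c}$ (using connectedness of $H$), and use the commutative square of $\CR$ witnessing $(g,F(h_c)) \in R$ to read off the component $\nu_c$ and the value of $F'$ on $h_c$. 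Rigidity makes the choices unambiguous, and the gluing lemma \ref{ss12.4} shows that the candidate naturality squares for an arbitrary morphism of $H$, written as a composition of the chosen $h_c$'s and their inverses, are again commutative squares of $\CR$; thus $\nu \in \mor(G^H_\square[F])$ and $g = \nu_b \in \chi(e(F))$.

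For part (a), the forward direction follows immediately from (b): any stable $F'$ is isomorphic to $e(F)$ for some $F$, so $\chi(F') = \chi(F)^\dag$ is $R$-stable. For the reverse direction, suppose $S \seq X$ is $R$-stable and set $T := S^\dag$; it suffices to exhibit $F \in \ob(C)$ with $\chi(F) = T$, for then $e(F)$ is stable and $\chi(e(F)) = T^\dag = S^{\dag\dag} = S$. To build $F$, use Proposition \ref{ss14.5}: the decomposition $T = T_1 \dotcup T_2$ with $T_1 \seq \lrsub{a}{G}{a}$ and $T_2 \seq \lsub{a}{G}$ should form a based connected groupoid datum $(T_1,T_2,1_a)$, whose preimage under $\mathfrak{F}$ is a based connected groupoid $(H,b)$; the datum morphism from $(T_1,T_2,1_a)$ to $\mathfrak{F}(G,a)$ given by the inclusions then corresponds under $\mathfrak{F}$ to the sought morphism $F\colon (H,b) \to (G,a)$ in $C$.

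The main obstacle is verifying that the putative datum $(T_1,T_2,1_a)$ extracted from an $R$-stable $T$ really satisfies the axioms in \ref{ss14.5}: $T_1$ must be a subgroup of $\lrsub{a}{G}{a}$, and $T_2$ a $T_1$-stable subset of $\lsub{a}{G}$ containing $1_a$ (freeness of the action comes for free from that of $\lrsub{a}{G}{a}$ on $\lsub{a}{G}$). These closure properties must be squeezed out of the definition of $R$ and of $S^\dag$ via gluing of squares: one shows, for example, that if $x_1,x_2 \in T_1$ are both $R$-related to every element of $S$, then Lemma \ref{ss12.4} lets one compose the witnessing squares to see that $x_1 x_2$ is too, and analogous compositions yield $T_1$-stability of $T_2$ and the presence of $1_a$. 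The relation $R$ has to be pinned down carefully enough to make all these compositions pass through, and this is where the precise form of the extra clause distinguishing the group-part from the set-part of $X$ enters; the verification is expected to be formal but technical.
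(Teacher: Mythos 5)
A preliminary caveat: the paper contains no proof of this theorem to compare you against --- it is stated without proof, and even the relation $R$ is only asserted to be ``explicitly known'', with \ref{ss14.6} explicitly omitting the precise details (saying only that one piece of $R$ generalizes a relation from \cite{DyWeak} and that the proofs rest on the combinatorics of squares relative to these Galois connections). Measured against what the statement requires, your outline has a reasonable shape (define $R$ via squares, use rigidity \ref{ss12.3}(c) and gluing \ref{ss12.4} for (b), and deduce (a) from (b) together with Proposition \ref{ss14.5} applied to $T=S^{\dag}$), but it has a genuine gap: the relation $R$ is the entire content of the theorem, and the one you sketch is not the right one; with it, a key step of your argument for (b) fails.

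Unwinding naturality shows what $R$ must be on the pairs involving the group component of $X$. If $\nu\colon F'\to F$ lies in $G^{H}_{\square}$ with $\nu_{b}=g\in\lsub{a}{G}$, and $h\in\lrsub{b}{H}{b}$ is a vertex-group element with $F(h)=p\in\lrsub{a}{G}{a}$, the naturality square has $g$ on two \emph{opposite} sides and forces $F'(h)=g^{*}pg$; so the condition on $(g,p)$ is that the specific oriented square $(g,g^{*}pg,g^{*},p^{*})$ is a square of $\CR$ (equivalently, by the dihedral symmetry, that $(p,g,g^{*}p^{*}g,g^{*})$ is one), and for two group elements $p,q$ the condition is that $(q,p,q^{*},p^{*})$ is a square. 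This is strictly stronger than ``$g$ and $p$ occur as the two adjacent sides of some square at $a$'': rigidity makes that square unique, but nothing forces its side opposite $g$ to be $g$ again, and your clause about the opposite corner lying over $a$ does not repair this. Already in $\CC_{(W,S)}$ of type $A_{2}$ with $S=\set{r,s}$, taking $g=r$ and $p=sr$, the unique oriented square of the form $(r,w,v,rs)$ is $(r,sr,s,rs)$, so your weak relation holds, whereas the square $(r,rs,r,rs)$ demanded by naturality is not a square of $\CR$. This bites precisely in your reverse inclusion for (b): a general morphism of $H$ is not a word in the chosen $h_{c}$ and their inverses but has the form $h_{c}^{*}\,k\,h_{d}$ with $k\in\lrsub{b}{H}{b}$, so you must also produce naturality squares at vertex-group elements (and, for the group component of $\chi(e(F))$, handle automorphisms $\nu\colon F\to F$, which you do not discuss); with the weak relation there is no guarantee these loop-shaped squares exist, and the construction breaks exactly when $H$ is not simply connected, which is the case of interest (cf.\ \ref{ss14.3}). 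With the stronger relation, your mechanism for (a) --- closure of the components of $S^{\dag}$ under the datum operations of \ref{ss14.5}, proved by stacking squares via Lemma \ref{ss12.4} --- is plausible, but as written the proposal leaves the crucial definition, and hence the actual content of the theorem, unresolved.
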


The definition of  $R\cap (\lsub{a}{G}\times \lrsub{a}{G}{a})$ is 
generalizes in an obvious way that of the relation on a Coxeter group described in 
\cite{DyWeak}, and the other   intersections $R\cap (\lrsub{a}{G}{a}\times \lrsub{a}{G}{a})$, $R\cap (\lrsub{a}{G}{a}\times \lsub{a}{G})$ and
$R\cap (\lsub{a}{G}\times \lsub{a}{G})$   have similar 
descriptions. The precise details are omitted here but are important for the proofs of Theorems \ref{ss14.2} and \ref{ss14.4}, and provide a basis for  calculation with functor rootoids.
The main part of the proofs  involves the study of combinatorics of squares in relation to these Galois connections for varying $a\in \ob(G)$.  The overall strategy is similar to that in the proof of Proposition \ref{ss13.2}, but the key step of expressing certain meets as  joins is more intricate,  involving   repetition (finite for interval finite rootoids, transfinite in general) of certain ``zig-zag'' and ``loop'' constructions 
to produce squares. 
\begin{rem*} Significant parts of the  formalism mentioned above apply in the context of faithful protorootoids (giving functor protorootoids), and  much of that formalism in turn  extends to a  general categorical context (e.g. as provided by  \cite[Remark 5.5(3)]{DyGrp1}). It seems very likely  that
non-trivial parts of the theory of rootoids have a natural extension  in which   underlying groupoids are replaced by  suitable subcategories of groupoids. An interesting open question is  whether  the notion of rootoid has a natural extension (e.g. in the framework suggested by  \cite[Remark 5.5(3)]{DyGrp1}) allowing underlying  general categories (instead of  only groupoids  or subcategories of groupoids).  \end{rem*}
\subsection{$n$-cubes}\label{ss14.7}
The final subsections of this section contain only vague indications
of results which will be formulated and proved elsewhere. As  Example \ref{ss12.5}(1) suggests, the notion of squares of a rootoid $\CR=(G,\L,N)$ 
extends naturally to higher dimensions.  Informally, a 
 \emph{commutative  $n$-cube of $\CR$} is defined to be a  commutative
  $n$-cubical diagram in $G$ such that all of its $2$-dimensional faces are 
  commutative squares of $\CR$.  The $n$-cubes for all $n$ naturally constitute a  \emph{cubical set} as in algebraic topology. The $n$-cubes (of $\CR$) admit $n$ natural
  compositions; the $i$-th is defined by stacking  two $n$-cubes together 
   in the $i$-th direction along a common 
  $(n-1)$-face  in an obvious way. For example, in \ref{ss12.4}, the first (horizontal) 
   $1$-composite   of the two small commutative squares is the larger 
   (exterior) square; 
   the  second  composition of (composable) $2$-squares given by stacking 
   them
    vertically.  These structures make the families of $n$-cubes into a  
   \emph{cubical $\o$-groupoid}  $G^{\o}$ (without connections) in the sense of higher   category  theory.      
       \subsection{} \label{ss14.8}    It can be shown, using the theory of functor rootoids, that the groupoid with morphisms given by a fixed one of the $n$-composition of $n$-cubes $(n\geq 1$) in $G^{\o}$  can be given a natural structure of rootoid.      This gives rise to what we  call a \emph{symmetric cubical $\o$-rootoid}  $\CR^{\o}$ (with  $G^{\o}$ as underlying  cubical $\o$-groupoid) associated to the rootoid $\CR$;   each  groupoid defined by an   $n$-composition in $G^{\o}$  is the underlying  groupoid of a  rootoid, in a compatible way.  For $n=1$, the unique $1$-composition underlies the original rootoid $\CR$. The term ``symmetric''  refers to  a  natural action of the 
    hyperoctahedral group (Coxeter group  of type $B_{n}$) on the set of  $n$-cubes for each $n$, which is compatible  with the various rootoid structures.  
     
 \subsection{Non-triviality}\label{ss14.9} For  certain rootoids $\CR$ 
 (e.g. $\CJ_{(G,X\cup X^{*})}$ for a free  group $G$ on a set 
 $X$ or $\CJ_{\G}$ where $\G$ is a tree)  the theory discussed
  in this section largely reduces to trivialities.
 
 That this is not the case in general can be seen in several 
 ways.  Firstly, for Coxeter systems $(W,S)$,  the theory of  
 functor rootoids of $\CC_{(W,S)}$  extends the 
 results in 
   \cite{BrHowNorm}, from which one sees that   non-trivial  functor 
   rootoids certainly exist whenever $(W,S)$ has non-trivial braid 
   relations. 
    
The existence of non-trivial stable objects in \ref{ss14.4} can be seen 
from the following fact:  for
 $\CC_{(W,S)}$ with $W$ finite, there is a natural injection  from $W$ to the set of  $R$-stable subsets of $\lsub{a}{\CX}$ (where $a$ is the unique object of $W$) in Theorem \ref{ss14.6}.    (The  dual of this fact,  
 in a sense related to  Remark \ref{ss13.5}(2), actually holds for arbitrary $(W,S)$; for finite $W$, the fact is self-dual).  This injection is a bijection in the case of finite dihedral groups, but not in general; for 
 example,  for $W$ of  type $A_{3}$, with $\vert W\vert =24$, there are $26$ $R$-stable subsets.

  Finally, a crude   quantitative    measure of richness of the theory of 
  functor rootoids  associated to a  general rootoid $\CR$
  may be given as follows.   Say that   an $n$-cube of $\CR$ is  
  non-trivial if  it  has  no identity morphism assigned to 
  any of its edges. 
    Let  
  $N$ be  the largest integer  $n$ for which non-trivial $n$-cubes exist 
  (or $N=\infty$ 
  if there is no largest such $n$).  
   If 
    $\CR=\CC_{(W,S)}$, then $N$ turns out to be   the supremum of the 
    ranks  of  
    finite standard parabolic subgroups of $(W,S)$ (compare Example
    \ref{ss12.5}(1))  and it has a similar (but slightly more subtle)  
   description for principal rootoids in general.
     \begin{rem*} (1) The quantity $N$-defined above    is the sup of 
     the $n\in \Nat$ for which $\CR^{\o}$   has non-trivial $n$-morphisms in the 
     sense of higher category theory

 (2)   Many  structures similar to $\CR^{\o}$ could be 
 constructed by taking components of suitable families of functor 
 rootoids.              Although the resulting structures are 
 genuinely higher categorical in nature, their definition  involves     only two-dimensional structures (squares) satisfying conditions 
    determined by a  $1$-cocycle.
        It is not known  if there is an extension of these ideas incorporating, for instance, $n$-cocycles for $n>1$.
        \end{rem*}


\begin{thebibliography}{10}

\bibitem{BjBr}
Anders Bj{\"o}rner and Francesco Brenti.
\newblock {\em Combinatorics of {C}oxeter groups}, volume 231 of {\em Graduate
  Texts in Mathematics}.
\newblock Springer, New York, 2005.

\bibitem{BjEZ}
Anders Bj{\"o}rner, Paul~H. Edelman, and G{\"u}nter~M. Ziegler.
\newblock Hyperplane arrangements with a lattice of regions.
\newblock {\em Discrete Comput. Geom.}, 5(3):263--288, 1990.

\bibitem{DySd}
C{\'e}dric Bonnaf{\'e} and Matthew~J. Dyer.
\newblock Semidirect product decomposition of {C}oxeter groups.
\newblock {\em Comm. Algebra}, 38(4):1549--1574, 2010.

\bibitem{Bour}
N.~Bourbaki.
\newblock {\em \'{E}l\'ements de math\'ematique. {F}asc. {XXXIV}. {G}roupes et
  alg\`ebres de {L}ie. {C}hapitre {IV}: {G}roupes de {C}oxeter et syst\`emes de
  {T}its. {C}hapitre {V}: {G}roupes engendr\'es par des r\'eflexions.
  {C}hapitre {VI}: syst\`emes de racines}.
\newblock Actualit\'es Scientifiques et Industrielles, No. 1337. Hermann,
  Paris, 1968.

\bibitem{BrHowNorm}
Brigitte Brink and Robert~B. Howlett.
\newblock Normalizers of parabolic subgroups in {C}oxeter groups.
\newblock {\em Invent. Math.}, 136(2):323--351, 1999.

\bibitem{BF}
J.~Richard B{\"u}chi and William~E. Fenton.
\newblock Large convex sets in oriented matroids.
\newblock {\em J. Combin. Theory Ser. B}, 45(3):293--304, 1988.

\bibitem{CH}
M.~Cuntz and I.~Heckenberger.
\newblock Weyl groupoids with at most three objects.
\newblock {\em J. Pure Appl. Algebra}, 213(6):1112--1128, 2009.

\bibitem{Deod}
Vinay~V. Deodhar.
\newblock On the root system of a {C}oxeter group.
\newblock {\em Comm. Algebra}, 10(6):611--630, 1982.

\bibitem{Dies}
Reinhard Diestel.
\newblock The cycle space of an infinite graph.
\newblock {\em Combin. Probab. Comput.}, 14(1-2):59--79, 2005.

\bibitem{DyHS1}
M.~J. Dyer.
\newblock Hecke algebras and shellings of {B}ruhat intervals.
\newblock {\em Compositio Math.}, 89(1):91--115, 1993.

\bibitem{DyGrp1}
M.~J. Dyer.
\newblock Groupoids, root systems and weak order {I}.
\newblock {\em \tt arXiv:1110.3217 [math.GR]}, 2011.

\bibitem{DyWeak}
M.~J. Dyer.
\newblock On the weak order of {C}oxeter groups.
\newblock {\em {\tt arXiv:1108.5557 [math.GR]}}, 2011.

\bibitem{DyRef}
Matthew Dyer.
\newblock Reflection subgroups of {C}oxeter systems.
\newblock {\em J. Algebra}, 135(1):57--73, 1990.

\bibitem{DyEmb1}
Matthew~J. Dyer.
\newblock Embeddings of root systems. {I}. {R}oot systems over commutative
  rings.
\newblock {\em J. Algebra}, 321(11):3226--3248, 2009.

\bibitem{DyEmb2}
Matthew~J. Dyer.
\newblock Embeddings of root systems. {II}. {P}ermutation root systems.
\newblock {\em J. Algebra}, 321(3):953--981, 2009.

\bibitem{DyAdm}
Matthew~J. Dyer.
\newblock Elementary roots and admissible subsets of {C}oxeter groups.
\newblock {\em J. Group Theory}, 13(1):95--107, 2010.

\bibitem{EdJ}
Paul~H. Edelman and Robert~E. Jamison.
\newblock The theory of convex geometries.
\newblock {\em Geom. Dedicata}, 19(3):247--270, 1985.

\bibitem{Gal}
S.~Gal.
\newblock On normal subgroups of {C}oxeter groups generated by standard
  parabolic subgroups.
\newblock {\em Geom. Dedicata}, 115:65--78, 2005.

\bibitem{HV}
Istv{\'a}n Heckenberger and {V}olkmar Welker.
\newblock Geometric combinatorics of {W}eyl groupoids.
\newblock {\em {\tt arXiv:1003.3231 [math.QA]}}, 2010.

\bibitem{HY}
Istv{\'a}n Heckenberger and Hiroyuki Yamane.
\newblock A generalization of {C}oxeter groups, root systems, and {M}atsumoto's
  theorem.
\newblock {\em Math. Z.}, 259(2):255--276, 2008.

\bibitem{Hee}
Jean-Yves H{\'{e}}e.
\newblock Syst{\` {e}}mes de racines sur un anneau commutatif totalement
  ordonn{\' {e}}.
\newblock {\em Geom. Ded.}, 37:65--102, 1991.

\bibitem{How}
Robert~B. Howlett.
\newblock Normalizers of parabolic subgroups of reflection groups.
\newblock {\em J. London Math. Soc. (2)}, 21(1):62--80, 1980.

\bibitem{Hum}
James~E. Humphreys.
\newblock {\em Reflection groups and {C}oxeter groups}, volume~29 of {\em
  Cambridge Studies in Advanced Mathematics}.
\newblock Cambridge University Press, Cambridge, 1990.

\bibitem{Muhl}
B.~M{\"u}hlherr.
\newblock Coxeter groups in {C}oxeter groups.
\newblock In {\em Finite geometry and combinatorics (Deinze, 1992)}, volume 191
  of {\em London Math. Soc. Lecture Note Ser.}, pages 277--287. Cambridge Univ.
  Press, Cambridge, 1993.

\bibitem{Pilk}
Annette Pilkington.
\newblock On the weak order of orthogonal groups.
\newblock {\em Preprint}, 2011.

\bibitem{STrees}
Jean-Pierre Serre.
\newblock {\em Trees}.
\newblock Springer Monographs in Mathematics. Springer-Verlag, Berlin, 2003.
\newblock Translated from the French original by John Stillwell, Corrected 2nd
  printing of the 1980 English translation.

\bibitem{Spey}
David~E. Speyer.
\newblock Powers of {C}oxeter elements in infinite groups are reduced.
\newblock {\em Proc. Amer. Math. Soc.}, 137(4):1295--1302, 2009.

\end{thebibliography}
\end{document}